\newtheorem{theorem}{Theorem}[section]
\newtheorem{corollary}[theorem]{Corollary}
\newtheorem{lemma}[theorem]{Lemma}
\theoremstyle{definition}
\newtheorem{definition}[theorem]{Definition}
\theoremstyle{remark}
\newtheorem{remark}[theorem]{Remark}
\numberwithin{equation}{section}
\newcommand{\g}{\geqslant}
\newcommand{\RR}{\mathbb{R}}\newcommand{\R}{\mathbb{R}}
\newcommand{\ZZ}{\mathbb{Z}}
\newcommand{\CC}{\mathbb{C}}
\newcommand{\sph}{\mathbb{S}}
\newcommand{\NN}{\mathbb{N}}
\newcommand{\p}{\partial}
\newcommand{\les}{\leqslant}
\newcommand{\lesa}{\lesssim}
\newcommand{\ma}{\measuredangle}
\newcommand{\mc}[1]{\mathcal{#1}}
\newcommand{\mb}[1]{\mathbf{#1}}
\newcommand{\lr}[1]{ \langle #1 \rangle}
\newcommand{\ind}{\mathbbold{1}}
\DeclareMathOperator{\Real}{Re}
\DeclareSymbolFont{bbold}{U}{bbold}{m}{n}
\DeclareSymbolFontAlphabet{\mathbbold}{bbold}
\DeclareMathOperator*{\supp}{supp}
\DeclareMathOperator*{\diag}{diag}
\DeclareMathOperator*{\dist}{dist}
\newcommand{\C}{\mathbb{C}}
\newcommand{\N}{\mathbb{N}}
\newcommand{\Z}{\mathbb{Z}}
\begin{document}

\title{The massless and the non-relativistic limit for the cubic Dirac equation}

\author[T.~Candy]{Timothy Candy}
\address[T.~Candy]{Department of Mathematics and Statistics, University of Otago, PO Box 56, Dunedin 9054, New Zealand}
\email{tcandy@maths.otago.ac.nz}

\author[S.~Herr]{Sebastian Herr}
\address[S.~Herr]{Fakult\"at f\"ur
  Mathematik, Universit\"at Bielefeld, Postfach 10 01 31, 33501
  Bielefeld, Germany}
\email{herr@math.uni-bielefeld.de}

\begin{abstract}
Massive and massless Dirac equations with Lorentz-covariant cubic nonlinearities are considered in spatial dimension $d=2,3$. Global well-posedness of the Cauchy problem for small initial data in scale-invariant Sobolev spaces and scattering of solutions is proved by a new approach which uses bilinear Fourier restriction estimates and atomic function spaces. Furthermore, global uniform convergence results, both in the massless and in the non-relativistic limit, are proved at optimal regularity. In both regimes, these are the first results which imply convergence of scattering states and wave operators.
\end{abstract}

\maketitle


\section{Introduction}\label{sec:intro}
In the present paper, we address the Cauchy problem for cubic Dirac equations
        \begin{equation}\label{eq:cd}
        \begin{split}
        -i  \gamma^\mu \p_\mu \psi + m \psi &= F(\psi)\\
        \psi(0,x)&=f(x), \quad x\in \R^d
        \end{split}
        \end{equation}
       for a spinor $\psi:\RR^{1+d}\to \CC^{N_d}$, where $d\in\{2,3\}$ is the spatial dimension and the spinor dimension is $N_d=2$ if $d=2$, and $N_d=4$ if $d = 3$, and  $m \in \RR$ is a mass parameter.
The summation convention is in effect, thus repeated Greek indices are summed over $\mu=0,\ldots, 4$, and indices are raised and lowered using the Minkowski metric $ \eta = \diag(1, -1, \dots, -1)$. The Dirac matrices $\gamma^\mu\in
 \CC^{N_d\times N_d}$ are chosen (see Section \ref{sec:setup}) so that they satisfy the anti-commutativity property
 $$ \gamma^\mu \gamma^\nu + \gamma^\nu \gamma^\mu = 2 \eta^{\mu \nu}.$$
 Let $\overline{\psi} = \psi^\dagger \gamma^0$ be the Dirac adjoint, where $z^\dagger$ is the complex conjugate transpose of $z\in \CC^{N_d}$.

 The Dirac operator $\mc{D}_m=  -i  \gamma^\mu \p_\mu  + m$ has been introduced by Paul Dirac in \cite{Dirac-1928} to describe free relativistic particles with spin (e.g.\ electrons). We have $\gamma^0\mc{D}_m=-i\partial_t +\mc{H}_m$ with the Hamiltonian $\mc{H}_m=-i\gamma^0\gamma^j\partial_j+m\gamma^0$. $\mc{H}_m$ has the feature that  is unitarily equivalent to the ($N_d\times N_d$ system of) square-root Klein-Gordon Hamiltonians $\gamma^0\sqrt{-\Delta+m^2}$, see \cite{Thaller-1992} or Section \ref{sec:setup} for more details.

 We are primarily interested in the physically relevant cases where the nonlinearity $F(\psi)$ is given by
$$F(\psi)=\big( \overline{\psi} \psi \big) \psi \quad \text{(Soler model \cite{Soler-1970})}$$
or
$$F(\psi )=\big( \overline{\psi} \gamma^\mu \psi \big) \gamma_\mu \psi \quad \text{(Thirring model \cite{Thirring-1958})},$$
and our results are phrased for these cases.
However our arguments also apply to slightly more general Lorentz covariant cubic nonlinearities, see Remark \ref{rem:nonlinearities} below. Such nonlinear Dirac equations arise in relativistic quantum mechanics as models for self-interacting Dirac fermions \cite{Bjorken1964,Thaller-1992,Ranada1983}.

The present paper has three main results. The first result is a refined and unified global well-posedness and scattering result for small initial data in critical Sobolev spaces, which is discussed in Subsection \ref{subsec:gwp}. The second result concerns the massless limit, and gives a uniform and global convergence result as the mass parameter tends to zero, see Subsection \ref{subsec:ml}.
The third result concerns the non-relativistic limit and establishes a uniform and global convergence result towards a solution of a Schr\"odinger equation as the speed of light goes to infinity, see Subsection \ref{subsec:nrl}. As a consequence, we conclude the convergence of scattering states and wave operators for both the massless and non-relativistic limits.

\subsection{A unified and refined global well-posedness and scattering theory for small initial data}\label{subsec:gwp}
Let $\alpha > 0$ and $\psi$ be a solution to \eqref{eq:cd}. The rescaled function
            $ (t,x) \mapsto \alpha^{\frac{1}{2}}\psi( \alpha t, \alpha x)$ solves the cubic Dirac equation with mass parameter $\alpha m$. The homogeneous Sobolev norm $\|\psi\|_{\dot{H}^{s}}$ is invariant under this rescaling if and only if $s=\frac{d-1}{2}$, hence the scale-invariant (or critical) Sobolev regularity is $s_d:=\frac{d-1}{2}$.

For $m\ne 0$, global well-posedness and scattering for small initial data in the critical Sobolev space $H^{s_d}(\R^d)$ was proved in previous work of the second author and Bejenaru \cite{Bejenaru2014a,Bejenaru2016} and if $m=0$ for small initial data in $\dot{H}^{s_d}(\R^d)$ in previous work of the first author and Bournaveas \cite{Bournaveas2015}.
In both cases Tataru's null frame space construction  \cite{Tataru2001} was adapted from the Wave Maps to the Dirac setting as a remedy for the failure of the endpoint Strichartz estimate.

In general, for a dispersive PDE with characteristic hypersurface $\Sigma$, the dispersive decay is determined by the number of non-vanishing principal curvatures of $\Sigma$. In our application here,  $\Sigma$ is a cone (if $m=0$) or a hyperboloid (if $m\ne 0$). The dispersive decay implies mixed norm $L^p$-estimates (aka Strichartz estimates) \cite{Str-77,Ginibre1995,Keel1998} for free solutions which play an important role in the perturbative analysis of nonlinear dispersive PDEs. From another perspective, the solution operator is (up to isometry) dual to the Fourier restriction operator $f \mapsto \widehat{f}|_{\Sigma}$. Proving optimal $L^p$-estimates for the Fourier restriction operator is a classical topic in harmonic analysis.
 In the course of proving such estimates, in turned out that it is beneficial to pass to a bilinear setting \cite{Wolff2000,Tao2001b}, where the product of two free solutions is considered. In this case, additional transversality considerations can be exploited which lead to stronger decay of the product.

 In previous work, we have proved bilinear Fourier restriction estimates for perturbations of free solutions and used these in the analysis of nonlinear dispersive PDE, such as the Dirac--Klein-Gordon system \cite{Candy2018a} and the Wave Maps equation \cite{Candy2018b}. We also refer to these papers for further references and to \cite{Candy2019a} for a more general result on perturbative bilinear Fourier restriction estimates.

In the present paper we provide an alternative and unified proof of the small data global well-posedness theory in the critical Sobolev spaces $H^{s_d}$ for the cubic Dirac equation \eqref{eq:cd} which covers both the massive $m\ne 0$ and massless $m=0$ cases simultaneously. It is worth noting that although the small data theory obtained in \cite{Bejenaru2014a,Bejenaru2016, Bournaveas2015} in both the massive and massless cases adapted the null frame construction of Tataru, the spaces used were very different when $m=0$ and $m\ne 0$. Instead, here we give a unified argument which is based on the adapted function spaces $U^p$ and $V^p$ and bilinear Fourier restriction theory. In particular, we obtain a result which is uniform in the mass parameter $m$. While the approach here is conceptually similar to  \cite{Candy2018b}, it turns out that for the cubic Dirac equation the bilinear $L^2_{t,x}$-estimates derived for Wave Maps \cite{Candy2018b} and their analogues for the hyperboloid (if $m\ne 0$) are not strong enough and it is necessary to use the quadrilinear structure of the (dualised) nonlinearity. The key new ingredient here to break the symmetry is a bilinear $L^p_tL^2_x$-estimate for certain $p<2$, which requires the use of the adapted atomic space $U^{a}$ for some $a<2$.

As we eventually apply the global theory obtained below to the massless and non-relativistic limits, we require bounds which are uniform in $m$, and are both scale invariant and have the correct scaling to match both the wave and Schr\"odinger limits. This requirement means that the standard homogeneous and inhomogeneous Sobolev spaces are not sufficient. Instead we require norms which can detect improved low frequency behaviour.  To this end,
given $m \in \RR$, we define the Sobolev space $H^{s,\sigma}_m=H^{s,\sigma}_m(\R^d)$ via
        $$ \| f \|_{H^{s,\sigma}_m} := \big\| |\cdot |^{\sigma}\lr{\cdot}_m^{s-\sigma} \widehat{f} \big\|_{L^2},$$
where $\lr{\xi}_m := ( m^2 + |\xi|^2)^\frac{1}{2}$. Note that $H^{s, \sigma}_0 = H^{s, s}_m = \dot{H}^s$ and $H^{s, 0}_1 = H^s$ where $\dot{H}^s$ and $H^s$ are the standard homogeneous and inhomogeneous Sobolev spaces.  The low frequency parameter $\sigma$ is used to formulate our results in an optimal way in both the high frequency regime $|\xi|\g m$,  and the low frequency regime $|\xi|<|m|$. This adaptability is crucial to ensure that we have uniform estimates in both the massless and non-relativistic limits.

Although the space $H^{s, \sigma}_m$ may initially seem slightly artificial, it in fact arises naturally in the context of dispersive estimates. For example the standard Klein-Gordon Strichartz estimate
		$$ \| e^{it\lr{\nabla}_m} f \|_{L^{\frac{2d+2}{d-1}}_{t,x}(\RR^{1+d})} \lesa \| f \|_{H^{\frac{1}{2}}_x(\RR^d)}  $$
can be sharpened at low frequencies (via a simple application of Sobolev embedding) to the improved bound
$$
\| e^{it \lr{\nabla}_m} f \|_{L^{\frac{2d+2}{d-1}}_{t,x}(\R\times \R^d)} \lesa \|f\|_{H^{\frac{1}{2}, \frac{1}{d+1}}_m(\R^d)},
$$
see for instance  \cite[Lemma 4]{Machihara2003} or  Lemma \ref{lem:wave stri} below.

To formulate our first main theorem, we let
					$$ \mathcal{U}_m(t) := e^{-it\mc{H}_m} = e^{it(\gamma^0 \gamma^j \p_j - m \gamma^0)}$$
denote the free propagator for \eqref{eq:cd}. Let $S^{s,\sigma}_m$ be the function space defined via the norm
			$$ \| \psi \|_{S^{s,\sigma}_m} := \big\||\nabla|^{\sigma}\lr{\nabla}_m^{s-\sigma}\mc{U}_m(-t) \psi \big\|_{\ell^2 U^a}. $$
Here $1<a<2$ is a fixed constant, and $\ell^2 U^a$ is the atomic space $U^a$ together with an $\ell^2$ sum over frequencies $\lambda \in 2^\NN$, see Section \ref{sec:setup} for a precise definition. The parameters $s$ and $\sigma$ give the regularity at high ($|\xi| \g m$) and low ($|\xi| \les m$) frequencies respectively. It is important to emphasise that the only dependence on the mass in the above construction appears through the free solution operator $\mc{U}_m$ and the derivatives $\lr{\nabla}_m$. This fact is crucial in our later applications to the (global in time) massless and non-relativistic limits. We state our results for the forward in time Cauchy problem on $\RR_+=[0,\infty)$ and remark that by time reversibility the analogous results hold on $\RR_-$.

\begin{theorem}\label{thm:gwp}
Let $d \in\{2,3\}$, $s\g s_d=\frac{d-1}{2}$, and $0\les \sigma \les \sigma_d:=\frac{d-2}{2}$. There exists $\delta>0$ such that for any $m\in \RR$  and any initial data $f \in H_m^{s_d,\sigma_d}(\R^d)$ satisfying
                $$\| f \|_{H_m^{s_d,\sigma_d}} \les \delta$$
there exists a unique global solution $\psi^{(m)}=\psi^{(m)}[f]\in S^{s_d,\sigma_d}_m \cap C(\RR_+; H^{s_d,\sigma_d}_m(\R^d))$ to \eqref{eq:cd} on $\RR_+$ which scatters, i.e. there exist scattering states $f_{\infty}\in H^{s_d,\sigma_d}_m(\R^d)$ such that
$$\lim_{t \to  \infty}\|\psi^{(m)}(t)-\mathcal{U}_m(t)f_{ \infty}\|_{H^{s_d,\sigma_d}_m}=0.$$
If, in addition, $f \in H^{s,\sigma}_m(\R^d)$, then $\psi^{(m)} \in  S^{s,\sigma}_m\cap C(\RR_+; H^{s,\sigma}_m(\R^d))$ and the scattering claim also holds in $H^{s,\sigma}_m(\R^d)$.
Moreover, the flow map $f \to \psi^{(m)}[f]$ is $C^\infty$.
\end{theorem}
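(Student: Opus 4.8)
The plan is to run a contraction-mapping argument in the space $S^{s_d,\sigma_d}_m$ for the Duhamel formulation of \eqref{eq:cd}, with constants \emph{uniform in} $m\in\RR$. Writing the solution as $\psi = \mc{U}_m(t) f + \Phi_m(\psi)$ where $\Phi_m(\psi)(t) = \int_0^t \mc{U}_m(t-t')\, (\gamma^0 F(\psi))(t')\, dt'$, the fixed point is sought in a small ball of $S^{s_d,\sigma_d}_m$. The two analytic ingredients are: (i) a linear energy/transfer estimate $\|\mc{U}_m(t)f\|_{S^{s,\sigma}_m}\lesssim \|f\|_{H^{s,\sigma}_m}$ and the dual estimate bounding $\Phi_m$ on $S^{s,\sigma}_m$ by a suitable dual ($V^{a'}$-type) norm of $\gamma^0 F(\psi)$ — these are the standard properties of the adapted $U^a$/$V^{a'}$ spaces together with the definition of $S^{s,\sigma}_m$; and (ii) the crucial \emph{nonlinear (quadrilinear) estimate}
$$\Big| \int_{\RR^{1+d}} \langle \gamma^0 F(\psi_1,\psi_2,\psi_3), \psi_4\rangle \, dt\, dx \Big| \lesssim \prod_{j=1}^4 \|\psi_j\|_{S^{s_d,\sigma_d}_m},$$
valid uniformly in $m$, where $F(\psi_1,\psi_2,\psi_3)$ is the symmetric trilinear form associated to the Soler or Thirring nonlinearity. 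Once (i) and (ii) are in place, standard arguments give a global solution in $S^{s_d,\sigma_d}_m$ for $\|f\|_{H^{s_d,\sigma_d}_m}\le\delta$, smoothness of the data-to-solution map follows from analyticity of the nonlinearity and the implicit function theorem, the embedding $S^{s,\sigma}_m\hookrightarrow C(\RR_+;H^{s,\sigma}_m)$ (inherited from $U^a\hookrightarrow C_b$) gives continuity in time, and scattering follows because $\mc{U}_m(-t)\psi(t)$ converges in $H^{s_d,\sigma_d}_m$ as $t\to\infty$ (a Cauchy argument using that the $U^a$-norm of the Duhamel tail on $[T,\infty)$ tends to $0$). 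The higher-regularity persistence claim follows by re-running the estimate (ii) with one factor measured in $S^{s,\sigma}_m$ and the rest in $S^{s_d,\sigma_d}_m$, using a Leibniz/fractional-derivative bound and that $S^{s,\sigma}_m$ is an algebra-type module over $S^{s_d,\sigma_d}_m$.

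The heart of the matter, and the main obstacle, is estimate (ii). As emphasised in the introduction, the naive approach — dualising $F(\psi)$ as a product of two bilinear forms and applying bilinear $L^2_{t,x}$ Fourier restriction estimates of Dirac--Klein-Gordon/Wave-Maps type — \emph{fails} to close at the critical regularity: the available bilinear $L^2_{t,x}$ bounds for the cone ($m=0$) and hyperboloid ($m\ne 0$) are simply not strong enough. The fix is to exploit the full quadrilinear structure and the spinorial null structure of the Lorentz-covariant nonlinearities. Concretely, I would: perform a Littlewood--Paley/angular decomposition of the four factors into frequency blocks $\lambda_1,\dots,\lambda_4$ and caps/tubes; use the spinorial identities for $\overline{\psi}\psi$ and $\overline{\psi}\gamma^\mu\psi\,\gamma_\mu$ (projecting onto $\pm$ half-waves and extracting the angular gain $\lesssim \angle(\xi_i,\xi_j)$ or $\lesssim m/\langle\xi\rangle_m$ between interacting spinors — this is the Dirac null structure that compensates for the missing endpoint Strichartz estimate) to gain transversality; and then control the resulting pieces with a combination of the improved bilinear Fourier restriction estimates, bilinear $L^2_{t,x}$ estimates, and — the genuinely new ingredient — a bilinear $L^p_t L^2_x$ estimate for some $p<2$, which is what forces the use of $U^a$ with $a<2$ rather than $U^2$. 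The summation over the dyadic parameters $\lambda_1,\dots,\lambda_4$ and over caps must be organised so that the off-diagonal decay from transversality beats the logarithmic losses, and crucially every constant must be tracked to be $m$-independent; the low-frequency parameter $\sigma\le\sigma_d$ is exactly what is needed to make the low-frequency ($|\xi|<|m|$) portion of the sum converge with the correct scaling for both limits.

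I would present the nonlinear estimate by first reducing, via symmetry and the support/angular decomposition, to a finite list of frequency-interaction scenarios (high $\times$ high $\to$ low, high $\times$ low $\to$ high, and fully resonant), and in each scenario identify which two of the four factors carry the decisive transversality. For the massless case the relevant geometry is two cones with separated light-like directions; for the massive case it is two hyperboloids, where one additionally has the elliptic region $|\xi|<|m|$ handled by Bernstein/Sobolev embedding with the $\sigma$-weight. A technical point worth flagging is that the $U^a$/$V^{a'}$ duality and the transference principle must be set up carefully so that bilinear estimates proved for \emph{free} solutions (extensions) transfer to $U^a$-atoms; this is routine given the earlier sections but is where uniformity in $m$ could be lost if the free bilinear estimates were not themselves stated uniformly — so the bilinear Fourier restriction inputs (Lemma~\ref{lem:wave stri} and its relatives) need to be the $m$-uniform, $H^{s,\sigma}_m$-adapted versions. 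Granting those inputs, the contraction closes and all the assertions of Theorem~\ref{thm:gwp} follow.
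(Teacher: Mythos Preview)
Your proposal is correct and follows essentially the same approach as the paper: contraction mapping in $S^{s_d,\sigma_d}_m$ via the energy inequality (Lemma~\ref{lem:energy ineq}) and a uniform-in-$m$ quadrilinear estimate (Theorem~\ref{thm:nonlin}), the latter proved by angular/modulation decomposition, the Dirac null structure, the atomic bilinear restriction estimate, and---exactly as you flag---a bilinear $L^a_tL^2_x$ bound with $a<2$ in the high-modulation subcase to extract the decisive high--low gain. One small imprecision: in your displayed quadrilinear bound the fourth factor should sit in the weaker dual space $S^{-s_d,-\sigma_d}_{w,m}$ (i.e.\ $V^{a'}$-based), not $S^{s_d,\sigma_d}_m$, and the higher-regularity persistence is handled not via a fractional Leibniz rule but by re-summing the frequency-localised estimate of Theorem~\ref{thm:nonlin} with one $S^{s,\sigma}_m$ weight---otherwise your plan matches the paper.
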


Here, the case $\sigma=0$ recovers the previous results in \cite{Bejenaru2014a,Bejenaru2016, Bournaveas2015} and the case $d=3$, $m\ne0$, and $0<\sigma\les \frac12$ improves upon \cite{Bejenaru2014a}.
The case of higher dimensions $d\geq 4$ is significantly easier because of the fact that the endpoint Strichartz estimate for the wave equation is available \cite{Keel1998}, therefore we do not address this here. It is also worth noting that the regularity $s_d = \frac{d-1}{2}$ is the scale invariant regularity in the massless limit (i.e. $m=0$), while $\sigma_d = \frac{d-2}{2}$ is the scale invariant regularity for the cubic nonlinear Schr\"odinger equation arising in the non-relativistic limit (roughly $m\to \infty$).

\begin{remark}
  \label{rem:nonlinearities}
Although we are primarily interested in the Soler and Thirring models, all results stated in this paper also apply to the slightly more general case where the nonlinearity $F$ is a linear combination of terms of the form
	$$(\overline{\psi} \mb{A}_1 \psi) \mb{A}_2 \psi $$
for any constant coefficient matrices $\mb{A}_1, \mb{A}_2 \in \CC^{N_d}$ satisfying the commutativity condition
			\begin{equation}\label{eqn:null cond}
             \gamma^0 \gamma^j \mb{A} = \mb{A} \gamma^0 \gamma^j,  \qquad j\in \{1, 2, 3\}.
        \end{equation}
Clearly the Soler model (where $\mb{A}_1 = \mb{A}_2 = I$) satisfies \eqref{eqn:null cond}. A slightly more involved computation via the Fierz identities \cite{Nieves2004} shows that the Thirring model can also be written as a sum of terms satisfying \eqref{eqn:null cond}. We give the details in Remark \ref{rem:nonlinearities II} below.
  \end{remark}

  \begin{remark}
    \label{rem:maj}\leavevmode
    \begin{enumerate}
      \item There is no large data theory for nonlinear Dirac equation because of the lack of a coercive energy. On the other hand, there are soliton solutions \cite{Soler-1970,Strauss1986}, which are obviously non-scattering.
      \item
For arbitrarily large initial data there is an algebraic condition (known as the  Majorana-condition) which ensures that the nonlinear term vanishes (at least for the Soler model $F(\psi) = (\overline{\psi}\psi)\psi$). Perturbing these solutions gives an open unbounded subset of $H^s(\R^d)$ for which we have  global scattering solutions to \eqref{eq:cd}. This has been analysed in detail in \cite{Candy2018} in $d=3$ and we remark here that the same argument works if $d=2$, see also \cite{Dancona2017} for radial data in $d=3$.

It would be of interest to extend Theorem \ref{thm:gwp} to include large initial data beyond this somewhat special class of data satisfying the Majorana condition. However, improving the nonlinear estimates for (say) dispersed solutions akin to \cite{Sterbenz2010} seems to very difficult for the cubic Dirac equation, see Remark \ref{rem:improving bound} for more details.
\item Without going into detail, we remark that we also obtain local well-posedness for arbitrarily large initial data as an easy consequence of the estimates proven in this paper (e.g.\ analogous to \cite[Theorem 1.2]{Hadac2009}).
\end{enumerate}
\end{remark}

\subsection{The massless limit}\label{subsec:ml}
The second result is a global in time convergence theory in the massless limit $m\to0$. As a consequence, we obtain the convergence of scattering states and wave operators, which is the first result of this kind. The arguments used rely crucially on the uniform (in mass) theory developed above, together with the key fact that composing the free solution operators map our function spaces for $m\ne 0$ to $m=0$. It is important to emphasize that the convergence as $m\to 0$ on compact time intervals is straightforward but far from sufficient to obtain the convergence of wave operators and scattering states that we obtain here. We expect that the new approach here, namely using adapted function spaces (instead of, say, Strichartz norms) and pulling back along the linear flow to prove global-in-time and uniform convergence, will have applications to other problems as well.

To motivate the formulation below, we first recall that the Klein-Gordon equation and wave equation display very different asymptotic behaviour. In particular (at least from smooth localised data) we expect that for $m\not =0$ the solution $\psi^{(m)}(t)$ should decay like $t^{-\frac{d}{2}}$, while $\psi^{(0)}(t)$ exhibits the slower decay rate $t^{-\frac{d-1}{2}}$. Consequently there is no reason to expect that the difference $\sup_{t\in \RR} \|\psi^{(m)}(t) - \psi^{(0)}(t)\|_{H^s_m}$ vanishes in the limit $m\to 0$. Instead, if we wish to understand the global-in-time convergence in the massless limit, we first have to account for the difference in the asymptotic behaviour by pulling back with the linear propagators. After also taking into account the difference in regularity at small frequencies, we arrive at the following statement.

\begin{theorem}\label{thm:limit}
Let $d\in \{2, 3\}$, $s_d = \frac{d-1}{2}$, and $0\les \sigma\les \sigma_d$. There exists $\delta>0$ such that for any $m\in \RR$, and any data $f^{(m)} \in H^{s_d,\sigma}_m$ satisfying
                $$ \| f^{(m)} \|_{H^{s_d,\sigma_d}_m} \les \delta, \qquad \text{and} \qquad  \lim_{m\to 0} \big\| |\nabla|^\sigma \lr{\nabla}_m^{s_d-\sigma} f^{(m)} - |\nabla|^{s_d} f^{(0)} \big\|_{L^2} = 0,$$
the solutions $\psi^{(m)}=\psi^{(m)}[f^{(m)}] \in S^{s_d,\sigma}_m$ to \eqref{eq:cd} on $\R_+$ given by Theorem \ref{thm:gwp} satisfy
        $$ \lim_{m\to 0}  \big\| |\nabla|^\sigma  \lr{\nabla}_m^{s_d-\sigma} \mc{U}_m(-\cdot ) \psi^{(m)}  - |\nabla|^{s_d} \mc{U}_0(-\cdot ) \psi^{(0)}\big\|_{\ell^2 U^a} = 0$$
\end{theorem}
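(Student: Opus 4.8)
\emph{Proof idea.} The plan is to pull every object back along the linear flow and weight it, so that the massive problems $(m\ne 0)$ and the massless problem $(m=0)$ are all recast as the \emph{same} fixed-point equation in one $m$-independent space $\ell^2 U^a$, and then to compare the solutions by a stability argument driven by the $m$-uniformity of the estimates behind Theorem~\ref{thm:gwp} plus one new linear input. Write $T_m:=|\nabla|^\sigma\lr{\nabla}_m^{s_d-\sigma}$ (so $T_0=|\nabla|^{s_d}$) and set $u^{(m)}:=T_m\,\mc{U}_m(-\cdot)\psi^{(m)}$, $g^{(m)}:=T_m f^{(m)}$, $g^{(0)}:=|\nabla|^{s_d}f^{(0)}$. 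The hypothesis is exactly that $g^{(m)}\to g^{(0)}$ in $L^2$ and the conclusion is $\|u^{(m)}-u^{(0)}\|_{\ell^2 U^a}\to 0$. The Duhamel formula for \eqref{eq:cd}, rewritten in the variable $u^{(m)}$, becomes $u^{(m)}=g^{(m)}+N_m[u^{(m)}]$ with
$$ N_m[u]\;=\;T_m\int_0^{\cdot}\mc{U}_m(-s)\,\gamma^0\,F\big(\mc{U}_m(s)\,T_m^{-1}u(s)\big)\,ds $$
(up to a harmless constant, $T_m^{-1}$ being taken on the range of $T_m$). The multilinear and energy estimates underlying Theorem~\ref{thm:gwp}, which hold uniformly for $m\in\RR$ including $m=0$, are precisely the statement that for $\delta$ small $N_m$ maps the $\delta$-ball of $\ell^2 U^a$ to itself, vanishes at $0$, and is a contraction there, $\|N_m[u]-N_m[v]\|_{\ell^2 U^a}\les\tfrac12\|u-v\|_{\ell^2 U^a}$; in particular $\|u^{(m)}\|_{\ell^2 U^a}\les 2\delta$ uniformly in $m$.

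Granting this, decompose
$$ u^{(m)}-u^{(0)}=\big(g^{(m)}-g^{(0)}\big)+\big(N_m[u^{(m)}]-N_m[u^{(0)}]\big)+\big(N_m[u^{(0)}]-N_0[u^{(0)}]\big), $$
absorb the middle term (of $\ell^2 U^a$-norm $\les\tfrac12\|u^{(m)}-u^{(0)}\|_{\ell^2 U^a}$) into the left side, and conclude
$$ \|u^{(m)}-u^{(0)}\|_{\ell^2 U^a}\;\les\;2\big\|g^{(m)}-g^{(0)}\big\|_{\ell^2 U^a}+2\big\|N_m[u^{(0)}]-N_0[u^{(0)}]\big\|_{\ell^2 U^a}. $$
A time-independent $h=h(x)$ satisfies $\|h\|_{\ell^2 U^a}\approx\|h\|_{L^2_x}$ (a single step has $U^a$-norm its spatial $L^2$-norm, and the $\ell^2$ Littlewood--Paley sum recovers $\|\cdot\|_{L^2_x}$), so the first term is $\approx\|g^{(m)}-g^{(0)}\|_{L^2}\to 0$. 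Everything thus reduces to
$$ \big\|N_m[u^{(0)}]-N_0[u^{(0)}]\big\|_{\ell^2 U^a}\longrightarrow 0\qquad(m\to0), $$
with $u^{(0)}$ now \emph{fixed} (the pulled-back, weighted massless solution with data $f^{(0)}$).

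To get this I would propagate the linear convergence $\mc{U}_m\to\mc{U}_0$ through the fixed solution $u^{(0)}$. The key linear input is that the transfer operator $\mc{U}_0(-\cdot)\mc{U}_m(\cdot)$ is bounded on $\ell^2 U^a$ uniformly in $m$ and converges strongly to the identity as $m\to0$ --- this is exactly the fact noted in Section~\ref{sec:intro} that composition with the free solution operators maps the $m$-adapted space to the $0$-adapted space, here made quantitative. Combined with the (bounded, strongly convergent) multipliers $T_0T_m^{-1}\to 1$, it gives first that the nonlinear input $\mc{U}_m(\cdot)T_m^{-1}u^{(0)}$ converges in the massless function space to $\psi^{(0)}=\mc{U}_0(\cdot)T_0^{-1}u^{(0)}$; the $m$-uniform multilinear estimate then promotes this to convergence of $F\big(\mc{U}_m(\cdot)T_m^{-1}u^{(0)}\big)$ in the relevant source space; and the $m$-uniform Duhamel estimate finally reduces $N_m[u^{(0)}]-N_0[u^{(0)}]$ to a comparison of the weighted Duhamel integrals $T_m\int_0^{\cdot}\mc{U}_m(-s)(\cdot)\,ds$ and $T_0\int_0^{\cdot}\mc{U}_0(-s)(\cdot)\,ds$ applied to one \emph{fixed} forcing term --- which, in integrated form, is once more governed by the same transfer estimate.

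The crux --- and what makes the approach global in time rather than valid only on compact intervals --- is precisely this transfer estimate: the $m$-uniform boundedness on $\ell^2 U^a$, and strong convergence to the identity, of $\mc{U}_0(-\cdot)\mc{U}_m(\cdot)$. The elementary pointwise bound $|e^{it(\lr{\xi}_m-|\xi|)}-1|\les|t|\,|m|$, which comes from $\|\mc{H}_m-\mc{H}_0\|=|m|$, is useless once $|t|\gtrsim|m|^{-1}$, so the easy convergence on bounded time intervals falls far short of the statement proved here --- and, a fortiori, of the consequent convergence of scattering states and wave operators. What makes the global estimate possible is the deliberate construction of $S^{s,\sigma}_m$, in which the mass appears only through $\mc{U}_m$ and $\lr{\nabla}_m$, together with the atomic $U^a/V^a$ structure; proving the transfer bound in this framework is the main technical work, after which the telescoping, the absorption against the contraction constant, and the propagation of convergence through the solution map are routine.
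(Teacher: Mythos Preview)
Your reduction is correct and matches the paper: pulling back, absorbing $N_m[u^{(m)}]-N_m[u^{(0)}]$ via the $m$-uniform contraction, and reducing everything to $\|N_m[u^{(0)}]-N_0[u^{(0)}]\|_{\ell^2 U^a}\to 0$ is exactly the paper's scheme. The gap is in how you propose to establish this last convergence. The claimed ``key linear input'' --- that $\mc{U}_0(-\cdot)\mc{U}_m(\cdot)$ is bounded on $\ell^2 U^a$ uniformly in $m$ and converges strongly to the identity --- is false globally in time. For $m\ne 0$ and a single-step atom $u=\ind_{[0,\infty)}f$, the function $t\mapsto \mc{U}_0(-t)\mc{U}_m(t)f$ has no $L^2$-limit as $t\to\infty$ (each piece $e^{it(\pm\lr{\xi}_m\mp|\xi|)}$ oscillates indefinitely), so it is not even in $U^a$; the transfer operator does not map $U^a$ to $U^a$. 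The identity \eqref{eqn:conj flows map} you invoke is a tautology once the definitions are unwound (the $\mc{U}_m$'s cancel) and gives no boundedness on a fixed space.

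The paper handles $N_m[u^{(0)}]-N_0[u^{(0)}]$ not by a global transfer bound but by a splitting into a compact-time piece $[0,T)$ and a tail $[T,\infty)$. On $[0,T)$ the transfer bound \emph{does} hold (Lemma~\ref{lem:comp-norms II}), with constant $\lesa |m|(R^{-1}+T)(1+T|m|)$ after a frequency cutoff; this gives property (M2). The tail is controlled by an entirely different mechanism, property (M3): the Duhamel integral of $F(\mc{R}_m^\sigma\psi^{(0)})$ over $[T,\infty)$ is shown to vanish as $T\to\infty$ \emph{uniformly in $|m|\les 1$}, via the dispersive estimate together with the bilinear restriction bound of Theorem~\ref{thm:rest+disp} (the latter being essential in $d=2$, where one needs $L^q_t$ with $q<2$). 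Your outline identifies the large-$t$ obstruction but offers no replacement for (M3); without a uniform-in-$m$ nonlinear tail decay there is no way to close the argument globally, and the linear transfer route you sketch cannot supply it.
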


In the following, we discuss some consequences of Theorem \ref{thm:limit}.
\begin{remark}\label{rmk:reg}\leavevmode
\begin{enumerate}
\item
Concerning the hypothesis in Theorem \ref{thm:limit}, we remark that if $f^{(m)}=f^{(0)}\in \dot{H}^\sigma$, then  $$\lim_{m\to 0} \big\| |\nabla|^\sigma \lr{\nabla}_m^{s_d-\sigma} f^{(m)} - |\nabla|^{s_d} f^{(0)} \big\|_{L^2} = 0.$$
\item The additional derivative factors can be avoided at the cost of slighlty weakening the statement. For instance, under the hypothesis of Theorem \ref{thm:limit}, we also have
  $$ \lim_{m\to 0} \big\| \mc{U}_m(-\cdot ) \psi^{(m)}  - \mc{U}_0(-\cdot ) \psi^{(0)}\big\|_{L^\infty_t \dot{H}^{s_d}}= 0, $$
  see Subsection  \ref{subsec:proof-cor-mlimit} for a proof.
\end{enumerate}
\end{remark}

As stated above, it is not possible to remove the solution operators $\mc{U}_m$ from the conclusion of Theorem \ref{thm:limit} due to the distinct asymptotic behaviour of the wave and Klein-Gordon equations. On the other hand,  it is not so difficult to prove that Theorem \ref{thm:limit} implies uniform convergence of the solutions on any bounded time interval.

\begin{corollary}\label{cor:mlimit}
 For any bounded interval $I=[0,T)$
 we have
 \begin{equation*}
  \lim_{m \to 0} \|  \psi^{(m)}  -   \psi^{(0)} \|_{L^\infty(I,\dot{H}^{s_d}(\RR^d))}= 0
\end{equation*}
under the hypothesis of Theorem \ref{thm:limit}.
\end{corollary}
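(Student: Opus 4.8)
The plan is to pull both solutions back along their free flows, use the uniform-in-time convergence supplied by Theorem~\ref{thm:limit}, and then control the mismatch between the two linear evolutions by hand on the bounded interval. Fix $I=[0,T)$ and, for $t\in[0,T)$, put $h^{(m)}(t):=|\nabla|^{\sigma}\lr{\nabla}_m^{s_d-\sigma}\mc{U}_m(-t)\psi^{(m)}(t)$, so that $h^{(0)}(t)=|\nabla|^{s_d}\mc{U}_0(-t)\psi^{(0)}(t)$ and Theorem~\ref{thm:limit} reads $\|h^{(m)}-h^{(0)}\|_{\ell^2U^a}\to0$. Since the scalar multiplier $|\nabla|^{\sigma}\lr{\nabla}_m^{s_d-\sigma}$ commutes with the unitary matrix symbol of $\mc{U}_m(t)$, inverting it yields the $L^2$ identity $|\nabla|^{s_d}\psi^{(m)}(t)=\mc{U}_m(t)\,\mu_m(\nabla)\,h^{(m)}(t)$, where $\mu_m(\xi):=\big(|\xi|/\lr{\xi}_m\big)^{s_d-\sigma}$ satisfies $0\le\mu_m\le1$ and $\mu_0\equiv1$ (note $s_d-\sigma\ge\tfrac12$). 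Hence $\|\psi^{(m)}(t)-\psi^{(0)}(t)\|_{\dot H^{s_d}}=\|\mc{U}_m(t)\mu_m(\nabla)h^{(m)}(t)-\mc{U}_0(t)h^{(0)}(t)\|_{L^2}$, and I split the right-hand side as
\[
\mc{U}_m(t)\mu_m(\nabla)\big[h^{(m)}(t)-h^{(0)}(t)\big]+\mc{U}_m(t)\big[\mu_m(\nabla)-1\big]h^{(0)}(t)+\big[\mc{U}_m(t)-\mc{U}_0(t)\big]h^{(0)}(t),
\]
and call the three summands $\mathrm{(I)}$, $\mathrm{(II)}$, $\mathrm{(III)}$, in that order.

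Term $\mathrm{(I)}$ is handled at once: $\mc{U}_m(t)$ is unitary on $L^2$ and $\mu_m(\nabla)$ is a contraction, so $\|\mathrm{(I)}\|_{L^2}\le\|h^{(m)}(t)-h^{(0)}(t)\|_{L^2}\le\|h^{(m)}-h^{(0)}\|_{L^\infty_tL^2_x}\lesssim\|h^{(m)}-h^{(0)}\|_{\ell^2U^a}\to0$ by Theorem~\ref{thm:limit}, uniformly in $t$ (using the embedding $\ell^2U^a\hookrightarrow L^\infty_tL^2_x$). Term $\mathrm{(III)}$ is elementary and is the one place where finiteness of $T$ is exploited: on the Fourier side $\mc{U}_m(t)$ acts as the unitary matrix $e^{-it\mc{H}_m(\xi)}$ with $\mc{H}_m(\xi)=\gamma^0\gamma^j\xi_j+m\gamma^0$ Hermitian, whence $\mc{H}_m(\xi)-\mc{H}_0(\xi)=m\gamma^0$ has operator norm $|m|$; the bound $\|e^{-itA}-e^{-itB}\|\le|t|\,\|A-B\|$ for Hermitian $A,B$ then gives, via Plancherel, $\|\mathrm{(III)}\|_{L^2}\le T|m|\,\|h^{(0)}(t)\|_{L^2}\le T|m|\,\|h^{(0)}\|_{\ell^2U^a}\lesssim T|m|\,\delta\to0$.

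The heart of the matter is term $\mathrm{(II)}$, which by unitarity equals $\|[\mu_m(\nabla)-1]h^{(0)}(t)\|_{L^2}$. The symbol $\mu_m(\xi)-1$ is bounded by $1$ and converges to $0$ for every $\xi\neq0$ as $m\to0$, though not uniformly; thus for each fixed $u\in L^2$ dominated convergence in frequency gives $\|[\mu_m(\nabla)-1]u\|_{L^2}\to0$, and what is needed is uniformity of this over $u$ in the free orbit $\{h^{(0)}(t):t\in[0,T)\}$. The key observation is that this orbit is precompact in $L^2$: since $\psi^{(0)}\in C(\RR_+;\dot H^{s_d})$ (which holds by Theorem~\ref{thm:gwp}, recalling $H^{s_d,\sigma}_0=\dot H^{s_d}$) and $\mc{U}_0$ is a strongly continuous group on $L^2$, the map $t\mapsto h^{(0)}(t)=\mc{U}_0(-t)|\nabla|^{s_d}\psi^{(0)}(t)$ is continuous on the compact interval $[0,T]$, so its image is compact. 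Covering that compact set by finitely many $\varepsilon$-balls and using $\|\mu_m(\nabla)-1\|_{L^2\to L^2}\le2$ then upgrades the pointwise-in-$u$ convergence to $\sup_{t\in[0,T]}\|[\mu_m(\nabla)-1]h^{(0)}(t)\|_{L^2}\to0$, hence $\sup_{t\in[0,T)}\|\mathrm{(II)}\|_{L^2}\to0$. Summing the three bounds yields $\sup_{t\in[0,T)}\|\psi^{(m)}(t)-\psi^{(0)}(t)\|_{\dot H^{s_d}}\to0$, which is the claim.

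I expect term $\mathrm{(II)}$ to be the only genuinely non-routine step: reconciling the two slightly different derivative weights in Theorem~\ref{thm:limit} forces the use of compactness of the linear orbit of $\psi^{(0)}$, and this---together with the factor $T|m|$ in $\mathrm{(III)}$---is precisely why the conclusion is confined to bounded intervals and cannot hold on all of $\RR_+$, in line with the remark preceding the corollary that the propagators $\mc{U}_m$ cannot be removed globally. Should one prefer to invoke Remark~\ref{rmk:reg}(2), terms $\mathrm{(I)}$ and $\mathrm{(II)}$ together reduce to bounding $\|\mc{U}_m(-\cdot)\psi^{(m)}-\mc{U}_0(-\cdot)\psi^{(0)}\|_{L^\infty_t\dot H^{s_d}}$, which that remark provides, leaving only the elementary term $\mathrm{(III)}$.
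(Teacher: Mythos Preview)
Your proof is correct and follows the same overall strategy as the paper's argument in Subsection~\ref{subsec:proof-cor-mlimit}: pull back along the linear flows, invoke the $\ell^2U^a$-convergence from Theorem~\ref{thm:limit} via the embedding into $L^\infty_tL^2_x$, and then control the residual mismatch between the $m$- and $0$-evolutions on the bounded interval $[0,T)$.

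The one substantive difference lies in how you handle the derivative-weight discrepancy (your term $\mathrm{(II)}$). The paper absorbs the difference $|\nabla|^{s_d}-\lr{\nabla}_m^{s_d-\sigma}|\nabla|^{\sigma}$ by a direct symbol estimate, claiming a quantitative bound of order $|m|^{s_d-\sigma}\delta$. You instead observe that $t\mapsto h^{(0)}(t)$ is continuous on $[0,T]$, hence has precompact range in $L^2$, and upgrade the pointwise (in $u$) dominated-convergence statement $\|[\mu_m(\nabla)-1]u\|_{L^2}\to0$ to uniformity over that compact set. This soft argument is arguably cleaner: it sidesteps any need to verify that the quantitative symbol bound interacts correctly with the available a~priori control on $\psi^{(m)}$, and it makes transparent why boundedness of $I$ enters. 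For term $\mathrm{(III)}$ your Duhamel-type bound $\|e^{-itA}-e^{-itB}\|\le |t|\|A-B\|$ with $A-B=m\gamma^0$ is the direct analogue of the paper's per-dyadic symbol estimate $\lesssim \lr{T}m/\lambda$; the paper then sums in~$\lambda$ by dominated convergence, whereas your bound is already uniform in frequency.
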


A further consequence is most easily phrased in the language of scattering theory. Given $m\in \RR$, we let $B_\delta^{(m)}$ be the closed ball in $H^{s_d,\sigma_d}_m$ of  radius $\delta$ centered at the origin, and let $f^{(m)}_{\pm\infty} \in H^{s_d, \sigma}_m$ denote the scattering states at $t = \pm\infty$ of the solution $\psi^{(m)}$ to \eqref{eq:cd}. Thus
            $$ \lim_{t\to \pm \infty} \| \mc{U}_m(-t) \psi^{(m)}(t) - f_{\pm \infty}^{(m)} \|_{H^{s_d, \sigma}_m} = 0. $$
By Theorem \ref{thm:gwp} (and the analogue on $\R_-$), the operators
	$$\Omega_\pm^{(m)}:B_\delta^{(m)}  \cap H^{s_d,\sigma}_m\to H^{s_d,\sigma}_m, \; f \mapsto f^{(m)}_{\pm \infty} $$
are well-defined, $C^\infty$, and given by the formulae
 $$
 \Omega_\pm^{(m)}[f]=  f_{\pm \infty}^{(m)}=f+i \int_0^{\pm \infty} \mathcal{U}_m(-t') \gamma^0 F\big(\psi^{(m)}[f]\big)(t')dt'
 $$
  By the inverse function theorem \cite[Theorem 15.2]{Deimling1985}, their local inverses, the wave operators
$$W_\pm^{(m)}:B_\delta^{(m)} \cap H^{s_d,\sigma}_m \to H^{s_d,\sigma}_m, \; f^{(m)}_{\pm \infty}\mapsto \psi^{(m)}(0) $$
are well-defined and $C^\infty$, too. The convergence statement in Theorem \ref{thm:limit} then immediately implies that scattering states and wave operators must converge. More precisely, via Remark \ref{rmk:reg} and the fact that $B^{(1)}_\delta \subset B^{(m)}_{\delta}$ for all $|m|\les 1$, we have the following.

\begin{corollary}\label{cor:wo}
Let $f \in B_\delta^{(1)}$.
We have convergence of the scattering states
$$\lim_{m \to 0}\| \Omega_\pm^{(m)}[f]- \Omega_\pm^{(0)} [f] \|_{\dot{H}^{s_d}}=0,$$
and of the wave operators
$$\lim_{m \to 0}\| W_\pm^{(m)}[f]- W_\pm^{(0)} [f] \|_{\dot{H}^{s_d}}=0.$$
\end{corollary}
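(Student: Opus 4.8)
The plan is to derive both limits from Theorem~\ref{thm:limit} (in the form given by Remark~\ref{rmk:reg}) together with soft arguments. First observe that for $f\in B^{(1)}_\delta$ and every $|m|\les 1$ one has $f\in B^{(m)}_\delta$ by the stated inclusion, and moreover $f\in\dot H^{\sigma_d}$, since $|\xi|^{\sigma_d}\les|\xi|^{\sigma_d}\lr{\xi}_1^{s_d-\sigma_d}$ because $s_d\g\sigma_d$ and $\lr{\xi}_1\g 1$; the analogous comparison $|\xi|^{s_d}\les|\xi|^{\sigma_d}\lr{\xi}_m^{s_d-\sigma_d}$ gives $\|g\|_{\dot H^{s_d}}\les\|g\|_{H^{s_d,\sigma_d}_m}$ uniformly in $m$. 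Hence, with $\sigma=\sigma_d$ and the $m$-independent choice $f^{(m)}:=f$, the hypotheses of Theorem~\ref{thm:limit} hold: smallness is immediate and the data convergence is precisely Remark~\ref{rmk:reg}(1). Applying Remark~\ref{rmk:reg}(2) — and its time-reversed analogue on $\RR_-$ — we obtain
$$\lim_{m\to 0}\big\|\mc U_m(-\cdot)\psi^{(m)}[f]-\mc U_0(-\cdot)\psi^{(0)}[f]\big\|_{L^\infty_t\dot H^{s_d}}=0$$
on $\RR_+$, respectively $\RR_-$.

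For the scattering states, recall that for each fixed $m$ the quantity $\mc U_m(-t)\psi^{(m)}[f](t)$ converges as $t\to\pm\infty$ in $H^{s_d,\sigma_d}_m$ ($\mc U_m$ being an isometry there), hence in $\dot H^{s_d}$, to $\Omega^{(m)}_\pm[f]$. Letting $t\to\pm\infty$ in the trivial bound $\|\mc U_m(-t)\psi^{(m)}(t)-\mc U_0(-t)\psi^{(0)}(t)\|_{\dot H^{s_d}}\les\|\mc U_m(-\cdot)\psi^{(m)}-\mc U_0(-\cdot)\psi^{(0)}\|_{L^\infty_t\dot H^{s_d}}$ then gives
$$\|\Omega^{(m)}_\pm[f]-\Omega^{(0)}_\pm[f]\|_{\dot H^{s_d}}\les\big\|\mc U_m(-\cdot)\psi^{(m)}-\mc U_0(-\cdot)\psi^{(0)}\big\|_{L^\infty_t\dot H^{s_d}},$$
and the right-hand side (on $\RR_\pm$) tends to $0$ as $m\to 0$ by the first paragraph.

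For the wave operators I would not try to invert the convergence of the $\Omega^{(m)}_\pm$ by an abstract stability argument, because the $m=0$ limit object $W^{(0)}_\pm[f]$ lies only in $\dot H^{s_d}=H^{s_d,\sigma}_0$ and carries no improved low-frequency norm, so it is not admissible as data in Theorem~\ref{thm:limit}. Instead I would re-run the proof of Theorem~\ref{thm:limit} for the solution $\psi^{(m)}_\pm$ of the final-state equation
$$\psi(t)=\mc U_m(t)f-i\int_t^{\pm\infty}\mc U_m(t-t')\gamma^0 F(\psi)(t')\,dt',$$
which is solved by the same contraction in $S^{s_d,\sigma_d}_m$ as in Theorem~\ref{thm:gwp}, with constants uniform in $m$, and which satisfies $W^{(m)}_\pm[f]=\psi^{(m)}_\pm(0)$ (with $\Omega^{(m)}_\pm$ its inverse). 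The proof of Theorem~\ref{thm:limit} then applies essentially verbatim: the only change is that the Duhamel integral runs from $\pm\infty$, which affects neither the uniform-in-$m$ multilinear estimates nor the fact that composing the free solution operators maps the function spaces for $m\neq0$ into those for $m=0$; and the associated data hypothesis reduces to $|\nabla|^{\sigma_d}\lr{\nabla}_m^{s_d-\sigma_d}f\to|\nabla|^{s_d}f$ in $L^2$, which holds by Remark~\ref{rmk:reg}(1) since $f\in\dot H^{\sigma_d}$. This yields convergence of $\mc U_m(-\cdot)\psi^{(m)}_\pm$ to $\mc U_0(-\cdot)\psi^{(0)}_\pm$ in $\ell^2 U^a$ (after the $|\nabla|^{\sigma_d}\lr{\nabla}_m^{s_d-\sigma_d}$ weights), hence in $L^\infty_t\dot H^{s_d}$ exactly as in the proof of Remark~\ref{rmk:reg}(2); evaluating at $t=0$ gives $\|W^{(m)}_\pm[f]-W^{(0)}_\pm[f]\|_{\dot H^{s_d}}\to0$.

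The scattering-state part is essentially formal; the substantive point — and the main obstacle — is the wave-operator part, where one cannot simply invoke Theorem~\ref{thm:limit} because for $m=0$ no function space in our scheme records an improved low-frequency norm, so the convergence must be established directly for solutions parametrised by their asymptotic state. The work is then to verify that the machinery of the earlier sections (the uniform-in-$m$ bilinear Fourier restriction estimates, and the behaviour of the free flows on the atomic spaces) is insensitive to whether the solution is pinned down at $t=0$ or at $t=\pm\infty$; this holds precisely because, by design, all mass-dependence in the construction sits in $\mc U_m$ and $\lr{\nabla}_m$.
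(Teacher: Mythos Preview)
Your proposal is correct and matches the paper's approach, which simply asserts that Corollary~\ref{cor:wo} follows immediately from Theorem~\ref{thm:limit} via Remark~\ref{rmk:reg} and the ball inclusion $B^{(1)}_\delta\subset B^{(m)}_\delta$. Your treatment of the scattering states is precisely the intended argument (bound by the $L^\infty_t\dot H^{s_d}$ norm of the pulled-back difference and let $t\to\pm\infty$), and your handling of the wave operators---re-running the proof of Theorem~\ref{thm:limit} for the final-state Duhamel problem rather than attempting to invert the convergence of $\Omega^{(m)}_\pm$---is the correct fleshing-out of what the paper leaves implicit. Your observation that one cannot feed $W^{(0)}_\pm[f]$ back into Theorem~\ref{thm:limit} as initial data (since only its $\dot H^{s_d}$ norm is controlled, not the stronger $H^{s_d,\sigma_d}_m$ norm for $m\ne 0$) is a valid point and justifies the need to argue via the final-state formulation; the reduction to the IVP estimates works because $\int_t^\infty G\,ds$ differs from $-\int_0^t G\,ds$ by the constant $\int_0^\infty G\,ds$, whose $L^2$ norm is controlled by the $\ell^2 U^a$ norm of the IVP Duhamel term.
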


The above corollaries could be sharpened with respect to regularity in the low frequencies (see Remark \ref{rmk:reg}), but here we prefer to choose the homogeneous space $\dot{H}^{s_d}=H^{s_d}_0$ as a common domain for the operators $\Omega_{\pm}^{(m)}$ and $W_\pm^{(m)}$.

\subsection{The non-relativistic limit}\label{subsec:nrl} Our third main result concerns the non-relativistic limit. To make this precise, we begin by making explicit the dependence on the speed of light $c>0$ in \eqref{eq:cd}. This leads to the Dirac equation
    \begin{equation}\label{eq:dirac c}
    \begin{split}
        -i\gamma^0 \p_t \psi - i c \gamma^j \p_j \psi + c^2 \psi &= F(\psi)\\
                            \psi(0) &=f.
    \end{split}
  \end{equation}
For simplicity, we have set the mass $m=1$ in \eqref{eq:dirac c} but it is clear by rescaling that the arguments below apply to any (fixed) mass $m\in \RR$ with $m\not = 0$. It is well known that if $\psi$ solves \eqref{eq:dirac c},  then at least on compact intervals in time the phase modulated solution $e^{itc^2 \gamma^0} \psi$ converges to a solution to a cubic nonlinear Schr\"odinger equation \cite{Machihara2003,Matsuyama1995,Najman1992}. Substantial effort has also gone into proving convergence of the Klein-Gordon equation to the nonlinear Schr\"odinger equation, see for instance \cite{Nakanishi2002, Masmoudi2002} and the references therein. With the notable exception of  \cite{Nakanishi2002} (which obtained global convergence of wave operators in the non-relativistic limit for Klein-Gordon to Schr\"odinger via a compactness argument), the results cited above only apply to compact time intervals and the Soler nonlinearity  $F(\psi) = (\overline{\psi} \psi) \psi$.

Here we prove a global convergence result with the sharp regularity (i.e. scale invariant) for both \eqref{eq:dirac c} and the limiting Schr\"odinger equation. Moreover, similar to the massless limit, we obtain convergence of scattering states and wave operators. It is worth noting that the compactness argument from \cite{Nakanishi2002} does not apply to the cubic Dirac equation due to the lack of a coercive conserved energy. Instead we require scale invariant estimates for solutions to \eqref{eq:dirac c} which are uniform in $c\g 1$ in order to be able to conclude a global in time result. Identifying the correct regularity at both low and high frequencies is a key step in our argument, and has not been previously observed in the non-relativistic limit for either the Dirac or Klein-Gordon equations.

Before we come to a precise statement of our results, we first like to motivate the correct limiting equation for \eqref{eq:dirac c}. This is particularly important here as, unlike in the Soler model considered previously in the literature, for general nonlinearities $F$ satisfying the conditions in Remark \ref{rem:nonlinearities}, the non-resonant contributions vanish in the limit $c\to \infty$. In particular, the nonlinearity for the limiting Sch\"odinger equation may differ from the nonlinearity appearing in \eqref{eq:dirac c}. In the case of the Klein-Gordon to Schr\"odinger limit, this has previously been observed in \cite{Masmoudi2002}. We begin by observing that the free Hamiltonian for \eqref{eq:dirac c} is
            $$ c\mc{H}_c = - i c \gamma^0 \gamma^j \p_j + c^2 \gamma^0.$$
Clearly the leading order term is $c^2$, and hence as it stands the Hamiltonian $c\mc{H}_c$ does not converge as $c\to \infty$. In other words, to get a reasonable limit, we must first remove the leading order term. Physically, this corresponds to subtracting off the `rest energy' $c^2$ of the particle (here the mass $m=1$ and so $m c^2 = c^2$). More precisely, as the anti-commutativity properties of the $\gamma^\mu$ matrices implies that $ (c\mc{H}_c)^2 =  - c^2 \Delta +  c^4$, a short computation gives the resolvent identity
        $$ \big( c\mc{H}_c \mp c^2 - z\big)^{-1} = \Big( \frac{1}{2} ( I \pm \gamma^0) \mp \frac{i}{c} \gamma^0 \gamma^j \p_j \Big) \Big( 1 \mp \frac{z^2}{2c^2} \big( \mp \frac{1}{2} \Delta - z\Big)^{-1} \Big)^{-1} \Big( \mp \frac{1}{2} \Delta - z \Big)^{-1} $$
(see \cite{Thaller-1992}) and hence letting $c\to \infty$ we expect that
        $$ (c\mc{H}_c \mp c^2 - z)^{-1} \quad \longrightarrow \quad  E_\pm \Big( \mp \frac{1}{2} \Delta - z \Big)^{-1} $$
where $E_\pm = \frac{1}{2} ( I \pm \gamma^0)$. This limit can also be expressed rather concisely in terms of the free solution operator
        $$\mc{V}_c(t) = e^{-it c\mc{H}_c}$$
by noting that $E_\pm$ is a projection with $E_\pm \gamma^0 = \pm E_{\pm}$ and hence multiplying with the corrective factor $e^{it c^2 \gamma^0}$ and then letting $c\to \infty$ should give
        $$ e^{itc^2 \gamma^0} \mc{V}_c(t)  = E_+ e^{- i t (c\mc{H}_c - c^2)}  + E_- e^{-it (c\mc{H}_c + c^2)} \quad \longrightarrow \quad E_+ e^{ i \frac{t}{2} \Delta} + E_- e^{-i \frac{t}{2} \Delta} = e^{  i \frac{t}{2} \gamma^0 \Delta}=: \mc{V}_\infty(t).$$
In other words, in the non-relativistic limit $c\to \infty$, we expect that after modulating the phase via the factor $e^{itc^2 \gamma^0}$, a solution to the free Dirac equation (i.e. \eqref{eq:dirac c} with $F=0$) should converge to the Schr\"odinger equation
        $$ -i\gamma^0 \p_t \phi - \frac{1}{2} \Delta \phi = 0. $$

The above discussion explains the correct linear behaviour for the limit, but our goal is to understand the non-relativistic limit for the full nonlinear solution. To this end, we note that given any cubic nonlinearity $F$ satisfying the conditions described in Remark \ref{rem:nonlinearities} we can always write
       \begin{equation}\label{eqn:F res decomp}
            F( e^{itc^2 \gamma^0} \psi) =  e^{-3it c^2 \gamma^0} F_{-3}(\psi)+ e^{-it c^2 \gamma^0} F_{-1}(\psi)+ e^{it c^2 \gamma^0} F_{1}(\psi)+e^{3it c^2 \gamma^0} F_{3}(\psi).
          \end{equation}
From \eqref{eqn:F res decomp} we have the decomposition
        \begin{align*}
            \mc{V}_c&(-t) F(\psi) \\
            &= \big( e^{itc^2 \gamma^0} \mc{V}_c(t) \big)^\dagger \Big[ F_1( e^{itc^2 \gamma^0} \psi)  + e^{4it c^2 \gamma^0} F_{-3}(e^{itc^2 \gamma^0}\psi)+  e^{2it c^2 \gamma^0} F_{-1}(e^{itc^2 \gamma^0}\psi) +e^{-2it c^2 \gamma^0} F_{3}(e^{itc^2 \gamma^0}\psi) \Big].
        \end{align*}
In the limit $c\to \infty$, we expect $e^{itc^2\gamma^0} \mc{V}_c(t)$ and $e^{itc^2 \gamma^0} \psi$ to converge and the terms with additional oscillatory factors $e^{k it c^2 \gamma^0}$ to vanish. In particular, the only component of the nonlinearity that contributes is the `resonant' term $F_1$. Consequently, as $c\to \infty$, after correcting the phase via the oscillatory factor $e^{it c^2  \gamma^0}$, solutions to \eqref{eq:dirac c} should converge to solutions to the (cubic) nonlinear Schr\"{o}dinger equation
    \begin{equation}\label{eqn:NLS}
        -i\gamma^0 \p_t \phi - \frac{1}{2} \Delta \phi = F_1(\phi)
    \end{equation}
where the resonant component of the nonlinearity $F_1$ is defined via the formula \eqref{eqn:F res decomp}.
          For the Soler model $F(\psi) = (\overline{\psi} \psi) \psi$ we have $F=F_1$, because $e^{itc^2 \gamma^0} $ and $\gamma^0$ commute, and for the Thirring model
          \[
 F_1(\psi)=(\overline{\psi} \psi) \psi -  \big( \overline{ E_- \psi} \gamma^5  E_+ \psi\big) \gamma^5 E_-\psi -\big(\overline{ E_+ \psi} \gamma^5  E_- \psi\big)  \gamma^5E_+ \psi,
          \]
          see Remark \ref{rmk:res}.

 We remark that we also have a small data global well-posedness and scattering theory both for \eqref{eq:dirac c} and \eqref{eqn:NLS}, similar to the one for \eqref{eq:cd}. In the case of \eqref{eq:dirac c}, this follows immediately from Theorem \ref{thm:gwp} after noting that $\psi$ solves \eqref{eq:dirac c} if and only if $\varphi(t,x) = c^{-1} \psi(c^{-2} t, c^{-1} x)$ solves \eqref{eq:cd} with $m=1$. On the other hand, small data global well-posedness and scattering for the Schr\"odinger equation \eqref{eqn:NLS} is substantially easier and follows by Strichartz estimates and the usual Picard iteration (see Subsection \ref{subsec:proofn1} for the details). To state the uniform global theory slightly more precisely, as above we let $\mc{V}_\infty(t) = e^{ i \frac{t}{2} \gamma^0 \Delta}$ denote the homogeneous solution operator to \eqref{eqn:NLS} and given $1\les c \les \infty$, define
    $$X_c = |\nabla|^{-\sigma_d} \lr{c^{-1} \nabla}^{-\frac{1}{2}} \mc{V}_c(t) \ell^2 U^a ,\qquad \| \psi \|_{X_c} = \big\| |\nabla|^{\sigma_d} \lr{c^{-1} \nabla}^{\frac{1}{2}} \mc{V}_c(-t) \psi \big\|_{\ell^2 U^a}.$$
Here $\lr{a} = \lr{a}_1 = (1+|a|^2)^{\frac{1}{2}}$, and as in the statement of Theorem \ref{thm:gwp}, $\sigma_d = \frac{d-2}{2}$ denotes the scale invariant regularity for the cubic nonlinear Schr\"odinger equation \eqref{eqn:NLS}. Note that for $c<\infty$ we have
    $$ (\mc{V}_c(t) f)(x) = \big( \mc{U}_{c^2}(t) \Lambda_c f\big)( c^{-1} x), \qquad \| \psi \|_{X_c} = \| \Lambda_c \psi \|_{S^{s_d, \sigma_d}_{c^2}},$$
where $(\Lambda_c f)(x) = f(cx)$ denotes the spatial dilation. We then have a global and uniform convergence result which applies to general nonlinearities of the form discussed in Remark \ref{rem:nonlinearities}.

\begin{theorem}[Non-relativistic limit]\label{thm:limit c}
Let $d\in \{2, 3\}$ and $\sigma_d = \frac{d-2}{2}$. There exists $\delta>0$ such that for any $1\les c \les \infty$, and any data $f \in \dot{H}^{\sigma_d}$ satisfying
                $$ \| \lr{c^{-1} \nabla}^{\frac{1}{2}} f \|_{\dot{H}^{\sigma_d}} \les \delta$$
there exists a (unique) global solution $\psi^{(c)}=\psi^{(c)}[f] \in X_c$ to \eqref{eq:dirac c} (for $c<\infty$) and \eqref{eqn:NLS} (for $c=\infty$) on $\R_+$. Moreover, if  $f^{(c)} \in \dot{H}^{\sigma_d}$ with
        $$\| \lr{c^{-1} \nabla}^{\frac{1}{2}} f^{(c)} \|_{\dot{H}^{\sigma_d}} \les \delta, \qquad \text{and} \qquad  \lim_{c\to \infty} \big\| \lr{c^{-1} \nabla}^{\frac{1}{2}} f^{(c)} -  f^{(\infty)} \big\|_{\dot{H}^{\sigma_d}} = 0,$$
then the solutions $\psi^{(c)} = \psi^{(c)}[f^{(c)}] \in X_c$ satisfy
        $$ \lim_{c \to \infty}  \Big\| |\nabla|^{\sigma_d}\Big(  \lr{c^{-1} \nabla}^{\frac{1}{2}}  \mc{V}_c(-\cdot ) \psi^{(c)}  -  \mc{V}_\infty(-\cdot ) \psi^{(\infty)}\Big) \Big\|_{\ell^2 U^a} = 0.$$
\end{theorem}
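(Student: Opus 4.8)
The plan is to run a Picard iteration for \eqref{eq:dirac c} and \eqref{eqn:NLS} that is uniform in the parameter $1\les c\les\infty$, and then to promote the resulting fixed points to a convergence statement by an $\varepsilon/3$ argument, in the spirit of the massless limit (Theorem \ref{thm:limit}); the only genuinely new ingredient is the treatment of the non-resonant part of the nonlinearity. Throughout I pass to the pulled-back variables
$$ w^{(c)} := \lr{c^{-1}\nabla}^{\frac12}\mc{V}_c(-t)\psi^{(c)} \quad(c<\infty), \qquad w^{(\infty)} := \mc{V}_\infty(-t)\psi^{(\infty)}, $$
so that the quantity to be estimated is exactly $\||\nabla|^{\sigma_d}(w^{(c)}-w^{(\infty)})\|_{\ell^2 U^a}$ and the Duhamel formula becomes $w^{(c)} = \lr{c^{-1}\nabla}^{\frac12}f^{(c)} + \mc{N}_c(w^{(c)})$ with
$$ \mc{N}_c(w)(t) := i\lr{c^{-1}\nabla}^{\frac12}\int_0^t \mc{V}_c(-t')\,\gamma^0 F\big(\mc{V}_c(t')\lr{c^{-1}\nabla}^{-\frac12}w(t')\big)\,dt' \qquad(c<\infty), $$
and the analogue with $\mc{V}_\infty$ and the resonant nonlinearity $F_1$ when $c=\infty$. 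For $c<\infty$, the existence of a small solution with $\||\nabla|^{\sigma_d}w^{(c)}\|_{\ell^2 U^a}\les C\delta$ is Theorem \ref{thm:gwp} applied after the rescaling $\psi\mapsto c^{-1}\psi(c^{-2}\,\cdot\,,c^{-1}\,\cdot\,)$, which turns \eqref{eq:dirac c} into \eqref{eq:cd} with $m=1$ and $\|\cdot\|_{X_c}$ into $\|\Lambda_c\,\cdot\,\|_{S^{s_d,\sigma_d}_{c^2}}$; for $c=\infty$ it follows from Strichartz estimates and Picard iteration in $X_\infty$. The key structural fact, inherited from the proof of Theorem \ref{thm:gwp} (which is uniform in the mass) and from the $c=\infty$ analysis, is that $\mc{N}_c$ maps the ball of radius $C\delta$ in $|\nabla|^{-\sigma_d}\ell^2 U^a$ into itself and is Lipschitz there with constant $\les\delta^2$, uniformly in $c\in[1,\infty]$.

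With this in place, decompose
$$ w^{(c)}-w^{(\infty)} = \big(\lr{c^{-1}\nabla}^{\frac12}f^{(c)}-f^{(\infty)}\big) + \big(\mc{N}_c(w^{(c)})-\mc{N}_c(w^{(\infty)})\big) + \big(\mc{N}_c(w^{(\infty)})-\mc{N}_\infty(w^{(\infty)})\big). $$
The first term is independent of $t$, so its $|\nabla|^{-\sigma_d}\ell^2 U^a$-norm equals $\|\lr{c^{-1}\nabla}^{\frac12}f^{(c)}-f^{(\infty)}\|_{\dot{H}^{\sigma_d}}$ (a time-independent function has $\ell^2 U^a$-norm equal to its $L^2$-norm), which tends to $0$ by hypothesis. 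The second term is $\les\delta^2\||\nabla|^{\sigma_d}(w^{(c)}-w^{(\infty)})\|_{\ell^2 U^a}$ by the uniform Lipschitz bound, and is therefore absorbed into the left-hand side once $\delta$ is chosen small. Everything thus reduces to showing, for the \emph{fixed} profile $w=w^{(\infty)}$,
$$ \big\||\nabla|^{\sigma_d}\big(\mc{N}_c(w^{(\infty)})-\mc{N}_\infty(w^{(\infty)})\big)\big\|_{\ell^2 U^a}\;\longrightarrow\;0 \qquad(c\to\infty). $$

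Here one inserts the resonance decomposition \eqref{eqn:F res decomp}. Writing $W_c(t):=e^{itc^2\gamma^0}\mc{V}_c(t)$ — so that $W_c(t)\to\mc{V}_\infty(t)$ strongly (the linear non-relativistic limit recalled above) and $W_c(t)^\dagger = \mc{V}_c(-t)e^{-itc^2\gamma^0}$ — and $\Phi^{(c)}(t'):=W_c(t')\lr{c^{-1}\nabla}^{-\frac12}w^{(\infty)}(t')$, a short computation gives
$$ \mc{V}_c(-t')\,\gamma^0 F\big(\mc{V}_c(t')\lr{c^{-1}\nabla}^{-\frac12}w^{(\infty)}\big) = W_c(t')^\dagger \sum_{k\in\{-3,-1,1,3\}} e^{-(k-1)it'c^2\gamma^0}\,\gamma^0 F_k(\Phi^{(c)}), $$
where the $k=1$ summand (no oscillation) is the resonant contribution. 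Since $\lr{c^{-1}\nabla}^{-1/2}\to I$ and $W_c(t')\to\mc{V}_\infty(t')$, one has $\Phi^{(c)}(t')\to\psi^{(\infty)}(t')$ and $W_c(t')^\dagger\to\mc{V}_\infty(-t')$, so the contribution of the $k=1$ summand to $\mc{N}_c(w^{(\infty)})$ converges (after the $t'$-integration and using $\lr{c^{-1}\nabla}^{1/2}\to I$ once more) to $\mc{N}_\infty(w^{(\infty)})$. Each of the three $k\ne1$ summands carries an oscillatory factor $e^{-(k-1)it'c^2\gamma^0}$, i.e.\ a $t'$-frequency of order $c^2$; integrating by parts once in $t'$ produces a prefactor of order $c^{-2}$ together with a $t'$-derivative hitting $W_c(t')^\dagger\gamma^0 F_k(\Phi^{(c)})$. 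The decisive point is that the $c^2\gamma^0$-contributions to $\partial_{t'}W_c(t')^\dagger$ and to $\partial_{t'}\Phi^{(c)}$ cancel — this is precisely the cancellation that singles out the phase $e^{itc^2\gamma^0}$ — leaving these derivatives of order $c$, so that the $k\ne1$ terms are of order $c^{-1}$ and vanish.

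Carrying out the last two steps rigorously in the \emph{global-in-time} atomic norm $\ell^2 U^a$, rather than merely on compact intervals, is the step I expect to be the main obstacle: one must know that the $t'$-derivatives above are integrable in $t'$, so that the boundary and bulk contributions of the time integration by parts have small $U^a$-norm, and one must justify passing the limit inside the global $t'$-integral in the resonant summand. I would handle this by density: approximate $f^{(\infty)}\in\dot{H}^{\sigma_d}$ by smooth, frequency-localised $\tilde f$; for such data the solution $\tilde\psi$ of \eqref{eqn:NLS} is smooth with space-time decay (persistence of regularity together with the dispersive and scattering bounds for \eqref{eqn:NLS}), so that $\tilde w:=\mc{V}_\infty(-t)\tilde\psi$ is smooth in $t$ with $t$-integrable derivative and the integration by parts and the dominated-convergence step for $\tilde w$ are elementary, giving $\mc{N}_c(\tilde w)\to\mc{N}_\infty(\tilde w)$; the error of replacing $w^{(\infty)}$ by $\tilde w$ is $\les\delta^2\||\nabla|^{\sigma_d}(w^{(\infty)}-\tilde w)\|_{\ell^2 U^a}$ for each of $\mc{N}_c$ and $\mc{N}_\infty$ by the uniform Lipschitz bounds, so an $\varepsilon/3$ argument closes the proof. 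As in the massless limit, a subsidiary point to check is that composition with the propagators $W_c(t)^\dagger$ respects the Littlewood--Paley and atomic structure of the function spaces uniformly in $c$.
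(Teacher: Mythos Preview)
Your strategy --- absorb the difference $\mc{N}_c(w^{(c)})-\mc{N}_c(w^{(\infty)})$ via the uniform Lipschitz bound, then split $\mc{N}_c(w^{(\infty)})-\mc{N}_\infty(w^{(\infty)})$ through the resonance decomposition \eqref{eqn:F res decomp}, handling the non-resonant summands via the $c^2$-oscillation --- is exactly the paper's, and your integration by parts is the pointwise form of Lemma~\ref{lem:high freq energy ineq}. The gap is in the density step. Approximating $f^{(\infty)}$ by Schwartz, frequency-localised $\tilde f$ and passing to the nonlinear Schr\"odinger profile $\tilde w=\mc{V}_\infty(-t)\tilde\psi$ does give $\partial_t\tilde w\in L^1_tL^2_x$, but it does \emph{not} give what the oscillation argument actually needs. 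Your integrand $W_c(t')^\dagger\gamma^0 F_k(\Phi^{(c)}(t'))$, with $\Phi^{(c)}=W_c\lr{c^{-1}\nabla}^{-1/2}\tilde w$, is built from the time-dependent $\tilde w(t')$, which is not temporally band-limited; so the product carries no clean temporal Fourier support in $\{|\tau|\gtrsim c^2\}$, and neither Lemma~\ref{lem:high freq energy ineq} nor your IBP (whose boundary term at $t$ has $U^a$-norm governed by that support) applies as stated. Likewise, the dominated-convergence step for the $k=1$ summand and the IBP bulk/boundary terms both require uniform-in-$c$ control of $F_k(\Phi^{(c)})$ in $L^1_{t'}L^2_x$; since $\Phi^{(c)}$ is \emph{not} a free wave (because $\tilde w$ varies in $t'$), the dispersive estimate does not furnish this, and the scattering state of $\tilde\psi$ need not lie in $L^1_x$.

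The paper fixes this by running the density argument at the level of $w^{(\infty)}\in\ell^2 U^a$ rather than at the level of the data: after localising in frequency, one approximates $w^{(\infty)}$ by \emph{step functions} (dense in $U^a$). On each step the profile is $\mc{V}_\infty(t)f$ for a fixed $f\in L^2$, so $\Phi^{(c)}$ becomes a genuine free (modulated) Klein--Gordon wave; the non-resonant integrand then has sharp temporal support $\{|\tau|\gtrsim c^2\}$ for $c\gg R$ and Lemma~\ref{lem:high freq energy ineq} yields the $c^{2/a-2}$ gain directly, while a further $L^1\cap L^2$ approximation of $f$ makes the uniform Klein--Gordon dispersive estimate available for the tail. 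The paper packages this as two separate properties --- convergence on compact intervals (using the operator bound for $\tilde{\mc R}_c^{mod}-1$ in $U^b$, Lemma~\ref{lem:comp-norms c}, for the resonant piece) and uniform tail decay --- but the essential correction to your argument is the atomic reduction, not the compact/tail split per se.
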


On global time scales it is not possible to remove the solution operators $\mc{V}_c$ from the convergence statement in Theorem \ref{thm:limit c} due to the distinct asymptotic behaviour of the Dirac and Schr\"odinger equations. However, similar to Corollary \ref{cor:mlimit}, on compact intervals we can prove convergence without pulling back along the linear flow provided we include the corrective factor $e^{i t c^2 \gamma^0}$. In fact, after writing
        $$\mc{V}_c(-t ) \psi^{(c)} = [e^{itc^2 \gamma^0} \mc{V}_c(t)]^{-1} e^{itc^2 \gamma^0} \psi^{(c)}$$
and noting that the symbol of $e^{itc^2 \gamma^0} \mc{V}_c(t)$ converges to $\mc{V}_{\infty}(t)$, a short argument via Theorem \ref{thm:limit c} gives the following non-relativistic counterpart to Corollary \ref{cor:mlimit}.

\begin{corollary}\label{cor:non-rel conv}
Let $I\subset \RR$ be a compact interval. Then the solutions $\psi^{(c)}$ and $\psi^{(\infty)}$ in Theorem \ref{thm:limit c}
satisfy
        $$ \lim_{c\to \infty} \big\| \ind_I(t) \big(  \lr{c^{-1} \nabla}^{\frac{1}{2}} e^{i t c^2  \gamma^0} \psi^{(c)} - \psi^{(\infty)}\big)  \big\|_{L^\infty_t \dot{H}^{\sigma_d}} = 0. $$
\end{corollary}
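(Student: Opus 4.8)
\emph{Plan.} The idea is to transfer the $\ell^2 U^a$‑convergence of Theorem \ref{thm:limit c} to the topology of $L^\infty_t\dot H^{\sigma_d}$ and then to trade the pull‑back $\mc V_c(-t)$ for the corrective phase $e^{itc^2\gamma^0}$, using that $e^{itc^2\gamma^0}\mc V_c(t)$ converges \emph{strongly} to $\mc V_\infty(t)$ on compact time intervals. First, by the (standard) continuous embedding $\ell^2 U^a\hookrightarrow L^\infty_tL^2_x$, the conclusion of Theorem \ref{thm:limit c} yields $\lim_{c\to\infty}\|\lr{c^{-1}\nabla}^{\frac12}\mc V_c(-t)\psi^{(c)}-\mc V_\infty(-t)\psi^{(\infty)}\|_{L^\infty_t\dot H^{\sigma_d}}=0$. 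Writing $g^{(c)}(t)=\lr{c^{-1}\nabla}^{\frac12}\mc V_c(-t)\psi^{(c)}(t)$ and $g^{(\infty)}(t)=\mc V_\infty(-t)\psi^{(\infty)}(t)$, this reads $\|g^{(c)}-g^{(\infty)}\|_{L^\infty_t\dot H^{\sigma_d}}\to0$. Since $\lr{c^{-1}\nabla}^{\frac12}$ is a scalar Fourier multiplier it commutes with $\mc V_c(t)$ and with the constant matrix $e^{itc^2\gamma^0}$, so $\lr{c^{-1}\nabla}^{\frac12}e^{itc^2\gamma^0}\psi^{(c)}(t)=e^{itc^2\gamma^0}\mc V_c(t)g^{(c)}(t)$ and $\psi^{(\infty)}(t)=\mc V_\infty(t)g^{(\infty)}(t)$, whence
\[
\lr{c^{-1}\nabla}^{\frac12}e^{itc^2\gamma^0}\psi^{(c)}(t)-\psi^{(\infty)}(t)
= e^{itc^2\gamma^0}\mc V_c(t)\big(g^{(c)}(t)-g^{(\infty)}(t)\big)
+\big(e^{itc^2\gamma^0}\mc V_c(t)-\mc V_\infty(t)\big)g^{(\infty)}(t).
\]

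Since $\gamma^0$ is Hermitian and $\mc H_c$ self‑adjoint, $e^{itc^2\gamma^0}\mc V_c(t)$ is a unitary Fourier multiplier, hence an isometry on $\dot H^{\sigma_d}$ uniformly in $t,c$; so the $\dot H^{\sigma_d}$‑norm of the first term on the right equals $\|g^{(c)}(t)-g^{(\infty)}(t)\|_{\dot H^{\sigma_d}}$, whose supremum over $t\in\RR$ tends to $0$ by the previous step (the factor $\ind_I$ is irrelevant here). For the second term I would use that, by the small data theory for \eqref{eqn:NLS}, $\psi^{(\infty)}\in C(\RR_+;\dot H^{\sigma_d})$, and that $\mc V_\infty$ is strongly continuous on $\dot H^{\sigma_d}$, so $t\mapsto g^{(\infty)}(t)$ is continuous and, as $I$ is compact, $K:=\{g^{(\infty)}(t):t\in I\}$ is a compact subset of $\dot H^{\sigma_d}$. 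Since $\|e^{itc^2\gamma^0}\mc V_c(t)-\mc V_\infty(t)\|_{\dot H^{\sigma_d}\to\dot H^{\sigma_d}}\le2$, a $\varepsilon/3$ argument over a finite $\varepsilon$‑net of $K$ reduces the matter to the linear statement: for each fixed $h\in\dot H^{\sigma_d}$,
\[
\lim_{c\to\infty}\ \sup_{t\in I}\ \big\|\big(e^{itc^2\gamma^0}\mc V_c(t)-\mc V_\infty(t)\big)h\big\|_{\dot H^{\sigma_d}}=0 .
\]

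This last claim follows from the symbol analysis already carried out in the discussion preceding Theorem \ref{thm:limit c}. Using $e^{itc^2\gamma^0}\mc V_c(t)=E_+e^{-it(c\mc H_c-c^2)}+E_-e^{-it(c\mc H_c+c^2)}$, the matrix multiplier symbol $a_c(t,\xi)$ of $e^{itc^2\gamma^0}\mc V_c(t)$ converges, as $c\to\infty$, to the symbol $a_\infty(t,\xi)=e^{-i\frac t2|\xi|^2\gamma^0}$ of $\mc V_\infty(t)=e^{i\frac t2\gamma^0\Delta}$: the eigenvalues of the symbol of $c\mc H_c$ are $\pm c^2\lr{c^{-1}\xi}=\pm\big(c^2+\tfrac12|\xi|^2+O(c^{-2})\big)$ with spectral projections converging to $E_\pm$, so on the range of $E_\pm$ the phases converge to $\mp\tfrac12|\xi|^2$ while the off‑diagonal contributions carry projection factors that vanish, and this convergence is uniform for $(t,\xi)$ in compact sets. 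Then, by Plancherel,
\[
\big\|\big(e^{itc^2\gamma^0}\mc V_c(t)-\mc V_\infty(t)\big)h\big\|_{\dot H^{\sigma_d}}^2=\int_{\RR^d}|\xi|^{2\sigma_d}\,|a_c(t,\xi)-a_\infty(t,\xi)|^2\,|\widehat h(\xi)|^2\,d\xi ,
\]
and splitting the integral at $|\xi|=R$, the tail is at most $4\int_{|\xi|>R}|\xi|^{2\sigma_d}|\widehat h|^2<\varepsilon$ for $R$ large (uniformly in $c,t$), while on $|\xi|\le R$ the integrand tends to $0$ uniformly for $t\in I$; hence the whole integral tends to $0$ uniformly for $t\in I$. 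Combining the two terms gives the corollary. The hard part is this second term: $e^{itc^2\gamma^0}\mc V_c(t)$ converges to $\mc V_\infty(t)$ only strongly, not in operator norm, and it must be applied to the moving profile $g^{(\infty)}(t)$ with uniformity in $t$; compactness of $I$ (and continuity of $t\mapsto g^{(\infty)}(t)$) is exactly what makes this work, and is the reason the statement is restricted to bounded intervals — on unbounded time intervals $(e^{itc^2\gamma^0}\mc V_c(t)-\mc V_\infty(t))h$ does not tend to $0$ uniformly in $t$ for fixed $h\ne0$.
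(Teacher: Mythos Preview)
Your proof is correct, and the decomposition
\[
\lr{c^{-1}\nabla}^{\frac12}e^{itc^2\gamma^0}\psi^{(c)}-\psi^{(\infty)}
= e^{itc^2\gamma^0}\mc V_c(t)\big(g^{(c)}-g^{(\infty)}\big)
+\big(e^{itc^2\gamma^0}\mc V_c(t)-\mc V_\infty(t)\big)g^{(\infty)}
\]
together with the compactness argument for the second term is a clean way to finish.

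The paper's route is genuinely different in where the error operator lands. After reducing to a fixed dyadic frequency $\lambda$ (using the uniform bound $\|\psi^{(c)}\|_{X_c}\lesa\delta$ for dominated convergence over $\lambda$), the paper writes
\[
\mc V_\infty(-t)\big(\lr{c^{-1}\nabla}^{\frac12}e^{itc^2\gamma^0}\psi^{(c)}_\lambda-\psi^{(\infty)}_\lambda\big)
=\mc V_\infty(-t)(\tilde{\mc R}_c^{mod}-1)\mc V_\infty(t)\mc V_c(-t)\psi^{(c)}_\lambda
+\big(g^{(c)}_\lambda-g^{(\infty)}_\lambda\big),
\]
so the difference of propagators acts on the \emph{Dirac} solution $\psi^{(c)}$ rather than on the Schr\"odinger pullback $g^{(\infty)}$. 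This forces the paper to control $(\tilde{\mc R}_c^{mod}-1)$ in an $\ell^2 U^b$ operator norm (Lemma~\ref{lem:comp-norms c} with $b>2$), yielding an explicit rate $c^{\frac{2}{b}-1}$, since only the $X_c$-bound on $\psi^{(c)}$ is available. Your approach sidesteps the $U^b$ operator machinery entirely: because $g^{(\infty)}$ is a fixed continuous curve, compactness of $g^{(\infty)}(I)$ lets you use mere strong convergence of $e^{itc^2\gamma^0}\mc V_c(t)\to\mc V_\infty(t)$, uniformly on $I$, which only needs the elementary symbol bounds you sketch. The trade-off is that the paper's argument is quantitative (explicit dependence on $c,T,R$), while yours is softer but more self-contained for this corollary.
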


\begin{remark}
  In subcritical spaces in dimension $d=3$, convergence on bounded time intervals has been obtained in \cite{Machihara2003} for the Soler model. Note that Corollary \ref{cor:non-rel conv} implies for any $\lambda \in 2^\Z$ that
  $$ \lim_{c\to \infty} \big\| \ind_I(t) \big( e^{i t c^2  \gamma^0} \psi^{(c)}_\lambda - \psi^{(\infty)}_\lambda\big)  \big\|_{L^\infty_t L^2_x} = 0,$$
  therefore any uniform bound in higher regularity implies convergence in higher regularity as well.
\end{remark}

As in the case of the  massless limit, Theorem \ref{thm:limit c} also implies that the scattering states and wave operators converge in the non-relativistic limit. Let $1\les c \les \infty$ and define the ball
            $$ \tilde{B}_\delta^{(c)} = \{ f \in \lr{c^{-1}\nabla}^{-\frac{1}{2}} \dot{H}^{\sigma_d} \mid \| \lr{c^{-1} \nabla}^{\frac{1}{2}} f \|_{\dot{H}^{\sigma_d}} \les \delta \}. $$
Given a solution $\psi^{(c)}$ to \eqref{eq:dirac c} (if $c<\infty$) or \eqref{eqn:NLS} (if $c=\infty$), we let $\lr{c^{-1} \nabla}^{\frac{1}{2}} f_{\pm\infty}^{(c)} \in \dot{H}^{\sigma_d}_c$ denote the scattering state at $t=\infty$. Thus
                $$ \lim_{t\to \pm \infty} \big\| \lr{c^{-1} \nabla}^{\frac{1}{2}} \big( \mc{V}_c(-t) \psi^{(c)}(t)  - f_{\pm\infty}^{(c)}\big) \big\|_{\dot{H}^{\sigma_d}} = 0.$$
In analogy with the massless limit, we then take $\tilde{\Omega}_\pm^{(c)}$ to be the operator mapping data $\lr{c^{-1} \nabla}^{\frac{1}{2}}  f\in \dot{H}^{\sigma_d}$ at $t=0$ to the scattering state $ \lr{c^{-1} \nabla}^{\frac{1}{2}} f_{\pm\infty}^{(c)} \in \dot{H}^{\sigma_d}$. As discussed above, there exists $\delta>0$ such that for any $1\les c \les \infty$ the operator $\Omega_\pm^{(c)}:\tilde{B}_\delta^{(c)} \to \lr{c^{-1} \nabla}^{-\frac{1}{2}} \dot{H}^{\sigma_d}$ is well-defined.  Moreover, for $1\les c < \infty$, we have the explicit formulae
         $$
 \tilde{\Omega}_\pm^{(c)}[f]= f + i \int_0^{\pm \infty} \mathcal{V}_c(-t') \gamma^0 F\big(\psi^{(c)}[f]\big)(t')dt', \qquad \tilde{\Omega}_\pm^{(\infty)}[f]= f + i \int_0^{\pm \infty} \mathcal{V}_\infty(-t') \gamma^0 F_1\big(\psi^{(\infty)}[f]\big)(t')dt'.
 $$
Again applying the inverse function theorem \cite[Theorem 15.2]{Deimling1985}, the operators $\tilde{\Omega}_\pm^{(c)}$ are locally invertible, and hence the local inverses $\tilde{W}_\pm^{(c)} = (\Omega_\pm^{(c)})^{-1} : \tilde{B}_\delta^{(c)} \to \lr{c^{-1} \nabla}^{-\frac{1}{2}} \dot{H}^{\sigma_d}$ are also well-defined. After writing
    \begin{align*}
      \| \lr{c^{-1} \nabla}^{\frac{1}{2}}& \Omega_\pm^{(c)}[f^{(c)}] - \Omega_\pm^{(\infty)}[f^{(\infty)}]\|_{\dot{H}^{\sigma_d}} \\
            &\lesa \| \lr{c^{-1}\nabla} ( \mc{V}_c(-t) \psi^{(c)}(t) -  \Omega_\pm^{(c)}[f^{(c)}])\|_{\dot{H}^{\sigma_d}} +\| \mc{V}_\infty(-t) \psi^{(\infty)}(t) -  \Omega_\pm^{(\infty)}[f^{(\infty)}]\|_{\dot{H}^{\sigma_d}}\\
            &\qquad +\| \lr{c^{-1}\nabla}\mc{V}_c(-t) \psi^{(c)}(t) - \mc{V}_{\infty}(-t) \psi^{(\infty)}(t)\|_{L^\infty_t \dot{H}^{\sigma_d}}
    \end{align*}
and letting $t \to \infty$, Theorem \ref{thm:limit c} implies the following.

\begin{corollary}\label{cor:wo-nr}
Let $f^{(c)} \in \tilde{B}_\delta^{(c)}$ and assume that
        $$ \lim_{c\to \infty} \| \lr{c^{-1}\nabla}^{\frac{1}{2}} f^{(c)} - f^{(\infty)}\|_{\dot{H}^{\sigma_d}} = 0. $$
Then we have convergence of the scattering states
$$\lim_{c \to 0}\| \lr{c^{-1} \nabla}^\frac{1}{2} \tilde{\Omega}_\pm^{(c)}[f^{(c)}]- \tilde{\Omega}_\pm^{(\infty)} [f^{(\infty)}] \|_{\dot{H}^{\sigma_d}}=0,$$
and of the wave operators
$$\lim_{c \to 0}\| \lr{c^{-1} \nabla}^\frac{1}{2} \tilde{W}_\pm^{(c)}[f^{(c)}]- \tilde{W}_\pm^{(\infty)} [f^{(\infty)}] \|_{\dot{H}^{\sigma_d}}=0.$$
\end{corollary}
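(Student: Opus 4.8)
The plan is to deduce both convergences from Theorem~\ref{thm:limit c} by reading off the value of the profile $\mc{V}_c(-t)\psi^{(c)}(t)$ at the appropriate time. Three soft facts do the work. First, $\ell^2 U^a\hookrightarrow L^\infty_t L^2_x$ (combine $U^a\hookrightarrow L^\infty_t L^2_x$ with Minkowski's inequality in the $\ell^2$ frequency sum), so applying $|\nabla|^{\sigma_d}$ to the conclusion of Theorem~\ref{thm:limit c} yields
\[
\lim_{c\to\infty}\big\|\lr{c^{-1}\nabla}^{\frac12}\mc{V}_c(-t)\psi^{(c)}(t)-\mc{V}_\infty(-t)\psi^{(\infty)}(t)\big\|_{L^\infty_t\dot{H}^{\sigma_d}}=0.
\]
Second, every element of $U^a$ (hence of $\ell^2 U^a$) has one-sided limits at $t=\pm\infty$ in $L^2$; thus the profile $\lr{c^{-1}\nabla}^{\frac12}\mc{V}_c(-t)\psi^{(c)}(t)$ has a limit in $\dot{H}^{\sigma_d}$ as $t\to\pm\infty$, and this limit is exactly the weighted scattering state $\lr{c^{-1}\nabla}^{\frac12}f^{(c)}_{\pm\infty}=\lr{c^{-1}\nabla}^{\frac12}\tilde{\Omega}_\pm^{(c)}[f^{(c)}]$ (and likewise for $c=\infty$, with all weights trivial). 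Third, after conjugating by $\lr{c^{-1}\nabla}^{\frac12}$ the operator $\tilde{\Omega}_\pm^{(c)}$ takes the form $\hat{\Omega}_\pm^{(c)}=\mathrm{Id}+R^{(c)}_\pm$ on a fixed ball $B\subset\dot{H}^{\sigma_d}$, with $\|R^{(c)}_\pm\|_{\mathrm{Lip}}=O(\delta^2)$ uniformly in $1\le c\le\infty$; this is precisely the multilinear estimate behind the Picard iteration of Theorems~\ref{thm:gwp} and~\ref{thm:limit c} and the inverse function theorem.

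For the scattering states, subtract the two limits appearing in the first fact. Since $\lr{c^{-1}\nabla}^{\frac12}\tilde{\Omega}_\pm^{(c)}[f^{(c)}]$ is the $t\to\pm\infty$ limit of the first profile and $\tilde{\Omega}_\pm^{(\infty)}[f^{(\infty)}]$ that of the second, the norm of their difference is bounded by the $t$-independent quantity $\|\lr{c^{-1}\nabla}^{\frac12}\mc{V}_c(-\cdot)\psi^{(c)}-\mc{V}_\infty(-\cdot)\psi^{(\infty)}\|_{L^\infty_t\dot{H}^{\sigma_d}}$, which tends to $0$ as $c\to\infty$ by the first fact. (This is the triangle inequality displayed just before the statement: its first two right-hand terms vanish as $t\to\pm\infty$ for each fixed $c$, and for $c=\infty$, by definition of the scattering states, while its third term is the profile difference above.) Applying the same conclusion to the $c$-independent data $f^{(c)}=\lr{c^{-1}\nabla}^{-\frac12}g$, $f^{(\infty)}=g$, we also obtain the pointwise statement $\hat{\Omega}_\pm^{(c)}[g]\to\hat{\Omega}_\pm^{(\infty)}[g]$ in $\dot{H}^{\sigma_d}$ for each fixed $g\in B$; equivalently $R^{(c)}_\pm[g]\to R^{(\infty)}_\pm[g]$.

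For the wave operators, combine this with a perturbation-of-inverse argument. By the third fact, $\hat{W}_\pm^{(c)}=(\hat{\Omega}_\pm^{(c)})^{-1}$ is defined on a common ball with Lipschitz constant $(1-O(\delta^2))^{-1}$ uniformly in $c$. Set $y_c=\lr{c^{-1}\nabla}^{\frac12}f^{(c)}$, so $y_c\to f^{(\infty)}=:y_\infty$ by hypothesis, and put $x_c=\hat{W}_\pm^{(c)}[y_c]$, $x_\infty=\hat{W}_\pm^{(\infty)}[y_\infty]$. Using $\hat{\Omega}_\pm^{(\infty)}[x_\infty]=y_\infty$, the identity $\hat{\Omega}_\pm^{(\infty)}[x_c]-y_\infty=(R^{(\infty)}_\pm-R^{(c)}_\pm)[x_c]+(y_c-y_\infty)$, the $O(\delta^2)$-Lipschitz bounds on the $R^{(c)}_\pm$ (to estimate $\|x_c-x_\infty\|$ from below by $\|\hat{\Omega}_\pm^{(\infty)}[x_c]-y_\infty\|$ and to replace $x_c$ by $x_\infty$ inside $R^{(\infty)}_\pm-R^{(c)}_\pm$ up to an $O(\delta^2)\|x_c-x_\infty\|$ remainder), the pointwise convergence $R^{(c)}_\pm[x_\infty]\to R^{(\infty)}_\pm[x_\infty]$ from the previous paragraph (applied at the fixed $g=x_\infty$), and $y_c\to y_\infty$, one arrives at $(1-O(\delta^2))\|x_c-x_\infty\|_{\dot{H}^{\sigma_d}}\le o(1)$, hence $x_c\to x_\infty$; this is exactly the asserted wave-operator convergence.

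The only genuine obstacle is already inside Theorem~\ref{thm:limit c} — the global-in-time, uniform-in-$c$ convergence of the profiles, together with the resonant/non-resonant decomposition~\eqref{eqn:F res decomp} of the nonlinearity. Granting that, two points still deserve care: (i) the scattering state must be identified with the \emph{actual} $t=\pm\infty$ endpoint value of the profile in $\ell^2 U^a$, not merely a weak or time-averaged limit, which is what licenses ``letting $t\to\infty$'' in the triangle inequality; and (ii) for the wave operators, it is essential to notice that only pointwise (not locally uniform) convergence $\tilde{\Omega}_\pm^{(c)}\to\tilde{\Omega}_\pm^{(\infty)}$ is needed, after which the uniform-in-$c$ bi-Lipschitz bounds from the small-data theory transfer convergence to the inverses.
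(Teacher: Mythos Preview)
Your proposal is correct and follows essentially the same approach as the paper. For the scattering states you reproduce precisely the triangle-inequality argument displayed just before the corollary (letting $t\to\pm\infty$ after using $\ell^2 U^a\hookrightarrow L^\infty_t L^2_x$ and Theorem~\ref{thm:limit c}); for the wave operators the paper gives no details beyond asserting that the corollary follows from Theorem~\ref{thm:limit c}, and your perturbation-of-inverse argument is a correct and natural way to fill that in. One minor technicality: when you apply the scattering-state convergence at the fixed point $g=x_\infty=\hat{W}_\pm^{(\infty)}[y_\infty]$, you need $\|x_\infty\|_{\dot{H}^{\sigma_d}}\le\delta$, whereas a priori $\|x_\infty\|\le(1+O(\delta^2))\delta$; this is handled by the usual device of running the small-data theory on a ball of radius $2\delta$ (say) while assuming the data lie in a ball of radius $\delta$.
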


\subsection{Key ideas and novelties}\label{subsec:keyideas}

The proof of Theorem \ref{thm:gwp} is based on an improved atomic version of the classical bilinear $L^2_{t,x}$ estimate for the Klein-Gordon/Wave equation. To motivate the required estimate, we first recall that the scale invariant $L^2_t L^\infty_x$ estimate would suffice to give an easy proof of small data global well-posedness and scattering. However, this bound only holds in dimensions $d\g 4$ and is thus not available here. The standard way to avoid this difficulty is to move to the bilinear setting. For instance, for any $m\in \RR$, and any $\mu, \lambda \in 2^\ZZ$ with $0<\mu \les \lambda $ we have under a suitable transversality condition
    $$ \big\| e^{it\lr{\nabla}_m} f_\mu e^{\pm i t\lr{\nabla}_m} g_\lambda \big\|_{L^2_{t,x}} \lesa \mu^{\frac{d-1}{2}}\Big( \frac{\lr{\lambda}_m}{\lambda}\Big)^{\frac{1}{2}} \| f_\mu \|_{L^2} \| g_\lambda \|_{L^2}, $$
see Lemma \ref{lem:bi L2 free KG}. Note that the implied constant here is independent of $m \in \RR$. This estimate already illustrates one key novelty of our argument, namely the low frequency gain in the Schr\"odinger regime $\lambda \les m$ where the Schr\"odinger scaling $\frac{d-2}{2}$ already appears. Previously, in the massive case $m>0$, this bilinear $L^2_{t,x}$ estimate had only been used in inhomogeneous Sobolev spaces, which obscures the connection to the Schr\"odinger regularity in the limit $m\to \infty$ (which roughly corresponds to the non-relativistic limit).

The second key novelty in our proof of Theorem \ref{thm:gwp} is the use of the atomic bilinear restriction estimates obtained in \cite{Candy2019a} as a suitable generalisation of the bilinear $L^2_{t,x}$ estimate for free solutions introduced above. Working with the spaces $U^a$ instead of the null frame spaces used in earlier work has the distinct advantage that comparing spaces in the massless and massive cases is straightforward. For instance, recalling the solution spaces $S^{s, \sigma}_m$ introduced earlier, we have
        \begin{equation}\label{eqn:conj flows map}
            \big\| \mc{U}_m(t) \mc{U}_0(t) \psi \|_{S^{\sigma, \sigma}_m}= \| |\nabla|^\sigma \mc{U}_m(-t) \big( \mc{U}_m(t) \mc{U}_0(t) \psi\big) \|_{\ell^2 U^a}  = \| |\nabla|^\sigma \mc{U}_0(t) \psi \|_{\ell^2 U^a} = \| \psi \|_{S^{\sigma, \sigma}_0}.
        \end{equation}
In particular we can easily move between different masses by conjugating with the relevant linear flows. This is far from straightforward in function spaces which are not adapted to the linear flow (i.e. Strichartz, null frames). As a further consequence of working in the atomic setting, the additional low frequency gain appears naturally as a consequence of the atomic bilinear restriction estimates obtained in \cite{Candy2019a}. Finally, the dependence of our spaces on the mass $m\in \RR$ is completely explicit which is instrumental in proving uniform in $m$ bounds in both asymptotic limits $m\to 0$ and $m\to \infty$.

The proof of the massless and non-relativistic limits in Theorem \ref{thm:limit} and Theorem \ref{thm:limit c} is rather general. The first key observation is that if we wish to compare solutions globally in time we can not work with the difference $\psi^{(m)} - \psi^{(0)}$ as this clearly does not converge to zero uniformly in $t\in \RR$. Instead we first have to pull back along the linear flows. This simple observation is crucial for obtaining global statements such as convergence of wave operators. Note that the argument given in \cite{Nakanishi2002} in the case of the Klein-Gordon to Schr\"odinger equation relies heavily on the existence of a coercive conserved energy which is not available in the Dirac setting. The second key observation is that the limits are a consequence of three properties: (a) a uniform (in the limit parameter) bound in the relevant adapted function space, (b) convergence on compact time intervals, and (c) large time uniform decay.  After exploiting the fact that we can easily move between say massive and massless spaces via \eqref{eqn:conj flows map}, the uniform bound (a) reduces the problem of convergence to understanding the finite time convergence together with an error term. But these are easily dealt with by (b) and (c) (see Sections \ref{sec:massless-limit} and \ref{sec:nonrelativistic-limit} for a precise statement). The most difficult step in the strategy is (a), namely the correct uniform bound in the adapted function spaces, as this forces bounds which have the correct scaling at both small $m$ and large $m$. This general method of using adapted function spaces for proving limits seems extremely useful, and we expect further applications. Aside from revealing the correct regularity to study the limit, and above method has important consequence of rather immediately implying fundamental asymptotic properties like convergence of scattering states and wave operators. A further novelty is that the resonant decomposition \eqref{eqn:F res decomp} allows us to  treat Dirac equations with general cubic nonlinearities with null-structure in the non-relativistic limit problem.

\subsection{Outline of the paper}
In Section \ref{sec:setup} we introduce notation, discuss the null-structure, and introduce the key functions spaces and their properties which are used throughout this paper. In particular, we give an bilinear interpolation theorem and a convenient operator bound for time dependent operators on $U^p$ and $V^p$ (see Lemma \ref{lem:bil-inter} and Lemma \ref{lem:comp-norms-gen}). Section \ref{sec:str} contains a low frequency refinement of the standard Strichartz estimates (relying on curvature properties only) and an application to a first set of bilinear estimates. In Section \ref{sec:be2} we prove bilinear estimates which rely on bilinear Fourier restriction estimates (also relying on transversality) in atomic function spaces. In Section \ref{sec:mult-est} we prove the crucial frequency localized quadrilinear estimates, which are a key building block for all results in this paper. In Section \ref{sec:proof-gwp} we prove the global nonlinear estimates and provide a proof of Theorem \ref{thm:gwp}.
The results on the massless limit, in particular Theorem \ref{thm:limit} and Corollary \ref{cor:mlimit}, are proved in Section \ref{sec:massless-limit}. Finally, Section \ref{sec:nonrelativistic-limit}
provides all proofs for the results on the non-relativistic limit.

\section{Setup}\label{sec:setup}

Unless specified otherwise, all functions in this paper take values in $\CC^{N_d}$ where $N_d =2 $ if $d=2$ and $N_d =4$ if $d=3$. We define the Fourier transform $\widehat{f}$ of $f\in L^1(\RR^d)$ as
				$$ \widehat{f}(\xi) = \frac{1}{(2\pi)^{\frac{d}{2}} } \int_{\RR^d} e^{ix\cdot \xi} f(x) dx. $$
Throughout this paper, we fix the constants $s_d = \frac{d-1}{2}$ and $\sigma_d = \frac{d-2}{2}$. Note that $\dot{H}^{s_d}$ and $\dot{H}^{\sigma_d}$ are the critical (i.e. scale invariant) norms for the massless and non-relativistic limits of \eqref{eq:cd}. We let $\lr{a}_m = (m^\frac{1}{2} + |a|^2)^{\frac{1}{2}}$ denote the Japanese bracket.

\subsection{Dirac equation}\label{subsec:dirac}
Define the Pauli matrices
    \[
    \sigma^1=\begin{pmatrix}0&1\\1&0
    \end{pmatrix}, \qquad
    \sigma^2=\begin{pmatrix}0&-i\\i&0
    \end{pmatrix}, \qquad
    \sigma^3=\begin{pmatrix}1&0\\0&-1
    \end{pmatrix}.
    \]
We take the Dirac representation of the gamma matrices, namely
    \[
    \gamma^0=\diag(1,1,-1,-1), \quad
    \gamma^j=\begin{pmatrix} 0 & \sigma^j\\
    -\sigma^j & 0
    \end{pmatrix},  \quad \gamma^5:=i\gamma^0\gamma^1\gamma^2\gamma^3, \quad \text{ if } \quad d=3
    \]
and
    $$\gamma^0 = \sigma^3, \qquad \gamma^1 = i \sigma^2, \qquad \gamma^2 = - i \sigma^1 \quad  \text{ if } \quad d=2.$$
We take $\mc{H}_m$ to denote the Dirac Hamiltonian
			$$ \mc{H}_m := -i \gamma^0 \gamma^j \p_j + m \gamma^0$$
and define the Fourier multipliers
			$$ \Pi_\pm := \frac{1}{2} \Big( I \mp \frac{1}{\lr{\nabla}_m} \mc{H}_m \Big) $$
The anti-commutativity properties of the matrices $\gamma^\mu$ implies that $ (\mc{H}_m )^2 = - \Delta + m^2 = \lr{\nabla}_m^2$ and $\mc{H}_m^\dagger = \mc{H}_m$. In particular we have $I = \Pi_+ + \Pi_-$ and a short computation gives
    $$ \Pi_\pm^2 = \Pi_\pm = \Pi_\pm^\dagger, \qquad \Pi_+ \Pi_- = \Pi_- \Pi_+ = 0, \qquad \Pi_\pm \mc{H} = \mc{H} \Pi_\pm = \mp \lr{\nabla}_m \Pi_\pm.  $$
We let $\mc{U}(t) := e^{-it \mc{H}_m}$ denote the free Dirac propagator. In view of the above properties of the projections $\Pi_\pm$, we can write the homogeneous solution operator $\mc{U}(t)$ in terms of the more familiar Klein-Gordon propagators $e^{\pm it \lr{\nabla}_m}$ via the identity
	  \begin{equation}\label{eqn:lin dirac flow}
	    \mc{U}_m(t) = e^{it\lr{\nabla}_m} \Pi_+ + e^{-it\lr{\nabla}_m} \Pi_-.
    \end{equation}
Given a spinor $\psi$, we define
        $$ \psi_\pm := \Pi_\pm \psi . $$
Note that we may decompose $\psi = \psi_+  + \psi_-$. Moreover, if $\psi$ solves the (inhomogeneous) Dirac equation
    $$ -i \gamma^\mu \psi  + m \psi = F$$
then a short computation gives
    $$  (i \p_t \pm \lr{\nabla}_m) \psi_\pm  = - \Pi_\pm \gamma^0 F. $$
In particular, if $\psi = \mc{U}_m(t) f$ is a solution to the free Dirac equation, we can write
	$$ \psi(t) = \psi_+ + \psi_- = e^{it\lr{\nabla}_m} f_+ + e^{-it\lr{\nabla}_m} f_-.  $$

As in  \cite{dfs2007,Bejenaru2014a,Bejenaru2016,Bournaveas2015}, the arguments in this paper heavily exploit the null structure exhibited by the Soler and Thirring models. To explain this point in more detail, by an abuse of notation, given $\xi \in \RR^d$ we let $\Pi_\pm(\xi)$ denote the symbol of $\Pi_\pm$. Explicitly we have
			$$ \Pi_\pm(\xi) = \frac{1}{2} \Big( I \mp \frac{1}{\lr{\xi}_m}\gamma^0 (  \gamma^j \xi_j +  m) \Big). $$
Since $\Pi_+(\xi)$ and $\Pi_-(\xi)$ are orthogonal projections, a computation gives
    \begin{align*}
         \big| \Pi_{\pm_1}(\xi)^\dagger \gamma^0  \Pi_{\pm_2}(\eta) \big| &= \big| \Pi_{\pm_1}(\xi) \big( \Pi_{\pm_1}(\xi)  \gamma^0 - \gamma^0 \Pi_{\mp_2}(\eta) \big)  \Pi_{\pm_2}(\eta) \big| \\
         &\lesa \big| \Pi_{\pm_1}(\xi)  \gamma^0 - \gamma^0 \Pi_{\mp_2}(\eta) \big|\\
         &\lesa \Big| \frac{\pm_1 \xi}{\lr{\xi}_m} - \frac{\pm_2 \eta}{\lr{\eta}_m}\Big| +  \Big| \frac{ \pm_1 m}{\lr{\xi}_m} + \frac{\pm_2 m}{\lr{\eta}_m}\Big|
    \end{align*}
and hence we have the null structure bounds\footnote{In the special case $\pm_1 = - \pm_2$, we have the slight improvement
            $$\big| \Pi_\pm(\xi) \gamma^0 \Pi_\mp(\eta) \big| \lesa \frac{|\xi| |\eta|}{\lr{\xi}_m \lr{\eta}_m} \ma(\xi, -\eta) + \frac{m}{\lr{\xi}_m \lr{\eta}_m} \big| |\xi| - |\eta| \big| $$
although it is not needed in the arguments in this paper. }
    \begin{equation}\label{eqn:Pi null struc}
        \begin{split}
        \big| \Pi_{\pm_1}(\xi)^\dagger \gamma^0 \Pi_{\pm_2}(\eta) \big| &\lesa \frac{|\xi| |\eta|}{\lr{\xi}_m \lr{\eta}_m} \ma(\pm_1\xi, \pm_2\eta) + \frac{|m|}{\lr{\xi}_m} + \frac{|m|}{\lr{\eta}_m}
        \end{split}
    \end{equation}
where the implied constant is independent of the mass $m\in \RR$. Here and in the sequel, we use
\[\ma(\xi, \eta):=\sqrt{1-\frac{\xi}{|\xi|}\cdot \frac{\eta}{|\eta|}}\]
to denote the angle between the vectors $\xi, \eta \in \RR^d \setminus \{0\}$. In particular, if $\psi$ and $\varphi$ are free Dirac waves, then the product
			$$ \overline{\psi_{\pm_1}} \varphi_{\pm_2} = (\psi_{\pm_1})^\dagger \gamma^0 \varphi_{\pm_2}    $$
has additional cancellation when $\psi_{\pm_1}$ and $\varphi_{\pm_2}$ propagate in either parallel (if $\pm_1 =\pm_2$) or orthogonal (if $\pm_1 \not = \pm_2$) directions. As this is the worst case interaction for free waves, the product $\overline{\psi} \varphi$ decays significantly faster then generic products like $|\psi|^2$. See for instance \cite{dfs2007} where null structure and its consequences for the Dirac equation are well illustrated.

We end this subsection with two remarks concerning the nonlinearity $F$. The first explains why the general condition \eqref{eqn:null cond} suffices in our arguments, while the second explicitly computes the resonant contribution $F_1$ for the Soler and Thirring models.

\begin{remark}[General nonlinearities]\label{rem:nonlinearities II}
In Sections \ref{sec:str},  \ref{sec:be2}, and \ref{sec:mult-est}, we only explicitly work with the Soler model nonlinearity $\overline{\psi} \varphi$. However all bilinear and multilinear bounds in these sections continue to hold in the case where the products $\overline{\psi} \varphi$ are replaced with $\overline{\psi} \mb{A} \varphi$ for any constant matrix $\mb{A} \in \CC^{N_d \times N_d}$ satisfying \eqref{eqn:null cond}. This follows due to the fact that all bilinear and multilinear estimates used in the proof of our main results only rely on the null structure bound \eqref{eqn:Pi null struc} and this bound continues to hold for the product $\Pi_{\pm_1}^\dagger(\xi) \gamma^0 \mb{A} \Pi_{\pm_2}(\eta)$. More precisely, the null condition \eqref{eqn:null cond} implies that
		$$ \big| \Pi_{\pm_1}(\xi)^\dagger \gamma^0 \mb{A} - \gamma^0 \mb{A} \Pi_{\mp_2}(\eta) \big| \lesa_{\mb{A}} \Big| \frac{\pm_1 \xi}{\lr{\xi}_m} - \frac{\pm_2 \eta}{\lr{\eta}_m}\Big| +  \Big| \frac{ \pm_1 m}{\lr{\xi}_m} + \frac{\pm_2 m}{\lr{\eta}_m}\Big|  $$
and hence repeating the derivation of \eqref{eqn:Pi null struc} gives
	\begin{align*}
		\big| \Pi_{\pm_1}(\xi)^\dagger \gamma^0 \mb{A} \Pi_{\pm_2}(\eta) \big|  \lesa \frac{|\xi| |\eta|}{\lr{\xi}_m \lr{\eta}_m} \ma(\pm_1\xi, \pm_2\eta) + \frac{|m|}{\lr{\xi}_m} + \frac{|m|}{\lr{\eta}_m}.
	\end{align*}
In particular, \eqref{eqn:Pi null struc} continues to hold with $\Pi_{\pm_1}^\dagger(\xi) \gamma^0 \Pi_{\pm_2}(\eta)$ replaced with $\Pi_{\pm_1}^\dagger(\xi) \gamma^0 \mb{A} \Pi_{\pm_2}(\eta)$. Consequently
the bilinear form $\overline{\psi} \mb{A} \psi$ is also a null form, and hence obeys all estimates in Sections \ref{sec:str} and \ref{sec:be2}. As a quick application, note that $\gamma^5$ satisfies \eqref{eqn:null cond} in the case $d=3$. In particular,  as the Fierz identities \cite{Nieves2004,Bournaveas2015} allow us to write
       \begin{equation}\label{eqn:thirring mod}
        (\overline{\psi} \gamma^\mu \psi) \gamma_\mu \psi = \begin{cases} (\overline{\psi} \psi) \psi, &d=2,\\ ( \overline{\psi} \psi ) \psi -( \overline{\psi} \gamma^5 \psi) \gamma^5 \psi, &d=3, \end{cases}
       \end{equation}
we conclude that our arguments also apply to the Thirring nonlinearity $F(\psi) = (\overline{\psi} \gamma^\mu \psi) \gamma_\mu \psi$.
\end{remark}

\begin{remark}\label{rmk:res}
In the two primary cases of interest, namely the Soler and Thirring models, we explicitly compute the resonant contribution $F_1$ defined via the formula \eqref{eqn:F res decomp}. Clearly, for the Soler model $F(\psi) = (\overline{\psi} \psi) \psi$ we have $F=F_1$, because $e^{itc^2 \gamma^0} $ and $\gamma^0$ commute. For the Thirring model, in view of \eqref{eqn:thirring mod}, it suffices to consider the $d=3$ and the $\gamma^5$ contribution $G(\psi)=(\overline{\psi}\gamma^5\psi) \gamma^5\psi$:
We observe that  $\gamma^0\gamma^5=-\gamma^5\gamma^0$, and hence $e^{itc^2\gamma^0}\gamma^5=\gamma^5e^{-itc^2\gamma^0}$, $\gamma^5 E_+ = E_- \gamma^5 $, and
        \[
 G( e^{itc^2 \gamma^0} \psi) =e^{ - i t c^2 \gamma^0} \big( \overline{\psi} \gamma^5  e^{2itc^2 \gamma^0} \psi \big) \gamma^5 \psi.
\]
Therefore
$$  \overline{\psi} \gamma^5  e^{2itc^2 \gamma^0} \psi =e^{2itc^2}  \overline{ E_- \psi} \gamma^5  E_+ \psi+e^{-2itc^2}  \overline{ E_+ \psi} \gamma^5  E_- \psi,
$$
which implies
\begin{align*}
G( e^{itc^2 \gamma^0} \psi) =&e^{itc^2} \big( \overline{ E_- \psi} \gamma^5  E_+ \psi\big)E_+\gamma^5\psi +e^{-3itc^2}  \big(\overline{ E_+ \psi} \gamma^5  E_- \psi \big)\gamma^5 E_+ \psi
  \\&{}+e^{3itc^2} \big( \overline{ E_- \psi} \gamma^5  E_+ \psi\big)E_- \gamma^5\psi +e^{-itc^2}  \big(\overline{ E_+ \psi} \gamma^5  E_- \psi\big)E_- \gamma^5 \psi\\
  =& e^{ i t c^2 \gamma^0}  \Big(\big( \overline{ E_- \psi} \gamma^5  E_+ \psi\big) \gamma^5 E_-\psi +\big(\overline{ E_+ \psi} \gamma^5  E_- \psi\big)  \gamma^5E_+ \psi \Big)\\
  &{}+ e^{ -3 i t c^2 \gamma^0}\Big(\big(\overline{ E_+ \psi} \gamma^5  E_- \psi \big)\gamma^5 E_-\psi+\big( \overline{ E_- \psi} \gamma^5  E_+ \psi\big)  \gamma^5 E_+\psi \Big).
\end{align*}
By \eqref{eqn:thirring mod}, for the full Thirring nonlinearity in $d=3$, namely $F(\psi) = (\overline{\psi} \gamma^\mu \psi) \gamma_\mu \psi$, we obtain the decomposition \eqref{eqn:F res decomp} with
\begin{align*}
F_{-3}(\psi)=&\big(\overline{ E_+ \psi} \gamma^5  E_- \psi \big)\gamma^5 E_-\psi+\big( \overline{ E_- \psi} \gamma^5  E_+ \psi\big)  \gamma^5 E_+\psi \\
 F_1(\psi)=&(\overline{\psi} \psi) \psi -  \big( \overline{ E_- \psi} \gamma^5  E_+ \psi\big) \gamma^5 E_-\psi -\big(\overline{ E_+ \psi} \gamma^5  E_- \psi\big)  \gamma^5E_+ \psi
\end{align*}
and $F_{-1}=F_3=0$.
\end{remark}

\subsection{Fourier Multipliers}\label{subsec:fm}

Given $0 < \alpha \les 1$ we let $\mc{C}_\alpha$ denote the collection of collection of finitely overlapping caps $\kappa \subset \sph^{d-1}$ of radius $\alpha$. For $\lambda, d>0$ and $\kappa \in C_{\alpha}$, we define the Fourier multipliers  $P_\lambda$, $R_{\kappa}$, and $C^{m, \pm}_{\les d}$ to restrict the Fourier transform to the sets
        $$ \{|\xi| \approx \lambda\}, \qquad \{ \tfrac{\xi}{|\xi|} \in \kappa \}, \qquad \{ |\tau - \pm \lr{\xi}_m |\les d \}. $$
 As usual, we choose the multipliers so that they are uniformly bounded in $L^p$ (disposable).
Similar, we define the multipliers $P_{\lambda, \kappa}$ and $C^{(m)}_{\les d}$ acting on spinors $\psi$ as
        $$  P_{\lambda, \kappa}  = P_\lambda R_\kappa \Pi_+  + P_\lambda R_{-\kappa} \Pi_- , \qquad C^{(m)}_{\les d} = C^{m,+}_{\les d} \Pi_+ + C^{m, -}_{\les d} \Pi_- $$
and we take $C_{\g d}^{(m)} = I - C_{\les d}^{(m)}$. We typically suppress the mass parameter $m$, and simply write  $C_{\les d} = C^{(m)}_{\les d}$. We introduce the short hand
            $$ f_\lambda = P_\lambda f, \qquad f_{\lambda, \kappa} = P_{\lambda, \kappa} f.$$
By choosing the multipliers $P_\lambda$ and $R_{\kappa}$ appropriately, we can assume that we have the pointwise decomposition
$$
 f = \sum_{\lambda \in 2^{\ZZ}} f_\lambda $$
 and the angular Whitney type decomposition
\begin{equation}\label{eqn:whitney decom}
    \qquad (f_\lambda)^\dagger g_\mu = \sum_{\alpha \in 2^{-\NN}} \sum_{\substack{ \kappa, \tilde{\kappa} \in \mc{C}_\alpha \\ \ma(\kappa, \tilde{\kappa}) + \frac{|m|}{\lr{\min\{\lambda, \mu\}}_m} \approx \alpha }} (f_{\lambda, \kappa})^\dagger  g_{\mu, \tilde{\kappa}}
            \end{equation}
            for  any $f, g \in L^2(\RR^d)$, together with the square sum bound
		$$ \sum_{\kappa \in \mc{C}_\alpha } \|f_{\lambda, \kappa}\|_{L^2}^2 \lesa \| f_\lambda \|_{L^2}^2. $$
Define the set
            $$
            \mathcal{W}_{\alpha}^{(m)}(\lambda,\mu):=\Big\{(\kappa, \tilde{\kappa} )\in \mc{C}_\alpha\times \mc{C}_\alpha: \ma(\kappa, \tilde{\kappa}) + \frac{|m|}{\lr{\min\{\lambda, \mu\}}_m} \approx \alpha.\Big\}
            $$
 To explain \eqref{eqn:whitney decom}, we first observe that $m=0$ this is the standard Whitney decomposition. In the low frequency case $0<\min\{\lambda, \mu\}\lesa |m|$, there is only a trivial decomposition into a finite number of caps of size $\alpha \sim 1$ (effectively no decomposition), whereas if $0<|m|\ll \min\{\lambda, \mu\}$, then
  $\mathcal{W}_{\alpha}^{(m)}(\lambda,\mu)$ is the set of all caps $(\kappa, \tilde{\kappa} )\in \mc{C}_\alpha\times \mc{C}_\alpha$ such that
  $$ \ma(\kappa, \tilde{\kappa}) \sim \alpha \text{ if }\alpha \gg |m| (\min\{\lambda, \mu\})^{-1}, \quad \ma(\kappa, \tilde{\kappa})  \lesa \alpha  \text{ if }\alpha \sim |m| (\min\{\lambda, \mu\})^{-1}, $$
  and
$\mathcal{W}_{\alpha}^{(m)}(\lambda,\mu)=\emptyset$ if $\alpha \ll |m| (\min\{\lambda, \mu\})^{-1}$.

For later use, we need to understand the Fourier support of a product of two spinors that are localised close to the hyperboloid/cone.

\begin{lemma}\label{lem:Fourier supp}
Let $\lambda \g \mu >0$ and $0<\alpha \les 1$. Let $\kappa, \tilde{\kappa} \in \mc{C}_\alpha$ with $ \frac{|m|}{\lr{\mu}_m} + \ma(\kappa, \tilde{\kappa}) \approx \alpha$.
Then
        $$ \supp \mc{F}_{t,x}\big[\overline{C_{\ll \alpha^2 \lr{\mu}_m} \varphi_{\lambda, \kappa}} C_{\ll \alpha^2 \lr{\mu}_m} \psi_{\mu, \tilde{\kappa}}\big]\subset \big\{ \big| |\tau|^2 - \lr{\xi}_m^2\big| \approx \alpha^2 \lr{\mu}_m \lr{\lambda}_m  \big\}$$
and, for any $\beta \gtrsim \alpha^2 \lr{\mu}_m$,
         $$ \supp\mc{F}_{t,x}\big[ \overline{C_{\les \beta} \varphi_{\lambda, \kappa}} C_{\les \beta} \psi_{\mu, \tilde{\kappa}}\big] \subset \big\{ \big| |\tau|^2 - \lr{\xi}_m^2\big| \lesa \beta (\beta + \lr{\lambda}_m) \big\}. $$
\end{lemma}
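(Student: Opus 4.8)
The plan is to compute the space-time Fourier support of the product $\overline{\varphi_{\lambda,\kappa}}\,\psi_{\mu,\tilde\kappa}$ directly from the supports of the two factors, using the convolution structure of $\mc{F}_{t,x}$ of a product together with the modulation and angular localisations. Write $\zeta_1 = (\tau_1,\xi_1)$ for the (space-time) frequency of the factor $\overline{C_{\les\beta}\varphi_{\lambda,\kappa}}$ and $\zeta_2=(\tau_2,\xi_2)$ for that of $C_{\les\beta}\psi_{\mu,\tilde\kappa}$. After conjugation, the first factor has its Fourier transform supported in $\{\,|\tau_1 \mp \lr{\xi_1}_m|\les\beta,\ |\xi_1|\approx\lambda,\ \xi_1/|\xi_1|\in\pm\kappa\,\}$ for the two choices of sign coming from $\Pi_\pm$ (I will keep track of all four sign combinations $\pm_1,\pm_2$), and similarly for the second factor with $|\xi_2|\approx\mu$ and $\xi_2/|\xi_2|\in\pm\tilde\kappa$. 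The output frequency is $(\tau,\xi)=(\tau_1+\tau_2,\ \xi_1+\xi_2)$, so I need to bound $\big||\tau|^2-\lr{\xi}_m^2\big|$ from above (and, for the first assertion, also from below) in terms of these constraints.

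The key algebraic identity is the factorisation
$$
|\tau|^2 - \lr{\xi}_m^2 = (|\tau| - \lr{\xi}_m)(|\tau|+\lr{\xi}_m),
$$
combined with the elementary estimate
$$
\big| \lr{\xi_1+\xi_2}_m - (\pm_1)\lr{\xi_1}_m - (\pm_2)\lr{\xi_2}_m\big| \lesa \lr{\mu}_m\Big(\ma(\pm_1\xi_1,\pm_2\xi_2)^2 + \tfrac{m^2}{\lr{\mu}_m^2}\Big),
$$
valid whenever $|\xi_1|\approx\lambda\g\mu\approx|\xi_2|$; this is the standard "resonance function" bound for the Klein–Gordon dispersion relation and is exactly where the angular separation $\ma(\kappa,\tilde\kappa)$ and the mass ratio $|m|/\lr{\mu}_m$ enter. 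Under the hypothesis $\ma(\kappa,\tilde\kappa)+|m|/\lr{\mu}_m\approx\alpha$ the right-hand side is $\approx\alpha^2\lr{\mu}_m$ (for the worst sign choice; for parallel propagation one gains, but $\alpha$ already absorbs the mass term so the bound is $\lesa\alpha^2\lr{\mu}_m$ uniformly, and for the first claim one also uses that the same quantity is $\gtrsim\alpha^2\lr{\mu}_m$ when $C_{\ll\alpha^2\lr{\mu}_m}$ forces the modulations to be strictly smaller than the resonance). From $|\tau_j\mp_j\lr{\xi_j}_m|\les\beta$ one gets $\big||\tau| - \lr{\xi}_m\big| \lesa \beta + \alpha^2\lr{\mu}_m$, while $|\tau|+\lr{\xi}_m \lesa \lr{\lambda}_m + \beta$; multiplying gives, for $\beta\gtrsim\alpha^2\lr{\mu}_m$, the bound $\big||\tau|^2-\lr{\xi}_m^2\big|\lesa\beta(\beta+\lr{\lambda}_m)$, which is the second claim. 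For the first claim, with $\beta$ replaced by the sharp cutoff $\ll\alpha^2\lr{\mu}_m$, the modulations are negligible compared to the resonance, so $\big||\tau|-\lr{\xi}_m\big|\approx\alpha^2\lr{\mu}_m$ (two-sided), and since $|\tau|+\lr{\xi}_m\approx\lr{\lambda}_m$ for $\lambda\g\mu$, multiplying gives the two-sided bound $\big||\tau|^2-\lr{\xi}_m^2\big|\approx\alpha^2\lr{\mu}_m\lr{\lambda}_m$.

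The main obstacle is the careful bookkeeping across the four sign combinations in $\Pi_{\pm_1},\Pi_{\pm_2}$ and verifying that the resonance-function estimate has the claimed size in each regime ($\alpha\gg|m|/\lr{\mu}_m$ where angular separation dominates, versus $\alpha\approx|m|/\lr{\mu}_m$ where the mass term dominates), and in particular checking the lower bound in the first statement — one must confirm that no cancellation can push $\big||\tau|-\lr{\xi}_m\big|$ strictly below $\alpha^2\lr{\mu}_m$ once the modulation cutoffs are $\ll\alpha^2\lr{\mu}_m$. I expect the upper bounds to be routine once the resonance identity is set up; the two-sided estimate in the first part requires a short but slightly delicate argument isolating the leading-order term in the Taylor expansion of $\lr{\cdot}_m$ and noting it cannot be cancelled by the (smaller) modulation contributions.
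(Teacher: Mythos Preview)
Your factorisation strategy has a genuine gap. The claimed first-order resonance estimate
\[
\big| \lr{\xi_1+\xi_2}_m - \pm_1\lr{\xi_1}_m - \pm_2\lr{\xi_2}_m\big| \lesa \lr{\mu}_m\Big(\ma(\pm_1\xi_1,\pm_2\xi_2)^2 + \tfrac{m^2}{\lr{\mu}_m^2}\Big)
\]
is false when the signs are opposite and $\lambda\approx\mu$. Take $m=0$, $|\xi_1|=|\xi_2|=1$, $\pm_1=+$, $\pm_2=-$, with $\pm_1\xi_1$ and $\pm_2\xi_2$ at angle $\alpha$ (so $\xi_1$ and $\xi_2$ are nearly antipodal). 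Then $|\xi_1+\xi_2|\approx\alpha$ while $\lr{\xi_1}_m-\lr{\xi_2}_m=0$, so the left side is $\approx\alpha$, not $\lesa\alpha^2$. Correspondingly, in this configuration the output satisfies $|\tau|+\lr{\xi}_m\approx\alpha$ (not $\approx\lr{\lambda}_m$) and $\big||\tau|-\lr{\xi}_m\big|\approx\alpha$ (not $\approx\alpha^2\lr{\mu}_m$); both of your intermediate claims fail, even though their product happens to land at the correct value $\alpha^2\approx\alpha^2\lr{\mu}_m\lr{\lambda}_m$.

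The paper avoids this entirely by never factoring: it computes the \emph{squared} quantity via the algebraic identity
\[
\big| |\pm_1 \lr{\xi}_m - \pm_2 \lr{\eta}_m|^2 - \lr{\xi - \eta}_m^2 \big|
= \frac{ 2m^2 | |\xi| - |\eta| |^2 }{\lr{\xi}_m \lr{\eta}_m + m^2 + |\xi| |\eta|} + (2 - \pm_1 \pm_2 1)m^2 +  2\big( |\xi||\eta| - (\pm_1 \xi) \cdot (\pm_2 \eta)\big),
\]
observes (using that $\pm_1\xi$ lies in direction $\kappa$ and $\pm_2\eta$ in direction $\tilde\kappa$ regardless of signs) that this is $\approx\alpha^2\lr{\mu}_m\lr{\lambda}_m$, and then bounds $\big||\tau-\tilde\tau|^2-|\pm_1\lr{\xi}_m-\pm_2\lr{\eta}_m|^2\big|\lesa\beta(\beta+\lr{\lambda}_m)$ directly from the modulation cutoffs. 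Working at the level of squares makes the argument uniform across all four sign combinations and sidesteps the low-output-frequency regime that breaks your factorised bounds. Your plan is repairable by switching to this squared identity; the rest of your outline (triangle inequality for the modulation contribution, dominance of the resonance term when $\beta\ll\alpha^2\lr{\mu}_m$) then goes through.
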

\begin{proof} We begin by noting that for any $\xi, \eta \in \RR^d$
        \begin{align*}
          \big| |\pm_1 \lr{\xi}_m &- \pm_2 \lr{\eta}_m|^2 - \lr{\xi - \eta}_m^2 \big| \\
                        &= \big| 2\big( \lr{\xi}_m \lr{\eta}_m - |\xi| |\eta| - m^2\big) + (2 - \pm_1 \pm_2 1)m^2 + 2\big( |\xi||\eta| - (\pm_1 \xi) \cdot (\pm_2 \eta)\big) \big| \\
                        &= \frac{ 2m^2 | |\xi| - |\eta| |^2 }{\lr{\xi}_m \lr{\eta}_m + m^2 + |\xi| |\eta|} + (2 - \pm_1 \pm_2 1)m^2 +  2\big( |\xi||\eta| - (\pm_1 \xi) \cdot (\pm_2 \eta)\big).
        \end{align*}
Consequently, we conclude that if $(\tau, \xi) \in \supp \widetilde{\Pi_{\pm_1}C_{\les \beta} \varphi_{\lambda, \kappa}}$ and $(\tilde{\tau}, \eta) \in \supp \widetilde{\Pi_{\pm_2}C_{\les \beta}\psi_{\mu, \tilde{\kappa}}}$, then we have
        $$ \big| |\pm_1 \lr{\xi}_m - \pm_2 \lr{\eta}_m|^2 - \lr{\xi - \eta}_m^2 \big| \approx \frac{ 2m^2 (| |\xi| - |\eta| |^2+\lr{\lambda}_m \lr{\eta}_m) }{\lr{\lambda}_m \lr{\mu}_m}  +  2\lambda \mu \ma^2(\kappa, \tilde{\kappa}) \approx  \alpha^2 \lr{\mu}_m \lr{\lambda}_m $$
and
    \begin{align*}
      \big| |\tau - \tilde{\tau}|^2 & - |\pm_1 \lr{\xi}_m - \pm_2 \lr{\eta}_m|^2 \big| \\
        &\les \Big( |\tau - \pm_1 \lr{\xi}_m| + |\tilde{\tau} - \pm_2 \lr{\eta}_m| + \lr{\xi}_m + \lr{\eta}_m\Big)\Big( |\tau - \pm_1 \lr{\xi}_m| + |\tilde{\tau} - \pm_2 \lr{\eta}_m|\Big) \lesa (\beta + \lr{\lambda}_m) \beta,
    \end{align*}
    as claimed.
\end{proof}

\subsection{The spaces $U^p$ and $V^p$}\label{subsec:fs}
The proof of Theorem \ref{thm:gwp} involves an iteration argument in suitably chosen solution space $S$. There are a number of choices that work here. One approach is use the null frame spaces introduced by Tataru \cite{Tataru2001}, as in \cite{Bejenaru2016,Bejenaru2014a,Bournaveas2015}. However, as explained in the introduction, we instead work in the adapted $U^p/V^p$ framework  and aim for a unified approach that allows us to treat the cases $m=0$ and $m\not =0$ simultaneously. Moreover, this approach has the advantage that it is well suited to the existence of the massless and non-relativistic limits. In the following we briefly outline the properties of the $U^p/V^p$ spaces, and refer to the papers \cite{Candy2018b,Hadac2009,Koch2005,Koch2014, Koch2018} for further details.

Let $1<p<\infty$. We say that a function $u:\RR^{1+d} \to \CC$ is a step-function (write $u \in \mathfrak{S}$) if there exists a finite sequence of times $t_1<t_2< \dots< t_N$ such that
        $$ u(t, x) = \sum_{j=1}^N \ind_{[t_j, t_{j+1})}(t) f_j(x)$$
where $t_{N+1} = \infty$ and $f_j:\RR^d \to \CC$. A step function $u\in \mathfrak{G}$ is called a  $U^p$-\emph{atom} if in addition we have the normalisation
        $$ \Big( \sum_{j=1}^N \| f_j \|_{L^2_x(\RR^d)}^p \Big)^{\frac{1}{p}} = 1.$$
We then define the atomic space $U^p$ as all possible sums of atoms, thus
        $$ U^p = \Big\{ \sum_{j\in \NN} c_j u_j \,\, \Big|\,\, (c_j)_{j\in \NN} \in \ell^1(\NN), \text{ $u_j$ is a $U^p$ atom} \Big\} $$
and equip $U^p$ with the obvious induced norm
        $$ \| u \|_{U^p} = \inf\big\{ \sum_{j\in \NN} |c_j| : u = \sum_{j\in \NN} c_j u_j, \; \text{ with } U^p\text{-atoms } u_j \big\}.$$

The dual to $U^p$ can be described using the spaces of finite $p$-variation $V^p$. More precisely, define the norm
        $$ \| v \|_{V^p} = \| v \|_{L^\infty_t L^2_x}  + |v|_{V^p}, \text{ where } |v|_{V^p}:=\sup_{t_1<t_2 < \dots < t_N} \Big( \sum_{j=1}^{N-1} \| v(t_{j+1}) - v(t_j)\|_{L^2_x}^p \Big)^{\frac{1}{p}}.$$
Here the supremum is over all finite increasing sequences. The Banach space $V^p$ is then defined to be collection of all right continuous $v:\RR^{1+d} \to \CC$ such that $\|v\|_{V^p} < \infty$ and $\lim_{t\to -\infty} v(t) = 0$ (in $L^2_x$). This last property is a normalisation condition, and ensures that the dual pairing with $U^p$ is well-defined. The above definitions can be extended to $\CC^N$ valued functions (i.e. spinors) or Hilbert space-valued functions in the obvious manner. The above definition  implies that if $u\in U^p$ (or $u \in V^p$) then $u(t)$ converges in $L^2_x$ as $t\to \infty$.

\begin{lemma}[Basic Properties of $U^p$ and $V^p$]\label{lem:Up basic prop}
Let $1<p\les q < r$.
\begin{enumerate}
    \item We have the continuous embeddings
	$$ U^p \subset V^q \subset U^r \subset L^\infty_tL^2_x.$$
    \item If $u\in L^p_t L^2_x$ has temporal Fourier support in the region $\{ |\tau| \approx \alpha\}$, then $u \in U^p$ and
		$$  \| u \|_{U^p} \approx \| u \|_{V^p} \approx \alpha^{\frac{1}{p}} \| u \|_{L^p_t L^2_x}. $$
    \item Let $\mc{L}(L^2)$ denote collection of all bounded operators on $L^2$. If $A\in \mc{L}(L^2)$ then
                $$ \| A u \|_{U^p} \les \|A\|_{\mc{L}(L^2)} \| u \|_{U^p}, \qquad \| Av \|_{V^p} \les \| A \|_{\mc{L}(L^2)} \| v \|_{V^p}.$$
    Moreover, for any $\alpha \in 2^{-\NN}$ we have the square sum bound
                    $$ \Big( \sum_{\kappa \in R_\alpha} \| R_\kappa u\|_{U^p}^2 \Big)^{\frac{1}{2}} \lesa \alpha^{-(d-1) \max\{0, \frac{1}{2}-\frac{1}{p}\}} \| u \|_{U^p}. $$
    \item If $u\in L^\infty_t L^2_x$ is right continuous with $u(t) \to 0$ as $t\to -\infty$, and
			$$ D(u) = \sup_{\substack{\phi \in C^\infty_0 \\ \|\phi\|_{V^{p'}}\les 1}} \Big| \int_{\RR^{1+d}} \p_t \phi u \,dt dx \Big|< \infty,$$
            then $u\in U^p$ and $\|u\|_{U^p} \approx D(u)$.
\end{enumerate}
\end{lemma}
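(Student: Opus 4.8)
\medskip
\noindent\textbf{Proof strategy.}
The plan is to derive each of the four items from the atomic definition of $U^p$ together with the variational definition of $V^p$ and elementary one-dimensional harmonic analysis in the time variable; all of these are by now standard, and I would follow closely the development in \cite{Hadac2009,Koch2005,Koch2014,Koch2018,Candy2018b}. The routine items dispatch quickly: $V^q \subset L^\infty_t L^2_x$ is immediate from the definition of $\|\cdot\|_{V^q}$; the nestings $U^p \subset U^q$ and $V^p \subset V^q$ for $p \les q$ follow from the inequality $\|\cdot\|_{\ell^q} \les \|\cdot\|_{\ell^p}$ applied, respectively, to the $\ell^p$-normalisation of atoms and to the partition sums defining $|v|_{V^p}$; and $U^p \subset V^p$ reduces, by the atomic decomposition, to the observation that for a $U^p$-atom $u = \sum_j \ind_{[t_j,t_{j+1})} f_j$ one has $\|u\|_{L^\infty_t L^2_x} = \max_j \|f_j\|_{L^2} \les 1$ and that, along any finite partition, the values of $u$ lie in $\{0, f_1, \dots, f_N\}$ with pairwise distinct indices, so the $p$-variation is $\lesa \sum_j \|f_j\|_{L^2}^p = 1$.

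For part (3), I would use that a bounded operator $A$ on $L^2_x$ acts fibrewise in $t$: it maps a step function to a step function, sends a $U^p$-atom to $\|A\|_{\mc{L}(L^2)}$ times a $U^p$-atom, and satisfies $\|A(v(t_{j+1}) - v(t_j))\|_{L^2_x} \les \|A\|_{\mc{L}(L^2)} \|v(t_{j+1})-v(t_j)\|_{L^2_x}$ together with the analogous pointwise bound, which yields the $U^p$ and $V^p$ operator estimates. The square-sum bound comes from applying this with $A = R_\kappa$ to an atom: when $p \les 2$, Minkowski's inequality (moving the $\ell^2_\kappa$-sum inside the $\ell^p_j$-sum) together with the $L^2_x$-almost-orthogonality $\sum_\kappa \|R_\kappa f\|_{L^2}^2 \lesa \|f\|_{L^2}^2$ gives $\big(\sum_\kappa \|R_\kappa u\|_{U^p}^2\big)^{1/2} \lesa 1$ with no loss, while for $p > 2$ one interpolates this against the crude bound obtained by summing over the $\#\mc{C}_\alpha \approx \alpha^{-(d-1)}$ caps, producing the stated factor $\alpha^{-(d-1)(\frac12 - \frac1p)}$.

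For part (2), since $U^p \subset V^p$ already holds, it suffices to prove $\|u\|_{V^p} \lesa \alpha^{1/p}\|u\|_{L^p_t L^2_x} \lesa \|u\|_{U^p}$ for $u$ with temporal Fourier support in $\{|\tau| \approx \alpha\}$; the three quantities are then equivalent. The first inequality follows from the one-dimensional Bernstein/Nikolskii estimates in $t$: $\|u\|_{L^\infty_t L^2_x} \lesa \alpha^{1/p}\|u\|_{L^p_t L^2_x}$ controls the sup part of the $V^p$ norm, while $\|\p_t u\|_{L^p_t L^2_x} \lesa \alpha \|u\|_{L^p_t L^2_x}$ combined with $u(t_{j+1}) - u(t_j) = \int_{t_j}^{t_{j+1}} \p_s u \, ds$ and a decomposition of the time axis into intervals of length $\alpha^{-1}$ controls the $p$-variation. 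For the second inequality, I would decompose $u$ into $U^p$-atoms, convolve in $t$ with a kernel $\alpha \phi(\alpha \cdot)$ reproducing the frequency annulus, and sum by parts; this reduces matters to $\|P_{|\tau| \approx \alpha} a\|_{L^p_t L^2_x} \lesa \alpha^{-1/p}$ for a single atom $a$, which follows from the rapid decay of the resulting time kernel.

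Finally, part (4) is the pre-duality identification: $U^p$ embeds isometrically into $(V^{p'})^*$ via the pairing $(u,\phi) \mapsto \int_{\RR^{1+d}} \p_t \phi\, u$, and conversely a function $u$ for which this pairing defines a bounded functional on $V^{p'}$ must lie in $U^p$ with matching norm --- which is exactly the assertion of (4). The easy direction $D(u) \les \|u\|_{U^p}$ is the bound $\big|\int \p_t\phi\, u\big| \les \|u\|_{U^p}\|\phi\|_{V^{p'}}$, checked first on atoms (integrating by parts turns the atom into a telescoping sum and Hölder in time finishes it). For $\|u\|_{U^p} \lesa D(u)$ one extends the bounded functional $\phi \mapsto \int \p_t\phi\, u$ from test functions to all of $V^{p'}$ by Hahn--Banach, invokes the representation of $(V^{p'})^*$ (equivalently, one selects atoms greedily to build an atomic decomposition of $u$ of comparable size), and identifies the result with $u$; see \cite{Hadac2009,Koch2014}. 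I expect the two ``scale-crossing'' statements to be the only genuine obstacles: the strict embedding $V^q \subset U^r$ for $q < r$ in part (1) --- proved by approximating $v \in V^q$ by a step function adapted to a near-optimal partition and then writing that step function as a sum of $U^r$-atoms whose coefficients form a summable geometric-type series --- and the pre-duality in part (4). Everything else is a matter of unwinding the definitions and applying standard time-frequency estimates.
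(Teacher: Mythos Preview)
Your proposal is correct and aligns with the paper's approach: the paper's own proof consists entirely of citations to \cite{Hadac2009} for items (i)--(iii) and to \cite[Theorem 5.1]{Candy2018b} and \cite{Koch2018} for the dual pairing in (iv), and your sketch accurately reproduces the arguments one finds in those references. In particular, you have correctly identified the two non-routine points --- the embedding $V^q \subset U^r$ for $q<r$ and the pre-duality characterisation --- and your outlines for both match the standard treatments.
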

\begin{proof}
The first properties can be found in (for instance) \cite{Hadac2009}. For the final dual pairing characterisation of $U^p$  see \cite[Theorem 5.1]{Candy2018b} and \cite{Koch2018}.
\end{proof}

We can apply the method of complex interpolation in a standard way, see e.g.\ \cite{Bergh1976}. \begin{lemma}\label{lem:bil-inter}
Suppose that $1\les p_j\les \infty$ and $1\les a_j,b_j<\infty$ for $j=0,1$. Also, suppose that $T:\mathfrak{S}\times \mathfrak{S}\to L^1_{loc}(\R^{1+d})$ is a bilinear or sesquilinear operator and for $j=0,1$ there exists $C_j$ such that
\begin{equation}\label{eq:bil-int-ass}
\|T(u,v)\|_{L^{p_j}_{t,x}}\les C_j \|u\|_{U^{a_j}} \|v\|_{U^{b_j}}
\end{equation}
for all step-functions $u,v \in \mathfrak{S}$.
Then, for $0\les \theta\les 1$ and $$(\frac1p,\frac1a,\frac1b)=(1-\theta)(\frac1p_0,\frac1a_0,\frac1b_0)+\theta (\frac1p_1,\frac1a_1,\frac1b_1)$$ we have
$$\|T(u,v)\|_{L^{p}_{t,x}}\les C_0^\theta C_1^{1-\theta} \|u\|_{U^{a}} \|v\|_{U^{b}}$$
for all $u \in U^a$ and $v \in U^b$.
\end{lemma}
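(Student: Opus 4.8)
The plan is to apply the standard method of complex interpolation; the one genuine subtlety is that the family $\{U^p\}$ is \emph{not} a complex interpolation scale -- in general $[U^{a_0},U^{a_1}]_\theta\neq U^{a}$ -- so one cannot literally ``interpolate $T$ on the scale $U^{a_\bullet}$''. What does hold, and is just barely enough, is the one-sided embedding $U^{a}\hookrightarrow[U^{a_0},U^{a_1}]_\theta$ with norm $\les 1$, and symmetrically $U^{b}\hookrightarrow[U^{b_0},U^{b_1}]_\theta$. Granting these, the argument is short: finite linear combinations of $U^p$-atoms are exactly the step functions, so $\mathfrak{S}$ is dense in every $U^{a_j}$ and $U^{b_j}$; hence by \eqref{eq:bil-int-ass} the bilinear map $T$ extends from the dense product $\mathfrak{S}\times\mathfrak{S}$ to bounded bilinear maps $U^{a_j}\times U^{b_j}\to L^{p_j}_{t,x}$, and the bilinear complex interpolation theorem (see \cite{Bergh1976}), combined with $[L^{p_0}_{t,x},L^{p_1}_{t,x}]_\theta = L^{p}_{t,x}$ and the two embeddings above, yields $\|T(u,v)\|_{L^{p}_{t,x}}\les C_0^{1-\theta}C_1^{\theta}\|u\|_{U^a}\|v\|_{U^b}$.

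So the heart of the matter is the embedding $U^{a}\hookrightarrow[U^{a_0},U^{a_1}]_\theta$. The first step is to reduce it to a single atom: writing a general $u\in U^a$ as an $\ell^1$-superposition $u=\sum_i c_iu^{(i)}$ of $U^a$-atoms with $\sum_i|c_i|$ as close to $\|u\|_{U^a}$ as desired, the partial sums are Cauchy in $[U^{a_0},U^{a_1}]_\theta$, and converge there to $u$ (using $U^a\hookrightarrow U^{a_0}+U^{a_1}$ to identify the limit), as soon as every $U^a$-atom has $[U^{a_0},U^{a_1}]_\theta$-norm $\les 1$. For a single atom $u=\sum_{j=1}^N\ind_{[t_j,t_{j+1})}f_j$, normalised by $\sum_{j=1}^N\|f_j\|_{L^2}^{a}=1$, I would exhibit the analytic family
\[
  F(z):=\sum_{j\,:\,f_j\neq0}\|f_j\|_{L^2}^{\,a(\tfrac{1-z}{a_0}+\tfrac{z}{a_1})-1}\,\ind_{[t_j,t_{j+1})}f_j,\qquad 0\les \Re z\les 1.
\]
Because the partition is finite, $F$ is a finite sum of fixed elements of $U^{a_0}\cap U^{a_1}$ against entire scalar functions that stay bounded on the strip, so $F$ is holomorphic on the open strip and continuous and bounded on its closure with values in $U^{a_0}+U^{a_1}$; the exponent vanishes at $z=\theta$, hence $F(\theta)=u$. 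On the line $\Re z=0$ the $j$th summand has $L^2_x$-norm $\|f_j\|_{L^2}^{a/a_0}$, so the pieces of $F(z)$ have $\ell^{a_0}(L^2_x)$-norm $\big(\sum_j\|f_j\|_{L^2}^{a}\big)^{1/a_0}=1$ and therefore $\|F(z)\|_{U^{a_0}}\les 1$ there; symmetrically $\|F(z)\|_{U^{a_1}}\les 1$ on $\Re z=1$. By the definition of the complex interpolation functor this gives $\|u\|_{[U^{a_0},U^{a_1}]_\theta}\les 1$, as required.

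Equivalently, and sidestepping interpolation spaces, one can run Stein's three-lines argument directly. Here one first proves the single-atom bound $\|T(u,v)\|_{L^p_{t,x}}\les C_0^{1-\theta}C_1^{\theta}$ for $U^a$- and $U^b$-atoms $u,v$: pair against a simple function $w$ with $\|w\|_{L^{p'}_{t,x}}\les 1$ and apply the Hadamard three-lines lemma to $\Phi(z)=\langle T(u_z,v_z),w_z\rangle$, where $u_z$, $v_z$ are the modulations $F(z)$ above built from the $a$'s and $b$'s and $w_z$ is the usual complex-power modulation of $w$ adapted to $(p_0',p_1')$ (with the evident change if $T$ is sesquilinear); $\Phi$ is holomorphic and bounded on the strip since $u_z,v_z,w_z$ are finitely supported, and the normalisations recorded above together with H\"older and \eqref{eq:bil-int-ass} give $|\Phi|\les C_0$ on $\Re z=0$ and $|\Phi|\les C_1$ on $\Re z=1$, while $\Phi(\theta)=\langle T(u,v),w\rangle$. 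The bound for general $u\in U^a,v\in U^b$ then follows by bilinearity, the double series $\sum_{i,k}c_id_k\,T(u^{(i)},v^{(k)})$ converging absolutely in $L^p_{t,x}$ by the \emph{uniform} single-atom bound. I expect the only real obstacle, in either presentation, to be exactly this reduction: one must resist ``interpolating $T$'' on the (non-interpolation) scale $U^{a_\bullet}$ and instead prove the one-sided embedding by hand, the finiteness of an atom's time partition being precisely what keeps the analytic family $F$ (resp.\ $u_z$, $v_z$) holomorphic and bounded on the strip.
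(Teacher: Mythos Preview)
Your second presentation---the direct three-lines argument pairing $T(u_z,v_z)$ against a modulated simple function $w_z$ and applying Hadamard---is exactly the paper's proof. The paper takes $U^a$- and $U^b$-atoms, an elementary function $F$ with $\|F\|_{L^{p'}}\les 1$, builds the same complex-power modulations $u_z,v_z,F_z$, defines $\Gamma(z)=\int T(u_z,v_z)F_z$, checks $|\Gamma(iy)|\les C_0$ and $|\Gamma(1+iy)|\les C_1$ from the atom normalisations, applies Hadamard, and concludes by duality, density, and the atomic structure; the sesquilinear case is handled by conjugating one slot. (Incidentally, both the statement and the paper's proof write $C_0^\theta C_1^{1-\theta}$, but Hadamard gives $C_0^{1-\theta}C_1^\theta$ as you have it; this is a harmless typo, symmetric under $\theta\leftrightarrow 1-\theta$.)

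Your first presentation, via the one-sided embedding $U^a\hookrightarrow[U^{a_0},U^{a_1}]_\theta$ and abstract bilinear complex interpolation, is a correct and genuinely different packaging. It has the conceptual advantage of isolating the single nontrivial ingredient (the analytic family $F(z)$ built from a finite atom), and it makes explicit the point you emphasise, that $\{U^p\}$ is not a complex interpolation scale but only embeds into it from one side. The cost is that one must invoke the abstract bilinear interpolation theorem and verify compatibility of the two endpoint extensions of $T$; the paper's hands-on version avoids naming the functor $[\cdot,\cdot]_\theta$ altogether and stays entirely at the level of atoms and simple functions, which keeps the argument self-contained.
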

\begin{proof}
 Let $T$ be bilinear. If $p=1$ the proof is easier, so let us suppose that $p>1$. Consider an elementary function
 $$F(t,x)=\sum_{m=1}^M e_m \chi_{E_m}(t,x) , \; E_m \subset  \R^{1+d}, \text{ pairwise disjoint, of finite measure},$$
 such that $ \|F\|_{L^{p'}_{t,x}}\les 1,$
 a $U^{a}$-atom $u=\sum_{k=1}^K \ind_{I_k} f_k$, and a $U^b$-atom $v=\sum_{l=1}^L \ind_{J_l} g_l$.
 Define, for $z \in \C$,
 $$(\frac1{p(z)},\frac1{a(z)},\frac1{b(z)})=(1-z)(\frac1p_0,\frac1a_0,\frac1b_0)+z (\frac1p_1,\frac1a_1,\frac1b_1),$$
 the step-functions (setting $\frac{0}{0}=0$ within this proof)
 $$
 u_z= \sum_{k=1}^K \ind_{I_k} \|f_k\|_{L^2_x}^{\frac{a}{a(z)}}\frac{f_k}{\|f_k\|_{L^2_x}}, \quad v_z= \sum_{l=1}^L \ind_{J_l} \|g_l\|_{L^2_x}^{\frac{b}{b(z)}}\frac{g_l}{\|g_l\|_{L^2_x}},
 $$
 and
 $$F_z=\sum_{m=1}^M  |e_m|^{\frac{p'}{p(z)'}}\frac{e_m}{|e_m|}  \chi_{E_m} $$
 and the holomorphic function
 $$
\Gamma:\C \to \C, \; \Lambda(z)=\int_{\R^{1+d}} T(u_z,v_z) F_z dtdx
 $$
 Note that $\Gamma$ is a finite linear combination of terms of the form $r^z$ for real numbers $r>0$, and because of $|r^{z}|=r^{\Real z}$ it is bounded and continuous within the strip $0\les \Real z \les 1$. Further, for $y \in \R$, $\Real p(iy)=p_0$ and therefore
 $$
 \|F_{iy}\|_{L^{p_0'}_{t,x}}^{p_0'} \les \sum_{m=1}^M \Big| |e_m|^{\frac{p'}{p(iy)'}}\Big|^{p_0'} |E_m| \les 1.
 $$
 Similarly,
 $$
 \|u_{iy}\|_{U^{a_0}}\les \sum_{k=1}^K \Big\| \|f_k\|_{L^2_x}^{\frac{a}{a(iy)}}\frac{f_k}{\|f_k\|_{L^2_x}}\Big\|_{L^2}^{a_0}=\sum_{k=1}^K \|f_k\|_{L^2_x}^{a}=1, \text{ and } \|v_{iy}\|_{U^{b_0}}\les 1.
 $$
 Therefore, assumption \eqref{eq:bil-int-ass} with $j=0$ implies
 $$
 |\Gamma(iy)|\les \|T(u_{iz},v_{iz})\|_{L^{p_0}_{t,x}}\|F\|_{L^{p_0'}_{t,x}}\les C_0.
 $$
 An analogous argument, using  \eqref{eq:bil-int-ass} with $j=1$, implies
 $$
 |\Gamma(1+iy)|\les \|T(u_{1+iz},v_{1+iz})\|_{L^{p_1}_{t,x}}\|F\|_{L^{p_1'}_{t,x}}\les C_1.
 $$
 Now, Hadamard's Three Lines Lemma \cite[Lemma 1.1.2]{Bergh1976} implies the bound
 $$
 \Big|\int_{\R^{1+d}}T(u,v) F dtdx \Big|=|\Gamma(\theta)|\les \sup_{0\les \Real z\les 1}|\Gamma(z)|\les C_0^\theta C_1^{1-\theta}.$$
 By duality, density, and the atomic structure of $U^{a}$ and $U^b$ the claim follows.

 If $T$ is sesquilinear, the operator $(u,v)\mapsto T(\overline{u},{v})$ (or $T(u,\overline{v})$) is bilinear. Since for any $b$ the $U^{b}$-norm is invariant under complex conjugation, the result also follows in this case.
\end{proof}

We remark that Lemma \ref{lem:bil-inter} can be generalized to mixed-norms $L^p_tL^r_x$ by a slight modification of the proof, but we will not need this here.

In the course of the proof of Theorem \ref{thm:gwp}, we decompose into frequencies $\lambda \in 2^\ZZ$ and consider frequency localised bilinear and multilinear estimates. As the $U^p$ norms only partially commute with square sums (see for instance (iii) in Lemma \ref{lem:Up basic prop}), decomposing and recombining square sums becomes technically inconvenient. Thus we instead introduce a more tractable `Besov' version of $U^p$ with an outer square sum over dyadic frequencies.

\begin{definition}\label{def:square-sum-upvp}
We define
        $$ \ell^2 U^p = \Big\{ u \in L^\infty_t L^2_x \,\, \Big|\,\,  u_\nu \in U^p \text{ and } \sum_{\nu \in 2^\ZZ} \| u_\nu \|_{U^p}^2<\infty \Big\} $$
with norm
        $$ \| u \|_{\ell^2 U^p} = \Big( \sum_{\nu \in 2^{\ZZ}} \| u_\nu \|_{U^p}^2 \Big)^{\frac{1}{2}}. $$
Similarly, we define
        $$ \ell^2 V^p = \Big\{ v \in L^\infty_t L^2_x \,\, \Big|\,\,  v_\nu \in U^p \text{ and } \sum_{\nu \in 2^\ZZ} \| v_\nu\|_{V^p}^2<\infty \Big\} $$
with the norm
        $$ \| v \|_{\ell^2 V^p} = \Big( \sum_{\nu \in 2^{\ZZ}} \| u_\nu \|_{U^p}^2 \Big)^{\frac{1}{2}}. $$
        \end{definition}

The next lemma provides a general result on the boundedness of time dependent linear operators on $U^p$, which generalizes Lemma \ref{lem:Up basic prop}, Part (iii).

\begin{lemma}\label{lem:comp-norms-gen}
Let $1\les p < \infty$, $A\in U^p(\R,\mathcal{L}(L^2))$, and $B \in V^p(\R, \mc{L}(L^2))$. Then, for all $u \in U^p(\R,L^2)$ and $v\in V^p(\R, L^2)$ we have $Au\in U^p(\R,L^2)$, $Bv \in V^p(\R, L^2)$ and
    $$\|Au\|_{U^p(\R,L^2)}\les   \|A\|_{U^p(\R,\mathcal{L}(L^2))} \|u\|_{U^p(\R,L^2)}, \qquad \| B v\|_{V^p(\R, L^2)} \les \| B\|_{V^p(\RR, \mc{L}(L^2))} \| v \|_{V^p(\RR, L^2)}. $$
    In addition, if $A$ respectively $B$  commutes with the Fourier multipliers $P_\lambda$ for all $\lambda \in 2^\ZZ$, then
     $$\|Au\|_{\ell^2 U^p(\R,L^2)}\les   \|A\|_{U^p(\R,\mathcal{L}(L^2))} \|u\|_{\ell^2 U^p(\R,L^2)}, \qquad \| B v\|_{\ell^2 V^p(\R, L^2)} \les \| B\|_{V^p(\RR, \mc{L}(L^2))} \| v \|_{\ell^2 V^p(\RR, L^2)}. $$
\end{lemma}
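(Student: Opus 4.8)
I would establish the $U^p$ bound first, then the $V^p$ bound, and finally deduce both $\ell^2$-versions by localising in frequency; only the $U^p$ bound has genuine content. For the $U^p$ bound the plan is to reduce, via atomic decompositions of \emph{both} $A$ and $u$, to the case of a single operator-valued atom acting on a single $L^2$-valued atom. Concretely, fix $\varepsilon>0$ and write $A=\sum_i d_i A_i$ with $U^p(\R,\mc L(L^2))$-atoms $A_i=\sum_l \ind_{[s^i_l,s^i_{l+1})}B^i_l$ (so $\sum_l\|B^i_l\|_{\mc L(L^2)}^p=1$) and $\sum_i|d_i|\les\|A\|_{U^p(\R,\mc L(L^2))}+\varepsilon$, and similarly $u=\sum_j c_j u_j$ with $U^p(\R,L^2)$-atoms $u_j=\sum_k\ind_{[t^j_k,t^j_{k+1})}f^j_k$ (so $\sum_k\|f^j_k\|_{L^2}^p=1$) and $\sum_j|c_j|\les\|u\|_{U^p(\R,L^2)}+\varepsilon$. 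Since $(Au)(t)=A(t)u(t)$, once we know $\|A_iu_j\|_{U^p}\les1$ for each pair, the series $Au=\sum_{i,j}d_ic_j A_iu_j$ converges absolutely in $U^p$, and the triangle inequality reduces everything to bounding a single $\|A_iu_j\|_{U^p}$.

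\textbf{The single-atom case.} Passing to the common refinement $r_1<r_2<\dots<r_M$ of the two underlying partitions, $A_iu_j$ is a step function with finitely many steps, right continuous and vanishing for $t$ small, equal to $B^i_{l(n)}f^j_{k(n)}$ on $[r_n,r_{n+1})$. The key combinatorial observation is that for each index pair $(l,k)$ the set of $t$ with $l(n)=l$ and $k(n)=k$ is the single interval $[s^i_l,s^i_{l+1})\cap[t^j_k,t^j_{k+1})$, so it corresponds to at most one value of $n$; hence
$$ \sum_n \|B^i_{l(n)}f^j_{k(n)}\|_{L^2}^p \les \sum_{l,k}\|B^i_l\|_{\mc L(L^2)}^p\|f^j_k\|_{L^2}^p = \Big(\sum_l\|B^i_l\|_{\mc L(L^2)}^p\Big)\Big(\sum_k\|f^j_k\|_{L^2}^p\Big)=1. $$
Thus $A_iu_j$ is a scalar multiple of modulus at most $1$ of a $U^p$-atom, so $\|A_iu_j\|_{U^p}\les1$, and letting $\varepsilon\to0$ yields $\|Au\|_{U^p}\les\|A\|_{U^p(\R,\mc L(L^2))}\|u\|_{U^p}$.

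\textbf{The $V^p$ bound and the $\ell^2$-versions.} For $B\in V^p(\R,\mc L(L^2))$ and $v\in V^p(\R,L^2)$, right continuity of $B$ together with $v(t)\to0$ as $t\to-\infty$ and the automatic bound $\|B\|_{L^\infty_t\mc L(L^2)}\les\|B\|_{V^p}<\infty$ shows that $Bv$ is right continuous, vanishes backwards in time, and satisfies $\|Bv\|_{L^\infty_tL^2_x}\les\|B\|_{L^\infty_t\mc L(L^2)}\|v\|_{L^\infty_tL^2_x}$. For the variation seminorm I would use, for any finite increasing sequence $(t_j)$,
$$ \|B(t_{j+1})v(t_{j+1})-B(t_j)v(t_j)\|_{L^2}\les\|B\|_{L^\infty_t\mc L(L^2)}\|v(t_{j+1})-v(t_j)\|_{L^2}+\|B(t_{j+1})-B(t_j)\|_{\mc L(L^2)}\|v\|_{L^\infty_tL^2_x}, $$
together with Minkowski's inequality in $\ell^p_j$, to obtain $|Bv|_{V^p}\les\|B\|_{L^\infty_t\mc L(L^2)}|v|_{V^p}+|B|_{V^p}\|v\|_{L^\infty_tL^2_x}$; adding the two estimates and bounding each factor of the form $\|\cdot\|_{L^\infty_t}$ by $\|\cdot\|_{V^p}$ gives $\|Bv\|_{V^p}\les\|B\|_{V^p}\|v\|_{V^p}$. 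Finally, if $A$ (resp.\ $B$) commutes with every $P_\lambda$, then $(Au)_\lambda=Au_\lambda$ (resp.\ $(Bv)_\lambda=Bv_\lambda$), so applying the bounds just proved for each $\lambda\in2^\Z$ and summing the squares immediately gives the $\ell^2U^p$ (resp.\ $\ell^2V^p$) estimate.

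\textbf{Main obstacle.} The only place requiring care is the single-atom step of the $U^p$ bound: one has to organise the common refinement so that the $\ell^p$-normalisation of the product step function factorises as the product of the two atom normalisations — this hinges on each pair of sub-intervals producing exactly one step — and one has to check the routine bookkeeping that the product is a bona fide step function. The atomic reduction, the $V^p$ computation, and the frequency-by-frequency passage to $\ell^2$ are all straightforward.
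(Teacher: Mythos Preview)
Your proof is correct and follows essentially the same route as the paper's: both reduce the $U^p$ bound to the product of two atoms via the common refinement of the underlying partitions and the factorisation $\sum_{l,k}\|B^i_l\|^p\|f^j_k\|^p=1$, handle the $V^p$ bound by the same telescoping/Minkowski splitting of $B(t_{j+1})v(t_{j+1})-B(t_j)v(t_j)$, and pass to the $\ell^2$-versions by commuting with $P_\lambda$. Your write-up is somewhat more explicit about the atomic reduction and the bookkeeping for the common refinement, but there is no substantive difference in the argument.
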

\begin{proof} We start by proving the $U^p$-bounds.
Consider $U^p$-atoms
\[u=\sum_j \ind_{[\tau_j,\tau_{j+1})}f_j \text{ and } A=\sum_k \ind_{[\sigma_k,\sigma_{k+1})}A_k. \]
Then, $Au$ is the step-function with $U^p$-norm
\[\|Au\|_{U^p}\les \Big(\sum_{j,k: [\tau_j,\tau_{j+1})\cap [\sigma_k,\sigma_{k+1})\not= \emptyset}\|A_kf_j\|_{L^2}^p\Big)^{\frac1p}\les 1.\]
This implies
$$\|Au\|_{U^p(\R,L^2)}\les   \|A\|_{U^p(\R,\mathcal{L}(L^2))} \|u\|_{U^p(\R,L^2)},$$
If we additionally assume that $A$ commutes with all $P_\lambda$, then we obtain
\begin{align*}\|Au\|_{\ell^2 U^p(\R,L^2)}
=\Big(\sum_{\lambda\in 2^\Z} \|A P_\lambda u\|_{U^p(\R,L^2)}^2 \Big)^{\frac12}
\les{}&  \|A\|_{U^p(\R,\mathcal{L}(L^2))} \Big(\sum_{\lambda\in 2^\Z} \| P_\lambda u\|_{U^p(\R,L^2)}^2 \Big)^{\frac12}\\
={}& \|A\|_{U^p(\R,\mathcal{L}(L^2))} \|u\|_{\ell^2 U^p(\R,L^2)}.
\end{align*}

Now we prove the $V^p$-bounds.
For any partition $(t_k)$ we have, by Minkowski's inequality,
\begin{align*}
\Big(\sum_{k} \|Bv(t_k)-Bv(t_{k-1})\|_{L^2}^p \Big)^{\frac1p}\les{}& \Big(\sum_{k} \big\|\big(B (t_k)-B(t_{k-1})\big)v(t_k)\big\|_{L^2}^p\Big)^{\frac1p}\\&+ \Big(\sum_{k} \big\|B(t_{k-1})\big(v(t_k)- v(t_{k-1})\big)\big\|_{L^2}^p \Big)^{\frac1p}
\end{align*}
which implies
\[
 | B v|_{V^p(\R, L^2)} \les |B|_{V^p(\R,\mathcal{L}(L^2))}\|v\|_{L^\infty(\R,L^2)}+\|B\|_{L^\infty(\R,\mathcal{L}(L^2))} | v |_{V^p(\RR, L^2)},
\]
and together with the obvious bound $$\| B v\|_{L^\infty (\R, L^2)} \les \|B\|_{L^\infty(\R,\mathcal{L}(L^2))}\|v\|_{L^\infty(\R,L^2)}$$ we obtain
\[
 \| B v\|_{V^p(\R, L^2)} \les \|B\|_{V^p(\R,\mathcal{L}(L^2))} \| v \|_{V^p(\RR, L^2)}.
\]
If we additionally assume that $B$ commutes with all $P_\lambda$, then we obtain
\begin{align*}
 \| B v\|_{\ell^2 V^p(\R, L^2)} \les \|B\|_{V^p(\R,\mathcal{L}(L^2))} \| v \|_{\ell^2 V^p(\RR, L^2)}
\end{align*}
by the same argument as for $\ell^2U^p$ presented above.
\end{proof}

\begin{remark}\label{rem:Up operator bounds}
Let $1\les p < \infty$ and $I = [0, T) \subset \RR$. In the special case where $A: \RR \to \mc{L}(L^2)$ is a Fourier multiplier with $C^1$-symbol $a(t,\xi)$, a short computation gives the upper bound
        \begin{equation}\label{eqn:rem Up operator bounds:Vp}
            \| \ind_I A \|_{V^p(\R,\mathcal{L}(L^2))} \lesa \sup_{t\in I} \| a(t) \|_{L^\infty_\xi}+ \Big( \int_I \| \p_t a(t)  \|_{L^\infty_\xi} dt \Big)^{\frac{1}{p}} \sup_{t\in I} \| a(t) \|_{L^\infty_\xi}^{1-\frac{1}{p}}.
        \end{equation}
More precisely, since
        $$ \| A(s_1) - A(s_2) \|_{\mc{L}(L^2)} \les \| a(s_1) - a(s_2) \|_{L^\infty_\xi} \les \int_{s_1}^{s_2} \| \p_t a \|_{L^\infty_\xi} dt$$
the bound \eqref{eqn:rem Up operator bounds:Vp} follows from
        \begin{align*}
            \| \ind_I A \|_{V^p(\R,\mathcal{L}(L^2))} &= \|\ind_I A\|_{L^\infty_t \mc{L}(L^2)}+\Big(\sup_{t_1< t_2< \dots < t_N} \sum_{j=1}^{N-1}  \| (\ind_I A)(t_{j+1}) - (\ind_I A)(t_j) \|_{\mc{L}(L^2)}^p\Big)^{\frac1p}  \\
                &\lesa \sup_{t\in I} \|  A(t) \|_{\mc{L}(L^2)} +  \sup_{t\in I} \| A(t) \|_{\mc{L}(L^2)}^{1-\frac1p}\Big( \sup_{0=t_1< t_2< \dots < t_N=T} \sum_{j=1}^{N-1}  \| A(t_{j+1}) - A(t_j) \|_{\mc{L}(L^2)}\Big)^{\frac1p}.
        \end{align*}
Similarly, in the $U^p$ case, for any $1\les q < p$  the continuous embedding $V^q \subset U^p$, see e.g.\ \cite[(4.1)]{Candy2018b} (holds for functions with values in Hilbert spaces) together with \eqref{eqn:rem Up operator bounds:Vp} implies that
        \begin{equation}\label{eqn:rem Up operator bounds:Up}
            \| \ind_I A \|_{U^p(\R,\mathcal{L}(L^2))} \lesa \sup_{t\in I} \| a(t) \|_{L^\infty_\xi}+\Big( \int_I \| \p_t a(t)  \|_{L^\infty_\xi} dt \Big)^{\frac{1}{q}} \sup_{t\in I} \| a(t) \|_{L^\infty_\xi}^{1-\frac{1}{q}}.
        \end{equation}
\end{remark}

To deal with the non-resonant contribution to the non-relativistic limit, we require a high temporal frequency version of the energy inequality.

\begin{lemma}\label{lem:high freq energy ineq}
Let $1\les a \les 2$ and $R>0$. If $G \in L^a_t L^2_x$ has temporal frequencies supported in $\{|\tau|\g R\}$ then for any $0\les t_1 < t_2 < \infty$ we have
		$$ \Big\| \int_0^t  \ind_{[t_1, t_2)}(s) G(s) ds \Big\|_{\ell^2 U^a} \lesa R^{\frac{1}{a} -1} \| G \|_{L^a_t L^2_x}.  $$
\end{lemma}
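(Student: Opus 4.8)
The plan is to use the dual characterisation of $U^a$ from Lemma~\ref{lem:Up basic prop}~(iv). Write $H(s)=\ind_{[t_1,t_2)}(s)G(s)$ and $u(t)=\int_0^t H(s)\,ds$. Since $\ind_{[t_1,t_2)}G\in L^1_tL^2_x$, the function $u$ is continuous into $L^2_x$, bounded, vanishes as $t\to-\infty$, and satisfies $\partial_t u=H$ a.e.; hence Lemma~\ref{lem:Up basic prop}~(iv) applies once we verify $D(u)<\infty$, and gives $\|u\|_{U^a}\approx D(u)$. Time cutoff, time integration, and the temporal frequency support condition all commute with the spatial projections $P_\lambda$, so it suffices to prove the frequency-localised bound $\|P_\lambda u\|_{U^a}\lesa R^{\frac1a-1}\|P_\lambda G\|_{L^a_tL^2_x}$; squaring, summing in $\lambda$, and using the elementary inequality $\sum_\lambda\|P_\lambda G\|_{L^a_tL^2_x}^2\lesa\|G\|_{L^a_tL^2_x}^2$ (Plancherel in $x$ and Minkowski's inequality, valid since $a\les2$) then completes the argument. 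The case $a=1$ is immediate, since then $R^{\frac1a-1}=1$ and $u=\int_{t_1}^{t_2}\ind_{[s,\infty)}(\cdot)\,G(s)\,ds$ is a superposition of rescaled $U^1$-atoms of total mass $\les\|G\|_{L^1_tL^2_x}$; so from now on assume $1<a\les2$.

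For the frequency-localised bound, fix $\phi\in C_0^\infty$ with $\|\phi\|_{V^{a'}}\les1$. Integrating by parts in $t$ (the boundary terms vanish as $\phi$ has compact support and $u\in L^\infty_tL^2_x$) and using $\partial_t u=H$,
\[
 \int_{\RR^{1+d}}\partial_t\phi\,u\,dt\,dx=-\int_{\RR^{1+d}}(\ind_{[t_1,t_2)}\phi)\,G\,dt\,dx.
\]
I would then introduce a smooth temporal low-pass multiplier $Q$ at scale $R$, with symbol supported in $\{|\tau|<R\}$ and equal to $1$ on $\{|\tau|\les R/2\}$, so that $\int K_R=1$ where $K_R(r)=Rk(Rr)$ with $k$ Schwartz. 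Because $Q(\ind_{[t_1,t_2)}\phi)$ has temporal Fourier support in $\{|\tau|<R\}$, disjoint from the temporal Fourier support of $G$, it pairs to zero; hence the right-hand side equals $-\int(I-Q)(\ind_{[t_1,t_2)}\phi)\,G$, and Hölder's inequality gives
\[
 D(u)\les\sup_{\|\phi\|_{V^{a'}}\les1}\big\|(I-Q)(\ind_{[t_1,t_2)}\phi)\big\|_{L^{a'}_tL^2_x}\,\|G\|_{L^a_tL^2_x}.
\]
Since restricting to an interval changes the $a'$-variation by at most $O(\|\phi\|_{L^\infty})$, we have $\|\ind_{[t_1,t_2)}\phi\|_{V^{a'}}\lesa\|\phi\|_{V^{a'}}\les1$, and as $\frac1a-1=-\frac1{a'}$ everything reduces to showing $\|(I-Q)w\|_{L^{a'}_tL^2_x}\lesa R^{-\frac1{a'}}\|w\|_{V^{a'}}$.

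Writing $Q$ as convolution with $K_R$, one has $(I-Q)w(t)=\int K_R(r)\,(w(t)-w(t-r))\,dr$, so by Minkowski's integral inequality $\|(I-Q)w\|_{L^{a'}_tL^2_x}\les\int|k(\rho)|\,\|w(\cdot)-w(\cdot-\tfrac\rho R)\|_{L^{a'}_tL^2_x}\,d\rho$. The one fact needed is the $L^{a'}$-modulus-of-continuity bound $\|w(\cdot)-w(\cdot-h)\|_{L^{a'}_tL^2_x}\les|h|^{1/a'}|w|_{V^{a'}}$: writing $t=kh+r$ with $r\in[0,|h|)$, for each fixed $r$ the sum $\sum_{k\in\ZZ}\|w((k+1)h+r)-w(kh+r)\|_{L^2_x}^{a'}$ is a variation sum over an increasing partition and hence $\les|w|_{V^{a'}}^{a'}$; integrating in $r$ gives $\int_{\RR}\|w(t)-w(t-h)\|_{L^2_x}^{a'}\,dt\les|h|\,|w|_{V^{a'}}^{a'}$. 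Combining this with $\int|k(\rho)|\,|\rho|^{1/a'}\,d\rho<\infty$ yields the claim.

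The main difficulty is obtaining exactly the gain $R^{\frac1a-1}$. The naive route — writing $G=\partial_t g$ with $\|g\|_{L^a_tL^2_x}\lesa R^{-1}\|G\|_{L^a_tL^2_x}$ and integrating by parts — stalls, because multiplying $g$ by the sharp cutoff $\ind_{[t_1,t_2)}$ destroys its temporal frequency localisation, so Lemma~\ref{lem:Up basic prop}~(ii) can no longer be invoked to pass from $L^a_t$ to $U^a$. Passing to the dual side circumvents this, at the price of the (elementary but slightly delicate) fact that the $L^{a'}_t$ modulus of continuity of a function is controlled by its $a'$-variation.
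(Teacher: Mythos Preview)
Your proof is correct but takes a genuinely different route from the paper's. The paper uses exactly the ``naive route'' you dismiss: it writes $G=\partial_t g$ with $g=\partial_t^{-1}G$, proves the identity
\[
\int_0^t\ind_{[t_1,t_2)}(s)G(s)\,ds=\ind_{[t_1,t_2)}(t)\big(g(t)-g(t_1)\big)+\ind_{[t_2,\infty)}(t)\big(g(t_2)-g(t_1)\big),
\]
and then bounds each piece in $U^a$. The point you missed is that one does \emph{not} need to apply Lemma~\ref{lem:Up basic prop}~(ii) to $\ind_{[t_1,t_2)}g$; one applies it to $g$ itself (after a dyadic temporal decomposition $g=\sum_{d\gtrsim R}P^{(t)}_d g$, giving $\|g\|_{U^a}\lesa\sum_{d\gtrsim R}d^{1/a-1}\|G\|_{L^a_tL^2_x}\lesa R^{1/a-1}\|G\|_{L^a_tL^2_x}$), and then uses the elementary fact $\|\ind_{[t_1,t_2)}u\|_{U^a}\les\|u\|_{U^a}$ together with $U^a\subset L^\infty_tL^2_x$ to control the constant terms $g(t_1),g(t_2)$. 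So the sharp cutoff destroys the frequency localisation of $g$, but this is irrelevant once $g$ is already in $U^a$.

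Your dual approach via Lemma~\ref{lem:Up basic prop}~(iv) and the $L^{a'}$-modulus-of-continuity bound $\|w(\cdot)-w(\cdot-h)\|_{L^{a'}_tL^2_x}\les|h|^{1/a'}|w|_{V^{a'}}$ is clean and self-contained; it trades the interval-cutoff stability of $U^a$ for the (equally standard but perhaps less familiar) Besov-type embedding of $V^{a'}$. The paper's argument is shorter and more direct once one knows that interval cutoffs are bounded on $U^a$.
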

\begin{proof}
The support assumption on $G$ implies that $\p_t^{-1} G$ is well-defined (and belongs to say $L^a_t L^2_x$) and $G = \p_t (\p_t^{-1} G)$. Hence we have the identity
		$$\int_0^t \ind_{[t_1, t_2)}(s) G(s) ds = \ind_{[t_1, t_2)}(t) \Big( (\p_t^{-1} G)(t) - (\p_t^{-1} G)(t_1)\Big) + \ind_{[t_2, \infty)}(t) (\p_t^{-1} G)(t_2). $$
Therefore applying (ii) in Lemma \ref{lem:Up basic prop}, noting that $\ind_{[a, b)} G(a)$ is a rescaled atom, $\| \ind_{[a,b)} u \|_{U^2} \les \| u \|_{U^2}$, and letting $P^{(t)}_d$ denote a temporal Fourier cutoff to $|\tau|\approx d$, we see that for any $\lambda \in 2^\ZZ$
	\begin{align*}
			\Big\| \int_0^t  \ind_{[t_1, t_2)}(s) G_{\lambda}(s) ds \Big\|_{U^a}
					&\les \| \p_t^{-1} G \|_{U^a} + 2 \| \p_t^{-1} G \|_{L^\infty_t L^2_x} \\
					&\lesa \sum_{d \gtrsim R} d^{\frac{1}{a}} \| \p_t^{-1} P^{(t)}_d G_\lambda \|_{L^a_t L^2_x} \lesa R^{\frac{1}{a} -1} \| G_{\lambda } \|_{L^a_t L^2_x}.
	\end{align*}
The claimed bound now follows by summing up over $\lambda \in 2^{\ZZ}$.
\end{proof}

\subsection{Adapted function spaces} \label{subsec:adap func}

To construct our solution spaces, we adapt the spaces $\ell^2 U^a$ and $\ell^2 V^{a'}$ to the Dirac equation by pulling back along the linear Dirac flow $\mc{U}_m(t)$.

\begin{definition}\label{def:sol-spaces}
Given $m\in \RR$, $ s,\sigma \in \RR$,  and $1< a < 2$, we define $S^{s,\sigma}_m =|\nabla|^{-\sigma} \lr{\nabla}_m^{\sigma-s} \mc{U}_m(t) \ell^2 U^a$ and the weaker space $S_{w, m}^{s,\sigma} = |\nabla|^{-\sigma} \lr{\nabla}_m^{\sigma-s}  \mc{U}_m(t) V^{a'}$ with the norms
        $$ \| \psi \|_{S^{s,\sigma}_m} = \| |\nabla|^{\sigma} \lr{\nabla}_m^{s-\sigma} \mc{U}_m(-t) \psi \|_{\ell^2 U^a}, \qquad \| \psi \|_{S^{s,\sigma}_{w, m}} = \| |\nabla|^{\sigma} \lr{\nabla}_m^{s-\sigma} \mc{U}_m(-t) \psi \|_{\ell^2 V^{a'}}. $$
where $\frac{1}{a'} = 1 - \frac{1}{a}$ denotes the conjugate exponent to $a$.
\end{definition}
There is some freedom in the choice of the parameter $1<a<2$, but eventually  $a$ is chosen fairly close to $2$. Our nonlinear solutions are constructed in $S_m^{s,\sigma}$, while the weaker $V^{a'}$ based space $S_{w,m}^{-s,-\sigma}$ is essentially the dual to $S_m^{s,\sigma}$ (after applying $\p_t^{-1}$).

For later use,  we note that the disposability of Fourier multipliers in Lemma \ref{lem:Up basic prop} implies that
    \begin{equation}\label{eqn:hom decomp of Sm}
        \| \psi \|_{S^{s,\sigma}_m}^2 \approx \sum_{\substack{ \lambda \in 2^\ZZ }} |\lambda|^{2\sigma} \lr{\lambda}_m^{2s-2\sigma}\| \mc{U}_m(-t) \psi_\lambda \|_{U^a}^2.
    \end{equation}
An identical comment applies to the weak counterpart $S^{s,\sigma}_{w, m}$. Similarly, a straight forward computation using \eqref{eqn:lin dirac flow} and Lemma \ref{lem:Up basic prop} gives
    \begin{equation}\label{eqn:S decomp into pm}
        \begin{split}
             \| \mc{U}_m(-t) \psi \|_{U^a} &\approx \| e^{-it\lr{\nabla}_m} \Pi_+ \psi \|_{U^a} + \| e^{it\lr{\nabla}_m} \Pi_- \psi \|_{U^a},\\
              \| \mc{U}_m(-t) \varphi \|_{V^{a'}} &\approx \| e^{-it\lr{\nabla}_m} \Pi_+ \psi \|_{V^{a'}} + \| e^{it\lr{\nabla}_m} \Pi_- \psi \|_{V^{a'}}.
        \end{split}
    \end{equation}
 Finally, as we often deal with frequency localised estimates with fixed $m\in \{0,1\}$, for  ease of notation we frequently suppress the explicit dependence on the mass $m$, and simply write
        $$ S = S_m^{0,0} \qquad \text{and} \qquad S_{w} = S^{0,0}_{w,m}. $$

The solution spaces $S$ and $S_w$ are built up in terms of $U^p$ and $V^p$, and hence they inherit many of the same properties contained in Lemma \ref{lem:Up basic prop}. In particular, after noting that $\mc{U}(-t) C_{\g \beta} \psi$ has temporal Fourier support in the region $\{\tau \gtrsim \beta\}$, an application of Lemma \ref{lem:Up basic prop} immediately implies the following.

\begin{lemma}[Basic Properties of $S$ and $S_w$]\label{lem:prop of S}
Let $p\g a>1$ and $q\g a'$. Then we have the continuous embeddings $S\subset S_w \subset L^\infty_t L^2_x$, and for any $\beta, \lambda \in 2^\ZZ$ we have modulation bounds
        \begin{equation}\label{eqn:Xsb control}
		\| C_{\g \beta}  \psi_{\lambda} \|_{L^p_t L^2_x} \lesa \beta^{-\frac{1}{p}} \|  \psi \|_{S}, \qquad \|C_{\g \beta}  \psi_\lambda \|_{L^{q}_t L^2_x} \lesa \beta^{-\frac{1}{q}}\| \psi \|_{S_w} 	
        \end{equation}
and disposability bounds
        \begin{equation}\label{eqn:disposability}
            \| C_{\les \beta} \psi \|_{S} \lesa \| \psi \|_S, \qquad \| C_{\les \beta} \psi \|_{S_w}\lesa \| \psi \|_{S_w}.
        \end{equation}
Moreover, if $\psi \in S_w$, then there exists a scattering state $f_\infty\in L^2$ such that
        $$ \lim_{t\to \infty} \| \mc{U}(-t) \psi - f_\infty \|_{L^2_x} = 0. $$
\end{lemma}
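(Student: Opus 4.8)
The plan is to obtain all four assertions by transporting the properties of $U^a$ and $V^{a'}$ collected in Lemma~\ref{lem:Up basic prop} through the conjugation $\psi\mapsto\mc{U}_m(-t)\psi$, exploiting that $\mc{U}_m(-t)$ is unitary on $L^2_x$ for each fixed $t$ and commutes with the spatial projections $P_\lambda$, so the outer $\ell^2$-sum over $\lambda\in2^\ZZ$ is untouched. With this, the embeddings are immediate: applying the frequency-localised inclusion $U^a\subset V^{a'}$ from part~(i) of Lemma~\ref{lem:Up basic prop} (legitimate since $a<2<a'$) to each piece $\mc{U}_m(-t)\psi_\lambda$ and square-summing gives $S\subset S_w$; and $\|\mc{U}_m(-t)\psi(t)\|_{L^2_x}^2\lesa\sum_\lambda\|\mc{U}_m(-t)\psi_\lambda(t)\|_{L^2_x}^2\les\sum_\lambda\|\mc{U}_m(-t)\psi_\lambda\|_{V^{a'}}^2=\|\psi\|_{S_w}^2$, uniformly in $t$, using the finite overlap of the Fourier supports and $V^{a'}\subset L^\infty_tL^2_x$, gives $S_w\subset L^\infty_tL^2_x$.

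Next I would record the observation that drives the rest: after conjugation by the linear Dirac flow, the modulation cutoff becomes an ordinary temporal Fourier multiplier. Writing $C_{\les\beta}=C^{m,+}_{\les\beta}\Pi_++C^{m,-}_{\les\beta}\Pi_-$ and using \eqref{eqn:lin dirac flow}, the half-wave propagator $e^{\mp it\lr{\nabla}_m}$ carries the spacetime symbol $\chi(|\tau\mp\lr{\xi}_m|\les\beta)$ of $C^{m,\pm}_{\les\beta}$ onto the symbol $\chi(|\tau|\les\beta)$, so $\mc{U}_m(-t)C_{\les\beta}\psi=\widetilde{C}^{(t)}_{\les\beta}\mc{U}_m(-t)\psi$, where $\widetilde{C}^{(t)}_{\les\beta}$ is a temporal Fourier cutoff, the same on both $\Pi_\pm$ components and independent of $m$. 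Since such a cutoff is convolution in $t$ against a rescaled Schwartz bump, it is bounded on $U^a$ and on $V^{a'}$ with constant $\lesa1$ (time translations being isometries) and commutes with $P_\lambda$; part~(iii) of Lemma~\ref{lem:Up basic prop} then yields the disposability bounds \eqref{eqn:disposability}.

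The same computation identifies $\mc{U}_m(-t)C_{\approx d}\psi_\lambda=\widetilde{C}^{(t)}_{\approx d}\mc{U}_m(-t)\psi_\lambda$ as a function with temporal Fourier support in the annulus $\{|\tau|\approx d\}$, so part~(ii) of Lemma~\ref{lem:Up basic prop} and unitarity give, for $p\g a$,
$$\|C_{\approx d}\psi_\lambda\|_{L^p_tL^2_x}\approx d^{-\frac1p}\big\|\widetilde{C}^{(t)}_{\approx d}\mc{U}_m(-t)\psi_\lambda\big\|_{U^p}\lesa d^{-\frac1p}\big\|\mc{U}_m(-t)\psi_\lambda\big\|_{U^a}\les d^{-\frac1p}\|\psi\|_S,$$
where I used $U^a\subset U^p$, disposability of $\widetilde{C}^{(t)}_{\approx d}$, and \eqref{eqn:hom decomp of Sm}. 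Decomposing $C_{\g\beta}=\sum_{d\in2^\ZZ,\,d\gtrsim\beta}C_{\approx d}$, the triangle inequality in $L^p_tL^2_x$ and the convergent geometric series $\sum_{d\gtrsim\beta}d^{-1/p}\lesa\beta^{-1/p}$ give the first bound in \eqref{eqn:Xsb control}; the second is the same argument with $V^q$-norms in place of $U^p$-norms on annulus-localised functions (part~(ii) also gives $\|\cdot\|_{U^q}\approx\|\cdot\|_{V^q}$ there, which handles the endpoint $q=a'$), together with $V^{a'}\subset V^q$ and $\mc{U}_m(-t)\psi_\lambda\in V^{a'}$.

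For the scattering claim I would invoke the fact that every $V^{a'}$-function converges in $L^2_x$ as $t\to\infty$: applying this to each $\mc{U}_m(-t)\psi_\lambda\in V^{a'}$ gives limits $g_\lambda$ with $\|g_\lambda\|_{L^2_x}\les\|\psi_\lambda\|_{S_w}$, so $f_\infty:=\sum_\lambda g_\lambda$ converges in $L^2_x$ by square-summability and finite overlap, and $\|\mc{U}_m(-t)\psi-f_\infty\|_{L^2_x}\to0$ by an $\varepsilon/3$ argument (finitely many frequencies, where convergence holds, plus a tail controlled uniformly in $t$ by $(\sum_\lambda\|\psi_\lambda\|_{S_w}^2)^{1/2}$). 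The only step that really needs care is verifying that $\mc{U}_m(-t)C_{\g\beta}\psi$ has temporal Fourier support in $\{|\tau|\gtrsim\beta\}$, i.e.\ that each characteristic sheet $\tau=\pm\lr{\xi}_m$ gets flattened to $\{\tau=0\}$ by the appropriate half-wave propagator on $\Pi_\pm$; with that and the isometry/commutation properties of $\mc{U}_m$ in hand, everything else is a transcription of Lemma~\ref{lem:Up basic prop}.
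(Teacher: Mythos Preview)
Your proposal is correct and follows essentially the same route as the paper: both reduce everything to Lemma~\ref{lem:Up basic prop} after the key observation $C_{\les\beta}=\mc{U}_m(t)P^{(t)}_{\les\beta}\mc{U}_m(-t)$, so that modulation cutoffs become temporal Fourier cutoffs (bounded on $U^a$ and $V^{a'}$ as convolution operators) and $\mc{U}_m(-t)C_{\g\beta}\psi$ has temporal Fourier support in $\{|\tau|\gtrsim\beta\}$. You supply more detail than the paper --- the dyadic decomposition $C_{\g\beta}=\sum_{d\gtrsim\beta}C_{\approx d}$ for the modulation bounds and the $\varepsilon/3$ argument for scattering --- but the underlying mechanism is identical.
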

\begin{proof}
The only property that does not immediately follow from Lemma \ref{lem:Up basic prop} is the disposability bounds \eqref{eqn:disposability}. As $C_{\les \beta} = \mc{U}(t) P^{(t)}_{\les \beta} \mc{U}(-t)$ where $P^{(t)}_{\les \beta}$ is a smooth temporal Fourier multiplier to the region $\{|\tau| \les \beta\}$, after unpacking the definitions, it suffices to prove that
        $$ \| P^{(t)}_{\les \beta} u \|_{U^a} \lesa \| u\|_{U^a},  \qquad \| P^{(t)}_{\les \beta} v \|_{V^{a'}} \lesa \| v \|_{V^{a'}}.$$
But this follows from the boundedness of temporal convolution operators on $U^p$ and $V^p$, which in turn can be obtained from the dual characterisation (iv) in Lemma \ref{lem:Up basic prop} (see for instance \cite{Koch2014} for details).
\end{proof}

A slightly more involved property is the energy inequality.

\begin{lemma}[Energy Inequality]\label{lem:energy ineq}
Let $s, \sigma\in \RR$. Let $f\in L^2_x$ and assume that for all $\lambda \in 2^\ZZ$ we have $ F_\lambda \in L^1_{t, loc} L^2_x$, and the bound
	$$ \sup_{\substack{\phi \in C_0^\infty(\RR^{1+d}) \\ \| \phi \|_{S^{-s,-\sigma}_{w, m}} \les 1}} \Big| \int_0^\infty \int_{\RR^d} \overline{\phi} F dt dx \Big| < \infty. $$
If we let
		$$\psi(t) = \ind_{[0, \infty)}(t) \mc{U}_m(t) f +  \ind_{[0, \infty)}(t) \int_0^t \mc{U}_m(t-t') \gamma^0 F(t') dt'$$
then $ \psi - \ind_{[0, \infty)}(t) \mc{U}_m(t) f  \in C(\RR; H^{s,\sigma}_m) \cap S^{s,\sigma}_m $ and moreover
        $$ \| \psi \|_{S^{s,\sigma}_m} \lesa \| f \|_{H^{s,\sigma}_m} +  \sup_{\substack{\phi \in C_0^\infty(\RR^{1+d}) \\ \| \phi \|_{S^{-s,-\sigma}_{w, m}} \les 1}} \Big| \int_0^\infty \int_{\RR^d} \overline{\phi} F dt dx \Big|.$$
\end{lemma}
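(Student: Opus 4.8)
The plan is to write $\psi=\ind_{[0,\infty)}\mc{U}_m(\cdot)f+\Phi$ with $\Phi(t)=\ind_{[0,\infty)}(t)\int_0^t\mc{U}_m(t-t')\gamma^0 F(t')\,dt'$, and to bound the two pieces in $S^{s,\sigma}_m$ separately (one may assume $f\in H^{s,\sigma}_m$, otherwise the right-hand side is infinite). For the homogeneous part, since $\mc{U}_m(-t)\big[\ind_{[0,\infty)}\mc{U}_m(t)f\big]=\ind_{[0,\infty)}(t)f$ and, for each $\lambda\in 2^\ZZ$, the map $t\mapsto\ind_{[0,\infty)}(t)f_\lambda$ is a rescaled $U^a$-atom, Lemma \ref{lem:Up basic prop} gives $\|\ind_{[0,\infty)}f_\lambda\|_{U^a}\les\|f_\lambda\|_{L^2}$; plugging this into the frequency decomposition \eqref{eqn:hom decomp of Sm} yields $\|\ind_{[0,\infty)}\mc{U}_m(\cdot)f\|_{S^{s,\sigma}_m}\lesa\|f\|_{H^{s,\sigma}_m}$. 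It then remains to prove $\|\Phi\|_{S^{s,\sigma}_m}\lesa N(F)$, where $N(F)$ denotes the supremum in the hypothesis.

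Write $B=|\nabla|^\sigma\lr{\nabla}_m^{s-\sigma}$, a self-adjoint scalar Fourier multiplier that commutes with $\mc{U}_m(t)$, with $\gamma^0$ and with each $P_\lambda$. Then $w:=B\mc{U}_m(-t)\Phi=\ind_{[0,\infty)}(t)\int_0^t\mc{U}_m(-t')\gamma^0 BF(t')\,dt'$, so that $\|\Phi\|_{S^{s,\sigma}_m}=\|w\|_{\ell^2 U^a}$ and $\p_t w_\lambda=\ind_{[0,\infty)}\mc{U}_m(-t)\gamma^0 BF_\lambda$. Applying Lemma \ref{lem:Up basic prop}(iv) in each frequency block together with a Cauchy--Schwarz argument over $\lambda$,
$$\|w\|_{\ell^2 U^a}\approx\sup\Big\{\,\Big|\sum_\lambda\int_{\RR^{1+d}}(\p_t v_\lambda)^\dagger w_\lambda\,dt\,dx\Big|\ :\ v_\lambda\in C_0^\infty,\ P_\lambda v_\lambda=v_\lambda,\ \sum_\lambda\|v_\lambda\|_{V^{a'}}^2\les1\,\Big\}.$$
For such a test family, integrating by parts in time (the boundary terms vanish since each $v_\lambda$ is compactly supported in $t$) and using $\mc{U}_m(-t)^\dagger=\mc{U}_m(t)$, $B^\dagger=B$, $(\gamma^0)^\dagger=\gamma^0$, $\gamma^0 B=B\gamma^0$ and $P_\lambda v_\lambda=v_\lambda$ (so that $P_\lambda F$ may be replaced by $F$), one finds
$$\int_{\RR^{1+d}}(\p_t v_\lambda)^\dagger w_\lambda\,dt\,dx=-\int_0^\infty\int_{\RR^d}\big(B\mc{U}_m(t)v_\lambda\big)^\dagger\gamma^0 F\,dx\,dt=-\int_0^\infty\int_{\RR^d}\overline{B\mc{U}_m(t)v_\lambda}\,F\,dx\,dt.$$
The key structural point is that the $\gamma^0$ produced by $\p_t w$ recombines with the complex conjugate into exactly the Dirac adjoint $\overline{\,\cdot\,}=(\cdot)^\dagger\gamma^0$ appearing in the hypothesis; were one instead to pair against $\gamma^0 F$ using an ordinary adjoint, one would be left with the operator $\mc{U}_m(-t)\gamma^0\mc{U}_m(t)$, which is \emph{not} bounded on $V^{a'}$. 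Summing over $\lambda$ and setting $\phi:=\sum_\lambda B\mc{U}_m(t)v_\lambda$ gives $\big|\sum_\lambda\int(\p_t v_\lambda)^\dagger w_\lambda\big|=\big|\int_0^\infty\int\overline\phi\,F\big|\les N(F)\,\|\phi\|_{S^{-s,-\sigma}_{w,m}}$, while $\|\phi\|_{S^{-s,-\sigma}_{w,m}}=\|B^{-1}\mc{U}_m(-t)\phi\|_{\ell^2 V^{a'}}=\big\|\sum_\lambda v_\lambda\big\|_{\ell^2 V^{a'}}\approx\big(\sum_\lambda\|v_\lambda\|_{V^{a'}}^2\big)^{1/2}\les1$, using once more that $B$ commutes with $\mc{U}_m$, that $\mc{U}_m(-t)\mc{U}_m(t)=I$, and the frequency localization of the $v_\lambda$. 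Taking the supremum proves $\|\Phi\|_{S^{s,\sigma}_m}\lesa N(F)$.

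Finally, for the membership $\Phi\in C(\RR;H^{s,\sigma}_m)$: for each fixed $\lambda$, since $F_\lambda\in L^1_{t, loc}L^2_x$ and $\mc{U}_m$ is strongly continuous on $L^2$, the integral $\Phi_\lambda(t)=\ind_{[0,\infty)}(t)\int_0^t\mc{U}_m(t-t')\gamma^0 F_\lambda(t')\,dt'$ is continuous in $t$ with values in $L^2_x$, hence in $H^{s,\sigma}_m$ (the weight is a bounded multiplier on $\{|\xi|\approx\lambda\}$). The embedding $\|h\|_{L^\infty_t H^{s,\sigma}_m}\lesa\|h\|_{S^{s,\sigma}_m}$ — a consequence of $U^a\subset L^\infty_t L^2$, \eqref{eqn:hom decomp of Sm} and Minkowski — applied to tails shows $\sum_{\lambda\in I}\Phi_\lambda\to\Phi$ uniformly in $t$ in $H^{s,\sigma}_m$ as $I\uparrow 2^\ZZ$, so $\Phi$ is a uniform limit of continuous $H^{s,\sigma}_m$-valued functions and hence continuous; together with $\Phi\in S^{s,\sigma}_m$ this completes the proof. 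The main obstacle is the duality identity in the middle paragraph (matching the Duhamel $\gamma^0$ with the Dirac-adjoint pairing); a secondary, routine technical point is that applying Lemma \ref{lem:Up basic prop}(iv) requires knowing a priori that each $w_\lambda\in L^\infty_t L^2$, which is handled by first establishing the estimate for $F$ whose frequency pieces $F_\lambda$ have compact time support and then using the resulting uniform bound to pass to general $F$ with $N(F)<\infty$.
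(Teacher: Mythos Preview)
Your proof is correct and follows essentially the same approach as the paper's: both treat the homogeneous piece via the observation that $\ind_{[0,\infty)}f_\lambda$ is a rescaled atom, then control the Duhamel term via the dual characterisation of $U^a$ (Lemma \ref{lem:Up basic prop}(iv)), carrying out the same integration-by-parts that turns the $\gamma^0$ from the Duhamel formula into the Dirac adjoint $\overline{\phi}$ paired against $F$, and finally obtain continuity from frequency-wise continuity plus the uniform $\ell^2$-tail bound. Your packaging via the multiplier $B=|\nabla|^\sigma\lr{\nabla}_m^{s-\sigma}$ is equivalent to the paper's explicit use of the weights $|\lambda|^\sigma\lr{\lambda}_m^{s-\sigma}$ and the $(a_\lambda)\in\ell^2$ parametrisation of the test family; you are also slightly more explicit about the a priori $L^\infty_t L^2_x$ hypothesis needed to invoke Lemma \ref{lem:Up basic prop}(iv), which the paper handles implicitly through the local continuity of $\psi_\lambda$.
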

\begin{proof}
In view of the bound
        $$ \| \ind_{[0, \infty)} \mc{U}(t) f \|_{S} \approx \| \ind_{[0, \infty)} f \|_{\ell^2 U^a} \lesa \| f\|_{L^2}$$
it suffices to consider the case $f=0$. For every $\lambda \in 2^\ZZ$ the assumption $F_\lambda \in L^1_{t,loc}L^2_x$ immediately implies that $\psi_\lambda$, and hence $\mc{U}_m(-t) \psi_\lambda$, is continuous (as a map into $L^2$) on any finite time interval. On the other hand, for any $v\in C^\infty_0(\RR^{1+d})$ we have
    \begin{align*}
        \Big| \int_0^\infty \int_{\RR^d} (\p_t v)^\dagger \mc{U}_m(-t) \psi_\lambda \, dx dt\Big|
                 &= \Big| \int_0^\infty \int_{\RR^d} \overline{ \mc{U}_m(t) v}  F_\lambda \, dx dt\Big| \\
                 &\les \| \mc{U}_m(t) v_\lambda \|_{S^{-s,-\sigma}_{w, m}} \sup_{\substack{\phi \in C_0^\infty(\RR^{1+d}) \\ \| \phi \|_{S^{-s,-\sigma}_{w, m}} \les 1}} \Big| \int_0^\infty \int_{\RR^d} \overline{\phi} F dt dx \Big| \\
                 &\lesa |\lambda|^{-\sigma}\lr{\lambda}^{\sigma-s}_m \| v \|_{V^{a'}}  \sup_{\substack{\phi \in C_0^\infty(\RR^{1+d}) \\ \| \phi \|_{S^{-s,-\sigma}_{w, m}} \les 1}} \Big| \int_0^\infty \int_{\RR^d} \overline{\phi} F dt dx \Big| < \infty.
    \end{align*}
Hence (iv) in  Lemma \ref{lem:Up basic prop} implies that $\psi_\lambda \in U^a$. Moreover, the above computation together with a standard duality argument gives
    \begin{align*}
      \| \psi \|_{S^{s,\sigma}_m}
      &\approx \sup_{ \substack{\|(a_\lambda)_{\lambda \in 2^\ZZ}\|_{\ell^{2}}<1\\ \sup_{\lambda \in 2^\ZZ}  \| v^{(\lambda)} \|_{V^{a'}} \les 1}} \Big| \sum_{\lambda \in 2^\ZZ}\int_{\RR^{1+d}} \Big( |\lambda|^\sigma \lr{\lambda}^{s-\sigma}_m a_\lambda \p_t v^{(\lambda)}\Big)^\dagger \mc{U}_m(-t) \psi_\lambda dt dx \Big| \\
      &\approx \sup_{ \substack{\|(a_\lambda)_{\lambda \in 2^\ZZ}\|_{\ell^{2}}<1\\ \sup_{\lambda \in 2^\ZZ}  \| v^{(\lambda)} \|_{V^{a'}} \les 1}} \Big| \sum_{\lambda \in 2^\ZZ} \int_0^\infty \int_{\RR^d} \overline{ \sum_{\lambda \in 2^\ZZ} |\lambda|^\sigma \lr{\lambda}_m^{s-\sigma} a_\lambda \mc{U}_m(t) P_\lambda v^{(\lambda)}} F dx dt \Big|
    \end{align*}
and thus the required bound follows after observing that if $\| (a_\lambda) \|_{\ell^2}< 1$ and $\sup_{\lambda \in 2^\ZZ} \| v^{(\lambda)}\|_{V^{a'}}\les 1$ then
    $$ \Big\| \sum_{\lambda \in 2^\ZZ}|\lambda|^\sigma \lr{\lambda}_m^{s-\sigma} a_\lambda \mc{U}_m(t) P_\lambda v^{(\lambda)} \Big\|_{S^{-s,-\sigma}_{w, m}} \lesa \Big( \sum_{\lambda \in 2^\ZZ} |a_\lambda|^2 \| v^{(\lambda)}\|_{V^{a'}}^2 \Big)^\frac{1}{2} \les 1. $$
Finally the claimed continuity in $t$ follows from the bound
            $$ \Big( \sum_{\lambda \in 2^\ZZ} \| \psi_\lambda \|_{L^\infty_t H^{s,\sigma}_m}^2 \Big)^\frac{1}{2} \lesa \| \psi \|_{S^{s,\sigma}_m} $$
           and  the continuity of $\psi_\lambda$.
\end{proof}

The norms $\| \cdot \|_{S^{s,\sigma}_m}$ have the same scaling as the Sobolev norms $\|\cdot \|_{H^{s,\sigma}_m}$. More precisely, a simple computation gives

\begin{lemma}[Mass rescaling]\label{lem:mass rescaling}
Let $m, s,\sigma \in \RR$ and $\alpha > 0$. Let $\psi \in S^{s,\sigma}_m$ and define
            $$ \varphi(t,x) = \alpha^\frac{1}{2} \psi( \alpha t, \alpha x). $$
Then $\varphi \in S_{\alpha m}^{s,\sigma} $ and $\|\varphi\|_{S_{\alpha m }^{s,\sigma}} \approx \alpha^{s-\frac{d-1}{2}} \| \psi \|_{S_m^{s,\sigma}}$. Moreover, the same holds with $S_m^{s,\sigma}$ replaced with $S_{w,m}^{s,\sigma}$.
\end{lemma}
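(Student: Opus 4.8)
\emph{Proof plan.} The plan is to reduce the statement to three elementary scaling identities. Write $D_\alpha$ for the space--time dilation $(D_\alpha h)(t,x) = \alpha^{\frac12} h(\alpha t, \alpha x)$ and $d_\alpha$ for its spatial analogue $(d_\alpha f)(x) = \alpha^{\frac12}f(\alpha x)$, so that $\varphi = D_\alpha \psi$. The first identity is the intertwining of $d_\alpha$ with the free Dirac flow: since $\mc{H}_m = -i\gamma^0\gamma^j\p_j + m\gamma^0$ transforms under spatial dilation as $\mc{H}_{\alpha m}\, d_\alpha = \alpha\, d_\alpha \mc{H}_m$, exponentiating gives $\mc{U}_{\alpha m}(-t)\, d_\alpha = d_\alpha \,\mc{U}_m(-\alpha t)$ on each time slice; interpreting $\mc{U}_m(-t)\psi$ as the space--time function obtained by pulling back each profile $\psi(t,\cdot)$ along $\mc{U}_m(-t)$, this upgrades to $\mc{U}_{\alpha m}(-t)\varphi = D_\alpha\big[\mc{U}_m(-\cdot)\psi\big]$. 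The second identity concerns the Fourier weights: from $\lr{\xi/\alpha}_m = \alpha^{-1}\lr{\xi}_{\alpha m}$ and the homogeneity of $|\nabla|^\sigma$ one gets the symbol-level relations $|\nabla|^\sigma d_\alpha = \alpha^\sigma d_\alpha |\nabla|^\sigma$ and $\lr{\nabla}_{\alpha m}^{s-\sigma} d_\alpha = \alpha^{s-\sigma} d_\alpha \lr{\nabla}_m^{s-\sigma}$, hence $|\nabla|^\sigma\lr{\nabla}_{\alpha m}^{s-\sigma} D_\alpha = \alpha^s D_\alpha\, |\nabla|^\sigma\lr{\nabla}_m^{s-\sigma}$. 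Combining the two yields
$$ |\nabla|^\sigma \lr{\nabla}_{\alpha m}^{s-\sigma}\mc{U}_{\alpha m}(-t)\varphi = \alpha^s\, D_\alpha\Big[ |\nabla|^\sigma \lr{\nabla}_m^{s-\sigma}\mc{U}_m(-\cdot)\psi\Big]. $$

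It then remains to track how the $\ell^2 U^a$ norm behaves under $D_\alpha$, which is the third identity: $\|D_\alpha h\|_{\ell^2 U^a} \approx \alpha^{-\frac{d-1}{2}}\|h\|_{\ell^2 U^a}$. At the level of a single $U^a$ atom this is immediate, since $D_\alpha$ preserves the step-function structure (the jump times are merely rescaled by $\alpha^{-1}$) and sends each spatial profile $f_j$ to $d_\alpha f_j$ with $\|d_\alpha f_j\|_{L^2} = \alpha^{-\frac{d-1}{2}}\|f_j\|_{L^2}$; thus $D_\alpha$ maps a $U^a$ atom to $\alpha^{-\frac{d-1}{2}}$ times a $U^a$ atom, and so $\|D_\alpha h\|_{U^a} = \alpha^{-\frac{d-1}{2}}\|h\|_{U^a}$. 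For the Besov version one uses $P_\nu D_\alpha = D_\alpha \tilde P_\nu^{(\alpha)}$, where $\tilde P_\nu^{(\alpha)}$ localises to $|\xi|\approx \nu/\alpha$; summing the $U^a$ identity over $\nu\in 2^\ZZ$ gives $\|D_\alpha h\|_{\ell^2 U^a}^2 = \alpha^{-(d-1)}\sum_{\nu}\|\tilde P_\nu^{(\alpha)}h\|_{U^a}^2$, and the right-hand side is comparable to $\alpha^{-(d-1)}\|h\|_{\ell^2 U^a}^2$ because $\{\tilde P_\nu^{(\alpha)}\}_{\nu}$ is a shifted dyadic Littlewood--Paley family and $\ell^2 U^a$ norms built on any such family are equivalent with constants independent of the shift (using finite overlap and the disposability bound in Lemma \ref{lem:Up basic prop}(iii)). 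Applying this to $h = |\nabla|^\sigma\lr{\nabla}_m^{s-\sigma}\mc{U}_m(-\cdot)\psi$ gives $\|\varphi\|_{S^{s,\sigma}_{\alpha m}} = \alpha^s\|D_\alpha h\|_{\ell^2 U^a} \approx \alpha^{s-\frac{d-1}{2}}\|\psi\|_{S^{s,\sigma}_m}$, which in particular forces $\varphi \in S^{s,\sigma}_{\alpha m}$. The argument for $S^{s,\sigma}_{w,m}$ is identical: $D_\alpha$ preserves right-continuity and the vanishing-at-$-\infty$ normalisation and scales both $\|\cdot\|_{L^\infty_t L^2_x}$ and the $a'$-variation seminorm by $\alpha^{-\frac{d-1}{2}}$, so $\|D_\alpha v\|_{V^{a'}} = \alpha^{-\frac{d-1}{2}}\|v\|_{V^{a'}}$ and the same computation goes through.

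No step here is genuinely hard; the only point requiring a little care is the last one, namely that for a dilation parameter $\alpha$ which is not a power of $2$ the standard dyadic decomposition is not exactly scale-covariant, so one must pass through a shifted dyadic family and invoke the (routine) equivalence of the associated $\ell^2 U^a$ norms --- this is why the conclusion is stated with $\approx$ rather than equality. Apart from that, the proof is just careful bookkeeping of the scaling exponents $\sigma$, $s-\sigma$, $\frac{d-1}{2}$ and the $L^2$-dilation factor, together with the symbol-level commutation identities, none of which involves any analytic subtlety.
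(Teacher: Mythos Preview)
Your proof is correct and complete. The paper itself does not supply a proof of this lemma --- it simply introduces it with the phrase ``a simple computation gives'' --- so your argument is precisely the routine bookkeeping the authors had in mind: the intertwining of dilation with $\mc{U}_m$, the symbol scaling $\lr{\xi/\alpha}_m = \alpha^{-1}\lr{\xi}_{\alpha m}$, and the atomic scaling $\|D_\alpha h\|_{U^a} = \alpha^{-\frac{d-1}{2}}\|h\|_{U^a}$, together with the observation that the $\ell^2$ sum over a shifted dyadic family is equivalent to the standard one (which is exactly why the statement carries $\approx$ rather than equality).
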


\section{Strichartz estimates and consequences}\label{sec:str}
Functions in our solution spaces $S$ and $S_w$ also satisfy useful Strichartz estimates.

\begin{lemma}[Wave admissible Strichartz estimates]\label{lem:wave stri}
Let $m \in \RR$ and  $(q,r)$ be wave-admissible, i.e.\
$$
2\les q \les \infty, \quad 2\les r<\infty, \quad \frac{1}{q} + \frac{d-1}{2r} = \frac{d-1}{4}.
$$
and $s=\frac{d+1}{d-1}\frac{1}{q}$ and $\sigma= \frac{2}{d-1}\frac{1}{q}$. Then  we have
    $$
		\| \psi \|_{L^q_t L^r_x} + \sup_{\alpha \in 2^{-\NN}}\Big\| \Big( \sum_{\kappa \in \mc{C}_\alpha} \| \psi_{\kappa}\|_{L^r_x}^2 \Big)^{\frac{1}{2}}\Big\|_{L^q_t}
    \lesa \| \psi \|_{S_m^{s , \sigma}}.
	$$
	If we additionally assume that $q>a'$, then
    $$
		  \| \psi \|_{L^{q}_t L^{r}_x} + \sup_{\alpha \in 2^{-\NN}}\Big\| \Big( \sum_{\kappa \in \mc{C}_\alpha} \| \psi_{ \kappa}\|_{L^{r}_x}^2 \Big)^{\frac{1}{2}}\Big\|_{L^{q}_t}  \lesa  \| \psi \|_{S_{w,m}^{s ,\sigma}}.
	$$
\end{lemma}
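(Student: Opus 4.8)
The plan is to deduce these bounds from Strichartz estimates for free Klein--Gordon half-waves and then transfer them to the atomic spaces via their step-function structure. By \eqref{eqn:lin dirac flow} and \eqref{eqn:S decomp into pm} it suffices, for each dyadic $\lambda\in 2^\ZZ$, to bound $\|\psi_\lambda\|_{L^q_tL^r_x}$ together with its angular square-function analogue by $\lambda^\sigma\lr{\lambda}_m^{s-\sigma}$ times $\|e^{-it\lr{\nabla}_m}\Pi_+\psi_\lambda\|_{U^a}+\|e^{it\lr{\nabla}_m}\Pi_-\psi_\lambda\|_{U^a}$ (respectively the same with $V^{a'}$ in place of $U^a$); the assertion then follows after summing in $\ell^2$ over $\lambda$ using \eqref{eqn:hom decomp of Sm}, a Littlewood--Paley decomposition, and Minkowski's inequality (legitimate since $2\les r<\infty$ and $2\les q\les\infty$, which also reduces the angular term to its frequency-localised version).

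The key input is a Strichartz estimate for free half-waves that is \emph{uniform in} $m\in\RR$: if $\widehat{g}$ is supported in $\{|\xi|\approx\lambda\}$, then for every wave-admissible pair $(q,r)$,
\[ \|e^{\pm it\lr{\nabla}_m}g\|_{L^q_tL^r_x}\lesa \lambda^\sigma\lr{\lambda}_m^{s-\sigma}\|g\|_{L^2}. \]
For $\lambda\g|m|$ one has $\lambda^\sigma\lr{\lambda}_m^{s-\sigma}\approx\lambda^s$, the hyperboloid $\{\tau=\lr{\xi}_m\}$ has curvature comparable to the cone on this frequency shell, so the dispersive bound $\|e^{\pm it\lr{\nabla}_m}P_\lambda\|_{L^1_x\to L^\infty_x}\lesa\lambda^d(\lambda|t|)^{-(d-1)/2}$ holds and the classical $TT^*$/Keel--Tao argument applies. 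For $\lambda<|m|$ one has $\lr{\xi}_m=|m|+\tfrac{|\xi|^2}{2|m|}+O(|\xi|^4|m|^{-3})$, so near its vertex the hyperboloid is a perturbed paraboloid, $\|e^{\pm it\lr{\nabla}_m}P_\lambda\|_{L^1_x\to L^\infty_x}\lesa(|t|/|m|)^{-d/2}$, and the resulting Schr\"odinger-type estimate (the rescaling $t\mapsto t/|m|$ contributing a factor $|m|^{1/q}$) holds for Schr\"odinger-admissible exponents; applying Bernstein's inequality on $\{|\xi|\approx\lambda\}$ to pass from the Schr\"odinger-admissible $\tilde r$, $\tfrac1{\tilde r}=\tfrac12-\tfrac2{dq}$, up to $r$ contributes the factor $\lambda^{d(1/\tilde r-1/r)}=\lambda^\sigma$, and since $s-\sigma=\tfrac1q$ this yields $\lambda^\sigma|m|^{1/q}=\lambda^\sigma\lr{\lambda}_m^{s-\sigma}$. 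The angular square-function version of this free estimate is then immediate: by Minkowski's inequality ($q,r\g2$) one pulls the $\ell^2_\kappa$-sum outside the spacetime norm, applies the above to each $R_\kappa g$, and uses the orthogonality $\sum_{\kappa\in\mc{C}_\alpha}\|R_\kappa g\|_{L^2}^2\lesa\|g\|_{L^2}^2$.

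To transfer to the atomic spaces, recall $\|\psi\|_{S^{s,\sigma}_m}^2\approx\sum_\lambda\lambda^{2\sigma}\lr{\lambda}_m^{2(s-\sigma)}\|\mc{U}_m(-t)\psi_\lambda\|_{U^a}^2$. By part (i) of Lemma \ref{lem:Up basic prop} we have $U^a\subset U^q$ (since $a<2\les q$), and when $q>a'$ also $V^{a'}\subset U^q$, so it is enough to bound the spacetime norms on a $U^q$-atom $v=\sum_j\ind_{I_j}f_j$ (with $\sum_j\|f_j\|_{L^2}^q=1$) localised to $\{|\xi|\approx\lambda\}$. Since $\Pi_\pm$, $P_\lambda$, $R_\kappa$ and $e^{\pm it\lr{\nabla}_m}$ commute and the $I_j$ are disjoint, $\Pi_\pm\psi_\lambda=\sum_j\ind_{I_j}e^{\pm it\lr{\nabla}_m}\Pi_\pm f_j$, so the free estimate above gives $\|\psi_\lambda\|_{L^q_tL^r_x}^q\les\sum_j\|e^{\pm it\lr{\nabla}_m}\Pi_\pm f_j\|_{L^q_tL^r_x}^q\lesa(\lambda^\sigma\lr{\lambda}_m^{s-\sigma})^q$, and likewise for the angular term. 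Summing atoms and then summing in $\ell^2$ over $\lambda$ proves the first estimate; under the extra hypothesis $q>a'$ the same computation, now starting from the embedding $V^{a'}\subset U^q$, yields the bound by $\|\psi\|_{S^{s,\sigma}_{w,m}}$. The main obstacle is the free estimate of the second paragraph: obtaining the \emph{sharp} low-frequency weight $\lambda^\sigma\lr{\lambda}_m^{s-\sigma}$ \emph{uniformly} in $m\in\RR$ forces one to correctly interpolate between the conic ($|\xi|\g|m|$) and paraboloidal ($|\xi|<|m|$) regimes of the hyperboloid; once this is in place the transference is routine.
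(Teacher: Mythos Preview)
Your proposal is correct and follows essentially the same approach as the paper's proof. Both arguments proceed by (i) establishing the frequency-localised free estimate $\|e^{\pm it\lr{\nabla}_m}f_\lambda\|_{L^q_tL^r_x}\lesa\lambda^\sigma\lr{\lambda}_m^{s-\sigma}\|f_\lambda\|_{L^2}$ via the wave-type bound when $\lambda\gtrsim|m|$ and the Schr\"odinger-type bound plus Bernstein when $\lambda\lesa|m|$, (ii) passing to the angular square-function version by Minkowski, (iii) transferring to $U^q$ atoms, and (iv) using $U^a\subset U^q$ (resp.\ $V^{a'}\subset U^q$ when $q>a'$) together with the $\ell^2$ dyadic structure and Littlewood--Paley. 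The only cosmetic difference is that the paper normalises to $|m|=1$ and cites the Klein--Gordon Strichartz estimates from the literature before rescaling, whereas you argue the two curvature regimes directly; the resulting constants and the logic are the same.
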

\begin{proof} If $|m|=1$, the classical Strichartz estimate for the Klein-Gordon equation for wave admissible pairs $(q,r)$ (see e.g. \cite[(2.3) for $\theta=0$]{Machihara2003})  implies
\begin{equation}\label{eq:free-wave}
\|e^{\pm it \lr{\nabla}}f_\lambda \|_{L^{q}_t L^{r}_x}\lesa \lr{\lambda}^{s} \|f_\lambda\|_{L^2}, \text{ for all }\lambda\in 2^{\Z}.
\end{equation}
For a Schr\"{o}dinger admissible pair $(q,p)$ with $\frac{1}{p}=\frac12-\frac{2}{dq}$ (see e.g. \cite[(2.3) for $\theta=1$]{Machihara2003})   we have
\begin{equation}\label{eq:schr-free}
\|e^{\pm it \lr{\nabla}}f_\lambda \|_{L^{q}_t L^{p}_x}\lesa  \lr{\lambda}^{\frac{d+2}{d}\frac{1}{q}} \|f_\lambda\|_{L^2}, \text{ for all }\lambda \in 2^{\Z}.
\end{equation}
For small frequencies, i.e.\ $\lambda \in 2^{-\N}$, we can use \eqref{eq:schr-free} and the Bernstein inequality $\|P_\lambda g\|_{L^r_x}\lesa \lambda^{\frac{2}{d-1}\frac{1}{q}} \|P_\lambda g\|_{L^p_x}$ to improve \eqref{eq:free-wave} to
$$\|e^{\pm it \lr{\nabla}}f_\lambda \|_{L^{q}_t L^{r}_x}\lesa \lambda^{\sigma} \lr{\lambda}^{s-\sigma} \|f_\lambda\|_{L^2}, \text{ for all }\lambda\in 2^{\Z}.$$
For general $m \in \R$, by rescaling from $m\ne 0$ to $|m|=1$ or in case $m=0$ by the classical estimate for the wave equation (see e.g.\ \cite[Prop.\ 3.1]{Ginibre1995}), we obtain
$$\|e^{\pm it \lr{\nabla}_m} f_\lambda \|_{L^{q}_t L^{r}_x}\lesa \lambda^{\sigma} \lr{\lambda}_m^{s-\sigma} \|f_\lambda\|_{L^2}, \text{ for all }\lambda\in 2^{\Z}.$$
The Minkowski inequality implies
 $$\sup_{\alpha \in 2^{-\NN}}\Big\| \Big( \sum_{\kappa \in \mc{C}_\alpha} \| e^{\pm it \lr{\nabla}_m} f_{ \kappa,\lambda }\|_{L^{r}_x}^2 \Big)^{\frac{1}{2}}\Big\|_{L^{q}_t} \lesa \lambda^{\sigma} \lr{\lambda}_m^{s-\sigma} \|f_\lambda\|_{L^2}, \text{ for all }\lambda\in 2^{\Z}.$$
 Next, we extend these two estimates to $S_{m}^{s ,\sigma}$ and $S_{w,m}^{s ,\sigma}$.
 To this end, consider a $e^{\pm it \lr{\nabla}_m} U^q$ atom $u_\lambda (t)=\sum_{j=1}^N \ind_{[t_j,t_{j+1})}(t)e^{\pm it \lr{\nabla}_m}P_\lambda f_j$.
 Then,
 \begin{align*}
 \|u_\lambda \|_{L^{q}_t L^{r}_x} =& \Big( \sum_{j=1}^N \Big\|\ind_{[t_j,t_{j+1})}(t)e^{\pm it \lr{\nabla}_m} P_\lambda f_{j} \Big\|_{L^{q}_t L^{r}_x }^q\Big)^{\frac1q}
\lesa{} \Big( \sum_{j=1}^N  \lambda^{q\sigma} \lr{\lambda}_m^{q(s-\sigma)}  \|P_\lambda f_j\|_{L^2}^q\Big)^{\frac1q}\lesa  \lambda^{\sigma} \lr{\lambda}_m^{s-\sigma}.
 \end{align*}
 Similarly, since the square sum is \emph{inside} the $L^q_t$ norm, we have
  \begin{align*}
  \Big\| \Big( \sum_{\kappa \in \mc{C}_\alpha} \|u_{\lambda,\kappa} \|_{L^r_x}^2 \Big)^{\frac{1}{2}}\Big\|_{L^q_t}
    \lesa{} & \Big(\sum_{j=1}^N \Big\| \Big( \sum_{\kappa \in \mc{C}_\alpha} \|e^{\pm it \lr{\nabla}_m} P_{\lambda,\kappa} f_{j} \|_{L^r_x}^2 \Big)^{\frac{1}{2}}\Big\|_{L^q_t}^q\Big)^{\frac1q}
\lesa{}   \lambda^{\sigma} \lr{\lambda}_m^{s-\sigma}.
    \end{align*}
 Therefore, both estimates extend to general functions in the atomic space $e^{\pm it \lr{\nabla}_m} U^q$.
 By the dyadic structure of the spaces $S_{m}^{s ,\sigma}$ and $S_{w,m}^{s ,\sigma}$, the Littlewood-Paley square function estimate, and the continuous embeddings $U^a \subset U^q$ and in case $q>a'$ also $V^{a'} \subset U^{q}$, we obtain
the claimed estimates.
\end{proof}

In view of scaling, we are only able to use wave-admissible pairs, with the exception of a specific non-transverse interaction (which breaks the scaling) which only occurs if $m\ne0$, where we  also require Schr\"{o}dinger admissible pairs.

\begin{lemma}[Schr\"{o}dinger admissible Strichartz estimates]\label{lem:schr stri}
Let $m \ne 0$ and  $(q,p)$ be Schr\"{o}dinger-admissible, i.e.\
$$
2\les q \les \infty, \quad 2\les p<\infty, \quad \frac{1}{q} + \frac{d}{2p} = \frac{d}{4}.
$$
and $s'=\frac{d+2}{d}\frac{1}{q}$. Then  we have
    $$
		\| \psi \|_{L^q_t L^p_x} + \sup_{\alpha \in 2^{-\NN}}\Big\| \Big( \sum_{\kappa \in \mc{C}_\alpha} \| \psi_{\kappa}\|_{L^p_x}^2 \Big)^{\frac{1}{2}}\Big\|_{L^q_t}
    \lesa |m|^{-\frac{2}{dq}}\| \psi \|_{S_m^{s' , 0}}.
	$$
	If we additionally assume that $q>a'$, then
    $$
		  \| \psi \|_{L^{q}_t L^{p}_x} + \sup_{\alpha \in 2^{-\NN}}\Big\| \Big( \sum_{\kappa \in \mc{C}_\alpha} \| \psi_{ \kappa}\|_{L^{p}_x}^2 \Big)^{\frac{1}{2}}\Big\|_{L^{q}_t}  \lesa |m|^{-\frac{2}{dq}} \| \psi \|_{S_{w,m}^{s' ,0}}.
	$$
\end{lemma}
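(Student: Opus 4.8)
\emph{Proof sketch.} The plan is to follow the proof of Lemma~\ref{lem:wave stri} almost verbatim, but with the wave/Klein--Gordon Strichartz estimate replaced by the Schr\"odinger-admissible Klein--Gordon estimate~\eqref{eq:schr-free}, and then to recover the $m$-dependence by rescaling to the case $|m|=1$ via Lemma~\ref{lem:mass rescaling}. For $|m|=1$ we have $\lr{\nabla}_m=\lr{\nabla}$, and by~\eqref{eqn:S decomp into pm} and~\eqref{eqn:hom decomp of Sm} the bound reduces to a frequency-localised estimate for the scalar Klein--Gordon propagators $e^{\pm it\lr{\nabla}}$ (the projections $\Pi_\pm$ being uniformly $L^2$-bounded), so both signs of $m$ are treated at once. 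Now~\eqref{eq:schr-free} gives $\|e^{\pm it\lr{\nabla}}f_\lambda\|_{L^q_tL^p_x}\lesa\lr{\lambda}^{s'}\|f_\lambda\|_{L^2}$ for every $\lambda\in2^{\Z}$ with $s'=\frac{d+2}{d}\frac1q$, and Minkowski's inequality (the sum over caps $\kappa\in\mc{C}_\alpha$ sits \emph{inside} the $L^q_t$ norm) upgrades this to the angular square-function version. Exactly as in Lemma~\ref{lem:wave stri}, testing on a $e^{\pm it\lr{\nabla}}U^q$-atom and summing the $\ell^q$ contributions over its constituent steps transfers both estimates to the atomic space $e^{\pm it\lr{\nabla}}U^q$; then the embeddings $U^a\subset U^q$ (valid since $a<2\les q$) and, in the weak case, $V^{a'}\subset U^q$ (valid since $q>a'$) from Lemma~\ref{lem:Up basic prop}, together with~\eqref{eqn:S decomp into pm}, \eqref{eqn:hom decomp of Sm} and the Littlewood--Paley estimate, yield the stated bounds for $|m|=1$ (where the factor $|m|^{-2/(dq)}$ equals $1$).

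It remains to rescale. Fix $m\ne0$ and $\psi\in S^{s',0}_m$, put $\alpha=|m|^{-1}$, and let $\varphi(t,x)=\alpha^{1/2}\psi(\alpha t,\alpha x)$. By Lemma~\ref{lem:mass rescaling}, $\varphi\in S^{s',0}_{\alpha m}$ with $\alpha m\in\{1,-1\}$ and $\|\varphi\|_{S^{s',0}_{\alpha m}}\approx\alpha^{s'-\frac{d-1}{2}}\|\psi\|_{S^{s',0}_m}$, while a change of variables using the admissibility relation $\frac1q+\frac{d}{2p}=\frac d4$ gives $\|\varphi\|_{L^q_tL^p_x}=\alpha^{\frac{1-d}{2}+\frac1q}\|\psi\|_{L^q_tL^p_x}$, with the identical scaling for the angular square-function term since the cap decomposition is invariant under spatial dilations and does not interact with the $x$-scaling. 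Applying the $|m|=1$ bound to $\varphi$ and combining these two scaling relations gives $\|\psi\|_{L^q_tL^p_x}\lesa\alpha^{s'-\frac1q}\|\psi\|_{S^{s',0}_m}$, and since $\alpha^{s'-\frac1q}=|m|^{\frac1q-s'}=|m|^{\frac1q(1-\frac{d+2}{d})}=|m|^{-\frac{2}{dq}}$ this is the asserted estimate; the angular term and the weak version $S^{s',0}_{w,m}$ are handled in exactly the same way, using that Lemma~\ref{lem:mass rescaling} applies verbatim to $S^{s',0}_{w,m}$ and that the $|m|=1$ bound with $S^{s',0}_{w,1}$ requires $q>a'$.

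The only step requiring genuine care is this rescaling. Unlike the wave-admissible estimates of Lemma~\ref{lem:wave stri}, a Schr\"odinger-admissible pair is incompatible with the wave scaling built into the spaces $S^{s,\sigma}_m$ and into Lemma~\ref{lem:mass rescaling}, so one must track the mismatch between the Schr\"odinger scaling of $(q,p)$ and the wave scaling of the function space; it is precisely this mismatch that produces the factor $|m|^{-2/(dq)}$. In particular the bound degenerates as $|m|\to\infty$ and would blow up as $|m|\to0$, consistent with the fact that a genuinely Schr\"odinger-admissible (non-wave-admissible) Strichartz estimate cannot hold uniformly down to the massless case --- which is why, as indicated in the text preceding the statement, this lemma is invoked only for $m\ne0$ and only to handle a single non-transverse frequency interaction.
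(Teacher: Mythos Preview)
Your proof is correct and follows essentially the same route as the paper: reduce to the free Klein--Gordon Strichartz estimate~\eqref{eq:schr-free} at $|m|=1$, use Minkowski for the cap square function, transfer to $U^q$-atoms and then to $S^{s',0}_m$ (resp.\ $S^{s',0}_{w,m}$) via the embeddings $U^a\subset U^q$ and $V^{a'}\subset U^q$, and recover the $m$-dependence by rescaling. The only cosmetic difference is that the paper rescales the \emph{free} estimate first and then transfers, whereas you transfer at $|m|=1$ and then rescale the full $S^{s',0}_m$ bound via Lemma~\ref{lem:mass rescaling}; the two orderings are equivalent and your scaling computation $\alpha^{s'-1/q}=|m|^{-2/(dq)}$ is exactly the one implicitly used in the paper.
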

\begin{proof}
Rescaling to $|m|=1$, the estimate \eqref{eq:schr-free}, and the Minkowski inequality imply
\begin{equation*}
\|e^{\pm it \lr{\nabla}_m }f_\lambda \|_{L^{q}_t L^{p}_x}
 + \Big\| \Big( \sum_{\kappa \in \mc{C}_\alpha} \|e^{\pm it \lr{\nabla}_m }f_{\lambda,\kappa} \|_{L^p_x}^2 \Big)^{\frac{1}{2}}\Big\|_{L^q_t}
\lesa |m|^{-\frac{2}{dq}} \lr{\lambda}_m^{s'}\|f_\lambda\|_{L^2},
\end{equation*}
for all $\lambda\in 2^\Z$, $\alpha\in 2^{-\N}$. Now, the same transference argument as in the proof of Lemma \ref{lem:wave stri} shows that the estimate extends to $S_m^{s' , 0}$ resp.\ $S_{m,w}^{s' , 0}$.
\end{proof}

As a first application of the results of Section \ref{sec:str} we prove a simple bilinear bound which exploits localisation to caps to obtain a null form gain in the case where one function has high modulation.

\begin{theorem}[High modulation case]\label{thm:bi high mod}
Let $d\g 2$, $\frac{8}{5}<a<2$ and $a\les p \les a'$. Then for any $R, \mu_1, \mu_2, \beta \in 2^\ZZ$ and $\alpha \in 2^{-\NN}$ we have
		\begin{equation}\label{eq:bi high mod 1}
\bigg\|  \sum_{\substack{ \kappa_1, \kappa_2 \in \mc{C}_\alpha  \\ \ma(\kappa_1, \kappa_2) + \frac{|m|}{\lr{\mu}_m} \approx \alpha}} P_{\les R}(\overline{C_{\gtrsim \beta} \varphi}_{\mu_1, \kappa_1} \psi_{\mu_2, \kappa_2}) \bigg\|_{L^p_t L^2_{x}}
		\lesa \alpha \beta^{- \frac{1}{a'}}  \mb{m}_{R, \mu}^{\frac{1}{2} - \frac{2}{d-1}(\frac{1}{p} - \frac{1}{a'})}  \big( \mu_2^{\frac{2}{d-1}}\lr{\mu_2}_m \big)^{\frac{1}{p} -\frac{1}{a'} }\| \varphi_{\mu_1} \|_{S_w} \| \psi_{\mu_2} \|_{S_w}
	    \end{equation}
and
	\begin{equation}\label{eq:bi high mod 2}
\bigg\|  \sum_{\substack{ \kappa_1, \kappa_2 \in \mc{C}_\alpha  \\ \ma(\kappa_1, \kappa_2) + \frac{|m|}{\lr{\mu}_m} \approx \alpha}} P_{\les R}(\overline{C_{\gtrsim \beta} \varphi}_{\mu_1, \kappa_1} \psi_{\mu_2, \kappa_2}) \bigg\|_{L^p_t L^2_{x}}\\
		\lesa \alpha \mb{m}_{R, \mu}^{\frac{1}{2}}  \beta^{-\frac{1}{p}} \| \varphi_{\mu_1} \|_{S} \| \psi_{\mu_2} \|_{S_w}
\end{equation}	
where we take $\mb{m}_{R, \mu} :=\min\{R, \mu\} \big( \min\{R, \alpha \mu\}\big)^{d-1}$ and $\mu = \min\{\mu_1, \mu_2\}$.
\end{theorem}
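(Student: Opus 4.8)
\emph{Strategy.} Both bounds follow from the square-function Strichartz estimates of Lemma~\ref{lem:wave stri}, the modulation bounds \eqref{eqn:Xsb control}, Bernstein, and H\"older, once the null structure has been used to extract a factor $\alpha$. The sum over caps is treated by the triangle inequality and Cauchy--Schwarz rather than by almost orthogonality, which is legitimate here precisely because $L^2_x$-square sums over $\mc{C}_\alpha$ cost nothing (recall $\mu=\min\{\mu_1,\mu_2\}$).

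\emph{Step 1 (disposing of the null multiplier).} For a Whitney pair $(\kappa_1,\kappa_2)\in\mathcal{W}^{(m)}_\alpha(\mu_1,\mu_2)$, the symbol $\Pi_{\pm_1}(\eta_1)^\dagger\gamma^0\Pi_{\pm_2}(\eta_2)$ (or $\Pi_{\pm_1}(\eta_1)^\dagger\gamma^0\mb{A}\Pi_{\pm_2}(\eta_2)$ in the setting of Remark~\ref{rem:nonlinearities II}), restricted to $\eta_1$ in the sector over $\kappa_1$ at radius $\mu_1$ and $\eta_2$ in the sector over $\kappa_2$ at radius $\mu_2$, is smooth, and by \eqref{eqn:Pi null struc} and the defining relation $\ma(\kappa_1,\kappa_2)+\tfrac{|m|}{\lr{\mu}_m}\approx\alpha$ (the sign choices being absorbed into $P_{\mu_j,\kappa_j}$) all of its rescaled derivatives are $\lesa\alpha$. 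Expanding this symbol in a Fourier series adapted to the product of the two sectors realises $P_{\les R}\big(\overline{(C_{\gtrsim\beta}\varphi)_{\mu_1,\kappa_1}}\,\psi_{\mu_2,\kappa_2}\big)$ as an absolutely convergent sum, with $\ell^1$-coefficient bound $\lesa\alpha$, of terms of the same shape in which $C_{\gtrsim\beta}\varphi$ and $\psi$ are replaced by fixed spatial translates. Since translations act isometrically on $L^p_tL^2_x$, on $S$ and $S_w$, and on the Strichartz norms, and do not move the spatial Fourier supports, this reduces both \eqref{eq:bi high mod 1} and \eqref{eq:bi high mod 2} to the same estimates for the plain sesquilinear product $(C_{\gtrsim\beta}\varphi)^\dagger\psi$, at the cost of a gain $\alpha$.

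\emph{Step 2 (cap sum and pointwise-in-$t$ estimate).} Because $\ma(\kappa_1,\kappa_2)\lesa\alpha$ in every Whitney pair, each $\kappa_1\in\mc{C}_\alpha$ is matched with only $O(1)$ caps $\kappa_2$, and conversely; grouping by $\kappa_2$ writes the reduced left side as $\sum_{\kappa_2\in\mc{C}_\alpha}P_{\les R}\big(\overline{\Phi_{\kappa_2}}\,\psi_{\mu_2,\kappa_2}\big)$, where $\Phi_{\kappa_2}$ is a sum of $O(1)$ angular pieces of $C_{\gtrsim\beta}\varphi_{\mu_1}$, so that $\|\Phi_{\kappa_2}(t)\|_{L^2_x}\lesa\|C_{\gtrsim\beta}\varphi_{\mu_1}(t)\|_{L^2_x}$ and $\sum_{\kappa_2}\|\Phi_{\kappa_2}(t)\|_{L^2_x}^2\lesa\|C_{\gtrsim\beta}\varphi_{\mu_1}(t)\|_{L^2_x}^2$ by the square-sum bound for the multipliers $P_{\lambda,\kappa}$. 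By Lemma~\ref{lem:Fourier supp} and the geometry of the difference of two $\alpha$-caps intersected with $\{|\xi|\les R\}$, each summand $P_{\les R}\big(\overline{\Phi_{\kappa_2}}\,\psi_{\mu_2,\kappa_2}\big)$ has spatial Fourier support in a box of measure $\lesa\mb{m}_{R,\mu}$. Hence, for any $2\les r\les\infty$, Bernstein ($L^q_x\to L^2_x$ on that box with $\tfrac{1}{q}=\tfrac{1}{2}+\tfrac{1}{r}$), H\"older in $x$, the triangle inequality in $\kappa_2$, and Cauchy--Schwarz give
\[
 \Big\|\sum_{\kappa_2}P_{\les R}\big(\overline{\Phi_{\kappa_2}}\,\psi_{\mu_2,\kappa_2}\big)(t)\Big\|_{L^2_x}\lesa\mb{m}_{R,\mu}^{\frac{1}{r}}\,\|C_{\gtrsim\beta}\varphi_{\mu_1}(t)\|_{L^2_x}\,\Big(\sum_{\kappa_2\in\mc{C}_\alpha}\|\psi_{\mu_2,\kappa_2}(t)\|_{L^r_x}^2\Big)^{\frac{1}{2}}.
\]

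\emph{Step 3 (integrating in time) and the main difficulty.} For \eqref{eq:bi high mod 2} take $r=2$, so the last display becomes $\lesa\mb{m}_{R,\mu}^{1/2}\|C_{\gtrsim\beta}\varphi_{\mu_1}(t)\|_{L^2_x}\|\psi_{\mu_2}(t)\|_{L^2_x}$ (the cap sum of $\psi$ is trivial in $L^2_x$); taking $L^p_t$, splitting $\tfrac{1}{p}=\tfrac{1}{p}+\tfrac{1}{\infty}$, and using \eqref{eqn:Xsb control} at exponent $p\g a$ for $\varphi$ together with $\|\psi_{\mu_2}\|_{L^\infty_tL^2_x}\lesa\|\psi_{\mu_2}\|_{S_w}$ yields, with the gain $\alpha$ from Step~1, exactly \eqref{eq:bi high mod 2}. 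For \eqref{eq:bi high mod 1} let $(p_2,r_2)$ be the wave-admissible pair with $\tfrac{1}{p_2}=\tfrac{1}{p}-\tfrac{1}{a'}$, so $\tfrac{1}{r_2}=\tfrac{1}{2}-\tfrac{2}{d-1}\tfrac{1}{p_2}$; the hypotheses $d\in\{2,3\}$, $\tfrac{8}{5}<a<2$, $a\les p\les a'$ ensure $r_2<\infty$, $p_2\g2$, $p_2>a'$. Taking $r=r_2$, then $L^p_t$ with $\tfrac{1}{p}=\tfrac{1}{a'}+\tfrac{1}{p_2}$, and using \eqref{eqn:Xsb control} at exponent $a'$ for $\varphi$ and the square-function Strichartz estimate of Lemma~\ref{lem:wave stri} for $\psi$ at the pair $(p_2,r_2)$ — with $\|\psi_{\mu_2}\|_{S^{s,\sigma}_{w,m}}\approx\mu_2^{\sigma}\lr{\mu_2}_m^{s-\sigma}\|\psi_{\mu_2}\|_{S_w}=(\mu_2^{2/(d-1)}\lr{\mu_2}_m)^{1/p_2}\|\psi_{\mu_2}\|_{S_w}$ for $s=\tfrac{d+1}{d-1}\tfrac{1}{p_2}$, $\sigma=\tfrac{2}{d-1}\tfrac{1}{p_2}$ — gives, with the gain $\alpha$, exactly \eqref{eq:bi high mod 1}. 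The one genuinely nontrivial input is the Fourier-support claim of Step~2, which must be verified across the ranges of $\mu_1,\mu_2,\alpha,R$ (in particular when the mass forces $\ma(\kappa_1,\kappa_2)\ll\alpha$); together with the elementary exponent bookkeeping of Step~3, which is where $\tfrac{8}{5}<a<2$ and $d\in\{2,3\}$ are exactly what is needed. Everything else is routine.
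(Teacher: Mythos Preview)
Your Step~3 bookkeeping is correct and matches the paper's, and your Step~1 (extracting the null gain $\alpha$ by Fourier series on the bilinear symbol) is a legitimate alternative to the paper's three-term decomposition via the constant projectors $\Pi_{\pm}^{\kappa_j}$. The real problem is the Fourier-support claim in Step~2, which you yourself flag as ``the one genuinely nontrivial input'' but do not verify --- and it is false as stated in the unbalanced frequency case.

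Concretely, suppose $\mu_2\ll\mu_1$ (so $\mu=\mu_2$). The product $\overline{\Phi_{\kappa_2}}\,\psi_{\mu_2,\kappa_2}$ has spatial Fourier support in the Minkowski difference of the two sectors, which is essentially the large sector at scale $\mu_1$ shifted by the small one; its measure is $\approx\mu_1(\alpha\mu_1)^{d-1}$. Since the product is supported in $\{|\xi|\approx\mu_1\}$, the multiplier $P_{\les R}$ is harmless only when $R\gtrsim\mu_1$, and then $\mb{m}_{R,\mu}=\mu_2(\alpha\mu_2)^{d-1}\ll\mu_1(\alpha\mu_1)^{d-1}$. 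So Bernstein on the product gives the wrong constant. (The reference to Lemma~\ref{lem:Fourier supp} does not help here: that lemma concerns the space-time Fourier support relative to the cone, not the spatial measure you need.) The same issue arises symmetrically when $\mu_1\ll\mu_2$.

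The fix is exactly what the paper does: in the unbalanced cases one does \emph{not} Bernstein the product but instead applies H\"older to the factors and Bernstein to the low-frequency factor. For $\mu_1\ll\mu_2\sim R$ one uses $\|P_{\les R}(f^\dagger g)\|_{L^2_x}\les\|f\|_{L^\rho_x}\|g\|_{L^r_x}$ with $\tfrac12=\tfrac1\rho+\tfrac1r$ and then $\|f\|_{L^\rho_x}\lesa(\mu_1(\alpha\mu_1)^{d-1})^{1/r}\|f\|_{L^2_x}$; for $\mu_2\ll\mu_1\sim R$ one uses $\|f^\dagger g\|_{L^2_x}\les\|f\|_{L^2_x}\|g\|_{L^\infty_x}$ and Bernstein on $g$. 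Only in the balanced case $\mu_1\approx\mu_2$ is your parallelepiped-support argument valid. Once you insert this case split into Step~2, the rest of your argument goes through.
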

\begin{proof}
We start with a simple bound for spatial functions (spinors) and $2\les r <\infty$: For any $\kappa_1, \kappa_2 \in \mc{C}_\alpha$ with $\angle(\kappa_1, \kappa_2)  + \frac{|m|}{\lr{\mu}_m} \approx \alpha$, and $f \in L^2$,  $g \in L^r$, such that
    $$\supp \widehat{\Pi_\pm f} \subset \{ |\xi| \approx \mu_1, \pm \tfrac{\xi}{|\xi|} \in \kappa_1\}, \qquad \supp \widehat{\Pi_\pm g} \subset \{ |\xi| \approx \mu_2, \pm\tfrac{\xi}{|\xi|}\in \kappa_2\},$$
we have the bound
    \begin{equation}\label{eqn:spatial v2} \| P_{\les R} ( \overline{f} g)  \|_{L^2_x} \lesa \alpha (\textbf{m}_{R, \mu})^\frac{1}{r} \| f \|_{L^2_x} \| g \|_{L^r_x}.\end{equation}

Since the product has Fourier support in a ball of radius $\lesa \mu_1+\mu_2$, we may as well suppose that $R\lesa \mu_1 + \mu_2$. Let $\xi_{\kappa_j} \in \RR^d$ be the center of the angular sector $\{ |\xi| \approx \mu_j, \frac{\xi}{|\xi|}\in \kappa_j\}$ and introduce the short hand $\Pi_{\pm_j}^{\kappa_j} = \Pi_{\pm_j}(\xi_{\kappa_j})$. Note that $\Pi_{\pm_j}^{\kappa_j}$ is then simply a complex valued matrix. Since $\alpha \approx \angle(\kappa_1, \kappa_2) + \frac{|m|}{\lr{\mu}_m}$, an application of \eqref{eqn:Pi null struc} gives the null form bounds
$$ \| (\Pi_{\pm_1} - \Pi_{\pm_1}^{\kappa_1})f \|_{L^r_x} \lesa \alpha \| f\|_{L^r_x}, \qquad \| \gamma^0(\Pi_{\pm_2} - \Pi_{\pm_2}^{\kappa_2})g \|_{L^r_x}\lesa \alpha \| g\|_{L^r_x},$$
and
$$ \| \Pi_{\pm_1}^{\kappa_1} \gamma^0 \Pi_{\pm_2}^{\kappa_2} g\|_{L^r_x} \lesa \alpha \|  g \|_{L^r_x},$$
see e.g. \cite{Candy2018a} or \cite[Lemma 3.3]{Bejenaru2017}. Hence, after exploiting the decomposition
\begin{equation}\label{eqn:thm bi trans:null decomp}
	\begin{split}
		\overline{\Pi_{\pm_1} f} \Pi_{\pm_2} g&= \big[(\Pi_{\pm_1} - \Pi_{\pm_1}^{\kappa_1})f\big]^\dagger \big[\gamma^0 \Pi_{\pm_2} g\big]\\
		&\qquad +\big[\Pi_{\pm_1}^{\kappa_1}f\big]^\dagger \big[\gamma^0(\Pi_{\pm_2} - \Pi_{\pm_2}^{\kappa_2})g\big]
		+\big[f\big]^\dagger \big[\Pi_{\pm_1}^{\kappa_1} \gamma^0 \Pi_{\pm_2}^{\kappa_2} g\big]
	\end{split}
\end{equation}
\eqref{eqn:spatial v2} reduces to proving that
    $$\| P_{\les R} (f^\dagger g) \|_{L^2_x} \lesa (\mb{m}_{R, \mu})^{\frac{1}{r}} \|f\|_{L^2_x} \| g \|_{L^r_x}.$$
If $R\lesa \mu_1\sim \mu_2$, then $ f^\dagger g$ has Fourier support in a parallelepiped of dimensions $R \times \min\{R,\alpha\mu\}\times \ldots \times \min\{R ,\alpha\mu\}$, therefore the Fourier support has measure $R \min\{R,\alpha\mu\}^{d-1}$. If  $\frac1\varrho=\frac12+\frac1r$, Bernstein's inequality and H\"{o}lder's inequality imply
	  	\begin{align*}
	  		\big\| P_{\les R}( f^\dagger g) \big\|_{L^2_x}
	  		\lesa{} \big[ R \min\{R,\alpha\mu\}^{d-1}\big]^{\frac{1}{r}} \big\| f^\dagger g\big\|_{L^\varrho_x}
	  		\lesa{} (\textbf{m}_{R, \mu})^{\frac{1}{r}} \|f\|_{L^2_x} \| g \|_{L^r_x}.
	  	\end{align*}
Similarly, if $ \mu_1\ll \mu_2\sim R$ and $\frac12=\frac1\rho+\frac1r$, we obtain
	  	\begin{align*}
	  		\big\| P_{\les R} (  f^\dagger  g) \big\|_{L^2_x}
	  		\lesa{}\|f\|_{L^\rho_x} \| g \|_{L^r_x}
	  		\lesa{} \big[ \mu (\alpha \mu)^{d-1} \big]^\frac{1}{r}  \|f\|_{L^2_x} \| g \|_{L^r_x}.
	  	\end{align*}
And again, if $ \mu_2\ll \mu_1\sim R$, we obtain
        $$\big\| P_{\les R} (  f^\dagger  g) \big\|_{L^2_x}
	  		\lesa{}\|f\|_{L^2_x} \| g \|_{L^\infty_x}
	  		\lesa{} \big[ \mu (\alpha \mu)^{d-1} \big]^\frac{1}{r}  \|f\|_{L^2_x} \| g \|_{L^r_x}.$$
This concludes the proof of \eqref{eqn:spatial v2}.
  	
To prove \eqref{eq:bi high mod 1}, let  $\frac{1}{q}=\frac{1}{p}-\frac{1}{a'}$ and $\frac{1}{q}+\frac{d-1}{2r}=\frac{d-1}{4}$.
The pair $(q,r)$ is wave Strichartz admissible (i.e. satisfies the conditions of Lemma \ref{lem:wave stri}) and the assumption $\frac{8}{5} < a< 2$ ensures that $q> a'$. An application of H\"older's inequality, \eqref{eqn:spatial v2}, and a trivial cap summation give
       \begin{align*}
  	     &\bigg\|  \sum_{\substack{ \kappa_1, \kappa_2 \in \mc{C}_\alpha  \\ \ma(\kappa_1, \kappa_2) + \frac{m}{\lr{\mu}_m} \approx \alpha}} P_{\les R}(\overline{C_{\gtrsim \beta} \varphi}_{\mu_1, \kappa_1} \psi_{\mu_2, \kappa_2}) \bigg\|_{L^p_t L^2_{x}}\\
  	     \lesa{}&\alpha  (\mb{m}_{R, \mu})^{\frac{1}{r}}  \big\|C_{\gtrsim \beta}  \varphi_{\mu_1}\big\|_{L^{a'}_tL^2_x}
  \bigg\|  \Big(\sum_{\kappa_2 \in \mc{C}_\alpha}  \|\psi_{\mu_2, \kappa_2}\|_{L^r_x}^2\Big)^{\frac12}\bigg\|_{L^q_t}\\
  \lesa{}&\alpha  (\mb{m}_{R, \mu})^{\frac{1}{r}} \beta^{-\frac{1}{a'}} \mu_2^{\frac{2}{d-1}\frac{1}{q}}  \lr{\mu_2}_m^{\frac{1}{q}}\| \varphi_{\mu_1} \|_{S_w} \| \psi_{\mu_2} \|_{S_w}
  \end{align*}
where we have used Lemma \ref{lem:prop of S} and Lemma \ref{lem:wave stri} in the last step.

Similarly, applying \eqref{eqn:spatial v2} with $r=2$ gives
	\begin{align*}
		&\bigg\|  \sum_{\substack{ \kappa_1, \kappa_2 \in \mc{C}_\alpha  \\ \ma(\kappa_1, \kappa_2) + \frac{m}{\lr{\mu_2}_m} \approx \alpha}} P_{\les R}(\overline{C_{\gtrsim \beta} \varphi}_{\mu_1, \kappa_1} \psi_{\mu_2, \kappa_2}) \bigg\|_{L^p_t L^2_{x}}\\
		&\lesa \alpha  (\mb{m}_{R, \mu})^{\frac{1}{2}} \big\|C_{\gtrsim \beta} \varphi_{\mu_1}\big\|_{L^{p}_tL^2_x}\bigg\|  \Big(\sum_{\kappa_2 \in \mc{C}_\alpha}  \|\psi_{\mu_2, \kappa_2}\|_{L^2_x}^2\Big)^{\frac12}\bigg\|_{L^\infty_t}
	\end{align*}
and hence \eqref{eq:bi high mod 2} again follows from Lemma \ref{lem:prop of S} and Lemma \ref{lem:wave stri}.
\end{proof}

As a second application of the Strichartz estimates, we prove a bilinear estimate in a non-transverse interaction arising only if $m\ne 0$.
In this setting, when $\alpha$ is very small, the angle might be zero. In this case, we have access to the Schr\"{o}dinger admissible Strichartz estimates, see Lemma \ref{lem:schr stri}. Together with the null structure, this gives a large gain due to very small $\alpha$.

\begin{theorem}[Non-transverse case]\label{thm:bi nontrans}
Let $d\g2$ and $\frac{1}{2}\les \frac{1}{a} < \frac{3}{4}$. For all $0<R \lesa \mu_1 \approx \mu_2$, and $\alpha \approx \frac{|m|}{\lr{\mu_2}_m}>0$, we have
         $$ \Big\| \sum_{\substack{\kappa_1, \kappa_2 \in \mc{C}_\alpha \\ \ma(\kappa_1, \kappa_2) \lesa \alpha}} P_{\les R}(\overline{\varphi_{\mu_1, \kappa_1}} \psi_{\mu_2, \kappa_2}) \Big\|_{L^2_{t,x}} \lesa  \alpha^{1-\frac{1}{d}} R^{\frac{1}{2} - \frac{1}{d}} \big( \min\{R, \alpha \mu\}\big)^{\frac{d-1}{2} - \frac{d-1}{d}} \lr{\mu}_m^\frac{1}{2} \| \varphi_{\mu_1} \|_{S_w} \| \psi_{\mu_2} \|_{S_w}.$$
\end{theorem}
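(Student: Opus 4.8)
The plan is to dualise against an $L^2_{t,x}$ function and reduce the bound to a fixed-time spatial bilinear estimate fed into the Schr\"odinger-admissible Strichartz bounds of Lemma \ref{lem:schr stri} --- which are available precisely because $m\ne 0$ --- with the null structure \eqref{eqn:Pi null struc} supplying the gain $\alpha$ and the low output frequency $P_{\les R}$ supplying a Bernstein volume gain governed by $\mb{m}_{R,\mu}=\min\{R,\mu\}\,(\min\{R,\alpha\mu\})^{d-1}$, where $\mu=\min\{\mu_1,\mu_2\}\approx\mu_1\approx\mu_2$ and $R\lesa\mu$. First I would reduce the cap sum: since $\kappa_1,\kappa_2\in\mc{C}_\alpha$ with $\ma(\kappa_1,\kappa_2)\lesa\alpha$, each $\kappa_1$ is paired with $O(1)$ admissible $\kappa_2$, so it suffices to bound $\sum_{\kappa}P_{\les R}(\overline{\varphi_{\mu_1,\kappa}}\,\psi_{\mu_2,\tilde\kappa})$ for a fixed injective ``shift'' $\kappa\mapsto\tilde\kappa=\tilde\kappa(\kappa)$ on $\mc{C}_\alpha$. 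Writing $\varphi_{\mu,\kappa}=\sum_{\pm}\Pi_\pm P_\mu R_{\pm\kappa}\varphi$ (and similarly for $\psi$) further reduces to fixed signs $\pm_1,\pm_2$; for each choice the relevant frequencies satisfy $\pm_1\xi\in\kappa$, $\pm_2\eta\in\tilde\kappa$, hence $\ma(\pm_1\xi,\pm_2\eta)+\tfrac{|m|}{\lr{\mu_1}_m}+\tfrac{|m|}{\lr{\mu_2}_m}\approx\alpha$, so the null-form decomposition \eqref{eqn:thm bi trans:null decomp} used in the proof of Theorem \ref{thm:bi high mod} applies.

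Next I would establish the spatial estimate. The null-form decomposition extracts a factor $\alpha$ and turns the matrix-weighted product into a scalar one whose spatial Fourier transform is supported in a box of dimensions $\min\{R,\mu\}\times(\min\{R,\alpha\mu\})^{d-1}$, so Bernstein and H\"older (exactly as in the derivation of \eqref{eqn:spatial v2}, now with both $L^r$ exponents equal) give, at each fixed time, with $r=\tfrac{2d}{d-1}$,
\[
\big\|P_{\les R}\big(\overline{\varphi^{\pm_1}_{\mu_1,\kappa}}\,\psi^{\pm_2}_{\mu_2,\tilde\kappa}\big)\big\|_{L^2_x}\lesa \alpha\,\mb{m}_{R,\mu}^{\frac12-\frac1d}\,\|\varphi^{\pm_1}_{\mu_1,\kappa}\|_{L^{r}_x}\,\|\psi^{\pm_2}_{\mu_2,\tilde\kappa}\|_{L^{r}_x}.
\]
Now write the left side of the theorem as $\sup_{\|G\|_{L^2_{t,x}}\les1}\big|\sum_\kappa\langle P_{\les R}G,\overline{\varphi_{\mu_1,\kappa}}\psi_{\mu_2,\tilde\kappa}\rangle\big|$, bound the inner sum at each fixed $t$ by Cauchy--Schwarz in $\kappa$ together with the displayed inequality, and then apply H\"older in $t$ with exponents $(2,4,4)$ (the $L^2_t$ slot absorbing $\|P_{\les R}G\|_{L^2_{t,x}}\les1$). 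This yields
\[
\Big\|\sum_{\kappa}P_{\les R}(\overline{\varphi_{\mu_1,\kappa}}\psi_{\mu_2,\tilde\kappa})\Big\|_{L^2_{t,x}}\lesa \alpha\,\mb{m}_{R,\mu}^{\frac12-\frac1d}\,\Big\|\Big(\sum_\kappa\|\varphi_{\mu_1,\kappa}\|_{L^r_x}^2\Big)^{\frac12}\Big\|_{L^4_t}\Big\|\Big(\sum_\kappa\|\psi_{\mu_2,\kappa}\|_{L^r_x}^2\Big)^{\frac12}\Big\|_{L^4_t}.
\]
It is essential that the Cauchy--Schwarz over $\kappa$ is carried out \emph{inside} the time integral: Minkowski's inequality runs the wrong way if one first passes to the $L^4_t$ norm, and the duality argument is exactly what circumvents this.

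Finally I would invoke Strichartz and collect the arithmetic. The pair $(4,r)$ with $r=\tfrac{2d}{d-1}$ is Schr\"odinger-admissible, and the hypothesis $\tfrac1a<\tfrac34$ gives $a'<4$, so Lemma \ref{lem:schr stri} (its $S_{w,m}$ version, which requires $q=4>a'$) bounds the first square-function norm by $|m|^{-\frac1{2d}}\lr{\mu_1}_m^{\frac{d+2}{4d}}\|\varphi_{\mu_1}\|_{S_w}$ and the second by $|m|^{-\frac1{2d}}\lr{\mu_2}_m^{\frac{d+2}{4d}}\|\psi_{\mu_2}\|_{S_w}$. Multiplying, using $|m|\approx\alpha\lr{\mu_2}_m\approx\alpha\lr{\mu}_m$ and $\mb{m}_{R,\mu}=R\,(\min\{R,\alpha\mu\})^{d-1}$ (since $R\lesa\mu$), the prefactor collapses:
\[
\alpha\cdot|m|^{-\frac1d}\lr{\mu}_m^{\frac{d+2}{2d}}\cdot\mb{m}_{R,\mu}^{\frac12-\frac1d}=\alpha^{1-\frac1d}\,R^{\frac12-\frac1d}\,(\min\{R,\alpha\mu\})^{\frac{d-1}{2}-\frac{d-1}{d}}\,\lr{\mu}_m^{\frac12},
\]
which is the claimed bound (and for $d=2$ the volume exponent is $0$, consistent with the absence of $R$ and $\min\{R,\alpha\mu\}$ on the right there). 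The main obstacles I expect are (i) the $\Pi_\pm$/sign bookkeeping in the first step --- one must check that for every sign combination $\pm_1\xi,\pm_2\eta$ genuinely point into the chosen caps so that the null-form gain $\alpha$ is available, which works because $\varphi_{\mu,\kappa}$ places $\Pi_+$ over $\kappa$ and $\Pi_-$ over $-\kappa$ --- and (ii) the placement of the cap summation, which must be done pointwise in time before Strichartz; everything else is a routine combination of Bernstein, H\"older and the Strichartz estimates already established in this section.
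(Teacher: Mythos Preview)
Your proposal is correct and follows essentially the same route as the paper: the null structure gain $\alpha$, the Bernstein volume factor $\mb{m}_{R,\mu}^{\frac12-\frac1d}$, the Schr\"odinger-admissible Strichartz pair $(4,\tfrac{2d}{d-1})$ from Lemma~\ref{lem:schr stri}, and the substitution $|m|\approx\alpha\lr{\mu}_m$ are exactly the paper's ingredients. The only cosmetic difference is that the paper dispenses with the explicit dualisation against $G$: it simply applies the triangle inequality over caps pointwise in $t$ to pass from $\|\sum_\kappa P_{\les R}(\cdots)\|_{L^2_{t,x}}$ to $\big\|\sum_\kappa\|\varphi_{\mu_1,\kappa}\|_{L^p_x}\|\psi_{\mu_2,\tilde\kappa}\|_{L^p_x}\big\|_{L^2_t}$, then Cauchy--Schwarz in $\kappa$ and H\"older in $t$, which achieves the same cap-inside-time ordering without the duality detour.
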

\begin{proof}
Similarly to the purely spatial bound \eqref{eqn:spatial v2},
using the null form bound \eqref{eqn:Pi null struc}, Bernstein's and H\"{o}lder's inequalities, we have
$$
 \Big\| \sum_{\substack{\kappa_1, \kappa_2 \in \mc{C}_\alpha \\ \ma(\kappa_1, \kappa_2) \lesa \alpha}} P_{\les R}(\overline{\varphi_{\mu_1, \kappa_1}} \psi_{\mu_2, \kappa_2}) \Big\|_{L^2_{t,x}}
         \lesa    \alpha (\mb{m}_{R, \mu_2})^{\frac{2}{p}-\frac12}  \Big\| \sum_{\substack{\kappa_1, \kappa_2 \in \mc{C}_\alpha \\ \ma(\kappa_1, \kappa_2) \lesa \alpha}}
         \|\varphi_{\mu_1, \kappa_1}\|_{L^p_x} \| \psi_{\mu_2, \kappa_2}\|_{L^p_x} \Big\|_{L^2_{t}}
$$
for any $2\les p\les 4$, where $\mb{m}_{R, \mu_2} =\min\{R, \mu_2\} \big( \min\{R, \alpha \mu_2\}\big)^{d-1}$. Choosing $\frac{1}{p} = \frac{1}{2} - \frac{1}{2d}$,
another application of H\"{o}lder's inequality implies
\begin{align*}
&\Big\| \sum_{\substack{\kappa_1, \kappa_2 \in \mc{C}_\alpha \\ \ma(\kappa_1, \kappa_2) \lesa \alpha}} P_{\les R}(\overline{\varphi_{\mu_1, \kappa_1}} \psi_{\mu_2, \kappa_2}) \Big\|_{L^2_{t,x}}
\\
\lesa{}&  \alpha (\mb{m}_{R, \mu_2})^{\frac{1}{2}-\frac1d} \Big\| \Big( \sum_{\kappa_1 \in \mc{C}_\alpha} \|\psi_{\mu_1, \kappa_1}\|_{L^p_x}^2\Big)^\frac{1}{2} \Big\|_{L^4_t}
          \Big\| \Big( \sum_{\kappa_2 \in \mc{C}_\alpha} \|\psi_{\mu_2, \kappa_2}\|_{L^p_x}^2\Big)^\frac{1}{2} \Big\|_{L^4_t}\\
          &\lesa \alpha  (\mb{m}_{R, \mu_2})^{\frac{1}{2}-\frac1d}   |m|^{-\frac1d} \lr{\mu_2}_m^{\frac{1}{2}+ \frac{1}{d}}\| \varphi_{\mu_1} \|_{S_w} \| \psi_{\mu_2} \|_{S_w}
           \end{align*}
      where we used  Lemma \ref{lem:schr stri} (note that $m>0$ and $a'<4$) in the last step.
The claim follows because $|m| \approx \alpha \lr{\mu_2}_m$.
\end{proof}

\section{Bilinear Fourier Restriction estimates and consequences}\label{sec:be2}
Our goal is to now consider the more difficult case where both $\varphi$ and $\psi$ are supported close to the hyperbola $\tau^2 = |\xi|^2 + m^2$. In particular, we can no longer exploit the large modulation estimates as in the previous section, and instead have to exploit transversality. Namely, the improved decay of the product $\overline{\varphi}\psi$ when the waves propagate in different directions/speeds. To illustrate this gain, we begin by considering a bilinear $L^2_{t,x}$ estimate for free Klein-Gordon waves.

\begin{lemma}\label{lem:bi L2 free KG}
Let $\alpha, \mu, \lambda \in 2^\ZZ$ and $\kappa_1, \kappa_2 \in \mc{C}_\alpha$ with $0<\alpha \les 1$, $0<\mu\les \lambda$, and
    \begin{equation}\label{eqn:lem bi L2 free KG:cond}
        \frac{\lambda}{\mu} + \frac{\alpha \lr{\mu}_m}{|m|} \gg 1, \qquad \angle(\kappa_1, \kappa_2) + \frac{|m|}{\lr{\mu}_m} \approx \alpha.
    \end{equation}
If $\supp \widehat{f} \subset \{ |\xi| \approx \lambda, \frac{\xi}{|\xi|} \in \kappa_1\}$ and $\supp \widehat{g} \subset \{|\xi| \approx \mu, \frac{\pm \xi}{|\xi|} \in \kappa_2\}$ then for any $R>0$ we have
        $$ \big\| e^{it\lr{\nabla}_m} f e^{\pm i t\lr{\nabla}_m} g \big\|_{L^2_{t,x}} \lesa \alpha^{-\frac{1}{2}} \big(\min\{R, \mu\}\big)^{\frac{1}{2}} \big( \min\{R, \alpha\mu\}\big)^{\frac{d-2}{2}}\Big( \frac{\lr{\lambda}_m}{\lambda}\Big)^{\frac{1}{2}} \| f \|_{L^2} \| g \|_{L^2}. $$
\end{lemma}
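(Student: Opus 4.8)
The plan is to prove this by the classical ``bilinear $L^2$ through transversality'' scheme: Plancherel and the coarea formula reduce the bound to a purely geometric estimate on a weighted measure of the resonant set, and that estimate is then obtained from the null‑structure information behind \eqref{eqn:Pi null struc} together with the Fourier‑support geometry of $f$ and $g$. Concretely, put $u=e^{it\lr{\nabla}_m}f$, $v=e^{\pm it\lr{\nabla}_m}g$; by Plancherel in $(t,x)$ we have $\|uv\|_{L^2_{t,x}}^2=\|\mc F_{t,x}[u]*\mc F_{t,x}[v]\|_{L^2_{\tau,\xi}}^2$, and since $\mc F_{t,x}[u]$ and $\mc F_{t,x}[v]$ are weighted surface measures carried by $\{\tau=\lr{\xi}_m\}$ and $\{\tau=\pm\lr{\xi}_m\}$, the coarea formula writes $\mc F_{t,x}[uv](\tau,\xi)$ as an integral of $\widehat f(\xi_1)\widehat g(\xi-\xi_1)$ over the hypersurface $\Sigma_{\tau,\xi}=\{\xi_1:\Phi_\xi(\xi_1)=\tau\}$, where $\Phi_\xi(\xi_1):=\lr{\xi_1}_m\pm\lr{\xi-\xi_1}_m$, against $|\nabla_{\xi_1}\Phi_\xi|^{-1}\,d\mathcal H^{d-1}$. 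Cauchy--Schwarz on $\Sigma_{\tau,\xi}$, followed by a further use of the coarea formula to integrate out $(\tau,\xi)$, then gives $\|uv\|_{L^2_{t,x}}^2\lesa K\,\|f\|_{L^2}^2\|g\|_{L^2}^2$ where it remains to prove the geometric bound
\[
 K:=\sup_{\tau,\xi}\ \int_{\Sigma_{\tau,\xi}\cap E_\xi}\frac{d\mathcal H^{d-1}(\xi_1)}{\bigl|\nabla_{\xi_1}\Phi_\xi(\xi_1)\bigr|}\ \lesa\ \frac1\alpha\,\min\{R,\mu\}\,\min\{R,\alpha\mu\}^{d-2}\,\frac{\lr{\lambda}_m}{\lambda},
\]
with $E_\xi$ the set of $\xi_1$ compatible with the Fourier supports of $f$ and $g$ (that is $|\xi_1|\approx\lambda$, $\xi_1/|\xi_1|\in\kappa_1$, $|\xi-\xi_1|\approx\mu$, $\pm(\xi-\xi_1)/|\xi-\xi_1|\in\kappa_2$) and, once the harmless output cutoff $P_{\les R}$ is inserted, also $|\xi|\les R$.

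For the geometric bound on $K$, note first that on $E_\xi$ one has $\lr{\xi_1}_m\approx\lr{\lambda}_m$ and $\nabla_{\xi_1}\Phi_\xi=\xi_1/\lr{\xi_1}_m\mp(\xi-\xi_1)/\lr{\xi-\xi_1}_m$. Applying the elementary identity relating $|a-b|$ to $\bigl||a|-|b|\bigr|$ and $\ma(a,b)$ with $a=\xi_1/\lr{\xi_1}_m$, $b=\pm(\xi-\xi_1)/\lr{\xi-\xi_1}_m$, together with the convexity (curvature) of $\lr{\cdot}_m$ and the two hypotheses $\ma(\kappa_1,\kappa_2)+|m|/\lr{\mu}_m\approx\alpha$ and $\lambda/\mu+\alpha\lr{\mu}_m/|m|\gg1$, yields a quantitative transversality statement for $\Phi_\xi$ on $E_\xi$ (this is the same computation underlying \eqref{eqn:Pi null struc}). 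Writing $\int_{\Sigma_{\tau,\xi}\cap E_\xi}|\nabla\Phi_\xi|^{-1}d\mathcal H^{d-1}=\int_{E_\xi}\delta(\tau-\Phi_\xi(\xi_1))\,d\xi_1$, one splits $E_\xi$ according to the size of the various derivatives of $\Phi_\xi$, integrates out the coordinate carrying the dominant derivative, and bounds $K$ by the area of the resulting projected box divided by that derivative. The box containing $E_\xi$ has one side of length $\min\{R,\mu\}$ in the radial ($\xi-\xi_1$) direction and $d-1$ sides of length $\min\{R,\alpha\mu\}$, while the factor $\lr{\lambda}_m/\lambda=\sqrt{1+m^2/\lambda^2}$ is a genuine loss, present only in the low‑frequency regime $\lambda\les|m|$, reflecting that at such frequencies $\Sigma_{\tau,\xi}$ behaves like a sphere of radius $\approx\lr{\lambda}_m$ rather than $\lambda$.

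The hard part is exactly this geometric estimate and the case analysis it requires: one must extract simultaneously the angular gain $\alpha^{-1/2}$, the truncation at the output scale $R$, and the factor $(\lr{\lambda}_m/\lambda)^{1/2}$, and do so uniformly across $m=0$ (the cone, where the loss factor is $1$ and the argument is cleanest), $m\ne0$ with $\mu\gec|m|$ (the hyperboloid in its ``wave'' regime), and $\mu\les|m|$ (where $\lr{\mu}_m\approx|m|$ forces $\alpha\approx1$, there is no angular decomposition, and the claim degenerates to a Schr\"odinger‑type bilinear $L^2$ bound), as well as across $\lambda\gg\mu$ versus $\lambda\approx\mu$ and across angle‑dominated versus mass‑dominated $\alpha$. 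In the delicate sub-cases — in particular when $\ma(\kappa_1,\kappa_2)$ is much smaller than $\alpha$, so that a first‑derivative bound on $\Phi_\xi$ alone is too weak — one must bring in the second‑order (curvature) behaviour of $\lr{\cdot}_m$ to estimate the measure of the level set within the box; keeping all of this uniform in $m$ is the main obstacle.
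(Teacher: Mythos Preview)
Your approach is the same as the paper's: Plancherel plus coarea plus a lower bound on $|\nabla_{\xi_1}\Phi_\xi|$. The transversality identity you allude to is exactly \eqref{eqn:lem bi L2 free KG:trans} in the paper,
\[
\Big|\frac{\xi}{\lr{\xi}_m}-\frac{\pm\eta}{\lr{\eta}_m}\Big|\approx \frac{m^2\bigl||\xi|-|\eta|\bigr|}{\lr{\mu}_m^2\lr{\lambda}_m}+\Big(\frac{\lambda\mu}{\lr{\lambda}_m\lr{\mu}_m}\Big)^{1/2}\ma(\kappa_1,\kappa_2),
\]
and the cube decomposition at scale $R$ handles the output localisation.

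Where you overcomplicate things is the small-angle regime. You write that when $\ma(\kappa_1,\kappa_2)\ll\alpha$ ``a first-derivative bound on $\Phi_\xi$ alone is too weak'' and one must invoke second-order curvature. This is not the case here, and missing this point is the gap in your plan. The hypothesis \eqref{eqn:lem bi L2 free KG:cond} is rigged precisely so that first-order transversality always survives: if $\ma(\kappa_1,\kappa_2)\ll\alpha$ then necessarily $\alpha\approx|m|/\lr{\mu}_m$, and then the first half of \eqref{eqn:lem bi L2 free KG:cond} forces $\lambda\gg\mu$. Consequently $\bigl||\xi|-|\eta|\bigr|\approx\lambda$ on $E_\xi$, and the radial term in the display above already gives
\[
|\nabla_{\xi_1}\Phi_\xi|\gtrsim \frac{m^2\lambda}{\lr{\mu}_m^2\lr{\lambda}_m}\approx \alpha^2\,\frac{\lambda}{\lr{\lambda}_m}.
\]
This is exactly the paper's second case, handled by a change of variables in the radial direction (not orthogonal to $\kappa_1$ as in the angle-dominated case). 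Moreover, since $\lambda\gg\mu$ the output lives at frequency $\approx\lambda$, so $P_{\les R}$ forces $R\gtrsim\lambda\gtrsim\mu$ and the $\min\{R,\cdot\}$ factors trivialise. Thus the whole lemma reduces to just two first-order cases ($\alpha\gg|m|/\lr{\mu}_m$: angular transversality, orthogonal change of variables, cube decomposition at scale $R$; and $\alpha\approx|m|/\lr{\mu}_m$: radial transversality, no $R$-dependence), with no curvature input needed. Your elaborate case analysis and the proposed second-order step are unnecessary, and pursuing them would obscure the argument.
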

\begin{proof}
The left hand side only depends on $|m|$, so it is enough to consider $m\geq 0$.
The key observation is that, after restricting to the support of $\widehat{f}$ and $\widehat{g}$, the normal directions to the hyperboloids $\tau = \lr{\xi}_m$ and $\tau = \pm \lr{\xi}_m$ are transverse. More precisely, a short computation shows that for any $\xi\in \supp \widehat{f}$ and $\eta \in \supp \widehat{g}$ we have
         \begin{equation}\label{eqn:lem bi L2 free KG:trans}
            \Big| \frac{\xi}{\lr{\xi}_m} - \frac{\pm \eta}{\lr{\eta}_m}\Big| \approx \frac{m^2 ||\xi| - |\eta||}{\lr{\mu}_m^2 \lr{\lambda}_m} + \Big(\frac{\lambda \mu}{\lr{\lambda}_m \lr{\mu}_m}\Big)^\frac{1}{2} \ma(\kappa_1, \kappa_2).
         \end{equation}
Thus the required bound now follows from \eqref{eqn:lem bi L2 free KG:cond} together with a standard argument using Plancherel and H\"older's inequality, see for instance \cite[Theorem 5.2]{Candy2019a} or (the proof of ) \cite[Lemma 2.6]{Candy2018a}.

In slightly more detail, we consider the two cases $\alpha \gg \frac{m}{\lr{\mu}_m}$ and $\alpha \approx \frac{m}{\lr{\mu}_m}$. In the former case, let $\mc{Q}_R$ denote a finitely overlapping collection of cubes $q\subset \RR^d$ of diameter $R>0$, and let $f_q$ denote a smooth Fourier cutoff of the Fourier support of $f$ to the cube $q\in \mc{Q}_R$. Since $\alpha \gg \frac{m}{\lr{\mu}_m}$, we have $\mu \gtrsim m$ and, in view of the transversality condition \eqref{eqn:lem bi L2 free KG:cond}, $\ma(\kappa_1, \kappa_2) \approx \alpha$. In particular following the argument in
\cite[Lemma 2.6]{Candy2018a} (using a change of variables in the direction orthogonal to the center of the cap $\kappa_1$) we obtain for any $q, q'\in \mc{Q}_R$
        $$ \big\| e^{it\lr{\nabla}_m} f_q e^{\pm i t\lr{\nabla}_m} g_{q'} \big\|_{L^2_{t,x}} \lesa \alpha^{-\frac{1}{2}} \big(\min\{R, \mu\}\big)^{\frac{1}{2}} \big( \min\{R, \alpha\mu\}\big)^{\frac{d-2}{2}} \|f_q \|_{L^2} \| g_{q'}\|_{L^2}. $$
Hence decomposing into cubes $q, q' \in \mc{Q}_R$ we see that
        \begin{align*}
       \big\| P_{\les R} \big( e^{it\lr{\nabla}_m} f e^{\pm i t\lr{\nabla}_m} g\big) \big\|_{L^2_{t,x}}
                    &\lesa \sum_{\substack{q, q' \in \mc{Q}_R \\ \dist(q, q') \lesa R}} \| e^{ i t\lr{\nabla}_m} f_{q} e^{\pm it \lr{\nabla}_m} g_{q'}  \|_{L^2_{t,x}} \\
                    &\lesa \alpha^{-\frac{1}{2}} \big(\min\{R, \mu\}\big)^{\frac{1}{2}} \big( \min\{R, \alpha\mu\}\big)^{\frac{d-2}{2}}  \sum_{\substack{q, q' \in \mc{Q}_R \\ \dist(q, q') \lesa R}}  \|f_q \|_{L^2} \| g_{q'}\|_{L^2} \\
                    &\lesa \alpha^{-\frac{1}{2}} \big(\min\{R, \mu\}\big)^{\frac{1}{2}} \big( \min\{R, \alpha\mu\}\big)^{\frac{d-2}{2}} \|f \|_{L^2} \| g\|_{L^2}
        \end{align*}
which suffices since $\lambda \gtrsim \mu \gtrsim m$ in this case. On the other hand, if $\alpha \approx \frac{m}{\lr{\mu}_m}$ then \eqref{eqn:lem bi L2 free KG:cond} implies that $\lambda \gg \mu$. Consequently the left hand side is vanishing unless $R \gtrsim \lambda$, and thus we can safely assume that $R\gtrsim \mu$ and discard the outer $P_{\les R}$ projection. The transversality assumption \eqref{eqn:lem bi L2 free KG:cond} together with \eqref{eqn:lem bi L2 free KG:trans} gives for any $\xi \in \supp \widehat{f}$ and $\eta \in \supp \widehat{g}$
         $$\Big| \frac{\xi}{\lr{\xi}_m} - \frac{\pm \eta}{\lr{\eta}_m}\Big| \gtrsim \alpha^2 \frac{\lambda}{\lr{\lambda}_m}.$$
Hence following the argument in \cite[Lemma 2.6]{Candy2018a} and applying a change of variables in the radial direction, we conclude that
        $$ \big\| e^{it\lr{\nabla}_m} f e^{\pm i t\lr{\nabla}_m} g \big\|_{L^2_{t,x}} \lesa \Big( \frac{\alpha^2 \lambda}{\lr{\lambda}_m}\Big)^{-\frac{1}{2}} (\alpha \mu)^{\frac{d-1}{2}} \|f \|_{L^2} \| g \|_{L^2}. $$
which suffices as $R\gtrsim \mu$ in this regime.
\end{proof}

After decomposing into atoms, the bilinear $L^2_{t,x}$ estimate in Lemma \ref{lem:bi L2 free KG} immediately implies a corresponding $U^2$ version. In particular, it also holds with the free waves replaced by elements of $S\times S$ (under the same support assumptions). On the other hand, the proof of Theorem \ref{thm:gwp} requires the bilinear $L^2_{t,x}$ estimate in Lemma \ref{lem:bi L2 free KG} to hold for functions in $S_w \times S$ (the high frequency term is placed in $S_w$). Unpacking definitions and using the embedding $V^{a'} \subset U^b$ for $a'<b$, we require a $U^2 \times U^b$ version of Lemma \ref{lem:bi L2 free KG} with $b>2$. Interpolating with a weaker Strichartz estimate can give such an estimate, but causes a high-low frequency loss in the wave regime $\lambda \gg \mu \gtrsim |m|$. In particular as it stands, Lemma \ref{lem:bi L2 free KG} is far from sufficient to conclude Theorem \ref{thm:gwp}.

There are two approaches in the literature to resolve this issue. The first, following Tataru \cite{Tataru2001}, is to work with null frames and develop a suitable functional framework in which a version of Lemma \ref{lem:bi L2 free KG} can be extended to perturbations of free waves. An alternative approach, first observed in \cite{Candy2018b} in the case of the wave maps equation, is to develop a suitable perturbed version of Lemma \ref{lem:bi L2 free KG} by arguing via the atomic Fourier restriction estimates in \cite{Candy2019a}. The second approach has the advantage that it simultaneously deals with both the massive and massless cases in the same underlying function spaces (up to the pull back via the free solution operator). As this flexibility is crucial in the proof of massless and non-relativistic limits, we follow the second approach and obtain a suitable generalisation of Lemma \ref{lem:bi L2 free KG} by arguing via an atomic bilinear restriction estimate for the hyperboloid.

\begin{theorem}[Atomic Bilinear Restriction for Klein-Gordon \cite{Candy2019a}]\label{thm:bilinear small scale KG}
Let $d\g 2$ and $1\les q \les 2\les b_2 \les b_1$ with
    $$\frac{1}{q} < \frac{d+1}{4}, \qquad  \frac{1}{b_1}>\frac{ 2}{(d+1)q}, \qquad \frac{1}{b_1} + \frac{1}{b_2} > \frac{1}{q}.$$
Let $\alpha, \mu, \lambda \in 2^\ZZ$ and $\kappa_1, \kappa_2 \in \mc{C}_\alpha$ with $0<\alpha \les 1$, $0<\mu\les \lambda$, and
    \begin{equation}\label{eq:thm bilinear small scale KG:cond}
        \frac{\lambda}{\mu} + \frac{\alpha \lr{\mu}_m}{|m|} \gg 1, \qquad \angle(\kappa_1, \pm \kappa_2) + \frac{|m|}{\lr{\mu}_m} \approx \alpha.
    \end{equation}
If $\supp \widehat{u} \subset \{ |\xi| \approx \lambda, \frac{\xi}{|\xi|} \in \kappa\}$ and $\supp \widehat{v} \subset \{|\xi| \approx \mu, \frac{\xi}{|\xi|} \in \kappa_2\}$ then
        $$ \big\| u v \big\|_{L^q_tL^2_x} \lesa \alpha^{\frac{d-1}{2} - \frac{2}{q}} \mu^{\frac{d}{2}-\frac{2}{q}} \lr{\mu}_m^{\frac{1}{q}} \Big( \frac{\lr{\lambda}_m}{\lr{\mu}_m}\Big)^{\frac{1}{q} - \frac{1}{2} + (d+1)(\frac{1}{2} - \frac{1}{b_2})} \Big( \frac{\mu \lr{\lambda}_m}{\lambda \lr{\mu}_m}\Big)^{\frac{1}{b_1}} \| e^{-it\lr{\nabla}_m} u \|_{U^{b_1}} \| e^{\mp it \lr{\nabla}_m} v \|_{U^{b_2}}.$$
\end{theorem}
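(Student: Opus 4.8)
The estimate is a specialisation to the hyperboloid $\tau = \langle\xi\rangle_m$ of the general atomic bilinear Fourier restriction theorem of \cite{Candy2019a}, and the plan is to reduce it to that result. Since the left-hand side and all the parameters depend only on $|m|$, I would first assume $m \g 0$; if $m>0$ a space-time rescaling (under which the multiplier $\langle\xi\rangle_m$ becomes $m\langle\xi/m\rangle_1$, $\lambda\mapsto m\lambda$, $\mu\mapsto m\mu$, and $\alpha$ is unchanged) reduces matters to $m=1$, while $m=0$ is the light cone; one checks, exactly as for the mass rescaling of Lemma~\ref{lem:mass rescaling}, that the claimed powers of $\mu,\lambda,\langle\mu\rangle_m,\langle\lambda\rangle_m$ are scale-consistent. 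The trivial frequency configurations forced by the support conditions (e.g.\ when the output frequency must dominate $\lambda$) are discarded at the outset.

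The core analytic input is a bilinear estimate for two \emph{transverse} caps of the hyperboloid with the improved time integrability $q<2$: for $u,v$ as in the statement,
\[
\big\| e^{it\langle\nabla\rangle_m} u \cdot e^{\mp it\langle\nabla\rangle_m} v \big\|_{L^q_t L^2_x} \lesa (\text{geom})\, \| u \|_{L^2_x}\| v \|_{L^2_x},
\]
where the geometric constant is dictated by the principal curvatures of the two caps and the angle between their normals. At the endpoint $q=b_1=b_2=2$ this is exactly Lemma~\ref{lem:bi L2 free KG} (taking the outer cutoff $P_{\les R}$ with $R$ larger than all frequencies present, so that it is harmless), which uses only the transversality \eqref{eq:thm bilinear small scale KG:cond} of the normals; the point of Theorem~\ref{thm:bilinear small scale KG} is that pushing $q$ below $2$ genuinely requires the curvature of the surface. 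In the wave regime $\lambda\gg m$ (cone-like, $d-1$ non-vanishing curvatures) this is the bilinear cone restriction estimate of Wolff~\cite{Wolff2000} and Tao~\cite{Tao2001b}; in the regime $\lambda\lesa m$ (elliptic, $d$ non-vanishing curvatures) it is the elliptic bilinear restriction estimate; and \cite{Candy2019a} provides the unified hyperboloidal statement together with the crucial \emph{atomic} version in which the free waves are replaced by elements of $U^{b_1}\times U^{b_2}$.

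For the deduction itself I would normalise the two caps by an affine change of variables in $\xi$, so that the relevant pieces of $\{\tau=\langle\xi\rangle_m\}$ become graphs over transverse coordinate planes with $O(1)$-separated normals and curvatures of size $\approx 1$; the Jacobians collected in this normalisation (and in the corresponding rescaling of $U^p$ norms) are exactly what produces the factors $\alpha^{\frac{d-1}{2}}$, $\mu^{\frac d2}$ and the powers of $\langle\mu\rangle_m/\langle\lambda\rangle_m$ in the statement. One then applies the atomic bilinear restriction theorem of \cite{Candy2019a} to the normalised pieces. The three hypotheses have a clear meaning: $\frac1q<\frac{d+1}{4}$ is the range of validity of the underlying bilinear restriction estimate; $\frac{1}{b_1}>\frac{2}{(d+1)q}$ ensures convergence of the induction-on-scales geometric series in the high-frequency (``Strichartz'') slot, which is why $b_1$ may be taken large; and $\frac1{b_1}+\frac1{b_2}>\frac1q$ is needed to sum the off-diagonal contributions when the two atoms are decomposed over overlapping time intervals. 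The loss $(\langle\lambda\rangle_m/\langle\mu\rangle_m)^{(d+1)(\frac12-\frac1{b_2})}$ is the high--low frequency penalty for measuring the low-frequency factor in $U^{b_2}$ with $b_2>2$ rather than in $U^2$. As a consistency check, the resulting formula collapses to Lemma~\ref{lem:bi L2 free KG} when $q=b_1=b_2=2$.

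The main obstacle is not the reduction but the bilinear restriction input: one needs the $q<2$ estimate with uniform constants as the hyperboloid degenerates --- across the cone regime $\lambda\gg m$, the elliptic regime $\lambda\lesa m$, and the transition --- and, more importantly, in a form stable under the atomic ($U^p$) decomposition, i.e.\ the induction-on-scales argument must survive passage to the spaces $U^{b_j}$; this is precisely the technical content of \cite{Candy2019a}, built on the Wolff--Tao method. On the side of this paper the only real care is the bookkeeping of the parameters $\alpha,\mu,\lambda,\langle\mu\rangle_m,\langle\lambda\rangle_m$ through the rescaling and the affine normalisation, and verifying that the various powers collapse to the single clean formula stated above.
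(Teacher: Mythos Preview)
Your proposal is correct and follows essentially the same route as the paper: both recognise the statement as a specialisation of the general atomic bilinear restriction theorem \cite[Theorem~1.7]{Candy2019a} to the hyperboloid, and both split into regimes according to the relative sizes of the frequencies and the mass. The paper's proof is slightly more terse in its case division --- it works directly with general $m\g 0$ and separates $\mu\gg m$ (handled exactly as in \cite[Theorem~1.10]{Candy2019a}) from $\mu\lesa m$ (where \eqref{eq:thm bilinear small scale KG:cond} forces $\alpha\approx 1$ and $\lambda\gg\mu$, reducing to the fully elliptic or mixed sub-cases of \cite[Theorem~1.7]{Candy2019a}) --- rather than first rescaling to $m\in\{0,1\}$ as you suggest, but this is a cosmetic difference and your added commentary on the role of the exponent conditions and the geometric normalisation is consistent with the argument in \cite{Candy2019a}.
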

\begin{proof}
Again, it is enough to consider $m\g 0$.
This is a special case of \cite[Theorem 1.7]{Candy2019a}, and the above statement follows by arguing as in \cite[Theorem 1.10]{Candy2019a}. More precisely, the case $\mu \gg m$ follows directly by arguing as in the proof of \cite[Theorem 1.10]{Candy2019a}. When $\mu \lesa m$ we define the phases
            $$ \Phi_1(\xi) = \Phi_2(\xi) = \lr{\xi}_m$$
and domains $\Lambda_1 = \{|\xi| \approx \lambda\}$, $\lambda_2 = \{ |\eta| \approx \mu\}$. The condition \eqref{eq:thm bilinear small scale KG:cond} implies that $\alpha \approx 1$ and $\lambda \gg \mu$. Hence result follows by directly applying Theorem \cite[Theorem 1.7]{Candy2019a} (in the fully elliptic regime $\lambda \lesa m$ this is immediate, in the mixed case $\lambda\gg m$ this requires a short argument as in the proof of \cite[Theorem 1.10]{Candy2019a}).
\end{proof}

The proof of global well-posedness in Theorem \ref{thm:gwp} only requires the case $q=2$. The case $q<2$ is only needed in the proof of Theorem \ref{thm:limit} via Theorem \ref{thm:rest+disp} below. Moreover, in the special case $b_1 = b_2 =2$,  we recover the frequency dependence in Lemma \ref{lem:bi L2 free KG} (provided $R\gtrsim \mu$).

The key importance of the Theorem \ref{thm:bilinear small scale KG} is that it completely resolves the loss of regularity issue discussed below the statement of Lemma \ref{lem:bi L2 free KG}. Namely we can place the high frequency wave in the weaker space $U^{b_1}$ for some $b_1>2$ \emph{without} losing a high-low factor $\lambda/\mu$ provided only that the low frequency wave remains in $U^2$. This is the crucial observation which allows us to treat the cubic Dirac equation via adapted function spaces. In the case of the wave equation, the analogous theorem has been successfully applied to the wave maps equation in \cite{Candy2018b}, where a direct proof of a special case of Theorem \ref{thm:bilinear small scale KG} was given relying on a wave table construction of Tao \cite{Tao2001b}. The proof of the more general version contained in Theorem \ref{thm:bilinear small scale KG} follows from a similar argument after adapting the wave table construction to the Klein-Gordon equation.

We now adapt the previous results to spinors lying in our solution spaces $S$ and $S_w$. Note that the following bilinear estimates again exploit the fact that the product $\overline{\varphi}\psi$ is a null form to gain an additional factor of $\alpha$ when compared to the dependence in Lemma \ref{lem:bi L2 free KG} and Theorem \ref{thm:bi trans}.

\begin{theorem}[Transverse case]\label{thm:bi trans}
Let $d\g 2$ and $\frac{1}{2} \les \frac{1}{a} \les  \frac{1}{2} + \frac{1}{12}$. Let $\alpha, \mu, \lambda, R \in 2^\ZZ$ with $0<\mu \les \lambda$ and $0<\alpha \les 1$. Assume that the transversality condition $\frac{\lambda}{\mu} + \frac{\alpha \lr{\mu}_m}{|m|} \gg 1$ hold, and define the constant
        $$ \mb{C} = \mb{C}(\alpha, R, \mu, \lambda) = \alpha^{\frac{1}{2}} (\min\{\mu, R\})^{\frac{1}{2}} (\min\{ \alpha \mu, R\})^{\frac{d-2}{2}} \Big( \frac{\lr{\lambda}_m}{\lambda}\Big)^\frac{1}{2}.$$
Then we have
        $$ \sum_{\substack{ \kappa_1, \kappa_2 \in \mc{C}_\alpha  \\ \ma(\kappa_1, \kappa_2) + \frac{|m|}{\lr{\mu}_m} \approx \alpha}} \big\| P_{\les R}(\overline{\varphi}_{\lambda, \kappa_1} \psi_{\mu, \kappa_2}) \big\|_{L^2_{t,x}} \lesa \Big( \frac{ \lambda \lr{\mu}_m}{\alpha \min\{R, \mu\} \lr{\lambda}_m}\Big)^{ 6d (\frac{1}{a} - \frac{1}{2})} \mb{C} \| \varphi_{\lambda} \|_{S_w} \| \psi_{\mu} \|_{S}$$
and
        $$ \sum_{\substack{ \kappa_1, \kappa_2 \in \mc{C}_\alpha  \\ \ma(\kappa_1, \kappa_2) + \frac{|m|}{\lr{\mu}_m} \approx \alpha}} \big\| P_{\les R}( \overline{\varphi}_{\lambda, \kappa_1} \psi_{\mu, \kappa_2}) \big\|_{L^2_{t,x}} \lesa  \Big( \frac{\lambda}{\alpha \min\{\mu, R\}} \Big)^{6d(\frac{1}{a} - \frac{1}{2})}\mb{C} \| \varphi_{\lambda} \|_{S_w} \| \psi_{\mu} \|_{S_w}.$$
\end{theorem}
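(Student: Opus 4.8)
The proof follows the pattern of Theorem~\ref{thm:bi high mod}: extract a null-form gain of $\alpha$, reduce to a scalar bilinear estimate, and then invoke the atomic bilinear restriction bound of Theorem~\ref{thm:bilinear small scale KG}.

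\emph{Step 1: reduction to a scalar estimate.} Fix caps $\kappa_1,\kappa_2\in\mc{C}_\alpha$ with $\ma(\kappa_1,\kappa_2)+\tfrac{|m|}{\lr{\mu}_m}\approx\alpha$ and signs $\pm_1,\pm_2$, and write $\Pi^{\kappa_j}_{\pm_j}=\Pi_{\pm_j}(\xi_{\kappa_j})$ for the constant-matrix cap-centre projections. Using \eqref{eqn:thm bi trans:null decomp} together with the null bounds \eqref{eqn:Pi null struc} (which in the present range of angles give a gain of $\alpha$ precisely because $\ma(\kappa_1,\kappa_2)+\tfrac{|m|}{\lr{\mu}_m}\approx\alpha$ and the sign of $\Pi_{\pm_j}$ is correlated with the cap $\pm_j\kappa_j$), one writes $\overline{\Pi_{\pm_1}\varphi_{\lambda,\kappa_1}}\,\Pi_{\pm_2}\psi_{\mu,\kappa_2}$ as a sum of three terms of the form $\alpha\,\overline u\,\mb{M}\,v$ with $\mb{M}$ a constant matrix, $\supp\widehat u\subset\{|\xi|\approx\lambda,\ \pm_1\tfrac\xi{|\xi|}\in\kappa_1\}$, $\supp\widehat v\subset\{|\xi|\approx\mu,\ \pm_2\tfrac\xi{|\xi|}\in\kappa_2\}$, and (by \eqref{eqn:S decomp into pm}, with the signs matched) $\|e^{\mp_1 it\lr{\nabla}_m}u\|_{V^{a'}}\lesa\|\varphi_{\lambda,\kappa_1}\|_{S_w}$ and $\|e^{\mp_2 it\lr{\nabla}_m}v\|_{V^{a'}}\lesa\|\psi_{\mu,\kappa_2}\|_{S_w}$ (respectively $\|e^{\mp_2 it\lr{\nabla}_m}v\|_{U^a}\lesa\|\psi_{\mu,\kappa_2}\|_{S}$ for the first estimate). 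Thus it suffices to prove the scalar bound
\[
\|P_{\les R}(\overline u\, v)\|_{L^2_{t,x}}\ \lesa\ \alpha^{-1}\,(\mathrm{loss})\,\mb{C}\,\|e^{\mp_1 it\lr{\nabla}_m}u\|_{U^{b_1}}\,\|e^{\mp_2 it\lr{\nabla}_m}v\|_{U^{b_2}};
\]
afterwards one embeds $V^{a'}\subset U^{b_1}$ (resp.\ $U^a\subset U^2$), sums over the $O(1)$ caps $\kappa_2$ compatible with a fixed $\kappa_1$ using Cauchy--Schwarz and the square-sum estimate in Lemma~\ref{lem:Up basic prop}(iii) (which on $U^{b_i}$ costs a further $\alpha^{-(d-1)(\frac12-\frac1{b_i})}$), and over the finitely many sign pairs. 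For the first estimate take $b_2=2$ and $b_1\in(a',d+1)$; for the second, $b_1=b_2\in(a',d+1)$. Since $a<2$ (Definition~\ref{def:sol-spaces}) and $\tfrac1a\le\tfrac12+\tfrac1{12}$ force $a'<d+1$ for $d\in\{2,3\}$, such $b_i$ exist, and with $q=2$ all remaining hypotheses of Theorem~\ref{thm:bilinear small scale KG} hold, \eqref{eq:thm bilinear small scale KG:cond} being exactly our transversality assumption.

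\emph{Step 2: the case $R\gtrsim\mu$.} Here $\min\{R,\mu\}=\mu$ and $\min\{R,\alpha\mu\}=\alpha\mu$, so $\alpha^{-1}\mb{C}=\alpha^{\frac{d-3}2}\mu^{\frac{d-1}2}(\lr{\lambda}_m/\lambda)^{1/2}$, and Theorem~\ref{thm:bilinear small scale KG} with $q=2$ applies directly. Dividing its constant by $\alpha^{-1}\mb{C}$ leaves precisely $\big(\tfrac{\lambda\lr{\mu}_m}{\mu\lr{\lambda}_m}\big)^{\frac12-\frac1{b_1}}$ for the first estimate, and in addition $\big(\tfrac{\lr{\lambda}_m}{\lr{\mu}_m}\big)^{(d+1)(\frac12-\frac1{b_2})}$ for the second. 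Since $\mu\le\lambda$ these are $\ge1$; combined with the cap-summation losses $\alpha^{-(d-1)(\frac12-\frac1{b_i})}$ they are dominated by $\big(\tfrac{\lambda\lr{\mu}_m}{\alpha\min\{R,\mu\}\lr{\lambda}_m}\big)^{6d(\frac1a-\frac12)}$, respectively $\big(\tfrac{\lambda}{\alpha\min\{\mu,R\}}\big)^{6d(\frac1a-\frac12)}$: indeed $1\le\tfrac{\lambda\lr{\mu}_m}{\mu\lr{\lambda}_m}$, $\lr{\lambda}_m/\lr{\mu}_m\le\lambda/\mu$, $\alpha^{-1}\le\tfrac{\lambda}{\alpha\min\{\mu,R\}}$, and choosing $b_i$ close enough to $a'$ makes $\tfrac12-\tfrac1{b_i}$ as close to $\tfrac1a-\tfrac12$ from above as desired, so the accumulated exponents stay below $6d(\tfrac1a-\tfrac12)$.

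\emph{Step 3: the case $R\lesa\mu$, and the main difficulty.} If $R\lesa\mu$ then, since $P_{\les R}(\overline u v)$ has Fourier support of size $\lesa R<\mu\le\lambda$, necessarily $\lambda\approx\mu$, and moreover the left side vanishes unless $R\gtrsim\alpha\mu$. The subtlety is that Theorem~\ref{thm:bilinear small scale KG} as stated carries no output localisation, so the gain $(\min\{R,\mu\}/\mu)^{1/2}=(R/\mu)^{1/2}$ present in $\mb{C}$ must be recovered by hand; this is the hardest step. The plan is the standard decomposition: split the output into $R$-balls and both inputs into $R$-cubes, so that $O(1)$ cube pairs contribute to each output ball; apply Theorem~\ref{thm:bilinear small scale KG} to the transverse cube pairs at the refined angular scale $\tilde\alpha\approx R/\mu$ (with $\lambda\approx\mu$ there), and use the crude bilinear Strichartz/Bernstein estimates of Lemma~\ref{lem:wave stri} for the few non-transverse or borderline pieces (permissible without transversality since $\lambda\approx\mu$ makes interpolation in the $U^p$-indices lossless in the frequencies). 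Reassembling via the almost-orthogonality of the $R$-balls reproduces $\mb{C}$ with $\min\{R,\mu\}=R$, $\min\{R,\alpha\mu\}=\alpha\mu$, up to losses of exactly the type absorbed in Step 2.
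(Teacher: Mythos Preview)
Your Steps~1--2 are essentially correct and parallel the paper's argument: extract the null-form gain $\alpha$ via \eqref{eqn:thm bi trans:null decomp} and \eqref{eqn:Pi null struc}, and in the regime $R\gtrsim\mu$ read off the constant directly from Theorem~\ref{thm:bilinear small scale KG} with $q=2$. The exponent bookkeeping you do there is fine.

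Step~3, however, has a genuine gap. Your plan to decompose the inputs into $R$-cubes and then apply Theorem~\ref{thm:bilinear small scale KG} to each cube pair does not work as stated: that theorem requires inputs with Fourier support of the annulus-and-cap form $\{|\xi|\approx\lambda,\ \tfrac{\xi}{|\xi|}\in\kappa\}$, not $R$-cubes inside such a sector. Moreover, even if you had a cube-level bilinear bound, reassembling via almost-orthogonality in $U^{b}$ with $b>2$ costs a factor $(\text{number of cubes})^{\frac12-\frac1b}$ by Lemma~\ref{lem:Up basic prop}(iii), and since an $\alpha$-sector at radius $\mu$ contains $\approx (\mu/R)(\alpha\mu/R)^{d-1}$ cubes of size $R$, this loss is not absorbable into the stated constant. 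The ``non-transverse pieces via Strichartz/Bernstein'' remark is also just a gesture; you would need a concrete estimate with the correct $R$-dependence.

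The paper avoids this difficulty by a different mechanism: \emph{bilinear interpolation} (Lemma~\ref{lem:bil-inter}) between two endpoints. One endpoint is the $U^2\times U^2$ estimate with the full $P_{\les R}$ gain, obtained by transferring Lemma~\ref{lem:bi L2 free KG} (the free-wave bilinear $L^2$ bound, whose proof already performs the cube decomposition at the level of free solutions and therefore captures $(\min\{R,\mu\})^{1/2}(\min\{R,\alpha\mu\})^{(d-2)/2}$). The other endpoint is the $U^{12/5}\times U^{b}$ bound from Theorem~\ref{thm:bilinear small scale KG} (with the null-form $\alpha$-gain folded in), which carries no $R$-dependence but lands in genuinely weaker atomic spaces. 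Interpolating with parameter $\theta=12(\tfrac{1}{a}-\tfrac12)$ then yields a $U^{a'}\times U^{b_2}$ bound with constant $\mb{C}\cdot(\mb{C}^*(b)\alpha^{1-(d-1)/6}/\mb{C})^{\theta}$, and the final quotients are bounded as in the paper to produce the stated losses. The point is that the delicate $R$-localisation is handled once and for all at the $U^2$ level, where the atomic structure reduces it to the free-wave case; the passage to $U^{a'}$ is done afterwards by interpolation, not by redoing the cube argument in $U^b$.
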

\begin{proof}
For ease of notation, in the following we write $ \psi^{\pm_1} = \Pi_{\pm_1} \psi$ and $U^b_{\pm} = e^{\pm i t\lr{\nabla}_m} U^b$ and define the constant
        $$ \mb{C}^*(b) = \alpha^{\frac{d-3}{2}} \mu^{\frac{d-1}{2}} \Big( \frac{\lr{\lambda}_m}{\lambda} \Big)^{\frac{1}{2}} \Big( \frac{\lr{\lambda}_m}{\lr{\mu}_m}\Big)^{(d+1)(\frac{1}{2} - \frac{1}{b})} \Big( \frac{\lambda \lr{\mu}_m}{\mu \lr{\lambda}_m}\Big)^{\frac{1}{12}}. $$
An application of Theorem \ref{thm:bilinear small scale KG} (discarding the outer $P_{\les R}$ multiplier) implies that for any choice of signs $\pm_1$ and $\pm_2$, and any $\frac{1}{2} - \frac{1}{12} \les \frac{1}{b} \les \frac{1}{2}$ we have
  \begin{equation}\label{eq:thm bi trans:restric}
    \big\| P_{\les R}\big( ( \varphi^{\pm_1}_{\lambda, \kappa_1})^\dagger  \psi^{\pm_2}_{\mu, \kappa_2}\big) \big\|_{L^2_{t,x}}
        \lesa \mb{C}^*(b)  \| \varphi^{\pm_1}_{\lambda, \kappa_1} \|_{U^{\frac{12}{5}}_{\pm_1}} \| \psi^{\pm_2}_{\mu, \kappa_2} \|_{U^{b}_{\pm_2}}.
  \end{equation}
We now improve this bound for the null form $\overline{\varphi}\psi$. Namely, the null form bound \eqref{eqn:Pi null struc} together with Lemma \ref{lem:Up basic prop} gives (adapting the notation in Theorem \ref{thm:bi high mod})
    $$ \| ( \Pi_{\pm_1} - \Pi^{\kappa_1}_{\pm_1}) \varphi^{\pm_1}_{\lambda, \kappa_1} \|_{U^{\frac{12}{5}}_{\pm_1}} \lesa \alpha \| \varphi^{\pm_1}_{\lambda, \kappa_1} \|_{U^{\frac{12}{5}}_{\pm_1}}, \qquad \| \Pi^{\kappa_1}_{\pm_1} \varphi^{\pm_1}_{\lambda, \kappa_1} \|_{U^{\frac{12}{5}}_{\pm_1}} \lesa \| \varphi^{\pm_1}_{\lambda, \kappa_1} \|_{U^{\frac{12}{5}}_{\pm_1}}$$
and
    $$ \| \gamma^0 ( \Pi_{\pm_2} - \Pi^{\kappa_2}_{\pm_2}) \psi^{\pm_2}_{\mu, \kappa_2} \|_{U^{b}_{\pm_2}} + \| \Pi_{\pm_1}^{\kappa_1} \gamma^0 \Pi^{\kappa_2}_{\pm_2} \psi^{\pm_2}_{\mu, \kappa_2} \|_{U^{b}_{\pm_2}} \lesa \alpha \| \psi^{\pm_2}_{\mu, \kappa_2} \|_{U^{b}_{\pm_2}}, \qquad \| \gamma^0 \psi^{\pm_2}_{\mu, \kappa_2} \|_{U^{b}_{\pm_2}} \lesa \| \psi^{\pm_2}_{\mu, \kappa_2} \|_{U^{b}_{\pm_2}}. $$
Consequently, as
      $$( \varphi^{\pm_1}_{\lambda, \kappa_1})^\dagger  \psi^{\pm_2}_{\mu, \kappa_2} = ( \Pi_{\pm_1} \varphi^{\pm_1}_{\lambda, \kappa_1})^\dagger  \Pi_{\pm_2} \psi^{\pm_2}_{\mu, \kappa_2}$$
the decomposition \ref{eqn:thm bi trans:null decomp} together with the square sum estimate in Lemma \ref{lem:Up basic prop} implies that we can improve the bilinear $L^2_{t,x}$ bound \eqref{eq:thm bi trans:restric} to
  \begin{equation}\label{eq:thm bi trans:restric+null}
    \sum_{\substack{\kappa_1, \kappa_2 \in \mc{C}_\alpha \\ \ma(\kappa_1, \kappa_2) + \frac{|m|}{\lr{\mu}_m}\approx \alpha }}\big\| P_{\les R}\big( \overline{ \varphi^{\pm_1}_{\lambda, \kappa_1}}  \psi^{\pm_2}_{\mu, \kappa_2}\big) \big\|_{L^2_{t,x}}
        \lesa  \mb{C}^*(b) \alpha^{1 -\frac{d-1}{6}} \| \varphi^{\pm_1}_{\lambda} \|_{U^{\frac{12}{5}}_{\pm_1}} \| \psi^{\pm_2}_{\mu} \|_{U^{b}_{\pm_2}}
  \end{equation}
On the other hand, after decomposing into $U^2_{\pm}$ atoms (as in the proof of Lemma \ref{lem:wave stri}) Lemma \ref{lem:bi L2 free KG} implies that
        $$\big\| P_{\les R} \big( ( \varphi^{\pm_1}_{\lambda, \kappa_1})^\dagger  \psi^{\pm_2}_{\mu, \kappa_2} \big) \big\|_{L^2_{t,x}}
        \lesa \alpha^{-1} \mb{C} \| \varphi_{\lambda, \kappa_1}^{\pm_1} \|_{U^2_{\pm_1}} \|  \psi_{\mu, \kappa_2}^{\pm_2} \|_{U^2_{\pm_2}}$$
where $\mb{C}$ is as in the statement of the theorem. Hence arguing as above, we obtain the improved estimate
    \begin{equation}\label{eq:thm bi trans:biL2 in U2}
    \sum_{\substack{\kappa_1, \kappa_2 \in \mc{C}_\alpha \\ \ma(\kappa_1, \kappa_2) + \frac{|m|}{\lr{\mu}_m}\approx \alpha }} \big\| P_{\les R} \big( \overline{\varphi^{\pm_1}_{\lambda, \kappa_1}}  \psi^{\pm_2}_{\mu, \kappa_2} \big) \big\|_{L^2_{t,x}}
        \lesa  \mb{C} \| \varphi_{\lambda}^{\pm_1} \|_{U^2_{\pm_1}} \|  \psi_{\mu}^{\pm_2} \|_{U^2_{\pm_2}}.
    \end{equation}
Interpolating (using Lemma \ref{lem:bil-inter}) between \eqref{eq:thm bi trans:restric+null} and \eqref{eq:thm bi trans:biL2 in U2} we conclude that for any $0\les \theta \les 1$ we have
    \begin{equation}\label{eq:thm bi trans:biL2 interp}
    \sum_{\substack{\kappa_1, \kappa_2 \in \mc{C}_\alpha \\ \ma(\kappa_1, \kappa_2) + \frac{|m|}{\lr{\mu}_m}\approx \alpha }} \big\| P_{\les R} \big( \overline{\varphi^{\pm_1}_{\lambda, \kappa_1}}  \psi^{\pm_2}_{\mu, \kappa_2} \big) \big\|_{L^2_t L^2_x}
        \lesa  \mb{C} \Big( \frac{ \mb{C}^*(b) \alpha^{1-\frac{d-1}{6}}}{ \mb{C}}\Big)^\theta \| \varphi_{\lambda}^{\pm_1} \|_{U^{b_1}_{\pm_1}} \| \psi_{\mu}^{\pm_2} \|_{U^{b_2}_{\pm_2}}
    \end{equation}
where $\frac{1}{b_1} = \frac{1-\theta}{2} + \frac{5\theta}{12}$ and $\frac{1}{b_2} = \frac{1-\theta}{2} + \frac{\theta}{b}$.
Taking $b_1 = a'$ and $b=2$ (which implies that $\theta = 12(\frac{1}{a} - \frac{1}{2})$), and noting that since $\min\{ R, \alpha \mu\} \g \alpha \min\{R, \mu\}$ we have
        $$ \frac{ \mb{C}^*(2) \alpha^{1-\frac{d-1}{6}}}{\mb{C}} \les \Big( \frac{\mu}{\min\{R, \mu\}}\Big)^{\frac{d-1}{2}} \Big( \frac{\lambda \lr{\mu}_m}{\mu \lr{\lambda}_m}\Big)^{\frac{1}{12}} \alpha^{-\frac{d-1}{6}} \les \Big( \frac{ \lambda \lr{\mu}_m}{\alpha \min\{R, \mu\} \lr{\lambda}_m}\Big)^{\frac{d-1}{2}}$$
the $S_w \times S$ estimate follows. Similarly, choosing $b_1 = b_2 = a'$ and $b=\frac{12}{5}$, and noting that
        $$ \frac{ \mb{C}^*(12/5) \alpha^{1-\frac{d-1}{6}}}{\mb{C}} \les \Big( \frac{\mu}{\min\{R, \mu\}}\Big)^{\frac{d-1}{2}} \Big( \frac{\lr{\lambda}_m}{\lr{\mu}_m}\Big)^{\frac{d+1}{12}} \Big( \frac{\lambda \lr{\mu}_m}{\mu \lr{\lambda}_m}\Big)^{\frac{1}{12}} \alpha^{-\frac{d-1}{6}} \les \Big( \frac{ \lambda }{\alpha \min\{R, \mu\}}\Big)^{\frac{d-1}{2}}$$
gives the claimed $S_w\times S_w$ bound.
\end{proof}

Theorem \ref{thm:bi trans} gives very good control over the transverse interactions where $\frac{\lambda}{\mu} + \frac{\alpha \lr{\mu}_m}{|m|} \gg 1$. On the other hand,  the remaining (non-transverse) frequency interactions, namely the high-high and small angle case $\lambda \approx \mu$ and $\alpha \approx \frac{|m|}{\mu}$, are covered by Theorem \ref{thm:bi nontrans}. To simplify the arguments in the following section, it is useful to have an estimate that combines both these cases. This combined estimate involves a non-sharp dependence on the frequency parameters, but still suffices for our purposes.

\begin{corollary}\label{cor:bi unified}
Let $d\g 2$, $2\les q \les \infty$, and $0<\frac{1}{a} - \frac{1}{2} \les \frac{1}{100d}$. If  $0<\mu \les \lambda$, $\frac{|m|}{\lr{\mu}_m} \lesa  \alpha \les 1$, and $R>0$,  then
        \begin{align*}
            \bigg\| \sum_{\substack{ \kappa_1, \kappa_2 \in \mc{C}_\alpha  \\ \ma(\kappa_1, \kappa_2) + \frac{|m|}{\lr{\mu}_m} \approx \alpha}} P_{\les R}(\overline{\varphi}_{\lambda, \kappa_1} \psi_{\mu, \kappa_2}) \bigg\|_{L^q_t L^2_x}
            &\lesa \big( \alpha \lr{\mu}_m\big)^{\frac{1}{4}} \lr{\rho}_m^{\frac{1}{4}} \rho^{\frac{d-1}{2} - \frac{1}{q}} \Big( \frac{\mu \lr{\lambda}_m}{\lambda \lr{\mu}_m}\Big)^{\frac{1}{4}} \| \varphi_{\lambda} \|_{S_w} \| \psi_{\mu} \|_{S}
        \end{align*}
and
        $$ \bigg\| \sum_{\substack{ \kappa_1, \kappa_2 \in \mc{C}_\alpha  \\ \ma(\kappa_1, \kappa_2) + \frac{|m|}{\lr{\mu}_m} \approx \alpha}} P_{\les R}(\overline{\varphi}_{\lambda, \kappa_1} \psi_{\mu, \kappa_2}) \bigg\|_{L^q_t L^2_x} \lesa  \big( \alpha \lr{\mu}_m\big)^{\frac{1}{4}} \lr{\rho}_m^{\frac{1}{4}} \rho^{\frac{d-1}{2} - \frac{1}{q}} \Big( \frac{\mu \lr{\lambda}_m}{\lambda \lr{\mu}_m}\Big)^{\frac{1}{4}} \Big( \frac{\lr{\lambda}_m}{\lr{\mu}_m}\Big)^{6d(\frac{1}{a}-\frac{1}{2})} \| \varphi_{\lambda} \|_{S_w} \| \psi_{\mu} \|_{S_w}$$
where we let $\rho = \min\{\mu, R\}$.
\end{corollary}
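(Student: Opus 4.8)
The plan is to split according to whether the transversality condition $\frac{\lambda}{\mu}+\frac{\alpha\lr{\mu}_m}{|m|}\gg1$ of Theorems \ref{thm:bi trans} and \ref{thm:bi nontrans} holds, and to reduce the range $2\les q\les\infty$ to the two endpoints $q=2$ and $q=\infty$ by the interpolation inequality $\|F\|_{L^q_tL^2_x}\les\|F\|_{L^2_{t,x}}^{2/q}\|F\|_{L^\infty_tL^2_x}^{1-2/q}$ applied to $F=\sum_{(\kappa_1,\kappa_2)\in\mc{W}_\alpha^{(m)}(\lambda,\mu)}P_{\les R}(\overline{\varphi}_{\lambda,\kappa_1}\psi_{\mu,\kappa_2})$. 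Since $\rho^{\frac{d-1}{2}-\frac1q}=(\rho^{\frac{d-2}{2}})^{2/q}(\rho^{\frac{d-1}{2}})^{1-2/q}$, it is enough to prove the $q=2$ statement with $\rho^{\frac{d-2}{2}}$ and a $q=\infty$ statement with $\rho^{\frac{d-1}{2}}$, and then interpolate.

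\emph{Endpoint $q=\infty$.} Here I would apply the purely spatial bound \eqref{eqn:spatial v2} with $r=2$ at each fixed time and take the supremum in $t$. For each $\kappa_1$ the admissibility constraint forces $\ma(\kappa_1,\kappa_2)\lesa\alpha$, hence only $O(1)$ values of $\kappa_2$; Cauchy--Schwarz in $\kappa_1$, the fixed-time square-sum bound $\sum_\kappa\|f_{\lambda,\kappa}(t)\|_{L^2_x}^2\lesa\|f_\lambda(t)\|_{L^2_x}^2$, and the embedding $S\subset S_w\subset L^\infty_tL^2_x$ then give
$$\|F\|_{L^\infty_tL^2_x}\lesa\alpha\,\big(\mb{m}_{R,\mu}\big)^{\frac12}\|\varphi_\lambda\|_{S_w}\|\psi_\mu\|_{S_w},\qquad \mb{m}_{R,\mu}=\min\{R,\mu\}\big(\min\{R,\alpha\mu\}\big)^{d-1}.$$
Since $\min\{R,\alpha\mu\}\les\rho$ we have $(\mb{m}_{R,\mu})^{1/2}\les\rho^{d/2}$, and the desired $q=\infty$ bound reduces to the elementary inequality $\alpha^3\rho^2\les\lr{\rho}_m\,\mu\,\lr{\lambda}_m/\lambda$, which follows at once from $\alpha\les1$, $\rho\les\mu$, $\lr{\rho}_m\g\rho$ and $\lr{\lambda}_m\g\lambda$. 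For the second ($S_w\times S_w$) estimate the extra factor $(\lr{\lambda}_m/\lr{\mu}_m)^{6d(\frac1a-\frac12)}\g1$ only helps.

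\emph{Endpoint $q=2$.} If the transversality condition holds I would invoke Theorem \ref{thm:bi trans}; the hypothesis $0<\frac1a-\frac12\les\frac1{100d}$ places $a$ inside the admissible range $\frac12\les\frac1a\les\frac12+\frac1{12}$ of that theorem, and $6d(\frac1a-\frac12)\les\frac1{10}$. Bounding $\mb{C}$ via $\min\{\alpha\mu,R\}\les\rho$ and then comparing frequency exponents (splitting into a few cases according to the relative sizes of $\mu$, $\lambda$ and $|m|$, using the transversality hypothesis in the form $\lambda\gg\mu$ or $\alpha\gg|m|/\lr{\mu}_m$) shows that the loss factor there, $(\lambda\lr{\mu}_m/(\alpha\rho\lr{\lambda}_m))^{6d(\frac1a-\frac12)}$ (resp.\ $(\lambda/(\alpha\rho))^{6d(\frac1a-\frac12)}$ in the $S_w\times S_w$ case), multiplied by $\mb{C}$, is dominated by the claimed right-hand side: the claimed bound already weakens the sharp constant $\mb{C}$ by (essentially) a $\frac14$-power of the same frequency ratio, and $\frac14>\frac1{10}$. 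If instead the transversality condition fails, then $\frac{\lambda}{\mu}+\frac{\alpha\lr{\mu}_m}{|m|}\lesa1$ together with the hypothesis $\frac{|m|}{\lr{\mu}_m}\lesa\alpha$ forces $\lambda\approx\mu$ and $\alpha\approx\frac{|m|}{\lr{\mu}_m}$ (so $m\ne0$ and $\ma(\kappa_1,\kappa_2)\lesa\alpha$ on the caps in $\mc{W}_\alpha^{(m)}(\lambda,\mu)$); since the product has Fourier support in $\{|\xi|\lesa\mu\}$ we may assume $R\lesa\mu$ and apply Theorem \ref{thm:bi nontrans}, whose range $\frac12\les\frac1a<\frac34$ contains ours. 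Its bound fits under the claimed one after using $\alpha\lr{\mu}_m\approx|m|\les\lr{\rho}_m$ together with $\min\{R,\alpha\mu\}\les R=\rho$.

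\emph{Main obstacle.} The conceptual content lies entirely in Theorems \ref{thm:bi trans}, \ref{thm:bi nontrans} and the spatial estimate \eqref{eqn:spatial v2}; the only real work is the scaling/frequency bookkeeping needed to check that these (sharp) inputs, together with their small $6d(\frac1a-\frac12)$-power losses, fit beneath the deliberately non-sharp right-hand side. This is routine precisely because every loss is a fractional power with exponent at most $\frac1{10}$, while the claimed estimate already concedes $\frac14$-powers of the relevant frequency ratios relative to the sharp bounds.
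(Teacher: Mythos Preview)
Your proposal is correct and follows essentially the same route as the paper: split into the transverse case (Theorem~\ref{thm:bi trans}) and the non-transverse case $\lambda\approx\mu$, $\alpha\approx|m|/\lr{\mu}_m$ (Theorem~\ref{thm:bi nontrans}) for $q=2$, use the purely spatial bound \eqref{eqn:spatial v2} with $r=2$ for $q=\infty$, and interpolate. Your observation that the $6d(\tfrac1a-\tfrac12)$-power losses are at most $\tfrac1{10}$ while the corollary already concedes $\tfrac14$-powers is exactly the mechanism the paper exploits, and the remaining content is indeed just frequency bookkeeping.
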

\begin{proof}
We simply observe that since $\min\{\alpha \mu, R\} \les \rho$, letting $\delta = 6(d-1) (\frac{1}{a} - \frac{1}{2})$ we can bound the constant in Theorem \ref{thm:bi trans} by
	\begin{align*}
       \Big( \frac{ \lambda \lr{\mu}_m}{\alpha \rho \lr{\lambda}_m}\Big)^{\delta}  \alpha^{\frac{1}{2}} \rho^{\frac{1}{2}} (\min\{ \alpha \mu, R\})^{\frac{d-2}{2}} \Big( \frac{\lr{\lambda}_m}{\lambda}\Big)^\frac{1}{2}
       &\lesa \big( \alpha \lr{\mu}\big)^{\frac{1}{2} - \delta} \lr{\rho}^\delta \rho^{\frac{d-2}{2}} \Big(\frac{\mu \lr{\lambda}_m}{\lambda \lr{\mu}_m}\Big)^{\frac{1}{2} - \delta} \Big( \frac{\rho\lr{\mu}_m}{ \mu \lr{\rho}_m}\Big)^\delta \Big( \frac{\rho}{\mu}\Big)^{\frac{1}{2} - 2 \delta}\\
       &\lesa \big( \alpha \lr{\mu}\big)^{\frac{1}{2} - \delta} \lr{\rho}^\delta \rho^{\frac{d-2}{2}} \Big(\frac{\mu \lr{\lambda}_m}{\lambda \lr{\mu}_m}\Big)^{\frac{1}{2} - \delta}.
    \end{align*}
Similarly, in the nontransverse setting of Theorem \ref{thm:bi nontrans}, we have $\mu \approx \lambda$ and $\alpha \approx \frac{|m|}{\lr{\mu}_m} \lesa \frac{\lr{\rho}_m}{\lr{\mu}_m}$. Hence we can bound the constant given by Theorem \ref{thm:bi nontrans} by
	\begin{align*}
		\alpha^{1-\frac{1}{d}} \rho^{\frac{1}{2} - \frac{1}{d}} \big( \min\{\alpha \mu, R\}\big)^{\frac{d-1}{2} - 1 + \frac{1}{d}} \lr{\mu}_m^\frac{1}{2}
                    &\lesa \alpha^{\frac{1}{2}} \rho^{\frac{d-2}{2}} \lr{\mu}_m^{\frac{1}{2}} \\
                    &\lesa (\alpha \lr{\mu}_m)^{\frac{1}{2} - \delta} \lr{\rho}_m^\delta \rho^{\frac{d-2}{2}} \approx  (\alpha \lr{\mu}_m)^{\frac{1}{2} - \delta} \lr{\rho}_m^\delta \rho^{\frac{d-2}{2}} \Big( \frac{\mu \lr{\lambda}_m}{\lambda \lr{\mu}_m}\Big)^{\frac{1}{2} - \delta}.
	\end{align*}
Consequently, for all frequency interactions, we obtain the bilinear $L^2_{t,x}$ estimate
    $$\bigg\| \sum_{\substack{ \kappa_1, \kappa_2 \in \mc{C}_\alpha  \\ \ma(\kappa_1, \kappa_2) + \frac{|m|}{\lr{\mu}_m} \approx \alpha}} P_{\les R}(\overline{\varphi}_{\lambda, \kappa_1} \psi_{\mu, \kappa_2}) \bigg\|_{L^2_{t,x}}
            \lesa (\alpha \lr{\mu}_m)^{\frac{1}{2} - \delta} \lr{\rho}_m^\delta \rho^{\frac{d-2}{2}} \Big( \frac{\mu \lr{\lambda}_m}{\lambda \lr{\mu}_m}\Big)^{\frac{1}{2} - \delta} \| \varphi_{\lambda} \|_{S_w} \| \psi_{\mu} \|_{S}. $$
On the other hand, since
        $$ \alpha \rho^{\frac{1}{2}} \big(\min\{\alpha \mu, R\}\big)^{\frac{d-1}{2}}\lesa \alpha^{\frac{1}{2}-\delta} \rho^{\frac{d}{2}} \lesa (\alpha \lr{\mu}_m)^{\frac{1}{2} - \delta} \lr{\rho}_m^\delta \rho^{\frac{d-1}{2}} \Big( \frac{\mu \lr{\lambda}_m}{\lambda \lr{\mu}_m}\Big)^{\frac{1}{2} - \delta}$$
an application of \eqref{eqn:spatial v2} (with $r=2$) gives
    \begin{align*}
      \bigg\| \sum_{\substack{ \kappa_1, \kappa_2 \in \mc{C}_\alpha  \\ \ma(\kappa_1, \kappa_2) + \frac{|m|}{\lr{\mu}_m} \approx \alpha}} P_{\les R}(\overline{\varphi}_{\lambda, \kappa_1} \psi_{\mu, \kappa_2}) \bigg\|_{L^\infty_t L^2_x}
        &\lesa  \alpha \rho^{\frac{1}{2}} \big(\min\{\alpha \mu, R\}\big)^{\frac{d-1}{2}}  \| \varphi_{\lambda} \|_{L^\infty_t L^2_x} \| \psi_{\mu} \|_{L^\infty_t L^2_x} \\
        &\lesa (\alpha \lr{\mu}_m)^{\frac{1}{2} - \delta} \lr{\rho}_m^\delta \rho^{\frac{d-1}{2}} \Big( \frac{\mu \lr{\lambda}_m}{\lambda \lr{\mu}_m}\Big)^{\frac{1}{2} - \delta} \| \varphi_{\lambda} \|_{S_w} \| \psi_{\mu} \|_{S_w}.
    \end{align*}
Letting $\delta = \frac{1}{4}$, the $S_w \times S$ bound now follows by convexity. The $S_w \times S_w$ bound is similar, the only difference is the slight high frequency loss coming from Theorem \ref{thm:bi trans}.
\end{proof}

The final estimate we state is required in the proof of Theorem \ref{thm:limit}, and combines the bilinear restriction estimate in Theorem \ref{thm:bilinear small scale KG} with the dispersive estimate for the Klein-Gordon equation.

\begin{theorem}\label{thm:rest+disp}
Let $d\g 2$, $\frac{1}{2}\les \frac{1}{q} < \frac{d+1}{4}$ and $R>2$. Suppose that $f, g \in L^1_x$ have Fourier support in the ball $\{ |\xi| \les R\}$. Then
        $$ \sup_{|m| \les 1} \| \overline{\mc{U}_m(t) f} \mc{U}_m(t)  g \|_{L^q_t L^2_x} \lesa R^{\frac{3d}{2} - \frac{1}{q}} \| f\|_{L^1_x} \| g \|_{L^1_x}. $$
\end{theorem}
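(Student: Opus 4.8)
The plan is to interpolate between a trivial $L^\infty_tL^2_x$ bound and a bilinear restriction estimate at the endpoint $q=\frac{4}{d+1}$... wait, we need $\frac1q<\frac{d+1}{4}$, so the natural strategy is: obtain the estimate at $q=\infty$ (i.e. $L^\infty_tL^2_x$) by crude means, obtain it for some $q$ close to (but above) $\frac{4}{d+1}$ via Theorem \ref{thm:bilinear small scale KG}, and interpolate. Since the Fourier supports of $f,g$ are both contained in $\{|\xi|\les R\}$, after a Littlewood--Paley decomposition $f=\sum_{\lambda\les R}f_\lambda$, $g=\sum_{\mu\les R}g_\mu$ and a further decomposition into caps via \eqref{eqn:whitney decom}, it suffices to bound each dyadic-times-angular piece and sum up; the number of such pieces is polynomial in $R$ (logarithmically many dyadic scales, and $\lesa\alpha^{-(d-1)}\lesa R^{d-1}$ caps), so summability is not an issue as long as the per-piece bound has a power-of-$R$ constant, which it will.

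First I would handle the $L^\infty_tL^2_x$ bound. By unitarity of $\mc{U}_m(t)$ on $L^2$ and Bernstein, $\|\mc{U}_m(t)f\|_{L^\infty_x}\lesa R^{d/2}\|\mc{U}_m(t)f\|_{L^2_x}=R^{d/2}\|f\|_{L^2_x}\lesa R^d\|f\|_{L^1_x}$ (the last step again by Bernstein, since $\widehat f$ is supported in $\{|\xi|\les R\}$, $\|f\|_{L^2_x}\lesa R^{d/2}\|f\|_{L^1_x}$). Hence, for each fixed $t$,
\begin{equation*}
\|\overline{\mc{U}_m(t)f}\,\mc{U}_m(t)g\|_{L^2_x}\les \|\mc{U}_m(t)f\|_{L^\infty_x}\|\mc{U}_m(t)g\|_{L^2_x}\lesa R^d\|f\|_{L^1_x}\|g\|_{L^1_x},
\end{equation*}
so $\|\overline{\mc{U}_m(t)f}\,\mc{U}_m(t)g\|_{L^\infty_tL^2_x}\lesa R^d\|f\|_{L^1_x}\|g\|_{L^1_x}$, uniformly in $|m|\les1$. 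Note $R^d\les R^{\frac{3d}{2}-\frac1q}$ precisely when $\frac1q\les\frac d2$, which holds since $\frac1q<\frac{d+1}{4}\les\frac d2$ for $d\g 2$; so this case is already consistent with (in fact better than) the claimed estimate at $q=\infty$.

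Next I would prove the estimate for some $q_0$ with $\frac1{q_0}$ just below $\frac{d+1}{4}$, using Theorem \ref{thm:bilinear small scale KG} with $b_1=b_2=2$ (so $\frac1{b_1}+\frac1{b_2}=1>\frac1{q_0}$ and $\frac1{b_1}=\frac12>\frac{2}{(d+1)q_0}$ both hold for such $q_0$). Write $\mc{U}_m(t)=e^{it\lr\nabla_m}\Pi_++e^{-it\lr\nabla_m}\Pi_-$ and expand the product into the four sign combinations $\overline{e^{\pm_1 it\lr\nabla_m}\Pi_{\pm_1}f}\,e^{\pm_2 it\lr\nabla_m}\Pi_{\pm_2}g$; for each, decompose $f,g$ dyadically and into caps. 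For a fixed dyadic pair $\lambda\g\mu$ and cap pair $(\kappa_1,\kappa_2)$ in the Whitney set with $\ma(\kappa_1,\pm\kappa_2)+\frac{|m|}{\lr\mu_m}\approx\alpha$, the condition $\frac\lambda\mu+\frac{\alpha\lr\mu_m}{|m|}\gg1$ may fail only in the non-transverse regime $\lambda\approx\mu$ and $\alpha\approx\frac{|m|}{\lr\mu_m}$; that regime I would cover directly by the purely spatial/Bernstein estimate \eqref{eqn:spatial v2} (applied with $r=2$) combined with the $L^\infty_tL^2_x$ bound above, which already gives a constant $\lesa\alpha\,\mu^{(d-1)/2}\lr\mu_m^{1/2}\lesa R^{d}$ after using $\alpha\les1$. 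In the transverse regime, Theorem \ref{thm:bilinear small scale KG} with $q=q_0$, $b_1=b_2=2$ gives a per-piece bound of the form $\alpha^{\frac{d-1}{2}-\frac2{q_0}}\mu^{\frac d2-\frac2{q_0}}\lr\mu_m^{\frac1{q_0}}(\lr\lambda_m/\lr\mu_m)^{\frac1{q_0}-\frac12}(\mu\lr\lambda_m/(\lambda\lr\mu_m))^{\frac12}\|e^{\mp it\lr\nabla_m}(\cdot)\|_{U^2}\|e^{\mp it\lr\nabla_m}(\cdot)\|_{U^2}$; each $U^2$ norm of a free wave equals the $L^2$ norm of the data, which is $\lesa R^{d/2}\|\cdot\|_{L^1_x}$ by Bernstein. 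Every remaining factor ($\alpha^{\,\cdot}$ with $\alpha\les1$, $\mu^{\,\cdot},\lr\mu_m^{\,\cdot},\lr\lambda_m^{\,\cdot}$ with $\mu\les\lambda\les R$ and $|m|\les1$ so $\lr\lambda_m\approx\max\{\lambda,1\}\les R$) is bounded by a fixed power of $R$, yielding a per-piece bound $\lesa R^{N}\|f\|_{L^1_x}\|g\|_{L^1_x}$ for some $N=N(d,q_0)$; summing over the $\lesa(\log R)^2 R^{d-1}$ pieces gives $\|\overline{\mc{U}_m(t)f}\,\mc{U}_m(t)g\|_{L^{q_0}_tL^2_x}\lesa R^{N'}\|f\|_{L^1_x}\|g\|_{L^1_x}$, uniformly in $|m|\les1$. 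Interpolating (H\"older in $t$, since the spatial norm is unchanged) between this $L^{q_0}_tL^2_x$ bound and the $L^\infty_tL^2_x$ bound gives the estimate for every $q\in[q_0,\infty]$, hence for every $q$ with $\frac12\les\frac1q<\frac{d+1}{4}$ by choosing $q_0$ appropriately; this produces \emph{some} power of $R$ on the right-hand side.

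The main obstacle is that the crude argument above only yields \emph{some} polynomial power $R^{N'}$, not the sharp exponent $\frac{3d}{2}-\frac1q$. To recover the sharp power one must not throw away the $\alpha$ and $\mu$ gains: instead of bounding $\alpha\les1$, $\mu\les\lambda\les R$ brutally, one should keep track of them and perform the dyadic and angular sums honestly. Concretely, after inserting the Bernstein bound $\|\Pi_{\pm}f_{\lambda,\kappa_1}\|_{L^2_x}\lesa(\lambda\cdot(\alpha\lambda)^{d-1})^{1/2}\|f_{\lambda,\kappa_1}\|_{L^1_x}$ (the Fourier support of $f_{\lambda,\kappa_1}$ sits in a $\lambda\times(\alpha\lambda)^{d-1}$ slab, so the $L^2\to L^1$ Bernstein constant is the square root of its measure) and likewise for $g_{\mu,\kappa_2}$, the $\kappa$-sum over the Whitney set can be carried out using $\sum_{\kappa}\|f_{\lambda,\kappa}\|_{L^1_x}\les$ (number of caps)$^{1/2}(\sum_\kappa\|f_{\lambda,\kappa}\|_{L^1_x}^2)^{1/2}$ or more simply Cauchy--Schwarz with the trivial bound $\|f_{\lambda,\kappa}\|_{L^1_x}\les\|f_\lambda\|_{L^1_x}$ for each $\kappa$ and $\lesa\alpha^{-(d-1)}$ caps, and the resulting geometric series in $\lambda,\mu,\alpha$ is then summable with total constant $\lesa R^{\frac{3d}{2}-\frac1q}$; the exponent $\frac{3d}{2}$ arises as $\frac d2$ (from Bernstein $L^1\to L^2$ on the product, i.e. the output ball of radius $\les R$) plus $d$ (from the two Bernstein factors $R^{d/2}$ on the inputs), while the $-\frac1q$ records the $t$-integrability improvement from the bilinear restriction estimate. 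I would organize this bookkeeping by first reducing (via the $P_{\les R}$ output cutoff) to $\lambda\approx\mu$ or $\lambda\gg\mu$ with $R\gtrsim\lambda$, so that all frequencies are comparable to at most $R$ and the sums are genuinely finite geometric series, and then simply verify that the worst case (high frequencies $\lambda\approx\mu\approx R$, angle $\alpha\approx1$) saturates the claimed exponent.
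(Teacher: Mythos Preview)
Your proposal has two genuine gaps.

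\textbf{Missing null structure.} You never exploit that $\overline{\mc{U}_m f}\,\mc{U}_m g = (\mc{U}_m f)^\dagger \gamma^0\, \mc{U}_m g$ is a null form. After Theorem~\ref{thm:bilinear small scale KG} with $b_1=b_2=2$ the per-piece constant carries $\alpha^{\frac{d-1}{2}-\frac{2}{q}}$. Your cap-localised Bernstein gains $\alpha^{(d-1)/2}$ on each input, but summing over the $\approx\alpha^{-(d-1)}$ Whitney pairs via $\|f_{\lambda,\kappa}\|_{L^1}\les\|f_\lambda\|_{L^1}$ returns exactly $\alpha^{-(d-1)}$; the net exponent is still $\frac{d-1}{2}-\frac{2}{q}$, which is negative throughout the stated range when $d=2$ and nonpositive when $d=3$. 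The sum over $\alpha\to 0$ diverges. The paper invokes \eqref{eqn:Pi null struc} (as in the proof of \eqref{eqn:spatial v2}) to gain one further power of $\alpha$, yielding $\alpha^{\frac{d+1}{2}-\frac{2}{q}}>0$ so that the Whitney decomposition \eqref{eqn:whitney decom} sums.

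\textbf{Non-transverse case.} For $m\ne 0$, $\lambda\approx\mu$, $\alpha\approx\frac{|m|}{\lr{\mu}_m}$, you propose to cover this piece by \eqref{eqn:spatial v2} ``combined with the $L^\infty_tL^2_x$ bound''. But an $L^\infty_tL^2_x$ bound gives no $L^{q_0}_tL^2_x$ control for $q_0<\infty$: there is no time decay in your argument here, so this contribution cannot be placed in $L^{q_0}_t$ and your interpolation fails. The paper handles this regime via the interpolated Klein--Gordon dispersive estimate \eqref{eqn:KG disp} with $\theta>0$ (so that $\frac{1}{q}<\frac{d-1+\theta}{2}$), which after a short-time/long-time splitting and optimisation yields an $L^q_tL^\infty_x$ bound on the cap-localised free wave and hence \eqref{eqn:thm rest+disp:bilinear via disp}. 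Incidentally, the paper does not interpolate in $q$ at all: it works directly at each $q$, combining \eqref{eqn:thm rest+disp:bilinear rest} and \eqref{eqn:thm rest+disp:bilinear via disp}, then applies the null structure and sums $\mu\les\lambda\lesa R$ to read off the exponent $\frac{3d}{2}-\frac{1}{q}$.
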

\begin{proof}
Clearly, it is enough to consider $0\les m\les 1$ and let us fix $m\in \{0,1\}$ for the moment. For $\lambda \g \mu >0$ and $0<\alpha \les 1$, under the transversality condition $\frac{\lambda}{\mu} + \frac{\alpha \lr{\mu}_m}{m} \gg 1$ (trivial if $m=0$) Theorem \ref{thm:bilinear small scale KG} implies that we have the bilinear restriction estimate
        \begin{align}
            \sum_{\substack{\kappa_1, \kappa_2\in \mc{C}_{\alpha} \\ \angle(\kappa_1, \kappa_2) + \frac{m}{\lr{\mu}_m} \approx \alpha}} \| (e^{\pm_1 it \lr{\nabla}_m} f_{\lambda, \kappa_1})^\dagger e^{\pm_2 i t \lr{\nabla}_m} g_{\mu, \kappa_2} \|_{L^q_t L^2_x}
            &\lesa  \alpha^{\frac{d-1}{2}-\frac{2}{q}}  \mu^{\frac{d}{2}-\frac{2}{q}} \Big( \frac{\mu}{\lambda}\Big)^{\frac{1}{2}}\lr{\lambda}_m^{\frac1q}\| f_{\lambda} \|_{L^2} \| g_{\mu} \|_{L^2} \notag \\
            &\lesa   \alpha^{\frac{d-1}{2}-\frac{2}{q}}  \mu^{d+\frac12-\frac{2}{q}} \lambda^{\frac{d-1}{2}}\lr{\lambda}_m^{\frac1q}
          \| f_{\lambda} \|_{L^1} \| g_{\mu} \|_{L^1}. \label{eqn:thm rest+disp:bilinear rest}
        \end{align}
On the other hand, if $m=1$, standard stationary phase estimates give for any $0\les \theta \les 1$ the interpolated dispersive bound
        \begin{equation}\label{eqn:KG disp}
            \| e^{\pm_2 it \lr{\nabla}} f_\mu \|_{L^\infty_x} \lesa |t|^{-\frac{d-1 + \theta}{2}} \mu^{(1-\theta)\frac{d+1}{2}} \lr{\mu}^{\theta\frac{d+2}{2}} \|f_\mu \|_{L^1_x}
        \end{equation}
see for instance \cite{Brenner1985}. In particular, provided that $\frac{1}{q} < \frac{d-1+\theta}{2}$ we have for any $ 0<\alpha \les 1$ and $T>0$
        \begin{align*}
        &  \Big(\sum_{\kappa\in \mc{C}_{\alpha} } \| e^{\pm_2it \lr{\nabla}} g_{\mu, \kappa} \|_{L^q_t L^\infty_x}^2\Big)^\frac{1}{2}\\
                \lesa{}&  T^{\frac{1}{q}} \alpha^{\frac{d-1}{2}} \mu^{\frac{d}{2}} \| g_{\mu}\|_{L^2} + (\# \mc{C}_\alpha)^{\frac{1}{2}} \lr{\mu}^{\frac{d+1+\theta}{2}}\Big(\frac{\mu}{\lr{\mu}}\Big)^{(1-\theta)\frac{d+1}{2}} \| g_\mu\|_{L^1_x} \| |t|^{-\frac{d-1+\theta}{2}} \|_{L^q_t(|t|>T)} \\
                \lesa{}& \Big(T^{\frac{1}{q}} \alpha^{\frac{d-1}{2}} \mu^{d} + T^{\frac{1}{q} - \frac{d-1+\theta}{2}} \alpha^{-\frac{d-1}{2}}  \lr{\mu}^{\frac{d+1+\theta}{2}}\Big(\frac{\mu}{\lr{\mu}}\Big)^{(1-\theta)\frac{d+1}{2}}\Big)\|g_\mu \|_{L^1_x}.
        \end{align*}
For simplicity, we use $\mu\les \lr{\mu}$. Optimising in $T$,  we conclude that for any $\frac{1}{q} < \frac{d-1+\theta}{2}$  we have
        $$ \Big(\sum_{\kappa\in \mc{C}_{\alpha}} \| e^{\pm_2 it \lr{\nabla}} g_{\mu, \kappa} \|_{L^q_t L^\infty_x}^2\Big)^\frac{1}{2} \lesa \alpha^{\frac{d-1}{2} - \frac{2}{q}} \lr{\mu}^{d-\frac{1}{q}} (\alpha \lr{\mu})^{\frac{2\theta}{q(d-1+\theta)}} \| g_\mu \|_{L^1_x}.$$
Therefore, for $\lambda\sim \mu$, via H\"older's and Bernstein's inequality we have
        \begin{align}
            &\sum_{\substack{\kappa_1, \kappa_2\in \mc{C}_{\alpha} \\ \angle(\kappa_1, \kappa_2) \lesa \alpha }} \| (e^{\pm_1 it \lr{\nabla}} f_{\lambda, \kappa_1})^\dagger e^{\pm_2 i t \lr{\nabla}} g_{\mu, \kappa_2} \|_{L^q_t L^2_x}\notag \\
                        &\lesa \Big( \sum_{\substack{\kappa_1, \kappa_2\in \mc{C}_{\alpha} \\ \angle(\kappa_1, \kappa_2) \lesa \alpha}} \| e^{\pm_1 it \lr{\nabla}} f_{\lambda, \kappa_1}\|_{L^\infty_t L^2_x}^2 \Big)^\frac{1}{2} \Big( \sum_{\substack{\kappa_1, \kappa_2\in \mc{C}_{\alpha} \\ \angle(\kappa_1, \kappa_2) \lesa \alpha}} \| e^{\pm_2 i t \lr{\nabla}} g_{\mu, \kappa_2} \|_{L^q_t L^\infty_x}^2 \Big)^\frac{1}{2} \notag \\
                        &\lesa \alpha^{\frac{d-1}{2} - \frac{2}{q}} \lr{\mu}^{d-\frac{1}{q}} \mu^{\frac{d}{2}} (\alpha \lr{\mu})^{\frac{2\theta}{q(d-1+\theta)}} \| f_\lambda \|_{L^1} \| g_\mu \|_{L^1_x} \label{eqn:thm rest+disp:bilinear via disp}
        \end{align}
Choosing $\theta>\frac{3-d}{2}$ covers the range $\frac{1}{2}\les \frac{1}{q} < \frac{d+1}{4}$.
Consequently, applying the bound \eqref{eqn:thm rest+disp:bilinear rest} in the transverse case $\frac{\lambda}{\mu} + \frac{\alpha \lr{\mu}_m}{m} \gg 1$, and \eqref{eqn:thm rest+disp:bilinear via disp} in the non-tranverse case $\frac{\lambda}{\mu} + \frac{\alpha \lr{\mu}_m}{m} \approx  1$ (note that \eqref{eqn:thm rest+disp:bilinear via disp} is only needed when $m\not= 0$, i.e. $m=1$ and $\alpha \lr{\mu} \approx 1$) we conclude that for any $0<\alpha\les 1$ we have
        $$   \sum_{\substack{\kappa_1, \kappa_2\in \mc{C}_{\alpha} \\ \angle(\kappa_1, \kappa_2) + \frac{m}{\lr{\mu}_m} \approx \alpha}} \| (e^{\pm_1 it \lr{\nabla}_m} f_{\lambda, \kappa_1})^\dagger e^{\pm_2 i t \lr{\nabla}_m} g_{\mu, \kappa_2} \|_{L^q_t L^2_x}
            \lesa  \alpha^{\frac{d-1}{2}-\frac{2}{q}} \mu^{\frac{d}{2}} \lr{\mu}_m^{d-\frac{1}{q}}  \Big( \frac{\lr{\lambda}_m}{\lr{\mu}_m}\Big)^{\frac{d-1}{2} +\frac{1}{q}}\| f_{\lambda} \|_{L^1} \| g_{\mu} \|_{L^1} $$
Arguing as in the proof of \eqref{eqn:spatial v2}, the null structure bound \eqref{eqn:Pi null struc} then gives the improved bound
        $$   \sum_{\substack{\kappa_1, \kappa_2\in \mc{C}_{\alpha} \\ \angle(\kappa_1, \kappa_2) + \frac{m}{\lr{\mu}_m} \approx \alpha}} \| \overline{ \mc{U}_m(t)f_{\lambda, \kappa_1}} \mc{U}_m(t) g_{\mu, \kappa_2} \|_{L^q_t L^2_x}
            \lesa  \alpha^{\frac{d+1}{2}-\frac{2}{q}}
             \mu^{\frac{d}{2}} \lr{\mu}_m^{d-\frac{1}{q}}  \Big( \frac{\lr{\lambda}_m}{\lr{\mu}_m}\Big)^{\frac{d-1}{2} +\frac{1}{q}}
             \| f_{\lambda} \|_{L^1} \| g_{\mu} \|_{L^1}. $$
Consequently, applying the decomposition \eqref{eqn:whitney decom} we see that for any $m\in \{0, 1\}$ we have
        $$   \| \overline{ \mc{U}_m(t)f_{\lambda}} \mc{U}_m(t) g_{\mu} \|_{L^q_t L^2_x}
            \lesa
              \mu^{\frac{d}{2}} \lr{\mu}_m^{d-\frac{1}{q}}  \Big( \frac{\lr{\lambda}_m}{\lr{\mu}_m}\Big)^{\frac{d-1}{2} +\frac{1}{q}}
           \| f\|_{L^1} \| g \|_{L^1}. $$
          Dyadically summing up $\mu\les \lambda\lesa R$ implies the claimed estimate for $m \in \{0,1\}$, and rescaling in general.\end{proof}

\section{Multilinear estimates}\label{sec:mult-est}
For ease of notation, throughout this section we write
    $$ \mb{D} = \Big( \frac{\lambda_1 \lambda_2 \lambda_3}{\lambda_0}\Big)^{\frac{d-2}{2}} \Big( \frac{\lr{\lambda_1}_m \lr{\lambda_2 }_m\lr{\lambda_3}_m}{\lr{\lambda_0}_m}\Big)^{\frac{1}{2}} $$
    and
    $$\lambda_{min} = \min\{\lambda_0, \lambda_1, \lambda_2, \lambda_2\}, \quad \lambda_{med} = \min(\{ \lambda_0, \lambda_1, \lambda_2, \lambda_3\} \setminus \{ \lambda_{min}\}).$$
 Our goal is to prove the following trilinear bound which gives control over the frequency localised nonlinearity in our iteration space $S$, uniformly in $m \in \R$.
\begin{theorem}[Control of frequency localised nonlinearity]\label{thm:nonlin}
Let $0<\frac{1}{a} - \frac{1}{2} \les \frac{1}{100d}$, $s_d=\frac{d-1}{2}$, $\sigma_d=\frac{d-2}{2}$. There exists $\epsilon>0$ such that for any $m \in \R$ and $\lambda_j \in 2^\ZZ$ ($j = 0, 1,2, 3$) we have
        \begin{align*}
            \Big| \int_{\RR^{1+d}} \overline{\varphi}_{\lambda_0} \psi^{[1]}_{\lambda_1} \overline{\psi}^{[2]}_{\lambda_2} \psi^{[3]}_{\lambda_3} dtdx \Big|
                        \lesa \Big( \frac{\lambda_{min}}{\lambda_{med}}\Big)^{\epsilon} \mb{D} \| \varphi_{\lambda_0} \|_{S_{w,m}} \| \psi^{[1]}_{\lambda_1} \|_{S_m} \| \psi^{[2]}_{\lambda_2} \|_{S_m} \| \psi^{[3]}_{\lambda_3} \|_{S_m},
        \end{align*}
where $\epsilon>\frac{d-2}{2}$ in the case $\lambda_0\ll  \min\{ \lambda_{med},|m|\}$.
\end{theorem}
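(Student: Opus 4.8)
The plan is to exploit that, by Remark \ref{rem:nonlinearities II}, for every admissible nonlinearity the integrand is a product of two Dirac bilinear forms, $\overline{\varphi}_{\lambda_0}\psi^{[1]}_{\lambda_1}$ and $\overline{\psi}^{[2]}_{\lambda_2}\psi^{[3]}_{\lambda_3}$, each obeying the null-structure symbol bound \eqref{eqn:Pi null struc} (with a harmless constant matrix inserted in the general case). First I would insert the dyadic decomposition $\sum_{R\in 2^\ZZ}$ over the common Fourier output scale of the two factors -- the spacetime integral forces these outer frequencies to agree -- so that in every summand both factors are spectrally localised to $\{|\xi|\approx R\}$; by frequency balance one may then assume $R\lesa\lambda_{\max}$ and that at least two of the $\lambda_j$ are $\approx\lambda_{\max}$. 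I would then apply the angular Whitney decomposition \eqref{eqn:whitney decom} to each factor, with caps at scales $\alpha$ and $\alpha'$ respectively, so that every piece has precisely the form treated in Sections \ref{sec:str} and \ref{sec:be2}. Recording here that, once $R$ is fixed, the inclusion of the Fourier support of $\overline{\varphi_{\lambda,\kappa_1}}\psi_{\mu,\kappa_2}$ in $\{|\xi|\lesa R\}$ forces the corresponding Whitney scale to obey $\alpha\lesa R/\max\{\lambda,\mu\}+|m|/\lr{\min\{\lambda,\mu\}}_m$ will be essential for summability in the configurations where the two largest frequencies lie in one factor.

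The central step is a Cauchy--Schwarz split in $(t,x)$,
\[
\Big|\int_{\RR^{1+d}}\overline{\varphi}_{\lambda_0}\psi^{[1]}_{\lambda_1}\overline{\psi}^{[2]}_{\lambda_2}\psi^{[3]}_{\lambda_3}\,dtdx\Big|\les\big\|P_{\les R}(\overline{\varphi}_{\lambda_0}\psi^{[1]}_{\lambda_1})\big\|_{L^2_{t,x}}\,\big\|P_{\les R}(\overline{\psi}^{[2]}_{\lambda_2}\psi^{[3]}_{\lambda_3})\big\|_{L^2_{t,x}},
\]
followed, for each factor and after the Whitney decomposition, by the bilinear estimates of Section \ref{sec:be2}. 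Concretely I would use Corollary \ref{cor:bi unified} -- which already packages the transverse atomic bilinear Fourier restriction estimate Theorem \ref{thm:bi trans} and the non-transverse Schr\"odinger-regime estimate Theorem \ref{thm:bi nontrans} -- for the bulk contribution, supplemented, in the sub-regimes where one input carries modulation above the resonance threshold of Lemma \ref{lem:Fourier supp} (which then forces a modulation gain in the other factor as well), by the high-modulation estimate Theorem \ref{thm:bi high mod}. The function spaces are compatible because $S_m\subset S_{w,m}$, $U^a\subset U^p$ for $p>a$, and $V^{a'}\subset U^b$ for $b>a'$, so that the test function $\varphi\in S_{w,m}$ (a $V^{a'}$-based space) lands in the ``large'' slot of the asymmetric bilinear restriction estimate Theorem \ref{thm:bilinear small scale KG} while the iterates $\psi^{[j]}\in S_m$ (a $U^a$-based space) land in the ``small'' slots. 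It is precisely this asymmetry, together with the choice $1<a<2$, that lets Theorem \ref{thm:bilinear small scale KG} absorb a $V^{a'}$-function without paying the high--low frequency factor $\lambda/\mu$; only the case $q=2$ of Theorem \ref{thm:bilinear small scale KG} is needed for the present theorem.

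It then remains to run the case analysis over the frequency configurations -- according to which two of $\lambda_0,\dots,\lambda_3$ realise $\lambda_{\max}$ and whether they sit in the same or in different Cauchy--Schwarz factors -- to sum the resulting products of bilinear constants over $\alpha,\alpha'$ and over $R\les\lambda_{\max}$, and to reassemble. In the ``cross'' configurations each factor is a transverse high--low (or low--high) interaction, for which Corollary \ref{cor:bi unified} supplies the factor $(\lambda_j\lr{\lambda_k}_m/(\lambda_k\lr{\lambda_j}_m))^{1/4}$ between the two frequencies of the factor; in the ``aligned'' configurations, where the two largest frequencies lie in one factor, the angular constraint recorded above confines the corresponding $\alpha$-sum to $\alpha\lesa R/\lambda_{\max}$ and, combined with the null-structure power of $\alpha$, both restores convergence and produces a gain in $R/\lambda_{\max}$. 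A careful accounting of these gains across the two factors -- distinguishing, in the aligned case, whether the remaining two frequencies are comparable or not -- yields the surplus $(\lambda_{min}/\lambda_{med})^\epsilon$, and the hypothesis $0<\frac{1}{a}-\frac{1}{2}\les\frac{1}{100d}$ guarantees that the mild losses $(\lr{\lambda}_m/\lr{\mu}_m)^{6d(\frac{1}{a}-\frac{1}{2})}$ appearing in Corollary \ref{cor:bi unified} are strictly dominated; tracking the powers of $\lambda_j$ and $\lr{\lambda_j}_m$ then reproduces exactly $\mb{D}$.

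The delicate point, which I expect to be the main obstacle, is the regime $\lambda_0\ll\min\{\lambda_{med},|m|\}$: here $\mb{D}$ carries the growing factor $\lambda_0^{-(d-2)/2}$ and one must produce the \emph{strictly} stronger gain $\epsilon>\frac{d-2}{2}$. Since $\lambda_0<|m|$ forces $\lr{\lambda_0}_m\approx|m|$, one has $|m|/\lr{\lambda_0}_m\approx 1$, so the angular Whitney decomposition of the factor $\overline{\varphi}_{\lambda_0}\psi^{[1]}_{\lambda_1}$ degenerates to the single scale $\alpha\approx 1$ and there is no small-angle null gain. When $d=2$ this is harmless, since $\frac{d-2}{2}=0$ and the plain bilinear $L^2$ estimate Lemma \ref{lem:bi L2 free KG}, with the small frequency $\lambda_0$ placed in $L^2_x$, already supplies a $\lambda_0^{1/2}$-gain. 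When $d=3$ one must work a little harder, placing $\varphi_{\lambda_0}$ instead in $L^q_tL^r_x$ for large $r$ by a Bernstein inequality followed by the Schr\"odinger-admissible Strichartz estimate Lemma \ref{lem:schr stri} (available since $m\neq0$), which improves the $\lambda_0^{(d-1)/2}$-factor to $\lambda_0^{d/2-\eta}$ for arbitrarily small $\eta>0$ and so yields $\epsilon=1-\eta>\frac{d-2}{2}$. Outside this regime the remaining effort is the bookkeeping of the case analysis, checking configuration by configuration that the chosen bilinear estimates beat every loss.
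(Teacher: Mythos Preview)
Your framework is broadly right --- Whitney decomposition on each bilinear factor, then feed the pieces into Corollary~\ref{cor:bi unified}, Theorem~\ref{thm:bi high mod}, and the Fourier–support lemma --- but the way you propose to combine them misses the mechanism that actually produces the off-diagonal gain $(\lambda_{min}/\lambda_{med})^\epsilon$.

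\medskip

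\textbf{The central gap.} You state the split as a pure Cauchy--Schwarz $\|\cdot\|_{L^2_{t,x}}\times\|\cdot\|_{L^2_{t,x}}$, with the high-modulation estimate only a ``supplement''. In the paper's proof the high-modulation step is the \emph{primary} source of the high--low gain, and it is run through a H\"older split $L^a_t L^2_x \times L^{a'}_t L^2_x$ with $a<2$, not through Cauchy--Schwarz. Concretely, in the configuration $\lambda_0\approx\lambda_1\gg\lambda_2\gtrsim\lambda_3$: if $\varphi_{\lambda_0}$ (or $\psi_{\lambda_1}$) carries modulation $\gtrsim\alpha^2\lr{\lambda_1}_m$, one places the first factor in $L^a_tL^2_x$ via Theorem~\ref{thm:bi high mod} and the second in $L^{a'}_tL^2_x$ via Corollary~\ref{cor:bi unified}; the mismatch $\frac1a-\frac12>0$ is exactly what manufactures the decisive factor $(\lambda_3/\lambda_1)^{\frac1a-\frac12}$. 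Your proposal never leaves $L^2_t\times L^2_t$, and so this gain is simply absent. Relatedly, the case split in the paper is not over ``cross vs.\ aligned'' pairs or over the output scale $R$, but over the comparison $\alpha^2\lr{\lambda_0}_m\lr{\lambda_1}_m$ versus $\tilde\alpha^2\lr{\lambda_2}_m\lr{\lambda_3}_m$ together with a modulation dichotomy on the first pair; in the ``resonant'' sub-case (both inputs of the first pair close to the hyperboloid and $\alpha^2\lr{\lambda_0}_m\lr{\lambda_1}_m\gg\tilde\alpha^2\lr{\lambda_2}_m\lr{\lambda_3}_m$) Lemma~\ref{lem:Fourier supp} is used in the \emph{forward} direction --- low modulation on both inputs of one factor forces high modulation on an input of the \emph{other} factor --- which is the opposite of what you wrote.

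\medskip

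\textbf{The angular-constraint argument does not close.} Your claim that in an ``aligned'' pair the output restriction $|\xi|\lesa R$ forces the Whitney scale $\alpha\lesa R/\lambda_{\max}+|m|/\lr{\mu}_m$ is only correct for the $\Pi_{\pm}\times\Pi_{\pm}$ contributions; for the mixed pieces $\Pi_{+}\times\Pi_{-}$ the caps are forced to be nearly antipodal, so only $\alpha\approx1$ survives, with no null-form gain. Even in the same-sign piece, if you carry out the arithmetic with Corollary~\ref{cor:bi unified} under the constraint $\alpha\lesa\lambda_2/\lambda_0$ (take the wave regime $\lambda_3\gtrsim|m|$ for concreteness), the $\alpha^{1/4}$ sum exactly cancels the $(\lr{\lambda_0}/\lr{\lambda_2})^{1/4}$ loss and you end up at $\mb D$ with \emph{no} surplus $(\lambda_3/\lambda_2)^\epsilon$. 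So the angular constraint alone cannot replace the modulation/Hölder machinery.

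\medskip

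\textbf{Minor points.} The extra dyadic sum over the output scale $R$ is not needed: frequency balance already pins the output to $\min\{\lambda_0,\lambda_1\}$ (resp.\ $\min\{\lambda_2,\lambda_3\}$), and the paper simply inserts a single projector $P_{\lesa\lambda_2}$ rather than summing. For the special regime $\lambda_0\ll\min\{\lambda_{med},|m|\}$, the paper does not invoke the Schr\"odinger Strichartz estimate on $\varphi_{\lambda_0}$; the required $\epsilon>\frac{d-2}{2}$ already falls out of the same Corollary~\ref{cor:bi unified}/Theorem~\ref{thm:bi high mod} scheme in Case~2, where the factors $(\lr{\lambda_0}_m/\lr{\lambda_1}_m)^{3/4}(\lambda_0/\lambda_1)^{d-2}$ appear directly.
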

\begin{proof}
It is enough to consider $m\g 0$.
For $ \alpha, \tilde{\alpha} \in 2^{-\NN_0}$ and $ \boldsymbol{\lambda} = (\lambda_0, \lambda_1, \lambda_2, \lambda_3) \in (2^\ZZ)^4$ we define the quadrilinear quantity
		$$ B(\alpha, \tilde{\alpha}, \boldsymbol{\lambda})
			= \Big| \int_{\RR^{1+d}} \Big(\sum_{\substack{\kappa_0, \kappa_1 \in \mc{C}_\alpha\\ \ma(\kappa_0, \kappa_1) + \frac{m}{\lr{\lambda_{0, 1}}_m}\approx \alpha}} \overline{\varphi}_{\lambda_0, \kappa_0} \psi_{\lambda_1, \kappa_1}\Big) \Big(\sum_{\substack{\kappa_2, \kappa_3 \in \mc{C}_{\tilde{\alpha}}\\ \ma(\kappa_2, \kappa_3) + \frac{m}{\lr{\lambda_{2, 3}}_m}\approx \tilde\alpha}}\overline{\psi}_{\lambda_2, \kappa_2} \psi_{\lambda_3, \kappa_3}\Big)  dx dt \Big|$$
			For ease of notation, we introduce the short hand
            $$\lambda_{i,j} = \min\{\lambda_i, \lambda_j\}, \qquad \psi_{\lambda_j} = \psi^{[j]}_{\lambda_j}.$$
Our goal is then to prove that
		\begin{equation}\label{eqn:thm nonlin:goal}
    \sum_{\alpha, \tilde{\alpha} \in 2^{-\NN_0}} B(\alpha, \tilde{\alpha}, \boldsymbol{\lambda}) \lesa  \Big( \frac{\lambda_{min}}{\lambda_{med}}\Big)^{\epsilon} \mb{D} \| \varphi_{\lambda_0} \|_{S_{w}} \| \psi_{\lambda_1} \|_{S} \| \psi_{\lambda_2} \|_S \| \psi_{\lambda_3} \|_S.
        \end{equation}
        again with the provision that $\epsilon>\frac{d-2}{2}$ in the case $\lambda_0\ll \min\{ \lambda_{med},m\}$. We consider separately the cases $\lambda_0 \gtrsim \lambda_1$ and $\lambda_0 \ll \lambda_1$. The former is the more difficult of the two, and we break it further into the four cases
	$$ \alpha \lr{\lambda_0}_m \lr{\lambda_1}_m \lesa \tilde{\alpha} \lr{\lambda_2}_m \lr{\lambda_3}_m, \qquad \text{$\overline{\varphi}_{\lambda_0} \psi_{\lambda_1}$ nonresonant}, \qquad \text{$\overline{\psi}_{\lambda_0} \psi_{\lambda_1}$ nonresonant}, \qquad \text{remaining cases}  $$
where the precise definition of ‘nonresonant’ is given below.\\

\noindent \underline{\textbf{Case 1:} $\lambda_0 \gtrsim \lambda_1$.}\\

Our goal is to prove that \eqref{eqn:thm nonlin:goal} holds when $\lambda_0 \gtrsim \lambda_1$. We may freely add the restriction  $\lambda_2\geq \lambda_3$ because the subcase where $\lambda_2 < \lambda_3$ would then follow by replacing waves with their complex conjugates. Moreover, if in addition we assume that $\lambda_1 \gtrsim \lambda_3$ and we can prove the bound \eqref{eqn:thm nonlin:goal} with $\psi_{\lambda_2} \in S_w$, then the remaining case $\lambda_1 \ll \lambda_3$ would then follow by exchanging the roles of the pairs $\overline{\varphi_{\lambda_0}}\psi_{\lambda_1}$ and $\overline{\psi_{\lambda_2}}\psi_{\lambda_3}$ and noting that $\lambda_1 \ll \lambda_3 \lesa \lambda_2$ implies that $B$ vanishes unless $\lambda_0 \lesa \lambda_2$.  Finally, as we now have $\lambda_0\g \lambda_1$ and $\lambda_1, \lambda_2 \g \lambda_3$, we may also assume that $\lambda_0 \approx \max\{\lambda_1, \lambda_2\}$ as otherwise $B$ vanishes. In particular, we may freely restrict our attention to the case $\lambda_1 \lambda_2 \approx \lambda_{1,2}\lambda_0$.  To sum up this discussion, we have reduced Case 1 to proving the slightly stronger estimate
        \begin{equation}\label{eqn:thm nonlin:goal case1}
         \sum_{\alpha, \tilde{\alpha} \in 2^{-\NN_0}} B(\alpha, \tilde{\alpha}, \boldsymbol{\lambda}) \lesa  \Big( \frac{\lambda_3}{\lambda_{1,2}}\Big)^{\epsilon} \mb{D}  \| \varphi_{\lambda_0} \|_{S_w} \| \psi_{\lambda_1} \|_S \| \psi_{\lambda_2} \|_{S_w} \| \psi_{\lambda_3} \|_S
        \end{equation}
under the stronger frequency constraints
        \begin{equation}\label{eq:constr-case1}
        \lambda_0 \gtrsim \lambda_1 \gtrsim \lambda_3,  \qquad  \lambda_2 \geq \lambda_3, \qquad \text{and} \qquad \lambda_1 \lambda_2 \approx \lambda_{1,2} \lambda_0
        \end{equation}
(note that $\lambda_{1,2} = \min\{\lambda_1, \lambda_2\} = \lambda_{med}$ if \eqref{eq:constr-case1} holds). In other words, in each pair we place the high frequency wave into the weaker space $S_w$. We now consider separately the four subcases
    $$ \alpha^2 \lr{\lambda_0}_m \lr{\lambda_1}_m \lesa \tilde{\alpha}^2 \lr{\lambda_2}_m \lr{\lambda_3}_m, \qquad \varphi_{\lambda_0}=C_{\gtrsim \alpha^2\lr{\lambda_1}_m}\varphi_{\lambda_0}, \qquad \psi_{\lambda_1}=C_{\gtrsim \alpha^2\lr{\lambda_1}_m}\psi_{\lambda_1}, \qquad \text{remaining case} $$
under the assumption that \eqref{eq:constr-case1} holds. \\

\noindent \underline{\textit{Case 1a)} :  $\alpha^2 \lr{\lambda_0}_m \lr{\lambda_1}_m \lesa \tilde{\alpha}^2 \lr{\lambda_2}_m \lr{\lambda_3}_m$.}

The idea here is to exploit the additional restriction in the sum over $\alpha$ and $\tilde{\alpha}$ to obtain the crucial high-low frequency gain. An application of Cauchy-Schwarz and
Corollary \ref{cor:bi unified} implies that
\begin{align*}
&B(\alpha, \tilde{\alpha}, \boldsymbol{\lambda})\\
&\lesa \bigg\| \sum_{\substack{\kappa_0, \kappa_1 \in \mc{C}_\alpha\\ \ma(\kappa_0, \kappa_1) + \frac{m}{\lr{\lambda_1}_m}\approx \alpha }} P_{\lesa \lambda_2} \big(\overline{\varphi}_{\lambda_0, \kappa_0} \psi_{\lambda_1, \kappa_1}\big)\bigg\|_{L^2_{t,x}} \bigg\|\sum_{\substack{\kappa_2, \kappa_3 \in \mc{C}_{\tilde{\alpha}}\\ \ma(\kappa_2, \kappa_3) + \frac{m}{\lr{\lambda_{3}}_m} \approx \tilde{\alpha} }} \overline{\psi}_{\lambda_2, \kappa_2} \psi_{\lambda_3, \kappa_3}\bigg\|_{L^2_{t,x}}\\
&\lesa \Big[ \big( \alpha \lr{\lambda_1}_m\big)^{\frac{1}{4}} \lr{\lambda_{1,2}}_m^{\frac{1}{4}} \lambda_{1,2}^{\frac{d-2}{2}} \Big( \frac{\lambda_1 \lr{\lambda_0}_m}{\lambda_0 \lr{\lambda_1}_m}\Big)^{\frac{1}{4}} \| \varphi_{\lambda_0}\|_{S_w} \| \psi_{\lambda_1} \|_{S} \Big] \times \Big[
\alpha^{\frac{1}{4}} \lr{\lambda_3}^{\frac{1}{2}} \lambda_3^{\frac{d-2}{2}} \Big( \frac{\lambda_3 \lr{\lambda_2}_m}{\lambda_2 \lr{\lambda_3}_m}\Big)^{\frac{1}{4}}\| \psi_{\lambda_2}\|_{S_w} \| \psi_{\lambda_3} \|_{S} \Big].
\end{align*}
The constraints \eqref{eq:constr-case1} imply that $\lr{\lambda_0}_m \lr{\lambda_{1,2}}_m \approx \lr{\lambda_1}_m \lr{\lambda_2}_m$. Hence summing up in $\alpha^2 \lesa \tilde{\alpha}^2 \frac{\lr{\lambda_2}_m \lr{\lambda_3}_m}{\lr{\lambda_0}_m\lr{\lambda_1}_m}$ and then in $\tilde\alpha\les 1$ yields
\begin{align*}
\sum_{\substack{\alpha, \tilde{\alpha} \in 2^{-\NN_0} \\ \alpha^2 \lesa \tilde{\alpha}^2 \frac{\lr{\lambda_2}_m \lr{\lambda_3}_m}{\lr{\lambda_0}_m\lr{\lambda_1}_m}} } B(\alpha, \tilde{\alpha}, \boldsymbol{\lambda})&\lesa \Big( \frac{\lr{\lambda_0}_m \lr{\lambda_3}_m}{\lr{\lambda_2}_m \lr{\lambda_1}_m}\Big)^{\frac{1}{8}} \Big( \frac{\lambda_1 \lr{\lambda_0}_m}{\lambda_0 \lr{\lambda_1}_m}\Big)^{\frac{1}{4}} \Big( \frac{\lambda_3 \lr{\lambda_2}_m}{\lambda_2 \lr{\lambda_2}_m}\Big)^{\frac{1}{4}} \mb{D}
 \| \varphi_{\lambda_0}\|_{S_w} \| \psi_{\lambda_1} \|_{S} \| \psi_{\lambda_2}\|_{S_w} \| \psi_{\lambda_3} \|_{S} \\
 &\lesa \Big( \frac{\lambda_3}{\lambda_{1,2}}\Big)^{\frac{1}{8}} \mb{D} \| \varphi_{\lambda_0}\|_{S_w} \| \psi_{\lambda_1} \|_{S} \| \psi_{\lambda_2}\|_{S_w} \| \psi_{\lambda_3} \|_{S}
 \end{align*}
 where we again used the constraints \eqref{eq:constr-case1}. Hence \eqref{eqn:thm nonlin:goal case1} follows in this case. \\

%
%
%
%
%

\noindent\underline{\textit{Case 1b)} : $\varphi_{\lambda_0}=C_{\gtrsim \alpha^2\lr{\lambda_1}_m}\varphi_{\lambda_0}$.}

This is the case where the pair $\overline{\varphi}_{\lambda_0} \psi_{\lambda_1}$ is `nonresonant', as the high frequency term is away from the cone/hyperboloid, and thus we get a large gain from the embedding
            $$ \| C_{\gtrsim \beta} \psi_{\lambda_1} \|_{L^{a'}_t L^2_x} \lesa \beta^{\frac{1}{a}  - 1} \| \psi_{\lambda_1} \|_{S_w}$$
via Theorem \ref{thm:bi high mod}. It is in this case where the freedom to take $a<2$ becomes crucial to gain a high-low gain. More precisely, as $\min\{\alpha \lambda_1, \lambda_{1,2}\} \les \lambda_{1,2}$, unpacking the constant in Theorem \ref{thm:bi high mod} gives
    \begin{align*}
        \alpha \big( \alpha^2 \lr{\lambda_1}_m \big)^{\frac{1}{a} - 1} \lambda_{1,2}^{\frac{d}{2} - \frac{2d}{d-1}(\frac{2}{a} - 1)} \big( \lambda_1^{\frac{2}{d-1}} \lr{\lambda_1}_m \big)^{\frac{2}{a} -1 }
               &= \alpha^{2(\frac{1}{a} - \frac{1}{2})} \lambda_{1,2}^{\frac{d-1}{2}} \lambda_1^{\frac{1}{2} - \frac{1}{a}} \Big( \frac{\lambda_{1, 2}}{\lambda_1}\Big)^{\frac{1}{2} - \frac{4d}{d-1}(\frac{1}{a} - \frac{1}{2})} \Big( \frac{\lambda_1}{\lr{\lambda_1}_m}\Big)^{\frac{1}{2} - 3 (\frac{1}{a} - \frac{1}{2})} \\
               &\lesa \alpha^{2(\frac{1}{a} - \frac{1}{2})} \lambda_{1,2}^{\frac{d-1}{2}} \lambda_1^{\frac{1}{2} - \frac{1}{a}}.
    \end{align*}
Thus recalling the constraints \eqref{eq:constr-case1}, an application of Theorem \ref{thm:bi high mod} and Corollary \ref{cor:bi unified} implies that
	\begin{align}
	&B(\alpha, \tilde{\alpha}, \boldsymbol{\lambda})\notag \\
	   &\lesa  \bigg\| \sum_{\substack{\kappa_0, \kappa_1 \in \mc{C}_\alpha\\ \ma(\kappa_0, \kappa_1) + \frac{m}{\lr{\lambda_1}_m}\approx \alpha}} P_{\lesa \lambda_2} \big( \overline{C_{\gtrsim \alpha^2 \lr{\lambda_1}_m}\varphi}_{\lambda_0, \kappa_0} \psi_{\lambda_1, \kappa_1}\big) \bigg\|_{L^a_t L^2_x} \bigg\| \sum_{\substack{\kappa_2, \kappa_3 \in \mc{C}_{\tilde{\alpha}}\\ \ma(\kappa_2, \kappa_3) + \frac{m}{\lr{\lambda_3}_m}\approx \tilde\alpha}}\overline{\psi}_{\lambda_2, \kappa_2} \psi_{\lambda_3, \kappa_3} \bigg\|_{L^{a'}_t L^2_x} \notag \\
	   &\lesa \Big[ \alpha^{2(\frac{1}{a} - \frac{1}{2})} \lambda_{1,2}^{\frac{d-1}{2}} \lambda_1^{\frac{1}{2} - \frac{1}{a}}   \| \varphi_{\lambda_0} \|_{S_w} \| \psi_{\lambda_1} \|_{S_w} \Big] \Big[ \tilde{\alpha}^{\frac{1}{4}} \lr{\lambda_3}_m^{\frac{1}{2}} \lambda_3^{\frac{d-2}{2} + \frac{1}{a} - \frac{1}{2}} \Big(\frac{\lambda_3 \lr{\lambda_2}_m}{\lambda_2 \lr{\lambda_3}_m}\Big)^{\frac{1}{4}} \| \psi_{\lambda_2} \|_{S_w} \| \psi_{\lambda_3}\|_{S}\Big] \notag \\
        &\lesa \alpha^{2(\frac{1}{a} - \frac{1}{2})} \tilde{\alpha}^{\frac{1}{4}} \Big( \frac{\lambda_3}{\lambda_{1}}\Big)^{\frac{1}{a} - \frac{1}{2}} \mb{D} \| \varphi_{\lambda_0} \|_{S_w} \| \psi_{\lambda_1} \|_{S_w} \| \psi_{\lambda_2} \|_{S_w} \| \psi_{\lambda_3}\|_{S}. \label{eqn:case1b est}
    \end{align}
Hence summing up over $\alpha, \tilde{\alpha} \in 2^{-\NN_0}$, we again obtain \eqref{eqn:thm nonlin:goal case1}. \\

\noindent\underline{\textit{Case 1c)} : $\psi_{\lambda_1}=C_{\gtrsim \alpha^2\lr{\lambda_1}_m}\psi_{\lambda_1}$.}

This case is similar to 1b) as the pair $\overline{\varphi}\psi$ is again non-resonant, and thus we can apply the high-modulation gain given by Theorem \ref{thm:bi high mod}. In more detail, recalling the constraints \eqref{eq:constr-case1} and noting that $d\g 2$, another application of Theorem \ref{thm:bi high mod} (using $(\min\{\alpha \lambda_1, \lambda_{1,2}\})^{\frac{d-1}{2}}\les (\alpha \lambda_1)^{\frac{1}{2}} \lambda_{1,2}^{\frac{d-2}{2}}$) and Corollary \ref{cor:bi unified} gives
	\begin{align}
	&B(\alpha, \tilde{\alpha}, \boldsymbol{\lambda})\notag \\
	&\lesa  \bigg\| \sum_{\substack{\kappa_0, \kappa_1 \in \mc{C}_\alpha\\ \ma(\kappa_0, \kappa_1) + \frac{m}{\lr{\lambda_1}_m}\approx \alpha}} P_{\lesa \lambda_2} \big( \overline{\varphi}_{\lambda_0, \kappa_0} C_{\gtrsim \alpha^2 \lr{\lambda_1}_m}\psi_{\lambda_1, \kappa_1}\big) \bigg\|_{L^a_t L^2_x} \bigg\| \sum_{\substack{\kappa_2, \kappa_3 \in \mc{C}_{\tilde{\alpha}}\\ \ma(\kappa_2, \kappa_3) + \frac{m}{\lr{\lambda_3}_m}\approx \tilde\alpha}}\overline{\psi}_{\lambda_2, \kappa_2} \psi_{\lambda_3, \kappa_3} \bigg\|_{L^{a'}_t L^2_x} \notag\\
		&\lesa \Big[ \alpha^{\frac{1}{2} - 2(\frac{1}{a} - \frac{1}{2})} \lambda_{1,2}^{\frac{d-1}{2}} \lambda_1^{\frac{1}{2}-\frac{1}{a}} \Big( \frac{\lambda_1}{\lr{\lambda_1}_m}\Big)^{\frac{1}{a} - \frac{1}{2}} \| \varphi_{\lambda_0} \|_{S_w} \| \psi_{\lambda_1} \|_{S} \Big] \Big[ \tilde{\alpha}^{\frac{1}{4}}\lambda_3^{\frac{d-1}{2} + \frac{1}{a} -\frac{1}{2}} \Big( \frac{\lambda_3 \lr{\lambda_2}_m}{\lambda_2 \lr{\lambda_3}_m}\Big)^{\frac{1}{4}} \| \psi_{\lambda_2} \|_{S_w} \| \psi_{\lambda_3}\|_{S}\Big]. \notag
    \end{align}
Hence after summing up over $\alpha, \tilde{\alpha} \in 2^{-\NN_0}$ the constraints \eqref{eq:constr-case1} imply that
    \begin{equation}
        \sum_{\alpha, \tilde{\alpha} \in 2^{-\NN_0}  } B(\alpha, \tilde{\alpha}, \boldsymbol{\lambda})
		\lesa  \Big( \frac{\lambda_3}{\lambda_1}\Big)^{\frac{1}{a} - \frac{1}{2}} \mb{D} \| \varphi_{\lambda_0} \|_{S_w} \| \psi_{\lambda_1} \|_{S} \| \psi_{\lambda_2} \|_{S_w} \| \psi_{\lambda_3}\|_{S} \label{eqn:case1c est}
    \end{equation}
which again clearly suffices to give \eqref{eqn:thm nonlin:goal case1}. \\

\noindent \underline{\textit{Case 1d)} : $\alpha^2 \lr{\lambda_0}_m \lr{\lambda_1}_m \gg \tilde{\alpha}^2 \lr{\lambda_2}_m \lr{\lambda_3}_m$ and $\varphi_{\lambda_0}=C_{\ll \alpha^2\lr{\lambda_1}_m}\varphi_{\lambda_0}$, $\psi_{\lambda_1}=C_{\ll \alpha^2\lr{\lambda_1}_m}\psi_{\lambda_1}$.}

This is the only remaining subcase of Case 1, where the product $\overline{\varphi}_{\lambda_0} \psi_{\lambda_1}$ may be fully resonant (thus $\varphi_{\lambda_0}$ and $\psi_{\lambda_1}$ have Fourier support close to the cone/hyperboloid). The key point here is that by considering the interaction of the various Fourier supports, we can show that $B$ vanishes unless at least one of $\psi_{\lambda_2}$ or $\psi_{\lambda_3}$ has Fourier support away from the cone/hyperboloid. In other words, the assumptions $\alpha^2 \lr{\lambda_0}_m \lr{\lambda_1}_m \gg \tilde{\alpha}^2 \lr{\lambda_2}_m \lr{\lambda_3}_m$ and $\overline{\varphi}_{\lambda_0}\psi_{\lambda_1}$ is resonant, implies that the product $\overline{\psi}_{\lambda_2} \psi_{\lambda_3}$ must be nonresonant. More precisely, an application of Lemma \ref{lem:Fourier supp} implies that
		$$ \supp \mc{F}_{t,x}\big[ \overline{\varphi}_{\lambda_0, \kappa_0} \psi_{\lambda_1, \kappa_1}] \subset \big\{ \big| |\tau|^2 - \lr{\xi}_m^2 \big| \approx \alpha^2 \lr{\lambda_0}_m \lr{\lambda_1}_m \}$$
and for any $\beta\gtrsim \tilde{\alpha}^2 \lr{\lambda_3}_m$
		$$ \supp \mc{F}_{t,x}\big[ \overline{C_{\les \beta}\psi}_{\lambda_2, \kappa_2} C_{\les \beta} \psi_{\lambda_3, \kappa_3}\big] \subset \big\{ \big| |\tau|^2 - \lr{\xi}_m^2\big| \lesa (\lr{\lambda_2}_m + \beta) \beta ) \big\}.   $$
In particular, taking
		$$ \beta = (\alpha^2 \lr{ \lambda_0}_m \lr{\lambda_1}_m )^\frac{1}{2} \min\Big\{1, \frac{(\alpha^2 \lr{\lambda_0}_m \lr{\lambda_1}_m)^\frac{1}{2}}{\lr{\lambda_2}_m}\Big\}$$
and noting that our assumption $\alpha^2 \lr{\lambda_0}_m \lr{\lambda_1}_m \gg \tilde{\alpha}^2 \lr{\lambda_2}_m \lr{\lambda_3}_m$ together with the constraint \eqref{eq:constr-case1} implies that $\beta \gtrsim \tilde{\alpha}^2 \lr{\lambda_3}_m$, we see that at least one of $\psi_{\lambda_2}$ or $\psi_{\lambda_3}$ must be at distance $\beta$ from the hyperboloid/cone as otherwise we trivially have $B=0$. Therefore, an application of Theorem \ref{thm:bi high mod}, Corollary \ref{cor:bi unified}, and the disposability bounds \eqref{eqn:disposability}, gives
\begin{align}
B(\alpha, \tilde{\alpha}, \boldsymbol{\lambda})
    &\lesa \Big\|\sum_{\substack{\kappa_0, \kappa_1 \in \mc{C}_\alpha\\ \ma(\kappa_0, \kappa_1) + \frac{m}{\lr{\lambda_1}_m}\approx \alpha}}
        P_{\lesa \lambda_2}( \overline{\varphi}_{\lambda_0, \kappa_0} \psi_{\lambda_1, \kappa_1})\Big\|_{L^2_{t,x}} \Bigg( \Big\| \sum_{\substack{\kappa_2, \kappa_3 \in \mc{C}_{\tilde{\alpha}}\\ \ma(\kappa_2, \kappa_3) + \frac{m}{\lr{\lambda_3}_m}\approx \tilde\alpha}}\overline{C_{\gtrsim \beta}\psi}_{\lambda_2, \kappa_2} \psi_{\lambda_3, \kappa_3}\Big\|_{L^2_{t,x}}  \notag \\
    &\qquad \qquad \qquad \qquad + \Big\| \sum_{\substack{\kappa_2, \kappa_3 \in \mc{C}_{\tilde{\alpha}}\\ \ma(\kappa_2, \kappa_3) + \frac{m}{\lr{\lambda_3}_m}\approx \tilde\alpha}}
        \overline{C_{\ll \beta}\psi}_{\lambda_2, \kappa_2} C_{\gtrsim \beta}\psi_{\lambda_3, \kappa_3}\Big\|_{L^2_{t,x}} \Bigg)\notag\\
    &\lesa (\alpha \lr{\lambda_1}_m)^{\frac{1}{4}} \lr{\lambda_{1,2}}_m^{\frac{1}{4}} \lambda_{1,2}^{\frac{d-2}{2}} \Big( \frac{\lambda_1 \lr{\lambda_0}_m}{\lambda_0 \lr{\lambda_1}_m}\Big)^{\frac{1}{4}} \tilde{\alpha}^{\frac{d-1}{2}} \lambda_3^{\frac{d-2}{2}} \lr{\lambda_3}_m^{\frac{1}{2}}\notag \\
     &\qquad  \times \Big[ \Big( \frac{\tilde{\alpha}^2 \lr{\lambda_3}_m}{\beta}\Big)^{1-\frac{1}{a}} \Big( \frac{\lambda_3}{\lr{\lambda_3}_m}\Big)^{2(1-\frac{1}{a})} +\Big( \frac{\tilde{\alpha}^2 \lr{\lambda_3}_m}{\beta}\Big)^{\frac{1}{2}} \frac{\lambda_3}{\lr{\lambda_3}_m} \Big]  \| \varphi_{\lambda_0}\|_{S_w} \| \psi_{\lambda_1} \|_{S} \|\psi_{\lambda_2}\|_{S_w} \|\psi_{\lambda_3}\|_{S} \notag \\
    &\lesa \sigma_{\alpha, \tilde{\alpha}, \boldsymbol{\lambda}} \mb{D} \| \varphi_{\lambda_0}\|_{S_w} \| \psi_{\lambda_1} \|_{S} \|\psi_{\lambda_2}\|_{S_w} \|\psi_{\lambda_3}\|_{S}
    \label{eqn:thm nonlinear:case1d}
\end{align}
where using \eqref{eq:constr-case1} and $\beta \gtrsim \tilde{\alpha}^2 \lr{\lambda_3}_m$ we can take
	$$\sigma_{\alpha, \tilde{\alpha}, \boldsymbol{\lambda}} = \tilde{\alpha}^{\frac{1}{2}} \Big( \frac{ \alpha\lr{\lambda_1}_m}{\lr{\lambda_{1,2}}_m}\Big)^{\frac{1}{4}}\Big( \frac{ \lambda_1 \lr{\lambda_0}_m}{\lambda_0 \lr{\lambda_1}_m}\Big)^{\frac{1}{4}} \Big( \frac{\tilde{\alpha}^2 \lr{\lambda_3}}{\beta}\Big)^{1-\frac{1}{a}} \Big( \frac{\lambda_3}{\lr{\lambda_3}_m}\Big)^{2(1-\frac{1}{a})}.  $$
We now split the sum over $\alpha \in 2^{-\NN_0}$ into the regions $\frac{\lr{\lambda_{1,2}}_m}{\lr{\lambda_1}_m} \ll \alpha \les 1$ and $\alpha \lesa \frac{\lr{\lambda_{1,2}}_m}{\lr{\lambda_1}_m}$. In the former region, the constraint \eqref{eq:constr-case1} forces $\beta \approx \alpha \lr{\lambda_1}_m$ and $\lambda_1 \gg \lr{\lambda_2}_m$ hence
                $$\sigma_{\alpha, \tilde{\alpha}, \boldsymbol{\lambda}} \approx   \tilde{\alpha}^{\frac{5}{2}- \frac{2}{a}} \Big( \frac{ \alpha \lr{\lambda_1}_m}{\lr{\lambda_{1,2}}_m}\Big)^{\frac{1}{a} - \frac{3}{4}} \Big( \frac{ \lr{\lambda_3}}{\lr{\lambda_{1,2}}_m}\Big)^{1-\frac{1}{a}} \Big( \frac{\lambda_3}{\lr{\lambda_3}_m}\Big)^{2(1-\frac{1}{a})} \lesa \tilde{\alpha}^{\frac{5}{2}- \frac{2}{a}} \Big( \frac{ \alpha \lr{\lambda_1}_m}{\lr{\lambda_{1,2}}_m}\Big)^{\frac{1}{a} - \frac{3}{4}} \Big( \frac{\lambda_3}{\lr{\lambda_{1,2}}_m}\Big)^{1-\frac{1}{a}} . $$
Consequently, provided $\frac{1}{a} - \frac{3}{4} < 0$, summing up \eqref{eqn:thm nonlinear:case1d} over $\alpha, \tilde{\alpha} \in 2^{\NN_0}$ with $\frac{\lr{\lambda_{1,2}}_m}{\lr{\lambda_1}_m} \ll \alpha \les 1$ we obtain \eqref{eqn:thm nonlin:goal case1}. For the remaining region $\alpha \lesa \frac{\lr{\lambda_{1,2}}_m}{\lr{\lambda_2}_m}$, in view of the constraints \eqref{eq:constr-case1} and the assumption $\alpha^2 \lr{\lambda_0}_m \lr{\lambda_1}_m \gtrsim \tilde{\alpha}^2 \lr{\lambda_2}_m \lr{\lambda_3}_m$ we have
            $$\beta \approx \frac{\alpha^2 \lr{\lambda_0}_m \lr{\lambda_1}_m}{\lr{\lambda_2}_m} \approx \frac{\alpha^2 \lr{\lambda_1}_m^2}{\lr{\lambda_{1,2}}_m} \gtrsim \tilde{\alpha}^2 \lr{\lambda_3}. $$
In particular, the sum over $\alpha \in \NN_0$ is restricted to the range $\tilde{\alpha} (\frac{\lr{\lambda_3}_m \lr{\lambda_{1,2}}_m}{\lr{\lambda_1}_m^2})^\frac{1}{2} \lesa \alpha \lesa \frac{\lr{\lambda_{1,2}}_m}{\lr{\lambda_1}_m}$, and we have the upper bound
        \begin{align*}
            \sigma_{\alpha, \tilde{\alpha}, \boldsymbol{\lambda}}
            &\approx \Big( \frac{\tilde{\alpha}^2 \lr{\lambda_3}_m \lr{\lambda_{1,2}}_m}{\alpha^2 \lr{\lambda_1}^2} \Big)^{\frac{7}{8} - \frac{1}{a}} \tilde{\alpha}^{\frac{3}{4}} \Big( \frac{ \lambda_1 \lr{\lambda_0}_m}{\lambda_0 \lr{\lambda_1}_m}\Big)^{\frac{1}{4}}  \Big( \frac{\lr{\lambda_3}_m}{\lr{\lambda_{1,2}}_m}\Big)^{\frac{1}{8}} \Big( \frac{\lambda_3}{\lr{\lambda_3}_m}\Big)^{2(1-\frac{1}{a})} \\
            &\lesa \Big( \frac{\tilde{\alpha}^2 \lr{\lambda_3}_m \lr{\lambda_{1,2}}_m}{\alpha^2 \lr{\lambda_1}^2} \Big)^{\frac{7}{8} - \frac{1}{a}} \tilde{\alpha}^{\frac{3}{4}}  \Big( \frac{\lambda_3}{\lr{\lambda_{1,2}}_m}\Big)^{\frac{1}{8}}
        \end{align*}
again provided $\frac{1}{a} - \frac{1}{2} \ll 1$. Therefore summing up \eqref{eqn:thm nonlinear:case1d} over $\alpha, \tilde{\alpha} \in 2^{\NN_0}$ with $\tilde{\alpha} (\frac{\lr{\lambda_3}_m \lr{\lambda_{1,2}}_m}{\lr{\lambda_1}_m^2})^\frac{1}{2} \lesa \alpha \lesa \frac{\lr{\lambda_{1,2}}_m}{\lr{\lambda_1}_m}$ we again obtain \eqref{eqn:thm nonlin:goal case1}.

\bigskip

\noindent \underline{\textbf{Case 2:} $\lambda_0 \ll \lambda_1$.}\\

This case follows by essentially repeating the argument in Case 1, thus we shall be somewhat brief. The key difference is now the low frequency term $\varphi_{\lambda_0}$ must be placed in $S_w$, which causes a $(\frac{\lr{\lambda_1}_m}{\lr{\lambda_0}_m})^\epsilon$ loss. However this is easily compensated by the large gain from the $(\frac{\lambda_0}{\lambda_1})^{\frac{1}{2}}$ factor arising from the inhomogeneous derivative term $(\frac{\lr{\lambda_1}_m \lr{\lambda_2}_m \lr{\lambda_3}_m}{\lr{\lambda_0}_m})^{\frac{1}{2}}$. We now turn to the details. As in Case 1, by conjugation symmetry, we may assume that $\lambda_2 \g \lambda_3$. Moreover, as the product $\overline{\varphi}_{\lambda_0}\psi_{\lambda_1}$ has spatial Fourier support concentrated near $|\xi| \approx \lambda_1$, we see that the quadrilinear term $B$ vanishes unless $\lambda_2 \gtrsim \lambda_1$. In particular, we may now assume the frequency restrictions
		\begin{equation}\label{eq:constr-case2}
            \lambda_0 \ll \lambda_1, \qquad \lambda_3 \les \lambda_2, \qquad \lambda_1 \lesa  \lambda_2.
        \end{equation}
To deal with the easy cases, we observe that applying H\"older's inequality together with  Corollary \ref{cor:bi unified} gives
	\begin{align*}
		B(\alpha, \tilde{\alpha}, \boldsymbol{\lambda})
					&\lesa \bigg\| \sum_{\substack{\kappa_0, \kappa_1 \in \mc{C}_\alpha\\ \ma(\kappa_0, \kappa_1) + \frac{m}{\lr{\lambda_0}_m}\approx \alpha }} \overline{\varphi}_{\lambda_0, \kappa_0} \psi_{\lambda_1, \kappa_1}\bigg\|_{L^2_{t,x}} \bigg\|\sum_{\substack{\kappa_2, \kappa_3 \in \mc{C}_{\tilde{\alpha}}\\ \ma(\kappa_2, \kappa_3) + \frac{m}{\lr{\lambda_{3}}_m} \approx \tilde{\alpha} }} \big(\overline{\psi}_{\lambda_2, \kappa_2} \psi_{\lambda_3, \kappa_3}\big)\bigg\|_{L^2_{t,x}}  \\
					&\lesa 	\Big[ \alpha^\frac{1}{4} \lambda_0^{\frac{d-2}{2} } \lr{\lambda_0}_m^{\frac{1}{2}} \Big( \frac{ \lambda_0 \lr{\lambda_1}_m}{\lambda_1 \lr{\lambda_0}_m}\Big)^{\frac{1}{4}}  \Big(\frac{\lr{\lambda_1}_m}{\lr{\lambda_0}_m}\Big)^{6d(\frac{1}{a} - \frac{1}{2})} \| \varphi_{\lambda_0}\|_{S_w} \| \psi_{\lambda_1} \|_{S_w} \Big] \\
&\qquad \qquad \times \Big[ \tilde{\alpha}^\frac{1}{4} \lambda_3^{\frac{d-2}{2} } \lr{\lambda_3}_m^{\frac{1}{2}} \Big( \frac{ \lambda_3 \lr{\lambda_2}_m}{\lambda_2 \lr{\lambda_3}_m}\Big)^{\frac{1}{4}}\| \psi_{\lambda_2} \|_S \| \psi_{\lambda_3} \|_{S} \Big] \\
				  &\lesa (\alpha \tilde{\alpha})^{\frac{1}{4}} \Big( \frac{\lr{\lambda_0}_m}{\lr{\lambda_1}_m}\Big)^{\frac{1}{4}} \Big( \frac{\lr{\lambda_0}_m}{\lr{\lambda_2}_m}\Big)^{\frac{1}{2}} \Big( \frac{\lambda_0^2}{\lambda_1 \lambda_2} \Big)^{\frac{d-2}{2}}  \Big( \frac{\lambda_0\lambda_3 \lr{\lambda_1}_m \lr{\lambda_2}_m}{\lambda_1 \lambda_2\lr{\lambda_0}_m \lr{\lambda_3}_m}\Big)^{\frac{1}{4}} \mb{D}  \| \varphi_{\lambda_0}\|_{S_w} \| \psi_{\lambda_1}\|_{S} \| \psi_{\lambda_2}\|_{S} \| \psi_{\lambda_3}\|_{S}
	\end{align*}
provided $\frac{1}{a} - \frac{1}{2} \ll 1$. In particular, in view of the constraints \eqref{eq:constr-case2}, if $\lambda_3 \gtrsim \lambda_0$ then a short computation gives
    $$ \sum_{\alpha, \tilde{\alpha} \in \NN_0} B(\alpha, \tilde{\alpha}, \boldsymbol{\lambda}) \lesa \Big( \frac{\lambda_0}{\lambda_{med}}\Big)^{d-2} \mb{D} \| \varphi_{\lambda_0}\|_{S_w} \| \psi_{\lambda_1}\|_{S} \| \psi_{\lambda_2}\|_{S} \| \psi_{\lambda_3}\|_{S} $$
which clearly suffices. On the other hand, if $\lambda_3 \ll \lambda_0$, then \eqref{eq:constr-case2} implies that $B$ vanishes unless $\lambda_1 \approx \lambda_2 \gg \lambda_0 \gg \lambda_3$. Moreover, as in Case 1a), we can exploit the sum over $\alpha$ to get a low-high gain if we in addition have $\alpha^2 \lr{\lambda_0}_m  \lesa \tilde{\alpha}^2 \lr{\lambda_3}_m$.  Consequently we have reduced matters to considering the high-low $\times$ high-low case
        \begin{equation}\label{eq:constr-case2mod}
            \lambda_3 \ll \lambda_0 \ll \lambda_1\approx \lambda_2, \qquad \alpha^2 \lr{\lambda_0}_m \gg \tilde{\alpha}^2 \lr{\lambda_3}_m.
        \end{equation}
We now consider separately the high-modulation cases, and the remaining small-modulation cases under the assumption that \eqref{eq:constr-case2mod} holds. \\

\noindent\underline{\textit{Case 2a)} :  $\varphi_{\lambda_0}=C_{\gtrsim \alpha^2\lr{\lambda_0}_m}\varphi_{\lambda_0}$ or $\psi_{\lambda_1} = C_{\gtrsim \alpha^2 \lr{\lambda_0}_m} \psi_{\lambda_1}$.}

This case is essentially identical to Case 1b) and 1c). We begin noting that
     \begin{align*}
        \alpha (\alpha^2 \lr{\lambda_0})^{\frac{1}{a} -1} \big( \alpha^{d-1} &\lambda_0^d\big)^{\frac{1}{2} - \frac{4}{d-1}(\frac{1}{a} - \frac{1}{2})} \lr{\lambda_1}^{\frac{2(d+1)}{d-1}(\frac{1}{a}-\frac{1}{2})} \\
        &= \alpha^{\frac{d-1}{2} - 2(\frac{1}{a} - \frac{1}{2})} \Big( \frac{\lr{\lambda_1}_m}{\lr{\lambda_0}_m}\Big)^{\frac{2(d+1)}{d-1}(\frac{1}{a} - \frac{1}{2})} \Big( \frac{\lambda_0}{\lr{\lambda_0}_m}\Big)^{1 - \frac{3d +1}{d-1}(\frac{1}{a} - \frac{1}{2})} \lambda_0^{\frac{d-1}{2} - \frac{1}{a}} \lr{\lambda_0}^{\frac{1}{2}}.
     \end{align*}
Hence, provided $0<\frac{1}{a} - \frac{1}{2} \ll 1$,  as either $\varphi_{\lambda_0}$ or $\psi_{\lambda_1}$ has modulation at least $\alpha^2 \lr{\lambda_0}_m$, an application of Theorem \ref{thm:bi high mod} (placing both $\varphi_{\lambda_0}$ and $\psi_{\lambda_1}$ in  $S_w$ and recalling the constraints \eqref{eq:constr-case2mod}) together with  Corollary \ref{cor:bi unified} gives
	\begin{align*}
	B(\alpha, \tilde{\alpha}, \boldsymbol{\lambda})
	&\lesa  \bigg\| \sum_{\substack{\kappa_0, \kappa_1 \in \mc{C}_\alpha\\ \ma(\kappa_0, \kappa_1) + \frac{m}{\lr{\lambda_0}_m}\approx \alpha}}  \overline{\varphi}_{\lambda_0, \kappa_0} \psi_{\lambda_1, \kappa_1} \bigg\|_{L^a_t L^2_x} \bigg\| \sum_{\substack{\kappa_2, \kappa_3 \in \mc{C}_{\tilde{\alpha}}\\ \ma(\kappa_2, \kappa_3) + \frac{m}{\lr{\lambda_3}_m}\approx \tilde\alpha}}\overline{\psi}_{\lambda_2, \kappa_2} \psi_{\lambda_3, \kappa_3} \bigg\|_{L^{a'}_t L^2_x} \notag \\
		&\lesa \Big[ \alpha^{\frac{1}{4}} \Big(\frac{\lr{\lambda_1}_m}{\lr{\lambda_0}_m}\Big)^{\frac{1}{4}} \lambda_0^{\frac{d}{2} - \frac{1}{a}} \| \varphi_{\lambda_0} \|_{S_w}
 \| \psi_{\lambda_1} \|_{S_w} \Big] \Big[ \tilde{\alpha}^{\frac{1}{4}} \lambda_3^{\frac{d-1}{2} +\frac{1}{a} - \frac{1}{2}}  \lr{\lambda_3}_m^{\frac{1}{2}} \| \psi_{\lambda_2} \|_{S} \| \psi_{\lambda_3}\|_{S}\Big] \notag \\
		&\lesa (\alpha \tilde{\alpha})^{\frac{1}{4}}  \Big( \frac{\lambda_3}{\lambda_0}\Big)^{\frac{1}{a} - \frac{1}{2}} \Big( \frac{\lr{\lambda_0}_m}{\lr{\lambda_1}_m}\Big)^{\frac{3}{4}} \Big( \frac{\lambda_0}{\lambda_1}\Big)^{d-2} \Big( \frac{\lambda_0}{\lr{\lambda_0}_m}\Big)^{\frac{1}{2}} \mb{D}  \| \varphi_{\lambda_0} \|_{S_w} \| \psi_{\lambda_1} \|_{S_w} \| \psi_{\lambda_2} \|_{S_w} \| \psi_{\lambda_3}\|_{S}.
	\end{align*}
Summing up over $\alpha, \tilde{\alpha} \in 2^{-\NN_0}$ we clearly obtain \eqref{eqn:thm nonlin:goal} in this case.\\

\noindent \underline{\textit{Case 2b)} :  $\alpha^2 \lr{\lambda_0}_m \gg \tilde{\alpha}^2 \lr{\lambda_3}_m$,  and $\varphi_{\lambda_0}=C_{\ll \alpha^2\lr{\lambda_0}_m}\varphi_{\lambda_0}$, $\psi_{\lambda_1}=C_{\ll \alpha^2\lr{\lambda_0}_m}\psi_{\lambda_1}$.}

This case is similar to Case 1d), and contains the only remaining subcase of Case 2, where the product $\overline{\varphi}\psi$ may be fully resonant. As in Case 1d), the point is to exploit the fact that Lemma \ref{lem:Fourier supp} implies that the quadrilinear product $B$ vanishes unless one of $\psi_{\lambda_2}$ or $\psi_{\lambda_3}$ has large modulation. The only difference is that as we are now placing the low frequency term into the weak space $S_w$ we have a slight high-low loss. However, as above, this loss is easily compensated by the large gain coming from the derivative factors. In more detail,  an application of Lemma \ref{lem:Fourier supp} implies that
		$$ \supp \mc{F}_{t,x}\big[ \overline{\varphi}_{\lambda_0, \kappa_0} \psi_{\lambda_1, \kappa_1}] \subset \big\{ \big| |\tau|^2 - \lr{\xi}_m^2 \big| \approx \alpha^2 \lr{\lambda_0}_m \lr{\lambda_1}_m \}$$
and (since $\alpha^2 \lr{\lambda_0}_m \gtrsim \tilde{\alpha}^2 \lr{\lambda_3}_m$ and $\alpha^2 \lr{\lambda_0}_m  \lesa \lr{\lambda_1}_m \approx \lr{\lambda_2}_m$)
		$$ \supp \mc{F}_{t,x}\big[ \overline{C_{\ll \alpha^2 \lr{\lambda_0}_m}\psi}_{\lambda_2, \kappa_2} C_{\ll \alpha^2 \lr{\lambda_0}_m} \psi_{\lambda_3, \kappa_3}\big] \subset \big\{ \big| |\tau|^2 - \lr{\xi}_m^2\big| \ll \alpha^2 \lr{\lambda_0}_m \lr{\lambda_1}_m \big\}.   $$
and hence we see that at least one of $\psi_{\lambda_2}$ or $\psi_{\lambda_3}$ must be at distance $\alpha^2\lr{\lambda_0}_m$ from the hyperboloid/cone as otherwise we trivially have $B=0$. Therefore an application of Theorem \ref{thm:bi high mod} and Corollary \ref{cor:bi unified} gives
\begin{align*}
&B(\alpha, \tilde{\alpha}, \boldsymbol{\lambda})\\
    &\lesa \Big\|\sum_{\substack{\kappa_0, \kappa_1 \in \mc{C}_\alpha\\ \ma(\kappa_0, \kappa_1) + \frac{m}{\lr{\lambda_0}_m}\approx \alpha}}
        \overline{\varphi}_{\lambda_0, \kappa_0} \psi_{\lambda_1, \kappa_1}\Big\|_{L^2_{t,x}} \Bigg( \Big\| \sum_{\substack{\kappa_2, \kappa_3 \in \mc{C}_{\tilde{\alpha}}\\ \ma(\kappa_2, \kappa_3) + \frac{m}{\lr{\lambda_3}_m}\approx \tilde\alpha}}\overline{C_{\gtrsim \alpha^2 \lr{\lambda_0}_m}\psi}_{\lambda_2, \kappa_2} \psi_{\lambda_3, \kappa_3}\Big\|_{L^2_{t,x}}  \\
    &\qquad \qquad \qquad \qquad + \Big\| \sum_{\substack{\kappa_2, \kappa_3 \in \mc{C}_{\tilde{\alpha}}\\ \ma(\kappa_2, \kappa_3) + \frac{m}{\lr{\lambda_3}_m}\approx \tilde\alpha}}
        \overline{C_{\ll \alpha^2 \lr{\lambda_0}_m}\psi}_{\lambda_2, \kappa_2} C_{\gtrsim \alpha^2 \lr{\lambda_0}_m}\psi_{\lambda_3, \kappa_3}\Big\|_{L^2_{t,x}} \Bigg)\\
    &\lesa \Big[ \alpha^\frac{1}{4} \lr{\lambda_0}_m^{\frac{1}{2}} \lambda_0^{\frac{d-2}{2}} \Big( \frac{\lambda_0 \lr{\lambda_1}_m}{\lambda_1 \lr{\lambda_0}_m}\Big)^{\frac{1}{4}}  \Big(\frac{\lr{\lambda_1}_m}{\lr{\lambda_0}_m}\Big)^{6d(\frac{1}{a} - \frac{1}{2})} \| \varphi_{\lambda_0}\|_{S_w} \| \psi_{\lambda_1} \|_{S_w} \Big]
        \times\Big[ \tilde{\alpha}^{\frac{d+1}{2}} \lambda_3^d (\alpha^2 \lr{\lambda_0}_m)^{-\frac{1}{2}} \|\psi_{\lambda_2}\|_{S} \|\psi_{\lambda_3}\|_{S}\Big]\\
    &\approx \Big( \frac{\tilde{\alpha}^2 \lr{\lambda_3}_m}{\alpha^2 \lr{\lambda_0}_m}\Big)^{\frac{3}{8}} \Big( \frac{\lr{\lambda_0}_m}{\lr{\lambda_1}_m}\Big)^{\frac{3}{4} - 6d(\frac{1}{a} - \frac{1}{2})} \Big( \frac{\lambda_0}{\lambda_1} \Big)^{d-2+\frac{1}{4}} \Big( \frac{\lambda_3}{\lr{\lambda_0}_m}\Big)^{\frac{1}{8}} \Big( \frac{\lambda_3}{\lr{\lambda_3}_m}\Big)^{\frac{7}{8}} \mb{D} \| \varphi_{\lambda_0}\|_{S_w} \| \psi_{\lambda_1} \|_{S} \|\psi_{\lambda_2}\|_{S} \|\psi_{\lambda_3}\|_{S}.
\end{align*}
Summing up over $\alpha, \tilde{\alpha} \in 2^{-\NN_0}$ with $\alpha \gtrsim (\frac{\lr{\lambda_3}_m}{\lr{\lambda_0}_m})^{\frac{1}{2}} \tilde{\alpha}$ then gives \eqref{eqn:thm nonlin:goal} in this case. This completes the proof of Case 2.
\end{proof}

\begin{remark}\label{rem:improving bound}
A natural question is if the conclusion of Theorem \ref{thm:nonlin} can be improved. For instance, motivated by the energy dispersed arguments in \cite{Sterbenz2010}, if we aim to prove a large data theory for the cubic Dirac equation, a first step would be to improve the bound in Theorem \ref{thm:nonlin} to something like
        \begin{equation}\label{eqn:tri disp gain}
        \begin{split}
            \Big| \int_{\RR^{1+d}} \overline{\varphi}_{\lambda_0} &\psi_{\lambda_1} \overline{\psi}_{\lambda_2} \psi_{\lambda_3} dtdx \Big|\\
                        &\lesa \Big( \frac{\lambda_{min}}{\lambda_{max}}\Big)^{\epsilon} \mb{D}\| \varphi_{\lambda_0} \|_{S_w} \Big( \| \psi_{\lambda_1} \|_S \| \psi_{\lambda_2} \|_S \| \psi_{\lambda_3} \|_S\Big)^{1-\theta} \Big( \|\psi_{\lambda_1}\|_D \|\psi_{\lambda_2}\|_D \|\psi_{\lambda_3}\|_D \Big)^{\theta}
        \end{split}
        \end{equation}
where $0<\theta<1$ and $\| \cdot \|_D$ is a dispersive norm (for instance a (non-endpoint) Strichartz norm $L^q_t L^r_x$, or a norm measuring energy dispersion like $L^\infty_t \dot{B}^{-\frac{d}{2}}_{\infty, \infty}$). A bound of the form \eqref{eqn:tri disp gain} would essentially be necessary in order to obtain a nonlinear profile decomposition. However such a bound seems extremely difficult to prove, due to the fact it would essentially require (say by taking $\varphi_{\lambda_0} = \psi_{\lambda_2}$, $\psi_{\lambda_1} = \psi_{\lambda_3}$, and $\lambda_0 \gg \lambda_1$) improving the bilinear $L^2_{t,x}$ estimate
            $$ \| \overline{\psi}_{\lambda} \psi_\mu \|_{L^2_{t,x}} \lesa \mu^{\frac{d-2}{2}} \lr{\mu}_m^{\frac{1}{2}} \|\psi_{\lambda} \|_{S} \| \psi_\mu \|_{S}^{1-\theta} \| \psi_\mu \|_D^{\theta}$$
when $\lambda \gg \mu$. Even in the case where $\psi_{\lambda}$ and $\psi_\mu$ are both free waves, it is not at all clear that such an improvement is possible.
\end{remark}
\section{Proof of Theorem  \ref{thm:gwp}}\label{sec:proof-gwp}
 We use the standard approach via the contraction mapping principle. First, we consider the Duhamel integral
   $$ \mc{N}(\psi_1, \psi_2, \psi_3)(t) = \ind_{[0, \infty)}(t) \int_0^t \mc{U}_m(t-t')\gamma^0F(\psi_1,\psi_2,\psi_3) (t') dt'.$$
As in Section \ref{sec:mult-est}, for the sake of the exposition, we consider the Soler  nonlinearity $$F(\psi_1,\psi_2,\psi_3)=\overline{\psi}_1\psi_2\psi_3,$$ but the results are valid for cubic nonlinearities satisfying the null-structure as discussed in Section \ref{sec:setup}.

$ \mc{N}$ is well-defined and satisfies the following crucial estimate. Recall that $s_d=\frac{d-1}{2}$ and $\sigma_d=\frac{d-2}{2}$, and also that the nesting property
$S^{s,\sigma}_m\subset S^{s_d,\sigma_d}_m $ for $s\g s_d $ and $\sigma \les \sigma_d$ holds  only  if $m\ne 0$.
   \begin{theorem}\label{thm:est-n}
    Let $s\g s_d $. There exists $C_s>0$, such that for any $0\les \sigma \les \sigma_d$, $m \in \R$ and  functions $\psi_j \in  S^{s,\sigma}_m\cap  S^{s_d,\sigma_d}_m $ we have $\mc{N}(\psi_1, \psi_2, \psi_3)\in S^{s,\sigma}_m\cap C([0,\infty),H^{s,\sigma}_m(\R^d))$ and
    \begin{equation}\label{eq:est-n}
      \begin{split}
    & \| \mc{N}(\psi_1, \psi_2, \psi_3)\|_{S^{s,\sigma}_m}
     \les C_s  \sum_{j=1}^3\| \psi_j \|_{S^{s,\sigma}_m} \prod_{\substack{k=1,2,3\\ k\ne j}}\| \psi_{k} \|_{S^{s_d,\sigma_d}_m} \end{split}
\end{equation}
  \end{theorem}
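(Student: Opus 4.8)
The plan is to run the usual contraction argument off the single trilinear bound \eqref{eq:est-n}, and to derive \eqref{eq:est-n} by feeding the frequency-localised quadrilinear estimate of Theorem~\ref{thm:nonlin} into the energy inequality (Lemma~\ref{lem:energy ineq}). Concretely, I would apply Lemma~\ref{lem:energy ineq} with $f=0$ to $\psi=\mc N(\psi_1,\psi_2,\psi_3)$: once the mild hypothesis $P_\lambda(\overline\psi_1\psi_2\psi_3)\in L^1_{t,\mathrm{loc}}L^2_x$ is checked (routine, via Bernstein's inequality and the embedding $S^{s,\sigma}_m\subset L^\infty_tL^2_x$; alternatively one may first truncate the $\psi_j$), the lemma yields at once that $\mc N(\psi_1,\psi_2,\psi_3)$ is well defined, lies in $S^{s,\sigma}_m\cap C([0,\infty);H^{s,\sigma}_m)$, and obeys
$$ \|\mc N(\psi_1,\psi_2,\psi_3)\|_{S^{s,\sigma}_m}\lesa \sup_{\substack{\phi\in C_0^\infty(\R^{1+d})\\ \|\phi\|_{S^{-s,-\sigma}_{w,m}}\les 1}}\Big|\int_0^\infty\!\!\!\int_{\R^d}\overline\phi\,\overline{\psi}_1\psi_2\psi_3\;dt\,dx\Big|. $$
Since $\overline{\psi}_1\psi_2=\psi_1^\dagger\gamma^0\psi_2$ is scalar, $\overline\phi\,\overline{\psi}_1\psi_2\psi_3=(\overline{\psi}_1\psi_2)(\overline\phi\,\psi_3)$, so this pairing is exactly of the quadrilinear form controlled by Theorem~\ref{thm:nonlin} with $\phi$ in the role of $\varphi$ (hence in the weak space $S_{w,m}$) and $(\psi_3,\psi_1,\psi_2)$ in the roles of $(\psi^{[1]},\psi^{[2]},\psi^{[3]})$; the permutation is immaterial since the right-hand side of Theorem~\ref{thm:nonlin} is symmetric in those three factors.

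Littlewood--Paley decomposing all four functions and summing Theorem~\ref{thm:nonlin} over $\lambda_0,\lambda_1,\lambda_2,\lambda_3\in2^\Z$ reduces the claim to a purely dyadic inequality. Writing $b_{\lambda_0}=\|\mc U_m(-t)\phi_{\lambda_0}\|_{V^{a'}}$ and $c^{(j)}_{\lambda_j}=\|\mc U_m(-t)\psi_{j,\lambda_j}\|_{U^a}$, and denoting by $\|\cdot\|_{(\rho,\tau)}$ the norm obtained by inserting a factor $\lambda^{\tau}\lr{\lambda}_m^{\rho-\tau}$ before taking $\ell^2_\lambda$, identity \eqref{eqn:hom decomp of Sm} turns the right-hand side of \eqref{eq:est-n} into $\|b\|_{(-s,-\sigma)}\sum_{j}\|c^{(j)}\|_{(s,\sigma)}\prod_{k\ne j}\|c^{(k)}\|_{(s_d,\sigma_d)}$, and it remains to bound $\sum_{\boldsymbol{\lambda}}(\lambda_{min}/\lambda_{med})^\epsilon\,\mb D\,b_{\lambda_0}c^{(1)}_{\lambda_1}c^{(2)}_{\lambda_2}c^{(3)}_{\lambda_3}$ by this quantity. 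Fix a dyadic configuration; the Fourier-support constraint forces the two largest of $\lambda_0,\lambda_1,\lambda_2,\lambda_3$ to be comparable. I would assign the $\|\cdot\|_{(s,\sigma)}$-weight to the $\psi$-factor at the \emph{largest} frequency among $\lambda_1,\lambda_2,\lambda_3$, except in the low-frequency regime below, where one instead places it on the $\psi$-factor at $\lambda_{med}$. After dividing $(\lambda_{min}/\lambda_{med})^\epsilon\mb D$ by the four extracted weights all powers of $m$ cancel, and in every frequency regime (all $\lambda_j\gtrsim|m|$; all $\lambda_j\lesa|m|$; and the mixed cases) the resulting effective coefficient is bounded by a product of Schur kernels $(\lambda_{(i)}/\lambda_{(j)})^{\delta}$, $\delta>0$, over ordered pairs of frequencies, together with the comparability of the two largest. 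The sum then closes by iterated Schur tests on the separated pairs and Cauchy--Schwarz on the comparable pair; for $s>s_d$ one also has the extra gain $(\lambda_{min}/\lambda_{max})^{s-s_d}$, and the constant $C_s$ is the rate of the ensuing geometric series (finite for every $s\ge s_d$, as at $s=s_d$ only the $\epsilon$-gains are used). Uniformity in $m$ is automatic, since $m$ enters only through $\lr{\cdot}_m$.

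The one delicate point — and the sole place where Theorem~\ref{thm:nonlin}'s improved exponent is needed — is the low-frequency regime $\lambda_0\ll\min\{\lambda_{med},|m|\}$. There $\lr{\lambda_0}_m\approx|m|$, so the $\phi$-weight $\lambda_0^{-\sigma}\lr{\lambda_0}_m^{-(s-\sigma)}$ contributes only $\lambda_0^{-\sigma}$ in $\lambda_0$, whereas $\mb D$ carries the genuinely negative power $\lambda_0^{-\sigma_d}$; since $\sigma\le\sigma_d$ this is not absorbed by the weights, and the naive bound on the effective coefficient fails to be summable in $\lambda_0$. Exactly here Theorem~\ref{thm:nonlin} supplies $\epsilon>\sigma_d=\tfrac{d-2}{2}$, which upgrades the relevant kernel to $(\lambda_0/\lambda_{med})^{\epsilon-(\sigma_d-\sigma)}$ with strictly positive exponent, because $\sigma_d-\sigma\le\sigma_d<\epsilon$; with this the Schur/Cauchy--Schwarz scheme goes through and \eqref{eq:est-n} follows. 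I expect this regime — together with the bookkeeping needed to check that the effective coefficient is Schur-admissible in all the mixed high/low frequency cases — to be the only real work; the membership $\mc N(\psi_1,\psi_2,\psi_3)\in S^{s,\sigma}_m\cap C([0,\infty);H^{s,\sigma}_m)$ is handed to us directly by Lemma~\ref{lem:energy ineq}.
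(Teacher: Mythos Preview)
Your approach is the paper's: Lemma~\ref{lem:energy ineq} reduces \eqref{eq:est-n} to a dual pairing, Theorem~\ref{thm:nonlin} controls each dyadic block, and the summation closes by Cauchy--Schwarz on the comparable pair together with the off-diagonal gain $(\lambda_{min}/\lambda_{med})^\epsilon$. You have also correctly isolated the regime $\lambda_0\ll\min\{\lambda_{med},|m|\}$ as the one place where the improved exponent $\epsilon>\sigma_d$ of Theorem~\ref{thm:nonlin} is genuinely needed.

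There is, however, a gap in your weight-assignment rule. Consider the regime $\lambda_3\lesa\lambda_0\ll|m|$ with $\lambda_0\ll\lambda_1\approx\lambda_2$ (the paper's $\Lambda_{IIb}$). Here $\lambda_{min}=\lambda_3$ and $\lambda_{med}=\lambda_0$, so you are \emph{not} in your low-frequency regime, and by your rule the $(s,\sigma)$-weight goes on $\psi_1$. But converting $\|\phi_{\lambda_0}\|_{S^{-s_d,-\sigma_d}_{w,m}}\|\psi_{1,\lambda_1}\|_{S^{s_d,\sigma_d}_m}$ to the $(-s,-\sigma)$ and $(s,\sigma)$ scales costs a factor
\[
\Big(\frac{\lambda_0}{\lambda_1}\Big)^{\sigma-\sigma_d}\Big(\frac{\lr{\lambda_0}_m}{\lr{\lambda_1}_m}\Big)^{(s-s_d)+(\sigma_d-\sigma)}.
\]
When $\sigma<\sigma_d$ and $\lambda_1\lesa|m|$ both brackets $\lr{\cdot}_m$ are $\approx|m|$, so this is $\approx(\lambda_1/\lambda_0)^{\sigma_d-\sigma}\gg1$; the available Schur gain $(\lambda_3/\lambda_0)^\epsilon$ does not touch the $\lambda_0$--$\lambda_1$ separation, and the sum in $\lambda_0$ fails to close. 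The fix (and the paper's choice) is to place the $(s,\sigma)$-weight on $\psi_3$ here: since $\lambda_3\lesa\lambda_0\ll|m|$ one has $\lr{\lambda_3}_m\approx\lr{\lambda_0}_m\approx|m|$ and $(\lambda_0/\lambda_3)^{\sigma-\sigma_d}\les1$, so the conversion factor is $\lesa1$ and the scheme closes exactly as you describe. In short, the correct rule is not ``largest $\psi$-frequency except in the low-frequency regime'' but rather: whenever $\lambda_0\ll|m|$, put the $(s,\sigma)$-weight on the $\psi$-factor with frequency closest to $\lambda_0$ from below (namely $\lambda_3$ in both $\Lambda_{IIb}$ and $\Lambda_{IIc}$). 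This is a bookkeeping correction; once made, your outline matches the paper's proof.
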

  \begin{proof} Within the following proof, we use the convention that implicit constants may depend on $s\g s_d$, but not on $\sigma$.
  Clearly, $P_{\lambda_0}F(\psi_1, \psi_2, \psi_3)\in L^1_{t,loc}L^2_x$ for any $\lambda_0\in 2^\Z$, so
 Lemma \ref{lem:energy ineq} implies that $\mc{N}(\psi_1, \psi_2, \psi_3)\in S^{s,\sigma}_m\cap C([0,\infty),H^{s,\sigma}_m(\R^d))$ and
        \begin{equation}\label{eqn:global-energy ineq}
            \| \mc{N}(\psi_1, \psi_2, \psi_3) \|_{S^{s,\sigma}_m} \approx \sup_{\substack{\varphi \in C_0^\infty(\RR^{1+d}) \\ \| \varphi \|_{S^{-s,-\sigma}_{w, m}} \les 1}}   \Big| \int_0^\infty \int_{\RR^d} \overline{\varphi} \psi_3 \overline{\psi}_1\psi_2 dtdx \Big|,
        \end{equation}
        provided that the right hand side is finite. Let $ \varphi \in S^{-s,-\sigma}_{w,m}$. With a slight abuse of notation we dyadically decompose
        $$\varphi=\sum_{\lambda_0 \in 2^\Z} \varphi_{\lambda_0}, \quad \psi_j=\sum_{\lambda_j \in 2^\Z} \psi_{\lambda_j}, \; (j=1,2,3).$$
        By symmetry it suffices to prove the bound
        \begin{equation}\label{eq:sum-bd}
        \begin{split}
         \sum_{\Lambda}  \Big| \int_0^\infty \int_{\RR^d} \overline{\varphi}_{\lambda_0} \psi_{\lambda_3} \overline{\psi}_{\lambda_1}\psi_{\lambda_2} dtdx \Big| \lesa
        \| \varphi \|_{S^{-s,-\sigma}_{w, m}}  \sum_{j=1}^3\| \psi_j \|_{S^{s,\sigma}_m} \prod_{\substack{k=1,2,3\\ k\ne j}}\| \psi_{k} \|_{S^{s_d,\sigma_d}_m} ,
        \end{split}
        \end{equation}
        where $\Lambda$ is the set of all $(\lambda_0,\lambda_1,\lambda_2,\lambda_3)\in (2^\Z)^4$ satisfying $\lambda_1\g\lambda_2\g\lambda_3$.
        By the Littlewood-Paley trichotomy there is a nontrivial contribution only in the subset $\Lambda_I$ where $\lambda_0\sim \lambda_1$ and the subset $\Lambda_{II}$ where $\lambda_0\ll \lambda_1\sim \lambda_2$.
        We further decompose $\Lambda_{II}$ into the subset $\Lambda_{IIa}$ where $|m|\lesa \lambda_0$, the subset $\Lambda_{IIb}$ where $\lambda_3\lesa \lambda_0\ll |m|$, and the subset $\Lambda_{IIc}$ where  $\lambda_0\ll \min\{|m|,\lambda_3\}$.

     \textit{   Contribution of $\Lambda_I$:} Using $\lambda_0\sim \lambda_1\g\lambda_2\g\lambda_3$, the frequency localized estimate of Theorem \ref{thm:nonlin} yields
     \begin{align*}
     \sum_{\Lambda_I}  \Big| \int_0^\infty \int_{\RR^d} \overline{\varphi}_{\lambda_0} \psi_{\lambda_3} \overline{\psi}_{\lambda_1}\psi_{\lambda_2} dtdx \Big| \lesa{}& \sum_{\Lambda_I}  \Big(\frac{\lambda_3}{\lambda_2}\Big)^\epsilon \| \varphi_{\lambda_0} \|_{S^{-s_d,-\sigma_d}_{w, m}}  \| \psi_{\lambda_1} \|_{S^{s_d,\sigma_d}_m} \| \psi_{\lambda_2} \|_{S^{s_d,\sigma_d}_m} \| \psi_{\lambda_3} \|_{S^{s_d,\sigma_d}_m}\\
     \approx{}& \sum_{\Lambda_I}  \Big(\frac{\lambda_3}{\lambda_2}\Big)^\epsilon \| \varphi_{\lambda_0} \|_{S^{-s,-\sigma}_{w, m}}  \| \psi_{\lambda_1} \|_{S^{s,\sigma}_m} \| \psi_{\lambda_2} \|_{S^{s_d,\sigma_d}_m} \| \psi_{\lambda_3} \|_{S^{s_d,\sigma_d}_m}\\
   \lesa{}&  \sum_{\lambda_0\sim \lambda_1} \| \varphi_{\lambda_0} \|_{S^{-s,-\sigma}_{w, m}}  \| \psi_{\lambda_1} \|_{S^{s,\sigma}_m}  \sum_{\lambda_3\les \lambda_2} \Big(\frac{\lambda_3}{\lambda_2}\Big)^\epsilon \| \psi_{\lambda_2} \|_{S^{s_d,\sigma_d}_m} \| \psi_{\lambda_3} \|_{S^{s_d,\sigma_d}_m}\\
 \lesa{} & \| \varphi \|_{S^{-s,-\sigma}_{w, m}} \| \psi_1 \|_{S^{s,\sigma}_m} \| \psi_{2} \|_{S^{s_d,\sigma_d}_m} \| \psi_{3} \|_{S^{s_d,\sigma_d}_m},
     \end{align*}
        where we used Cauchy-Schwarz to perform the dyadic summation in the last step.

  \textit{Contribution of $\Lambda_{IIa}$:}        In this case, since $|m|\lesa \lambda_0\ll \lambda_1$, we have $$\lambda_0^{\sigma-\sigma_d}\lr{\lambda_0}_m^{s-\sigma-(s_d-\sigma_d)}\lesa \lambda_1^{\sigma-\sigma_d}\lr{\lambda_1}^{s-\sigma-(s_d-\sigma_d)}_m.$$ Therefore,
    Theorem \ref{thm:nonlin} yields
     \begin{align*}
     \sum_{\Lambda_{IIa}}  \Big| \int_0^\infty \int_{\RR^d} \overline{\varphi}_{\lambda_0} \psi_{\lambda_3} \overline{\psi}_{\lambda_1}\psi_{\lambda_2} dtdx \Big| \lesa{}& \sum_{\Lambda_{IIa}}  \Big(\frac{\lambda_{min}}{\lambda_{med}}\Big)^\epsilon \| \varphi_{\lambda_0} \|_{S^{-s_d,-\sigma_d}_{w, m}}  \| \psi_{\lambda_1} \|_{S^{s_d,\sigma_d}_m} \| \psi_{\lambda_2} \|_{S^{s_d,\sigma_d}_m} \| \psi_{\lambda_3} \|_{S^{s_d,\sigma_d}_m}\\
     \lesa{}& \sum_{\Lambda_{IIa}}  \Big(\frac{\lambda_{min}}{\lambda_{med}}\Big)^\epsilon  \| \varphi_{\lambda_0} \|_{S^{-s,-\sigma}_{w, m}}  \| \psi_{\lambda_1} \|_{S^{s,\sigma}_m} \| \psi_{\lambda_2} \|_{S^{s_d,\sigma_d}_m} \| \psi_{\lambda_3} \|_{S^{s_d,\sigma_d}_m}\\
      \lesa{} & \| \varphi \|_{S^{-s,-\sigma}_{w, m}} \| \psi_1 \|_{S^{s,\sigma}_m} \| \psi_{2} \|_{S^{s_d,\sigma_d}_m} \| \psi_{3} \|_{S^{s_d,\sigma_d}_m},
      \end{align*}
 where again we used Cauchy-Schwarz to perform the dyadic summation in the last step.

         \textit{Contribution of $\Lambda_{IIb}$:} Here,  $\lambda_3\lesa \lambda_0\ll |m|$ implies that
                 $$\lambda_0^{\sigma-\sigma_d}\lr{\lambda_0}_m^{s-\sigma-(s_d-\sigma_d)}\lesa \lambda_3^{\sigma-\sigma_d}\lr{\lambda_3}_m^{s-\sigma-(s_d-\sigma_d)}  $$ and  Theorem \ref{thm:nonlin} yields
\begin{align*}
     \sum_{\Lambda_{IIb}}  \Big| \int_0^\infty \int_{\RR^d} \overline{\varphi}_{\lambda_0} \psi_{\lambda_3} \overline{\psi}_{\lambda_1}\psi_{\lambda_2} dtdx \Big| \lesa{}& \sum_{\Lambda_{IIb}}  \Big(\frac{\lambda_{3}}{\lambda_{0}}\Big)^\epsilon \| \varphi_{\lambda_0} \|_{S^{-s_d,-\sigma_d}_{w, m}}  \| \psi_{\lambda_1} \|_{S^{s_d,\sigma_d}_m} \| \psi_{\lambda_2} \|_{S^{s_d,\sigma_d}_m} \| \psi_{\lambda_3} \|_{S^{s_d,\sigma_d}_m}\\
     \lesa{}& \sum_{\Lambda_{IIb}}  \Big(\frac{\lambda_{3}}{\lambda_{0}}\Big)^\epsilon  \| \varphi_{\lambda_0} \|_{S^{-s,-\sigma}_{w, m}}  \| \psi_{\lambda_1} \|_{S^{s_d,\sigma_d}_m} \| \psi_{\lambda_2} \|_{S^{s_d,\sigma_d}_m} \| \psi_{\lambda_3} \|_{S^{s,\sigma}_m}\\
      \lesa{} & \| \varphi \|_{S^{-s,-\sigma}_{w, m}} \| \psi_1 \|_{S^{s_d,\sigma_d}_m} \| \psi_{2} \|_{S^{s_d,\sigma_d}_m} \| \psi_{3} \|_{S^{s,\sigma}_m}.
      \end{align*}

\textit{Contribution of $\Lambda_{IIc}$:} Here,  $\lambda_0\ll \min\{\lambda_3,|m|\}$, therefore  $\epsilon >\sigma_d$ in Theorem \ref{thm:nonlin}.
                 $$\lambda_0^{\sigma-\sigma_d}\lr{\lambda_0}_m^{s-\sigma-(s_d-\sigma_d)}\lesa \Big(\frac{\lambda_{3}}{\lambda_{0}}\Big)^{\sigma_d-\sigma}\lambda_3^{\sigma-\sigma_d}\lr{\lambda_3}_m^{s-\sigma-(s_d-\sigma_d)}  $$ and  Theorem \ref{thm:nonlin} yields
\begin{align*}
     \sum_{\Lambda_{IIc}}  \Big| \int_0^\infty \int_{\RR^d} \overline{\varphi}_{\lambda_0} \psi_{\lambda_3} \overline{\psi}_{\lambda_1}\psi_{\lambda_2} dtdx \Big| \lesa{}& \sum_{\Lambda_{IIc}}  \Big(\frac{\lambda_{0}}{\lambda_{3}}\Big)^\epsilon \| \varphi_{\lambda_0} \|_{S^{-s_d,-\sigma_d}_{w, m}}  \| \psi_{\lambda_1} \|_{S^{s_d,\sigma_d}_m} \| \psi_{\lambda_2} \|_{S^{s_d,\sigma_d}_m} \| \psi_{\lambda_3} \|_{S^{s_d,\sigma_d}_m}\\
     \lesa{}& \sum_{\Lambda_{IIc}}  \Big(\frac{\lambda_{0}}{\lambda_{3}}\Big)^{\epsilon-\sigma_d+\sigma}  \| \varphi_{\lambda_0} \|_{S^{-s,-\sigma}_{w, m}}  \| \psi_{\lambda_1} \|_{S^{s_d,\sigma_d}_m} \| \psi_{\lambda_2} \|_{S^{s_d,\sigma_d}_m} \| \psi_{\lambda_3} \|_{S^{s,\sigma}_m}\\
      \lesa{} & \| \varphi \|_{S^{-s,-\sigma}_{w, m}} \| \psi_1 \|_{S^{s_d,\sigma_d}_m} \| \psi_{2} \|_{S^{s_d,\sigma_d}_m} \| \psi_{3} \|_{S^{s,\sigma}_m}.
      \end{align*}
      This concludes the proof of  \eqref{eq:sum-bd} and Lemma \ref{lem:energy ineq} implies the claimed estimate \eqref{eqn:global-energy ineq}.
    \end{proof}

    Now, for $s\g s_d$ we conclude from Lemma \ref{lem:energy ineq} and Theorem \ref{thm:est-n} that there exists a constant $C_s>0$ such that we have the three estimates
        \begin{align*}
          \|\ind_{[0, \infty)} \mc{U}_m f\|_{S^{s,\sigma}_m}\les{}& C_s \|f \|_{H^{s,\sigma}_m}\\
          \|\mc{N}(\psi)\|_{S^{s,\sigma}_m}\les{}& C_s \|\psi\|_{S^{s_d,\sigma_d}_m}^2\|\psi\|_{S^{s,\sigma}_m}\\
          \|\mc{N}(\psi)-\mc{N}(\varphi)\|_{S^{s,\sigma}_m}\les{}& C_s \big(\|\psi\|_{S^{s,\sigma}_m}+|\varphi\|_{S^{s,\sigma}_m}\big)\big(\|\psi\|_{S^{s_d,\sigma_d}_m}+|\varphi\|_{S^{s_d,\sigma_d}_m}\big)\|\psi-\varphi\|_{S^{s_d,\sigma_d}_m}\\& {}+ C_s \big(\|\psi\|_{S^{s_d,\sigma_d}_m}+|\varphi\|_{S^{s_d,\sigma_d}_m}\big)^2\|\psi-\varphi\|_{S^{s,\sigma}_m},
        \end{align*}
        for any $\sigma_d \g \sigma \g 0$ and $m \in \R$. Set $C:=C_{s_d}$ and we may assume that $C_s\g C$.

        Now, let $f\in H^{s_d,\sigma_d}_m\cap H^{s,\sigma}_m$ with $\|f \|_{H^{s_d,\sigma_d}_m}\les \delta$ and $\|f \|_{H^{s,\sigma}_m}\les \delta_{s,\sigma}$.
    We will show that, for small enough $\delta>0$ (and arbitrary $\delta_{s,\sigma}$), there exists $\psi \in S^{s_d,\sigma_d}_m\cap S^{s,\sigma}_m$ such that
        $$ \psi= \ind_{[0, \infty)} \mc{U}_m f + i \mc{N}(\psi, \psi, \psi)=:\Omega(\psi).$$
        This will be our notion of solution of \eqref{eq:cd} on the time interval $[0,\infty)$.

       Consider the complete metric space
        $$X_r:=\{\psi \in S^{s_d,\sigma_d}_m\cap S^{s,\sigma}_m: d(\psi,0)\les r\},$$
        with metric $$d(\psi,\varphi)=\delta^{-1}\|\psi-\varphi\|_{S^{s_d,\sigma_d}_m}+\delta_{s,\sigma}^{-1}\|\psi-\varphi\|_{S^{s,\sigma}_m}.$$
        For $\psi \in X_r$ we have
        \begin{align*}
          d(\Omega(\psi),0)\les{}& C \delta^{-1}\|f \|_{H^{s_d,\sigma_d}_m}+ C_s\delta_{s,\sigma}^{-1} \|f \|_{H^{s,\sigma}_m}+C \delta^{-1}\|\psi\|_{S^{s_d,\sigma_d}_m}^3+C_s\delta_{s,\sigma}^{-1} \|\psi\|_{S^{s_d,\sigma_d}_m}^2\|\psi\|_{S^{s,\sigma}_m}\\
          \les{}& 2C_s +2C_s\delta^2 r^3\les r,
        \end{align*}
        provided that $r:=4C_s$ and $\delta^2\les 2^{-6}C_s^{-3}$, therefore $\Omega:X_r\to X_r$.
        In addition, for $\psi,\varphi \in X_r$,
         \begin{align*}
           d(\Omega(\psi),\Omega(\varphi))\les{}& 2C \delta^{-1}\big(\|\psi\|_{S^{s_d,\sigma_d}_m}+\|\varphi\|_{S^{s_d,\sigma_d}_m}\big)^2 \|\psi-\varphi\|_{S^{s_d,\sigma_d}_m}\\
           &{}
                   +C_s\delta_{s,\sigma}^{-1} \big(\|\psi\|_{S^{s,\sigma}_m}+\|\varphi\|_{S^{s,\sigma}_m}\big) \big(\|\psi\|_{S^{s_d,\sigma_d}_m}+\|\varphi\|_{S^{s_d,\sigma_d}_m}\big)  \|\psi-\varphi\|_{S^{s_d,\sigma_d}_m}\\
                                                &{}   +C_s\delta_{s,\sigma}^{-1} \big(\|\psi\|_{S^{s_d,\sigma_d}_m}+\|\varphi\|_{S^{s_d,\sigma_d}_m}\big)^2 \|\psi-\varphi\|_{S^{s,\sigma}_m}\\
           \les{}& 16C_s\delta^2 r^2d(\psi,\varphi).
         \end{align*}
         Therefore, choosing $\delta:=2^{-5}C_s^{-\frac{3}{2}}$, we have that  $\Omega:X_r\to X_r$ is a strict contraction with a unique fixed point $\psi \in X_r$. Since free solutions are continuous we conclude from Theorem \ref{thm:est-n} that $\psi=\Omega (\psi)\in  C([0,\infty),H^{s_d,\sigma_d}_m(\R^d)\cap H^{s,\sigma}_m(\R^d))$. Also, one can show that solutions are unique in the full space $S^{s_d,\sigma_d}_m$ by a local well-posedness theory for large initial data (e.g.\ analogous to \cite[Theorem 1.2]{Hadac2009}, we skip the details).

         If we consider initial data $f,g \in H^{s_d,\sigma_d}_m\cap H^{s,\sigma}_m $ satisfying $\|f\|_{H^{s_d,\sigma_d}_m},\|g\|_{H^{s_d,\sigma_d}_m}\les \delta$, and the two emanating solutions
         \begin{align*}
           X_r\ni \psi=& \ind_{[0, \infty)} \mc{U}_m f + i \mc{N}(\psi,\psi,\psi),\\
 X_r\ni \varphi=& \ind_{[0, \infty)} \mc{U}_m g + i \mc{N}(\varphi,\varphi,\varphi),
         \end{align*}
         the contraction estimate implies the local Lipschitz bound
         $$
\|\psi-\varphi\|_{S^{s_d,\sigma_d}_m}+\|\psi-\varphi\|_{S^{s,\sigma}_m}\lesa  \|f- g\|_{H^{s_d,\sigma_d}_m}+ \|f- g\|_{H^{s,\sigma}_m}.
         $$
Furthermore, from the implicit function theorem \cite[Corollary 15.1]{Deimling1985} it follows that the map $f\to \psi$ is $C^\infty$.
Finally the scattering claim follows immediately from Lemma \ref{lem:prop of S}.

By time reversibility, the same strategy can be applied for negative times $(-\infty,0]$.

\section{The massless limit}\label{sec:massless-limit}

We now turn to the proof of the convergence of the massive nonlinear Dirac equation to the massless equation as $m\to 0$. The argument we give is fairly general, and in fact only relies on three key properties of the nonlinear flow. Namely the fact we have uniform (in $m$) control over the nonlinear interactions, the convergence of the corresponding linear flows on compact time intervals, and a uniform in $m$ temporal decay property. To state the required properties more precisely, for $s_d = \frac{d-1}{2}$ and $0\les \sigma \les \sigma_d:=\frac{d-2}{2}$ we introduce the transfer operator
        \begin{equation}\label{eqn:R defn}
	\mc{R}_m^\sigma(t) =   \Big( \frac{|\nabla|}{\lr{\nabla}_m}\Big)^{s_d - \sigma}  \mc{U}_m(t)  \mc{U}_0(-t)
			\end{equation}
which is used to control the difference in the massive and massless flows. Note that by construction (and the definition of $S_m^{s_d, \sigma}$ via the pullback) we have the key identity
	\begin{equation}\label{eqn:R mapping}
		\| \mc{R}_m^\sigma(t) \psi \|_{S^{s_d, \sigma}_m} = \| \psi \|_{S^{s_d, \sigma}_0}.
	\end{equation}
Thus $\mc{R}^\sigma_m$ maps the massive to the massless solution spaces, globally in time. This somewhat simple identity is crucial in the proof of Theorem \ref{thm:limit}, and reflects the fact that our solution spaces are \emph{adapted} to the linear flow $\mc{U}_m(t)$, i.e. defined via the pullback. In fact, previous results in the literature (say on the non-relativistic limit) have generally only considered convergence on compact time intervals.

We now state precisely the key properties we require in the proof of Theorem \ref{thm:limit}. Note that in the following we fix $0\les \sigma \les \sigma_d$.

\begin{enumerate}[label= \textbf{(M\arabic*)}]
  \item \label{item:M1} (\emph{Uniform boundedness})
  There exists $\mb{C}\g 0$ such that for any $m\in \RR$ and any $\psi, \varphi \in S^{s_d, \sigma}_m$ we have
                $$ \Big\| \ind_{[0, \infty)}(t) \int_0^t \mc{U}_m(t-s) \gamma^0 \Big( F(\psi + \varphi) - F(\varphi) \Big) ds \Big\|_{S^{s_d, \sigma}_m} \les \mb{C} \| \psi \|_{S^{s_d, \sigma}_m} \big(\| \varphi \|_{S^{s_d, \sigma}_m} + \| \psi \|_{S^{s_d, \sigma}_m}\big)^2.$$\\

  \item \label{item:M2} (\emph{Convergence})
  For every $T>0$ and $\psi \in S_0^{s_d, \sigma}$ we have
                $$ \lim_{m \to 0} \Big\| \int_0^T \ind_{[0, T)}(s) \mc{U}_m(t-s) \Big[ \gamma^0 F\big( \mc{R}^\sigma_m(s) \psi(s) \big) - \mc{R}^\sigma_m(s) \gamma^0 F\big( \psi(s) \big) \Big] ds \Big\|_{S_m^{s_d, \sigma}} = 0. $$ \\

  \item  \label{item:M3} (\emph{Uniform decay})
  For any $\psi \in S_0^{s_d, \sigma}$ we have
                $$ \lim_{T\to \infty} \sup_{ |m|\les 1} \Big\| \int_0^t \ind_{[T, \infty)}(s) \mc{U}_m(t-s) \gamma^0 F\big( \mc{R}^{\sigma}_m(s) \psi(s) \big) ds \Big\|_{S^{s_d, \sigma}_m} = 0. $$  \\
\end{enumerate}

We leave the proof of \ref{item:M1}, \ref{item:M2}, and \ref{item:M3} till later in this section, and now turn to the proof of Theorem \ref{thm:limit}.

\begin{proof}[Proof of Theorem \ref{thm:limit}]
  We only prove convergence on the forward in time interval $[0, \infty)$, as the case of negative times then follows by symmetry.
  Let $\delta>0$ be as in Theorem \ref{thm:gwp}. For any given $m\in \RR$ and data $f^{(m)} \in H^{s_d, \sigma}$ with
        $$ \| f^{(m)} \|_{H^{s_d, \sigma_d}_m} \les \delta$$
we then take $\psi^{(m)} \in S^{s_d, \sigma}_m$ to be the solution to
$$ \psi^{(m)}(t) = \ind_{[0, \infty)}(t) \Big( \mc{U}_m(t) f^{(m)} + i \int_0^t \mc{U}_m(t-s) \gamma^0 F\big( \psi^{(m)}(s) \big) ds \Big).$$
We use the fact that the spaces are nested for fixed $s=s_d$, more precisely
  \[
 \| f^{(m)} \|_{H^{s_d, \sigma_d}_m}\lesa  \| f^{(m)} \|_{H^{s_d, \sigma}_m}, \; \text{ and } \| \psi^{(m)}\|_{S^{s_d, \sigma_d}_m}\lesa  \| \psi^{(m)}\|_{S^{s_d, \sigma}_m}.
  \]
Note that (see Section \ref{sec:proof-gwp}) we then have the smallness condition
            \begin{equation}\label{eqn:thm limit:psi small}
            	 \| \psi^{(m)}\|_{S^{s_d, \sigma}_m} \lesa \delta.
            	\end{equation}
The existence of such a solution can also (and essentially equivalently) be seen as a consequence of the uniform bound \ref{item:M1}. Define the modified solution operators
        $$ \mc{U}_m^\sigma(t) = \lr{\nabla}_m^{s_d - \sigma} |\nabla|^{\sigma} \mc{U}_m(t) $$
and take
			$$ u^{(m)}(t) =  \mc{U}_m^{\sigma}(-t) \psi^{(m)}(t) - \mc{U}^{\sigma}_0(-t) \psi^{(0)}(t). $$
Note that the smallness condition \eqref{eqn:thm limit:psi small} implies that
	\begin{equation}\label{eqn:thm limit:u small}			 \| u^{(m)} \|_{\ell^2 U^a} \les \| \psi^{(m)} \|_{S^{s_d, \sigma}_m} + \| \psi^{(0)} \|_{S^{s_d, \sigma}_0} \lesa \delta. 	
	\end{equation}
Our goal is to prove that
	\begin{equation}\label{eqn:thm limit:goal}
				 \limsup_{m\to 0} \| u^{(m)} \|_{\ell^2 U^a} \les 2 \limsup_{m\to 0} \| u^{(m)}(0)\|_{L^2_x}.
	\end{equation}
We begin by observing that for any $t\g 0$, the normalised difference $u^{(m)}$ satisfies the equation
        \begin{align*}
            u^{(m)}(t) -  u^{(m)}(0) &= i  \int_0^t  \mc{U}_m^{\sigma_d}(-s)  \Big[ \gamma^0 F\Big( \big(\mc{U}_m^{\sigma_d}(-s)\big)^{-1} u^{(m)}(s) + \mc{R}^{\sigma_d}_m(s) \psi^{(0)}(s) \Big) - R^{\sigma_d}_m(s) \gamma^0 F\big( \psi^{(0)}(s) \big)\Big] ds \\
                &= i\int_0^t  \mc{U}^{\sigma_d}_m(-s) \gamma^0 \Big[ F\Big( \big(\mc{U}_m^{\sigma_d}(-s)\big)^{-1} u^{(m)}(s) + \mc{R}^{\sigma_d}_m(s) \psi^{(0)}(s) \Big) - F\big( \mc{R}^{\sigma_d}_m(s)\psi^{(0)}(s) \big)\Big] ds \\
                &\qquad  + i\int_0^t  \mc{U}_m^{\sigma_d}(-s) \Big[ \gamma^0 F \Big( \mc{R}^{\sigma_d}_m(s) \psi^{(0)}(s) \Big) - R^{\sigma_d}_m(s) \gamma^0 F\big( \psi^{(0)}(s) \big)\Big] ds.
        \end{align*}
Applying the uniform bound \ref{item:M1}, the identity \eqref{eqn:R mapping}, and the smallness bounds \eqref{eqn:thm limit:psi small} and \eqref{eqn:thm limit:u small} we see that
	\begin{align*}
		\Big\| 	\int_0^t  &\mc{U}^{\sigma_d}_m(-s) \gamma^0 \Big[ F\Big( \big(\mc{U}_m^{\sigma_d}(-s)\big)^{-1} u^{(m)}(s) + \mc{R}^{\sigma_d}_m(s) \psi^{(0)}(s) \Big) - F\big( \mc{R}^{\sigma_d}_m(s)\psi^{(0)}(s) \big)\Big] ds\Big\|_{\ell^2 U^a} \\
			&=\Big\| 	\int_0^t  \mc{U}_m(t-s) \gamma^0 \Big[ F\Big( \big(\mc{U}_m^{\sigma_d}(-s)\big)^{-1} u^{(m)}(s) + \mc{R}^{\sigma_d}_m(s) \psi^{(0)}(s) \Big) - F\big( \mc{R}^{\sigma_d}_m(s)\psi^{(0)}(s) \big)\Big] ds\Big\|_{S^{s_d, \sigma}_m} \\
			&\lesa \big\|  \big(\mc{U}_m^{\sigma_d}(-t)\big)^{-1} u^{(m)} \big\|_{S^{s_d, \sigma}_m}
			\Big( \big\|  \big(\mc{U}_m^{\sigma_d}(-t)\big)^{-1} u^{(m)} \big\|_{S^{s_d, \sigma}_m} + \big\| \mc{R}_m^{\sigma}(t) \psi^{(0)} \big\|_{S^{s_d, \sigma}_m}\Big)^2 \\
			&\lesa \delta^2 \| u^{(m)} \|_{\ell^2 U^a} \les \frac{1}{2} \| u^{(m)} \|_{\ell^2 U^a},
	\end{align*}
        by choosing $\delta>0$ smaller if necessary.
Consequently we obtain the bound
	$$ \| u^{(m)} \|_{\ell^2 U^a} \les 2 \| u^{(m)}(0)\|_{L^2_x} + 2 \Big\| \int_0^t  \mc{U}_m^{\sigma_d}(-s) \Big[ \gamma^0 F \Big( \mc{R}^{\sigma_d}_m(s) \psi^{(0)}(s) \Big) - R^{\sigma_d}_m(s) \gamma^0 F\big( \psi^{(0)}(s) \big)\Big] ds \Big\|_{\ell^2 U^a}. $$	
To conclude the proof, we now observe that since $\mc{R}_0^\sigma(t) = 1$ and $\mc{U}^{\sigma}_m(-t)\mc{R}^\sigma_m(t) = \mc{U}_0^{\sigma}(-t)$ we have for any $|m| \les 1$
    \begin{align*}
      \Big\| \int_0^t \ind_{[T, \infty)}(s) \mc{U}_m^{\sigma_d}(-s) \Big[ \gamma^0 &F \Big( \mc{R}^{\sigma_d}_m(s) \psi^{(0)}(s) \Big) - R^{\sigma_d}_m(s) \gamma^0 F\big( \psi^{(0)}(s) \big)\Big] ds \Big\|_{\ell^2 U^a}\\
      &\les 2 \sup_{|m|\les 1} \Big\| \int_0^t \ind_{[T, \infty)}(s) \mc{U}_m^{\sigma_d}(-s)  \gamma^0 F \Big( \mc{R}^{\sigma_d}_m(s) \psi^{(0)}(s) \Big) ds \Big\|_{\ell^2 U^a}
    \end{align*}
and hence applying \ref{item:M2} and \ref{item:M3} (and recalling the definition of $\mc{U}^\sigma_m$) we obtain
    \begin{align*}
     &\limsup_{m\to 0} \Big\| \int_0^t  \mc{U}_m^{\sigma_d}(-s) \Big[ \gamma^0 F \Big( \mc{R}^{\sigma_d}_m(s) \psi^{(0)}(s) \Big) - R^{\sigma_d}_m(s) \gamma^0 F\big( \psi^{(0)}(s) \big)\Big] ds \Big\|_{\ell^2 U^a}\\
     &\lesa \sup_{T>0} \limsup_{m\to 0} \Big\| \int_0^t \ind_{[0, T)}(s) \mc{U}_m(t-s) \Big[ \gamma^0 F \Big( \mc{R}^{\sigma_d}_m(s) \psi^{(0)}(s) \Big) - R^{\sigma_d}_m(s) \gamma^0 F\big( \psi^{(0)}(s) \big)\Big] ds \Big\|_{S^{s_d, \sigma}_m}\\
     &\qquad + \limsup_{T\to \infty} \sup_{|m|\les 1} \Big\| \int_0^t \ind_{[T, \infty)}(s) \mc{U}_m(t-s)  \gamma^0 F \Big( \mc{R}^{\sigma_d}_m(s) \psi^{(0)}(s) \Big) ds \Big\|_{S^{s_d, \sigma}_m}\\
     &= 0.
    \end{align*}
Therefore we see that \eqref{eqn:thm limit:goal} holds.
\end{proof}

It only now remains to give the proof of the three key properties \ref{item:M1}, \ref{item:M2}, and \ref{item:M3}.

\subsection{Proof of (M1)} Because of
$\| \psi\|_{S^{s_d, \sigma_d}_m}\lesa  \| \psi\|_{S^{s_d, \sigma}_m}$
this is simply an application of the trilinear estimate contained in Theorem \ref{thm:est-n} together with the cubic structure of the nonlinearity $F$. Note that the constant in \eqref{eq:est-n} is independent of $m\in \RR$.

\subsection{Proof of (M2)} The convergence property is an application of Lemma \ref{lem:comp-norms-gen} together with the convergence of the symbols of the free solution operators $\mc{U}_m$ as $m\to 0$ on compact time intervals.  To simplify the argument to follow, we begin with a lemma containing a precise statement of this convergence in the solution spaces $S^{s_d, \sigma}_0$. Note that the convergence of $\mc{U}_m$ to $\mc{U}_0$ is straight forward in $L^\infty_t L^2_x$, the main novelty of the following lemma is that this also holds in our $U^a$ based spaces.

\begin{lemma}\label{lem:comp-norms II}
Let $s_d = \frac{d-1}{2}$ and $ 0\les \sigma \les \sigma_d $. Then for any $T, R>0$, $I=[0, T)$, and $m \in \RR $ we have
    $$\big\| \ind_I(t) \big( \mc{R}_m^\sigma -1 \big) P_{\g R} \psi \big\|_{S^{s_d,\sigma}_0}  \lesa{} |m| \big( R^{-1} + T\big) (1 + T|m|) \| \psi \|_{S^{s_d,\sigma}_{0}}, $$
 and
 		$$ \big\| \ind_I(t) (\mc{R}^{2s_d - \sigma}_m-1) P_{\g R} \varphi \big\|_{S^{-s_d, -\sigma}_{w, 0}} \lesa{}  |m| \big( R^{-1} + T\big) (1 + T|m|)\Big( \frac{\lr{R}_m}{R}\Big)^{2(s_d-\sigma)} \|\varphi \|_{S^{-s_d,-\sigma}_{w, 0}}.
 $$
\end{lemma}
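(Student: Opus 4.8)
The plan is to reduce everything to a single operator-norm estimate on the transfer operator $\mc{R}^\sigma_m$ and its cousin $\mc{R}^{2s_d-\sigma}_m$, and then invoke Lemma~\ref{lem:comp-norms-gen} together with Remark~\ref{rem:Up operator bounds}. First I would record that, pulling back along the linear flow, the quantities to be estimated become purely $\ell^2 U^a$ (resp.\ $\ell^2 V^{a'}$) bounds: by the definition of $S^{s_d,\sigma}_0$ and \eqref{eqn:R mapping}, one has
$$ \big\| \ind_I(t)(\mc{R}^\sigma_m-1)P_{\g R}\psi \big\|_{S^{s_d,\sigma}_0} = \big\| \ind_I(t)\,\mc{U}_0(-t)(\mc{R}^\sigma_m(t)-1)\mc{U}_0(t)\, |\nabla|^\sigma\lr{\nabla}_0^{s_d-\sigma}\mc{U}_0(-t)P_{\g R}\psi \big\|_{\ell^2 U^a}, $$
so it suffices to bound the $U^a(\RR,\mc{L}(L^2))$-norm of the conjugated multiplier $A_m(t) := \ind_I(t)\,\mc{U}_0(-t)(\mc{R}^\sigma_m(t)-1)\mc{U}_0(t)$, which is again a Fourier multiplier since everything here is a multiplier.

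Second, I would compute the symbol of $A_m$ and estimate it via \eqref{eqn:rem Up operator bounds:Up}. From \eqref{eqn:R defn}, $\mc{R}^\sigma_m(t)$ has symbol $\big(|\xi|/\lr{\xi}_m\big)^{s_d-\sigma}\,\mathcal{U}_m(t)\mathcal{U}_0(-t)$ as a matrix-valued function; conjugating by $\mc{U}_0(\mp t)$ does not change operator norms of differences, and on the region $|\xi|\g R$ the symbol of $\mc{R}^\sigma_m(t)-1$ is a sum of a $t$-independent elliptic factor $\big(|\xi|/\lr{\xi}_m\big)^{s_d-\sigma}-1$, which is $O(m^2/|\xi|^2) = O(m^2R^{-2})$, and an oscillatory factor of the form $(e^{\mp it(\lr{\xi}_m-|\xi|)}-1)\Pi_\pm(\xi)$ plus lower order matrix corrections. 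Since $0\le \lr{\xi}_m-|\xi|\le m^2/(2|\xi|)$, we get the pointwise bound $|e^{\mp it(\lr{\xi}_m-|\xi|)}-1|\lesa \min\{1, |t|\,m^2/|\xi|\}\lesa |t|\,m^2 R^{-1}$ on $|\xi|\g R$; together with the $O(|m|/\lr{\xi}_m)$ error from replacing $\Pi_\pm$-symbols across masses (the null structure bound \eqref{eqn:Pi null struc}) one finds $\sup_{t\in I}\|a_m(t)\|_{L^\infty_\xi}\lesa |m|R^{-1}(1+T|m|)$ (being slightly generous with the elliptic term) and $\int_I\|\p_t a_m(t)\|_{L^\infty_\xi}\,dt\lesa T\cdot m^2 R^{-1}\cdot (1+T|m|)^{?}$; plugging into \eqref{eqn:rem Up operator bounds:Up} with $\tfrac1q$ close to $1$ yields $\|\ind_I A_m\|_{U^a(\RR,\mc{L}(L^2))}\lesa |m|(R^{-1}+T)(1+T|m|)$. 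Since $A_m$ commutes with all $P_\lambda$, the $\ell^2 U^a$ version of Lemma~\ref{lem:comp-norms-gen} then gives the first claimed bound.

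Third, the second bound is the same computation with two modifications. The relevant transfer operator is $\mc{R}^{2s_d-\sigma}_m$, whose elliptic factor is $\big(|\xi|/\lr{\xi}_m\big)^{s_d-(2s_d-\sigma)} = \big(|\xi|/\lr{\xi}_m\big)^{\sigma-s_d} = \big(\lr{\xi}_m/|\xi|\big)^{s_d-\sigma}$, which on $|\xi|\g R$ is bounded by $\big(\lr{R}_m/R\big)^{s_d-\sigma}$ — this is where the extra weight in the statement comes from (the statement's exponent $2(s_d-\sigma)$ is generous but harmless). The oscillatory part is unchanged. Working in $V^{a'}$ instead of $U^a$ one uses the $V^p$ half of Lemma~\ref{lem:comp-norms-gen} and \eqref{eqn:rem Up operator bounds:Vp}; the $\ell^2 V^{a'}$ bound follows by the same commutation-with-$P_\lambda$ argument. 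The main obstacle I anticipate is purely bookkeeping: one must be careful that all the matrix-valued factors ($\gamma^0$, $\Pi_\pm$, and the cross-mass differences $\Pi_\pm^{(m)}(\xi)-\Pi_\pm^{(0)}(\xi)$) are uniformly bounded and that their $t$-derivatives are controlled, and that the factor $(1+T|m|)$ genuinely absorbs the worst case where $T|m|$ is large (in which regime one simply uses $|e^{\mp it(\lr{\xi}_m-|\xi|)}-1|\le 2$ and the elliptic bound $|m|R^{-1}$, recovering the claim without the oscillation gain). No deep estimate is needed beyond Lemma~\ref{lem:comp-norms-gen}, Remark~\ref{rem:Up operator bounds}, and the elementary inequality $0\le\lr{\xi}_m-|\xi|\le m^2/(2|\xi|)$.
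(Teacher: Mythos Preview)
Your approach is correct and essentially the same as the paper's: reduce to an $\ell^2 U^a$ (resp.\ $\ell^2 V^{a'}$) operator-norm bound via Lemma~\ref{lem:comp-norms-gen}, then control the symbol using Remark~\ref{rem:Up operator bounds}. The paper carries this out by explicitly splitting $\mc{U}_0(-t)\mc{U}_m(t)$ into ``diagonal'' pieces $e^{\pm it(\lr{\nabla}_m-|\nabla|)}\Pi_\pm^{(0)}\Pi_\pm^{(m)}$ and ``off-diagonal'' pieces $e^{\pm it(\lr{\nabla}_m+|\nabla|)}\Pi_\mp^{(0)}\Pi_\pm^{(m)}$; your ``oscillatory factor plus lower order matrix corrections'' is the same decomposition in different words.

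One point in your bookkeeping needs care. For the off-diagonal piece the phase $\lr{\xi}_m+|\xi|$ carries \emph{no} smallness; the entire gain comes from $|\Pi_\mp^{(0)}(\xi)\Pi_\pm^{(m)}(\xi)|=|(\Pi_\mp^{(0)}(\xi)-\Pi_\mp^{(m)}(\xi))\Pi_\pm^{(m)}(\xi)|\lesa |m|/\lr{\xi}_m$ (this is an elementary symbol computation, not \eqref{eqn:Pi null struc}). Consequently $\|\p_t a_2(t,\cdot)\|_{L^\infty_\xi}\lesa (\lr{\xi}_m+|\xi|)\cdot|m|/\lr{\xi}_m\approx |m|$, so $\int_I\|\p_t a_2\|\lesa T|m|$, \emph{not} $Tm^2 R^{-1}$. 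This off-diagonal contribution is precisely the origin of the $T$ in $|m|(R^{-1}+T)$; your written bound on $\int_I\|\p_t a_m\|$ captures only the diagonal term.

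For the second inequality, the paper takes a slightly slicker route than redoing the computation: it writes
\[
\mc{R}_m^{2s_d-\sigma}-1=\Big(\frac{\lr{\nabla}_m}{|\nabla|}\Big)^{2(s_d-\sigma)}\big(\mc{R}_m^\sigma-1\big)+\Big(\frac{\lr{\nabla}_m}{|\nabla|}\Big)^{2(s_d-\sigma)}-1,
\]
and invokes the first bound for the first summand. This explains the exponent $2(s_d-\sigma)$; your observation that a direct argument would give the sharper exponent $s_d-\sigma$ is correct but, as you note, irrelevant for the application.
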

\begin{proof}
  We may as well assume that $m\g 0$, as the case $m< 0$ is identical.
  Unpacking the definitions, the first claim reduces to showing that
        $$ \Big\| \ind_I(t) \Big[ \Big( \frac{|\nabla|}{\lr{\nabla}_m}\Big)^{s'} \mc{U}_0(-t) \mc{U}_m(t) - 1 \Big] P_{\g R} \phi \Big\|_{\ell^2 U^a} \lesa m^2 \big( R^{-1} + T\big)^2 \| \phi \|_{\ell^2 U^a}, $$
where $s'=s_d-\sigma \geq \frac12$.
After using \eqref{eqn:lin dirac flow} to write $\mc{U}_0(-t) \mc{U}_m(t) = \sum_{\pm_1, \pm_2} e^{it(\pm_1 \lr{\nabla}_m - \pm_2 |\nabla|)} \Pi^{(0)}_{\pm_1} \Pi^{(m)}_{\pm_2}$, the required bound follows by bounding the $\ell^2 U^a$ operator norms of
    $$A_1\phi(t):= \ind_I(t) \Big[ \Big( \frac{|\nabla|}{\lr{\nabla}_m}\Big)^{s'} e^{it(\lr{\nabla}_m - |\nabla|)} -1 \Big] P_{\g R} \Pi_+^{(0)} \Pi_+^{(m)} \phi(t)$$
and
    $$A_2\phi(t):= \ind_I(t) \Big[ \Big( \frac{|\nabla|}{\lr{\nabla}_m} \Big)^{s'} e^{it(\lr{\nabla}_m + |\nabla|)} -1 \Big] P_{\g R} \Pi_-^{(0)} \Pi_+^{(m)} \phi(t)$$
as the remaining cases can be estimated in the same way. To bound the operator norm of $A_1: \ell^2 U^{a}\to \ell^2 U^{a}$ we invoke Lemma \ref{lem:comp-norms-gen}. Note that for any $\xi \in \RR^d$ with $|\xi|\g R$ we have
the bounds
    $$
    \Big|\Big( \frac{|\xi|}{\lr{\xi}_m}\Big)^{s'} - 1\Big|\lesa  \frac{m^2}{R^2}\qquad \text{and} \qquad |e^{it(\lr{\xi}_m - |\xi|)}-1|\lesa  \frac{ tm^2}{\lr{\xi}_m}.
    $$
Therefore, using Remark \ref{eqn:rem Up operator bounds:Up} (with $q=1$), we get
\begin{align*}
  \|A_1\|_{U^a(\R,\mathcal{L}(L^2))}\lesa{}& \sup_{t \in I} \Big\| \Big(\Big( \frac{|\xi|}{\lr{\xi}_m}\Big)^{s'} e^{it(\lr{\xi}_m - |\xi|)} -1  \Big)p_{\g R}(\xi)  \Big\|_{L^\infty_{\xi}}\\ & +
                                                                                                                                                                                                       \int_I \Big\| \partial_t \Big( \Big( \frac{|\xi|}{\lr{\xi}_m}\Big)^{s'} e^{it(\lr{\xi}_m - |\xi|)} -1  \Big)p_{\g R}(\xi)  \Big\|_{L^\infty_{\xi}} dt\\
  \lesa{}& m^2 \big( R^{-1} + T\big)^2.
\end{align*}
Similarly for $A_2$, using the bound
    $$
    |\Pi_+^{(0)}(\xi)\Pi_-^{(m)}(\xi)|=|(\Pi_+^{(0)}(\xi)-\Pi_+^{(m)}(\xi))\Pi_-^{(m)}(\xi)|\lesa \frac{|m|}{\lr{\xi}_m}
    $$
gives
\begin{align*}
\|A_2\|_{U^a(\R,\mathcal{L}(L^2))}\lesa{}& \sup_{t \in I} \Big\|\Big( \Big( \frac{|\xi|}{\lr{\xi}_m} \Big)^{s'} e^{it(\lr{\xi}_m + |\xi|)} -1 \Big) \Pi_-^{(0)}(\xi) \Pi_+^{(m)}(\xi) p_{\g R}(\xi) \|_{L^\infty_\xi} \\
  &+\int_I \Big\| \partial_t  \Big( \Big( \frac{|\xi|}{\lr{\xi}_m} \Big)^{s'} e^{it(\lr{\xi}_m + |\xi|)} -1 \Big) \Pi_-^{(0)}(\xi) \Pi_+^{(m)}(\xi) p_{\g R}(\xi) \Big\|_{L^\infty_{\xi}} dt \\
  \lesa{}& m(R^{-1} + T).
\end{align*}
Hence the first claimed bound follows.

For the second claimed bound, we write
    $$ \mc{R}^{2s_d - \sigma}_m - 1 = \Big( \frac{\lr{\nabla}_m}{|\nabla|}\Big)^{2(s_d - \sigma)} \big( \mc{R}^\sigma_m -1\big) + \Big( \frac{\lr{\nabla}_m}{|\nabla|}\Big)^{2(s_d - \sigma)} - 1. $$
After observing that Lemma \ref{lem:comp-norms-gen} gives
    $$ \Big\| \Big( \frac{\lr{\nabla}_m}{|\nabla|}\Big)^{2(s_d - \sigma)} P_{\g R} \phi \Big\|_{S^{-s_d, -\sigma}_{w, 0}} \lesa \Big( \frac{\lr{R}_m}{R}\Big)^{2(s_d - \sigma)}  \| P_{\g R} \phi \|_{S^{-s_d, -\sigma}_{w, 0}}$$
and
$$ \Big\| \Big(\Big( \frac{\lr{\nabla}_m}{|\nabla|}\Big)^{2(s_d - \sigma)} -1 \Big)P_{\g R} \phi \Big\|_{S^{-s_d, -\sigma}_{w, 0}} \lesa \frac{m^2}{R^2} \Big( \frac{\lr{R}_m}{R}\Big)^{2(s_d - \sigma)} \| P_{\g R} \phi \|_{S^{-s_d, -\sigma}_{w, 0}},$$
the required estimate follows by adapting the multiplier bounds for $A_1$ and $A_2$ above.
 \end{proof}

We now return the proof of \ref{item:M2}. Let $\psi \in S^{s_d, \sigma}_0$. An application of \ref{item:M1} together with the definition of $\mc{R}_m^\sigma$ shows that
    \begin{align*}
      &\Big\| \int_0^T \ind_{[0, T)}(s) \mc{U}_0(t-s) \Big[ \big(\mc{R}^\sigma_m(s)\big)^{-1} \gamma^0 F\big( \mc{R}^\sigma_m(s) \psi(s) \big) - \gamma^0 F\big( \psi(s) \big) \Big] ds \Big\|_{S_0^{s_d, \sigma}}\\
      &= \Big\| \int_0^T \ind_{[0, T)}(s) \mc{U}_m(t-s) \Big[ \gamma^0 F\big( \mc{R}^\sigma_m(s) \psi(s) \big) - \mc{R}^\sigma_m(s) \gamma^0 F\big( \psi(s) \big) \Big] ds \Big\|_{S_m^{s_d, \sigma}}\\
      &\les \Big\| \int_0^T \ind_{[0, T)}(s) \mc{U}_m(t-s) \gamma^0 F\big( \mc{R}^\sigma_m(s) \psi(s) \big)ds \Big\|_{S_m^{s_d, \sigma}}+ \Big\| \int_0^T \ind_{[0, T)}(s) \mc{U}_0(t-s) \mc{R}^\sigma_m(s) \gamma^0 F\big( \psi(s) \big) \Big] ds \Big\|_{S_0^{s_d, \sigma}}\\
      &\lesa \| \mc{R}^\sigma_m(t) \psi \|_{S^{s_d, \sigma}_m}^3 + \| \psi \|_{S^{s_d, \sigma}_0}^3 \lesa  \|\psi \|_{S^{s_d, \sigma}_0}^3.
    \end{align*}
In particular, in view of the dyadic definition of $\ell^2 U^a$ (and hence $S^{s_d, \sigma}_0$), it suffices to prove that for any $T, R>0$ and any $\psi \in S^{s_d, \sigma}_0$ with $ \supp \widehat{\psi} \subset \{ R^{-1} \les |\xi| \les R\}$ we have
       \begin{equation}\label{eqn:M2:goal}
             \lim_{m \to 0} \Big\| \int_0^T \ind_{[0, T)}(s) \mc{U}_0(t-s) P_{\g R^{-1}}  \Big[ \big(\mc{R}^\sigma_m(s)\big)^{-1} \gamma^0 F\big( \mc{R}^\sigma_m(s) \psi(s) \big) - \gamma^0 F\big( \psi(s) \big) \Big] ds \Big\|_{S_0^{s_d, \sigma}} = 0.
       \end{equation}
Applying Lemma \ref{lem:energy ineq} together with \ref{item:M1} we see that for any $\varphi \in S^{-s_d, -\sigma}_{w,0}$ we have
    \begin{align*}
      \Big| \int_0^T \int_{\RR^d}& \overline{\varphi} P_{\g R^{-1}} \Big[ \gamma^0 \big(\mc{R}^\sigma_m\big)^{-1} \gamma^0 F\big( \mc{R}^\sigma_m \psi \big) -  F\big( \psi \big) \Big] dx dt\Big| \\
            &\les \Big| \int_0^T \int_{\RR^d} \overline{ \mc{R}^{2s_d - \sigma}_m P_{\g R^{-1}}\varphi}  F\big( \mc{R}^\sigma_m \psi \big) -  \overline{P_{\g R^{-1}} \varphi} F\big( \psi \big)  dx dt\Big|\\
            &\lesa  \| \ind_I(t) (\mc{R}_m^{2s_d - \sigma} -1) P_{\g R^{-1}} \varphi\|_{S^{-s_d, -\sigma}_{w,0}} \Big( \| \ind_I(t) (\mc{R}_m^{\sigma} - 1) \psi\|_{S^{s_d, \sigma}_0} + \| \psi \|_{S^{s_d, \sigma}_0}\Big)^3 \\
            &\qquad \qquad + \|\varphi\|_{S^{-s_d, -\sigma}_{w,0}}  \|\ind_I(t) (\mc{R}_m^{\sigma} - 1) \psi\|_{S^{s_d, \sigma}_0} \Big( \| \ind_I(t)(\mc{R}_m^{\sigma} - 1) \psi\|_{S^{s_d, \sigma}_0} + \| \psi \|_{S^{s_d, \sigma}_0}\Big)^2
    \end{align*}
Consequently Lemma \ref{lem:energy ineq} and Lemma \ref{lem:comp-norms II} imply that
    \begin{align*}
        \Big\| \int_0^T \ind_{[0, T)}(s) \mc{U}_0(t-s)& P_{\g R^{-1}}  \Big[ \big(\mc{R}^\sigma_m(s)\big)^{-1} \gamma^0 F\big( \mc{R}^\sigma_m(s) \psi(s) \big) - \gamma^0 F\big( \psi(s) \big) \Big] ds \Big\|_{S_0^{s_d, \sigma}} \\
        &\lesa |m| \big( R^{-1} + T\big) (1 + T|m|)^4 \Big( \frac{\lr{R}_m}{R}\Big)^{2(s_d-\sigma)} \| \psi \|_{S^{s_d, \sigma}_0}^3.
    \end{align*}
Letting $m\to 0$ we conclude that \eqref{eqn:M2:goal} holds.

\subsection{Proof of (M3)} As in the proof of \ref{item:M2}, in view of the uniform bound in \ref{item:M1}, it suffices to prove \ref{item:M3} under the assumption that $\supp \widehat{\psi} \subset \{R^{-1} \les |\xi| \les R\}$ for some $R>1$. Moreover, since we now have
             \begin{equation}\label{eqn:M3:Ua bdd by S}
             \| \mc{U}_0( - t) \psi \|_{U^a} \lesa \big( \log(R) \big)^{\frac{1}{2}}  \| \mc{U}_0(-t) \psi \|_{\ell^2 U^a} \lesa \big( R^{d-1} \log(R) \big)^{\frac{1}{2}} \| \psi \|_{S^{s_d, \sigma}_0},
             \end{equation}
we reduce to simply proving that \ref{item:M3} holds when $\mc{U}_0(-t) \psi \in U^a$. Again applying \ref{item:M1} and noting that we also have
           \begin{equation}\label{eqn:M3:S bdd by Ua}
           \| \psi \|_{S^{s_d, \sigma}_0} \lesa \big( R^{d-1} \log(R) \big)^{\frac{1}{2}} \| \mc{U}_0(-t) \psi \|_{U^a}
           \end{equation}
we see that it is enough to consider the case where $\mc{U}_0(-t) \psi$ is a $U^a$ atom with $\supp \widehat{\psi} \subset \{ R^{-1} \les |\xi| \les R\}$. In fact, by the definition of a $U^a$ atom, we can write $\mc{U}_0(-t) \psi = \sum_{j=1}^N \ind_{[T_j, T_{j+1})}(t) f_j$ with $T_{N+1} = \infty$. In particular, for sufficiently large $T$, $\ind_{[T, \infty)} \psi =  \ind_{[T, \infty)} \mc{U}_0(t) f_{N+1}$ and hence to prove \ref{item:M3}, it suffices to take $\psi = \mc{U}_0(t) f$ with $f\in L^2$ satisfying $\supp \widehat{f} \subset \{R^{-1} \les |\xi| \les R\}$. Moreover, as \ref{item:M1} gives
        $$\Big\|\int_0^t \ind_{[T, \infty)}(s) \mc{U}_m(t-s)  \gamma^0 F\big( \mc{R}^\sigma_m(s) \psi(s) \big) ds \Big\|_{S^{s_d, \sigma}_m} \lesa \| \psi \|_{S^{s_d, \sigma}_0}^3 \lesa R^{s_d}\| f \|_{L^2},  $$
it is enough to consider $f\in L^2\cap L^1$. Rescaling the dispersive estimate for the Klein-Gordon equation \eqref{eqn:KG disp} we see that
            $$ \sup_{m\in [0, 1]} \| \mc{U}_m(t) P_\lambda f \|_{L^\infty_x} \lesa |t|^{-\frac{d-1}{2}} \lambda^{\frac{d+1}{2}} \| f\|_{L^1}.$$
Hence applying Theorem \ref{thm:rest+disp} and letting $g = (\frac{|\nabla|}{\lr{\nabla}_m})^{s_d} f$ we have for any $\frac{4}{d+1} < q< 2$
    \begin{align*}
      \sup_{m\in [0, 1]} \Big\|\int_0^t \ind_{[T, \infty)}(s) \mc{U}_m(t-s)  &\gamma^0 F\big( \mc{R}^\sigma_m(s) \psi(s) \big)ds \Big\|_{S^{s_d, \sigma}_m}\\
                    &\lesa R^{s_d} \sup_{m\in [0, 1]} \| \ind_{[T, \infty)}(t) \big( \overline{\mc{U}_m(t) g} \mc{U}_m(t) g \big) \gamma^0 \mc{U}_m(t) g \big\|_{L^1_t L^2_x} \\
                    &\lesa R^{s_d} \sup_{m\in [0, 1]} \| \ind_{[T, \infty)}(t) \big( \overline{\mc{U}_m(t) g} \mc{U}_m(t) g \big)\big\|_{L^q_t L^2_x}  \| \ind_{[T, \infty)}(t) \gamma^0 \mc{U}_m(t) g \|_{L^{q'}_t L^\infty_x} \\
                    &\lesa R^{ \frac{5d}{2} - \frac{1}{q} }  T^{\frac{3-d}{2} - \frac{1}{q}} \| f \|_{L^1}^3.
    \end{align*}
Hence letting $T\to \infty$ we see that \ref{item:M3} follows. Note that we only require the bilinear restriction theory when $d=2$, as then we are forced to take $q<2$.

\subsection{Proof of Corollary \ref{cor:mlimit}}\label{subsec:proof-cor-mlimit}
Since $\mc{U}_m(t)$ is isometric in $\dot{H}^{s_d}$,
 \begin{align*}
   \big\| \ind_I(t)(\psi^{(m)}  -  \psi^{(0)})\big\|_{L^\infty_t \dot{H}^{s_d}} ={}&\big\| \ind_I(t)( \mc{U}_m(-t )\psi^{(m)}  -   \mc{U}_m(-t)\psi^{(0)})\big\|_{L^\infty_t \dot{H}^{s_d}} \\
   \les{}& \big\| \mc{U}_m(-t ) \psi^{(m)}  - \mc{U}_0(-t ) \psi^{(0)}\big\|_{L^\infty_t \dot{H}^{s_d}}+\big\| \ind_I(t)( \mc{U}_0(-t )\psi^{(0)}  -   \mc{U}_m(-t)\psi^{(0)})\big\|_{L^\infty_t \dot{H}^{s_d}}.
 \end{align*}
Under the hypothesis of Theorem \ref{thm:limit} we have the estimate
 \begin{align*} &\big\||\nabla|^{s_d} ( \mc{U}_m(-t ) \psi^{(m)}  -  \mc{U}_0(-t) \psi^{(0)})\big\|_{\ell^2 U^a}\\  \lesa{}& |m|^{s_d-\sigma} \delta + \big\| \lr{\nabla}_m^{s_d-\sigma}|\nabla|^\sigma \mc{U}_m(-t) \psi^{(m)}  -  |\nabla|^{s_d} \mc{U}_0(-t) \psi^{(0)}\big\|_{\ell^2U^a} \to 0 \text{ for }m \to 0,
 \end{align*}
and that, as $\ell^2U^{a}\subset L^\infty( \R,L^2 (\RR^d))$,
$$\lim_{m\to 0} \big\| \mc{U}_m(-t ) \psi^{(m)}  - \mc{U}_0(-t ) \psi^{(0)}\big\|_{L^\infty_t \dot{H}^{s_d}}= 0,$$
which shows that the first term vanishes in the limit.

Concerning the second term, since $\mc{U}_0(t)$ is isometric in $\dot{H}^{s_d}$,
\begin{align*}
  \big\| \ind_I(t)( \mc{U}_0(-t )\psi^{(0)}  -   \mc{U}_m(-t)\psi^{(0)})\big\|_{L^\infty_t \dot{H}^{s_d}}=&\|\ind_I(t)|\nabla|^{s_d}\Big( \mc{U}_0(t)\mc{U}_m(-t)-1\Big)\psi^{(0)}\|_{L^\infty_t \dot{H}^{s_d}}\\
\lesa{}& \Big(\sum_{\lambda \in 2^\Z}\lambda^{2s_d}\|\ind_I(t)\Big(\mc{U}_0(t) \mc{U}_m(-t)-1\Big)\psi^{(0)}_\lambda\|_{L^\infty_t L^2_x}^2\Big)^{\frac12}<\infty
\end{align*}
because $\psi^{(0)}\in S^{s_d,\sigma}_0$. Therefore, it is enough to prove that for each $\lambda \in 2^\Z$ we have
\[
\lim_{m \to 0}\|\ind_I(t)\Big(\mc{U}_0(t) \mc{U}_m(-t)-1\Big)\psi^{(0)}_\lambda\|_{L^\infty_t L^2_x}=0.
\]
But this is true, because from the proof of Lemma \ref{lem:comp-norms II} we know that
\[
\sup_{t \in I}\big\|\sum_{\pm_1, \pm_2} (e^{-it(\pm_1 \lr{\xi}_m - \pm_2 |\xi|)} -1)\Pi^{(0)}_{\pm_1}(\xi) \Pi^{(m)}_{\pm_2}(\xi)p_\lambda(\xi)\big\|_{L^\infty_\xi}\lesa \frac{\lr{T} m}{\lambda}.
\]
This concludes the proof of Corollary \ref{cor:mlimit}.

\section{The non-relativistic limit}\label{sec:nonrelativistic-limit}
In this section we give the proof of Theorem \ref{thm:limit c}. Similar to the proof of the massless limit $m\to 0$ given in Section \ref{sec:massless-limit}, the proof of the nonlinear relativistic limit relies on three key properties of the flow: uniform control over the nonlinear interactions as $c\to \infty$, the local in time convergence of the linear flows, and a uniform temporal decay property. To state these properties more precisely, we require some additional notation. We begin by noting that due to the change in scaling in \eqref{eq:dirac c}, the spaces $S^{s, \sigma}_m$ used to control solutions to the Dirac equation \ref{eq:cd} need to be adjusted.  To this end, we introduce the Schr\"odinger solution operator
        $$ \mc{V}_\infty(t) = e^{ i \frac{t}{2} \gamma^0 \Delta}$$
and given any $1\les c \les \infty$, we define the norms
        $$ \| \psi \|_{X_c} = \big\| |\nabla|^{\frac{d-2}{2}} \lr{c^{-1} \nabla}^{\frac{1}{2}} \mc{V}_c(-t) \psi \big\|_{\ell^2 U^a}, \qquad \| \psi \|_{Y_c} = \big\| |\nabla|^{-\frac{d-2}{2}} \lr{c^{-1} \nabla}^{-\frac{1}{2}} \mc{V}_c(-t) \psi \big\|_{\ell^2 V^{a'}}$$
and take $X_c = |\nabla|^{-\frac{d-2}{2}} \lr{c^{-1} \nabla}^{-\frac{1}{2}} \mc{V}_c(t) \ell^2 U^a$ and $Y_c = |\nabla|^{\frac{d-2}{2}} \lr{c^{-1} \nabla}^{\frac{1}{2}} \mc{V}_c(t) \ell^2 V^{a'}$. As we shall see below, these norms are simply rescaled versions of the norms $S^{s_d, \sigma_d}_m$ and $S^{-s_d, -\sigma_d}_w$ used earlier. Define the operator
          \begin{equation}\label{defn:rel R}
                \tilde{\mc{R}}_c(t) = \lr{c^{-1}\nabla}^{-\frac{1}{2}}  \mc{V}_c(t) \mc{V}_\infty(-t).
          \end{equation}
The operator $\tilde{R}_c$ plays similar role as that played by $\mc{R}_m^{\sigma}$ in the massless limit, namely it gives a way to measure the difference in the linear flows. For later use, we note that we again have the key mapping property
			\begin{equation}\label{eqn:rel R:mapping}
				\| \tilde{\mc{R}}_c(t) \psi \|_{X_c} = \| \psi \|_{X_\infty}			\end{equation}
We can now state the precise properties we require.\\

\begin{enumerate}[label= \textbf{(N\arabic*)}]
  \item \label{item:N1} (\emph{Uniform boundedness})
  There exists $\mb{C}\g 0$ such that for any $1\les c \les \infty$ and any $\psi, \varphi \in X_c$ we have
                $$ \Big\| \ind_{[0, \infty)}(t) \int_0^t \mc{V}_c(t-s) \gamma^0 \Big( F(\psi + \varphi) - F(\varphi) \Big) ds \Big\|_{X_c} \les \mb{C} \| \psi \|_{X_c} \big(\| \varphi \|_{X_c} + \| \psi \|_{X_c}\big)^2.$$\\

  \item \label{item:N2} (\emph{Convergence})
  For every $T>0$ and $\psi \in X_\infty$ we have
                $$ \lim_{c \to \infty} \Big\| \int_0^T \ind_{[0, T)}(s) \mc{V}_c(t-s) \Big[ \gamma^0 F\big( \tilde{\mc{R}}_c(s) \psi(s) \big) - \tilde{\mc{R}}_c(s) \gamma^0 F_1\big( \psi(s) \big) \Big] ds \Big\|_{X_c} = 0. $$ \\

  \item  \label{item:N3} (\emph{Uniform decay})
  For any $\psi \in X_\infty$ we have
                $$ \lim_{T\to \infty} \sup_{ 1\les c \les \infty} \Big\| \int_0^t \ind_{[T, \infty)}(s) \mc{V}_c(t-s) \gamma^0 F\big( \tilde{\mc{R}}_c(s) \psi(s) \big) ds \Big\|_{X_c} = 0. $$  \\
\end{enumerate}

We leave the proof of \ref{item:N1}, \ref{item:N2}, and \ref{item:N3} till later in this section, and turn to the proof of Theorem \ref{thm:limit c}.

\begin{proof}[Proof of Theorem \ref{thm:limit c}] The proof follows by repeating the proof of Theorem \ref{thm:limit} (note that the properties \ref{item:N1}, \ref{item:N2}, \ref{item:N3} exactly match the corresponding properties \ref{item:M1}, \ref{item:M2}, \ref{item:M3} used in the proof of Theorem \ref{thm:limit}). We only prove convergence on the forward in time interval $[0, \infty)$, as the case of negative times then follows by symmetry. Throughout the following argument, all implicit constants are independent of $1\les c \les \infty$.  A standard iteration argument (see for instance Section \ref{sec:proof-gwp}) together with the uniform nonlinear bound \ref{item:N1} shows that there exists $\delta>0$ such that for any $1\les c \les \infty$ and any data $f^{(c)}$ satisfying
                    $$ \| \lr{c^{-1} \nabla}^{\frac{1}{2}} f^{(c)} \|_{\dot{H}^{\frac{d-2}{2}}_x} \les \delta$$
there exists a (unique) solution $\psi^{(c)} \in X_c$ to
                    $$ \psi^{(c)}(t) = \ind_{[0, \infty)}(t) \Big( \mc{V}_c(t) f^{(c)} + i \int_0^t \mc{V}_c(t-s) \gamma^0 F\big( \psi^{(c)}(s) \big) ds \Big)$$
(with $F$ replaced with $F_1$ when $c=\infty$) satisfying the smallness bound
    \begin{equation}\label{eqn:thm limit c:psi small}
        \| \psi^{(c)}\|_{X_c} \lesa \delta.
    \end{equation}
Let
			$$ v^{(c)}(t) =  |\nabla|^{\frac{d-2}{2}} \lr{c^{-1} \nabla}^\frac{1}{2} \mc{V}_c(-t) \psi^{(c)}(t) - |\nabla|^{\frac{d-2}{2}} \mc{V}_\infty(-t) \psi^{(\infty)}(t). $$
Note that the smallness condition \eqref{eqn:thm limit c:psi small} implies that
	\begin{equation}\label{eqn:thm limit c:v small}
        \| v^{(c)} \|_{\ell^2 U^a} \les \| \psi^{(c)} \|_{X_c} + \| \psi^{(\infty)} \|_{X_\infty} \lesa \delta.
	\end{equation}
Our goal is to prove that
	\begin{equation}\label{eqn:thm limit c:goal}
				 \limsup_{c\to \infty} \| v^{(c)} \|_{\ell^2 U^a} \lesa \limsup_{c \to \infty} \| u^{(c)}(0)\|_{L^2_x}.
	\end{equation}
But, by repeating the argument in the proof of Theorem \ref{thm:limit}, this follows from the properties \ref{item:N1}, \ref{item:N2}, \ref{item:N3} together with the smallness assumptions \eqref{eqn:thm limit c:psi small} and \eqref{eqn:thm limit c:v small}.
\end{proof}

It only remains to give the proof of the properties \ref{item:N1}, \ref{item:N2}, and \ref{item:N3}.

\subsection{Proof of \ref{item:N1}}\label{subsec:proofn1} If $1\les c < \infty$, this follows directly from \ref{item:M1} via rescaling after noting that for any $f\in L^2$ and $\psi \in X_c$ we have
            $$ \mc{V}_c(t) f = \big( \mc{U}_{c^2}(t) \Lambda_c f\big)( c^{-1} x), \qquad \| \psi \|_{X_c} = \| \Lambda_c \psi \|_{S^{s_d, \sigma_d}_{c^2}}$$
where $(\Lambda_c f)(x) = f(cx)$ denotes the spatial dilation by $1\les c < \infty$. On the other hand, if $c=\infty$,
we first note that for any $N_d \times N_d$ matrices $\mb{A}$ and $\mb{B}$ we have the trilinear bound
$$ \| |\nabla|^{\sigma_d}\Big((\overline{\psi}_1 \mb{A} \psi_2) \mb{B} \psi_3\Big) \|_{L^1_t L^2_x} \lesa |\mb{A}| |\mb{B}| \prod_{j=1}^3 \| |\nabla|^{\sigma_d} \psi_j \|_{L^3_t L^{\frac{6d}{3d-4}}_x}. $$
Indeed, if $d=2$ this is simply H\"older's inequality. If $d=3$ this fractional Leibniz rule follows from a double application of \cite[Theorem 1]{Grafakos2014} combined with the Sobolev embedding $\| \psi_j \|_{L^3_t L^{9}_x}\lesa \| |\nabla|^{\frac12} \psi_j \|_{L^3_t L^{\frac{18}{5}}_x}$.
Therefore, the energy estimate
	$$\Big\| \ind_{[0, \infty)}(t) \int_0^t \mc{V}_\infty(t-s) G(s) ds \Big\|_{X_\infty} \lesa \| G \|_{L^1_t \dot{H}^{\frac{d-2}{2}}_x}$$
together with the Strichartz estimate
        $$ \| |\nabla|^{\sigma_d} \psi \|_{L^3_t L^{\frac{6d}{3d-4}}_x} \lesa  \||\nabla|^{\sigma_d} \mc{V}_\infty(-t)  \psi\|_{\ell^2 U^3} \lesa  \| \psi \|_{X_\infty} $$
implies that \ref{item:N1} also holds for $c=\infty$. Moreover, as the above trilinear bound does not require any null structure when $c=\infty$, we in particular have for any $k\in \{-3, -1, 1, 3\}$
			\begin{equation}\label{eqn:N1:Fk bound}
            \begin{split}
				\Big\| \int_0^t \mc{V}_\infty(t-s) \gamma^0 &\big( F_k(\varphi + \psi) - F_k(\psi) \big) ds \Big\|_{L^1_t \dot{H}^{\sigma_d}_x}\\
				&\lesa \| |\nabla|^{\sigma_d} \mc{V}_\infty(-t) \varphi \|_{\ell^2 U^3} \Big( \| |\nabla|^{\sigma_d} \mc{V}_\infty(-t)\psi\|_{\ell^2 U^3} + \||\nabla|^{\sigma_d} \mc{V}_\infty(-t)\varphi\|_{\ell^2 U^3}\Big)^2.
            \end{split}
			\end{equation}
			
\subsection{Proof of \ref{item:N2}}\label{subsec:proofn2} The first step is to observe that, unlike in the case of the massless limit, the operator $\tilde{\mc{R}}_c$ does not converge to 1 as $c\to \infty$. This is not surprising as $\tilde{\mc{R}}_c$ considers the difference in the free solution operators to the Dirac equation \eqref{eq:dirac c} and the Schr\"odinger equation \eqref{eqn:NLS}, and we only expect solutions to the Dirac equation to converge after multiplying by $e^{itc^2 \gamma^0}$. In particular, if we want convergence as $c\to \infty$, we instead need to consider the modified operator
$$ \tilde{\mc{R}}_{c}^{mod}(t) := e^{itc^2 \gamma^0} \tilde{\mc{R}}_c(t) = \lr{c^{-1} \nabla}^{-\frac{1}{2}} e^{itc^2 \gamma^0} \mc{V}_c(t) \mc{V}_\infty(-t). $$
The corrected operator $\tilde{\mc{R}}_c^{mod}$ now converges to 1, as least on compact time intervals and  bounded frequencies. More precisely, we have the following counterpart to Lemma \ref{lem:comp-norms II}.

\begin{lemma}\label{lem:comp-norms c}
Let $1\les b <\infty$. For all $T>0$, $I=[0, T)$, $R>1$, and $c\g 1$,
we have
        $$\big\| \ind_I(t)  \mc{V}_\infty(-t) \big( \tilde{\mc{R}}_c^{mod} -1 \big) P_{\les R}\psi \big\|_{\ell^2 U^b}  \lesa{}   \lr{R^2T}R^2 c^{\frac{2}{b}-1}  \|  \mc{V}_\infty(-t) \psi \|_{\ell^2 U^b},$$
 and
 		$$ \big\| \ind_I(t)   \big( \lr{c^{-1} \nabla} \tilde{\mc{R}}_c^{mod} -1 \big) P_{\les R} \varphi \big\|_{L^\infty_t L^2_x} \lesa{}    \lr{R^2T}R^3 c^{-1}  \| \varphi \|_{L^\infty_t L^2_x}.
 $$
\end{lemma}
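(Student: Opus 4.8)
\textbf{Proof proposal for Lemma \ref{lem:comp-norms c}.} The plan is to mirror the strategy used for Lemma \ref{lem:comp-norms II}: unpack the definitions, reduce to estimating the $\ell^2 U^b$ operator norm of a Fourier multiplier via Lemma \ref{lem:comp-norms-gen}, and then produce the required symbol bounds using Remark \ref{rem:Up operator bounds} (specifically \eqref{eqn:rem Up operator bounds:Up} with $q=1$). First I would write out the symbol of $e^{itc^2\gamma^0}\mc{V}_c(t)\mc{V}_\infty(-t)$ using the splitting $e^{itc^2\gamma^0}\mc{V}_c(t) = E_+ e^{-it(c\mc{H}_c - c^2)} + E_- e^{-it(c\mc{H}_c+c^2)}$ and the fact that $(c\mc{H}_c)^2 = -c^2\Delta + c^4$, so that on the $\Pi_\pm$ eigenspaces the phase $c\mc{H}_c \mp c^2$ acts as $\pm(\sqrt{c^4 - c^2\Delta} - c^2)$. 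Since $\mc{V}_\infty(t) = E_+ e^{i\frac t2\Delta} + E_- e^{-i\frac t2\Delta}$, after composing we are left with multipliers of the form $E_{\pm_1}\Pi_{\pm_2}(\xi)$ (from the Dirac representation, $E_\pm = \Pi_\pm(\xi)$ only up to an $O(c^{-1}|\xi|)$ error in the relevant regime) times phase factors $e^{it(\pm_1(\sqrt{c^4 + c^2|\xi|^2} - c^2) - \pm_2\frac12|\xi|^2)}$, plus the overall weight $\lr{c^{-1}\nabla}^{-1/2}$.

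The key elementary estimates, valid for $|\xi|\les R$, $c\g 1$, are: (i) $\big|\lr{c^{-1}\xi}^{-1/2} - 1\big| \lesa c^{-2}R^2$; (ii) the ``diagonal'' phase discrepancy $\big|\sqrt{c^4 + c^2|\xi|^2} - c^2 - \tfrac12|\xi|^2\big| \lesa c^{-2}R^4$, which comes from a Taylor expansion of the square root, so that $\big|e^{it(\sqrt{c^4+c^2|\xi|^2}-c^2) - i\frac t2|\xi|^2} - 1\big| \lesa c^{-2}R^4 T$ on $I$; (iii) the ``off-diagonal'' term carries the matrix factor $E_+\Pi_-(\xi)$ or $E_-\Pi_+(\xi)$, which is $O(c^{-1}R)$ by the same computation as in Lemma \ref{lem:comp-norms II} (the difference $E_\pm - \Pi_\pm(\xi)$ is $O(c^{-1}|\xi|)$ and $\Pi_+\Pi_- = 0$); additionally one must account for the time-derivative of these symbols, which for (ii) contributes a factor $\lesa c^{-2}R^4$ in the $L^1_t(I)$-integral, hence $\lesa c^{-2}R^4 T$. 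Collecting the worst of these and invoking \eqref{eqn:rem Up operator bounds:Up} with $q=1$ gives an operator norm bound of the form $\lesa \lr{R^2 T}\, R^2\, c^{2/b - 1}$ (the $c^{2/b-1}$ rather than $c^{-1}$ arising because the $U^b$-to-$U^b$ transfer via $V^1\subset U^b$ costs a power of the modulation, here absorbed into the crude $R$-powers; I would track the exponents carefully but expect the stated bound to hold with room to spare). For the second inequality one argues identically but with the weight $\lr{c^{-1}\nabla}$ instead of $\lr{c^{-1}\nabla}^{-1/2}$, which contributes an extra factor $\lr{c^{-1}R}\lesa 1$ and costs one more power of $R$, and one works directly in $L^\infty_t L^2_x$ where only the sup of the symbol (not its variation) matters, giving $\lesa \lr{R^2 T} R^3 c^{-1}$; the absence of a $U^b$ transfer is why here the clean power $c^{-1}$ survives.

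The main obstacle, as in Lemma \ref{lem:comp-norms II}, is purely bookkeeping: getting the $\ell^2$ (Besov) square-sum over dyadic frequencies $\lambda\les R$ to go through, which requires that all the multipliers above commute with the Littlewood--Paley projections $P_\lambda$ — they do, being themselves Fourier multipliers — so that the $\ell^2$-version of Lemma \ref{lem:comp-norms-gen} applies and the per-frequency bound upgrades to the $\ell^2 U^b$ bound with only the logarithmic loss already absorbed into the crude polynomial-in-$R$ factors. One should also note $\mc{V}_\infty(-t)\big(\tilde{\mc{R}}^{mod}_c - 1\big)\psi$ is really a multiplier applied to $\mc{V}_\infty(-t)\psi$ (since the multiplier commutes with $\mc{V}_\infty$), which is what makes the right-hand side $\|\mc{V}_\infty(-t)\psi\|_{\ell^2 U^b}$ appear; this is exactly the structure exploited in the proof of \ref{item:N2}. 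I do not foresee any genuinely new difficulty beyond carefully carrying the $R$- and $c$-exponents through the square-root Taylor expansion.
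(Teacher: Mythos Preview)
Your overall strategy matches the paper's exactly: reduce to bounding a Fourier multiplier operator on $\ell^2 U^b$ via Lemma \ref{lem:comp-norms-gen} and Remark \ref{rem:Up operator bounds}, split into diagonal and off-diagonal pieces according to the projectors $E_{\pm}$ and $\tilde{\Pi}_{c,\pm}$, and use the Taylor bounds you list in (i)--(iii). The diagonal analysis is fine.

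There is, however, a genuine gap in your treatment of the off-diagonal term, and it is precisely the place where the exponent $c^{2/b-1}$ comes from. You propose to apply \eqref{eqn:rem Up operator bounds:Up} with $q=1$, i.e.\ to control the $U^b$ operator norm by $\sup_{t\in I}\|a\|_{L^\infty_\xi} + \int_I \|\partial_t a\|_{L^\infty_\xi}\,dt$. For the off-diagonal symbol (schematically $a_2(t,\xi) = (\lr{c^{-1}\xi}^{-1/2} e^{it(c\lr{\xi}_c + c^2 + |\xi|^2/2)} - 1) E_+\tilde{\Pi}_{c,+}(\xi)$), the phase does \emph{not} cancel: it is $\approx 2c^2$, so $\partial_t a_2$ carries a factor $\sim c^2$. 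Combined with the $O(c^{-1}R)$ from the projector mismatch, one gets $\int_I \|\partial_t a_2\|_{L^\infty_\xi}\,dt \approx T R^2 c$, which \emph{grows} in $c$. Thus with $q=1$ the bound blows up; your remark that the extra $c$-powers are ``absorbed into the crude $R$-powers'' is not correct.

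What the paper does instead is take $q$ close to $b$ in \eqref{eqn:rem Up operator bounds:Up}, i.e.\ use the interpolated form
\[
\|\ind_I A_2\|_{U^b(\RR,\mc{L}(L^2))} \lesa \sup_{t\in I}\|a_2\|_{L^\infty_\xi} + \Big(\sup_{t\in I}\|a_2\|_{L^\infty_\xi}\Big)^{1-\frac{1}{b}} \Big(\int_I \|\partial_t a_2\|_{L^\infty_\xi}\,dt\Big)^{\frac{1}{b}}.
\]
Plugging in $\sup\|a_2\| \lesa R^2 c^{-1}$ and $\int_I\|\partial_t a_2\| \lesa T R^2 c$ yields $R^2 c^{-1} + (R^2 c^{-1})^{1-1/b}(TR^2 c)^{1/b} \lesa R^2(1+T^{1/b}) c^{2/b-1}$, which is exactly the stated decay. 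So the mechanism behind the $c^{2/b-1}$ is the balance between the small amplitude ($c^{-1}$) and the fast oscillation ($c^2$) of the off-diagonal term, mediated by the interpolation exponent $1/b$; you should compute $\partial_t a_2$ explicitly and use this interpolated form rather than $q=1$.
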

\begin{proof}
  We define the adjusted projectors via their symbols
  \[
\tilde{\Pi}_{c,\pm}(\xi)=\frac12 \Big(I\mp \frac{1}{\lr{\xi}_c}\gamma^0(\gamma^j \xi_j+c)\Big).
  \]
 Note that $E_{\pm} = \lim_{c \to \infty}\tilde{\Pi}_{c,\mp}(\xi)$ (with opposite signs due to our notational convention).
  Then,
  \[
\mc{V}_c(t)=e^{i c t\lr{\nabla}_c}\tilde{\Pi}_{c,+}+e^{-i c t\lr{\nabla}_c}\tilde{\Pi}_{c,-}.
\]
Setting $\phi(t)= \mc{V}_\infty(-t)\psi(t)$, the first claimed estimate boils down to proving
\[
\|\ind_I(t)\big( \lr{c^{-1}\nabla}^{-\frac12}e^{-it\Delta/2\gamma^0} e^{itc^2\gamma^0} \mc{V}_c(t)-1\big)\phi \|_{\ell^2 U^{b}}\lesa\| \phi \|_{\ell^2 U^{b}}
\]
As in the proof of Lemma \ref{lem:comp-norms II} we want to prove bounds for the $\ell^2 U^{b}$ operator norms of $\ind_I(t)A_\pm$, where
\[
A_\pm:=\big( \lr{c^{-1}\nabla}^{-\frac12}e^{-i\frac{t}{2} \Delta \gamma^0} e^{itc^2\gamma^0}e^{\pm it c\lr{\nabla}_c } -1\big)\tilde{\Pi}_{c,\pm}P_{\les R}
\]
by invoking Lemma \ref{lem:comp-norms-gen}.
By symmetry it is enough to consider the case $\pm=+$ and we decompose $A_+=A_1+A_2$ with symbols
\begin{align*}
  a_1(t,\xi):={}&\big( \lr{c^{-1}\xi}^{-\frac12}e^{-i\frac{t}{2}|\xi|^2} e^{-itc^2}e^{ it c\lr{\xi}_c }-1\big)E_-\tilde{\Pi}_{c,+}(\xi)p_{\les R}(\xi),\\
  a_2(t,\xi):={}&\big( \lr{c^{-1}\xi}^{-\frac12}e^{i\frac{t}{2}|\xi|^2} e^{itc^2}e^{ it c\lr{\xi}_c } -1\big)E_+\tilde{\Pi}_{c,+}(\xi)p_{\les R}(\xi).
\end{align*}
Lemma \ref{lem:comp-norms-gen} and Remark \ref{rem:Up operator bounds} imply
\[
  \|A_j\|_{U^{b}(\R,\mc{L}(L^2))}\lesa \sup_{t \in I}\|a_j(t,\xi)\|_{L^\infty_\xi} +\sup_{t \in I}\|a_j(t,\xi)\|_{L^\infty_\xi}^{1-\frac1b}\Big(\int_I \big\|\partial_t a_j(t,\xi)\big\|_{L^\infty_\xi} dt\Big)^{\frac1b}, \; j=1,2.\]
We start with the bound for $A_1$:
By Taylor expansion we have
$
|\frac{1}{\sqrt{1+x^2}}-1|\les x^2\text{ and } |\sqrt{1+x^2}-1-x^2/2|\les x^4/2
$,
therefore
\[
|\lr{c^{-1}\xi}^{-\frac12}-1|\les \frac{|\xi|^2}{c^2} \text{ and } |c\lr{\xi}_c-c^2-|\xi|^2/2|=c^2\Big|\sqrt{1+\frac{|\xi|^2}{c^2}}-1-\frac{|\xi|^2}{2c^2}\Big|\les \frac{|\xi|^4}{2c^2}.
\]
Further, this implies
\[|\lr{c^{-1}\xi}^{-\frac12}e^{it(c \lr{\xi}_c -c^2-|\xi|^2/2)}-1|\les  \frac{|\xi|^4}{2c^2} |t| +\frac{|\xi|^2}{2c^2}\]
We obtain
\[
\sup_{t \in I}\|a_1(t,\xi)\|_{L^\infty_\xi} \lesa (R^2T+1)\frac{R^2}{c^2}\; \text{ and }\;
\int_I \big\|\partial_t a_1(t,\xi)\big\|_{L^\infty_\xi} dt\lesa \frac{R^4T}{c^2},
\]
therefore
\[
  \|A_1\|_{U^{b}(\R,\mc{L}(L^2))}
  \lesa{} (R^2T+1)\frac{R^2}{c^2}.
\]

Concerning $A_2$ we proceed similarly, but now we have to exploit the asymptotic orthogonality of $E_+$ and $\tilde{\Pi}_{c,+}(\xi)$ since there is no cancellation in the phase. More precisely,
\[
|E_+\tilde{\Pi}_{c,+}(\xi)|=\big|\frac12 E_+ (1-\frac{c}{\lr{\xi}_c})-\frac12 E_+\gamma^j \xi_j \frac{1}{\lr{\xi}_c}\big|\les \frac{|\xi|^2}{2c^2}+\frac{1}{2c},
\]
which implies
\[
  \sup_{t \in I}\|a_2(t,\xi)\|_{L^\infty_\xi} \lesa \frac{R^2}{c} \; \text{ and }\;
  \int_I \big\|\partial_t a_2(t,\xi)\big\|_{L^\infty_\xi} dt\lesa T R^2c,\]
and
\[
 \|A_2\|_{U^{b}(\R,\mc{L}(L^2))}
  \lesa{} R^2(T^{\frac1b}+1)c^{\frac2b-1},
\]
which completes the proof of the first claim.

The second claimed bound in $L^\infty_t L^2_x$ is easier to prove (and corresponds in some sense to the case $b=\infty$ above, up to the operator $\lr{c^{-1} \nabla}$). From the $L^\infty(I,L^\infty_{\xi})$-bounds for $a_1$ and $a_2$ above we conclude that
\begin{align*}
  \|(\lr{c^{-1} \nabla} \tilde{\mc{R}}_c^{mod} -1 )P_{\les R}\varphi\|_{L^\infty(I,L^2_x)}\les{}& \| \lr{c^{-1}\nabla} ( \tilde{\mc{R}}^{mod}_c - 1)P_{\les R}\varphi\|_{L^\infty(I,L^2_x)} +\| ( \lr{c^{-1}\nabla} - 1)P_{\les R}\varphi\|_{L^\infty(I,L^2_x)}\\
  \lesa{}& R^3\frac{R^2T+1}{c^2}+\frac{R^3}{c} +\frac{R^2}{2c^2},
\end{align*}
where we also used the fact that $|\lr{c^{-1}\xi} - 1|\les |\xi |^2/c^2$.
\end{proof}

We now turn to the proof of \ref{item:N2}. Fix $T>0$. As in the proof of \ref{item:M2}, in view of the uniform bound \ref{item:N1}, it suffices to prove \ref{item:N2} under the assumption $\supp \widehat{\psi} \subset \{ R^{-1} \les |\xi| \les R\}$ for some $R> 0$. Moreover, as we now have
           \begin{equation}\label{eqn:N2:X vs Ua}
                (R^{d-2} \log R)^{-\frac{1}{2}} \|  \psi \|_{X_\infty} \lesa  \| \mc{V}_\infty(t) \psi \|_{U^a} \lesa (R^{d-2} \log R)^\frac{1}{2} \|  \psi \|_{X_\infty}
           \end{equation}
(compare to \eqref{eqn:M3:Ua bdd by S} and \eqref{eqn:M3:S bdd by Ua}) and step functions are dense in $U^a$, again using the uniform bound \ref{item:N1} we see that it is enough to prove that for any $R>0$ and any $\psi \in \mc{V}_\infty(t) \mathfrak{G}$ with $\supp \widehat{\psi} \subset \{R^{-1}\les  |\xi| \les R\}$ we have
        $$
         \lim_{c \to \infty} \Big\| \int_0^T \ind_{[0, T)}(s) \mc{V}_c(t-s) P_{\les R} \Big[ \gamma^0 F\big( \tilde{\mc{R}}_c(s) \psi(s) \big) - \tilde{\mc{R}}_c(s) \gamma^0 F_1\big( \psi(s) \big) \Big] ds \Big\|_{X_c} = 0
        $$
(recall that the collection of step functions $\mathfrak{G}$ is defined in the beginning of Subsection \ref{subsec:fs}). The next step is to replace the operators $\tilde{\mc{R}}_c$ with the corrected versions $\tilde{\mc{R}}_c^{mod}$. To this end, we note that
		\begin{align*}
		\mc{V}_c(t-s)	\Big[ \gamma^0 F\big( &\tilde{\mc{R}}_c(s) \psi(s) \big) - \tilde{\mc{R}}_c(s) \gamma^0 F\big( \psi(s) \big)\Big]		\\	
		&= \mc{R}_c(t) \mc{V}_\infty(t-s) \Big[ \big( \tilde{\mc{R}}_c^{mod}(s)\big)^{-1} \gamma^0 e^{-isc^2 \gamma^0}
		F\big( e^{isc^2\gamma^0} \tilde{\mc{R}}_c^{mod}(s) \psi \big) - \gamma^0 F(\psi)\Big]
		\end{align*}
and hence in view of the decomposition \eqref{eqn:F res decomp} it suffices to prove for $k\in \{-3, -1, 1, 3\}$ we have
		\begin{equation}\label{eqn:N2:goal I}
		\lim_{c\to \infty} \Big\| \int_0^t \ind_{[0, T)}(s) \mc{V}_\infty(t-s) \gamma^0 P_{\les R} \Big[ \big( \tilde{\mc{R}}_c^{mod}(s)\big)^{-1} F_{k,c}( \tilde{\mc{R}}_c^{mod}(s) \psi ) - F_{k,c}(\psi) \Big] ds \Big\|_{X_\infty} = 0
		\end{equation}
and that for the non-resonant contributions $k\in \{-3, -1, 3\}$ we have
		\begin{equation}\label{eqn:N2:goal II}
		\lim_{c\to \infty} \Big\| \int_0^t \ind_{[0, T)}(s) \mc{V}_\infty(t-s) \gamma^0 P_{\les R} F_{k,c}(\psi)  ds \Big\|_{X_\infty} = 0
		\end{equation}
where for ease of notation we take $F_{k, c} = e^{i(1-k)sc^2 \gamma^0} F_k $. Since $|e^{i(1-k) sc^2 \gamma^0}| \les 1$, An application of \eqref{eqn:N1:Fk bound} implies that for any $\varphi \in Y_\infty$ we have
		\begin{align*}
			&\Big| \int_0^T \int_{\RR^d} \overline{\varphi} P_{\les R} \Big[ \big( \tilde{\mc{R}}_c^{mod}\big)^{-1} F_{k,c}( \tilde{\mc{R}}_c^{mod}\psi ) - F_{k,c}(\psi) \Big] dt dx \Big| \\
				={}& 	\Big| \int_0^T \int_{\RR^d} \overline{ \lr{c^{-1} \nabla} \tilde{\mc{R}}_c^{mod} P_{\les R}\varphi } F_{k,c}( \tilde{\mc{R}}_c^{mod}(t) \psi ) - \overline{P_{\les R}\varphi} F_{k,c}(\psi)  dt dx \Big| \\
				\lesa{}& \big\| \ind_{[0, T)} ( \lr{c^{-1} \nabla} \tilde{\mc{R}}_c^{mod} -1) P_{\les R}\varphi \big\|_{L^\infty_t \dot{H}^{-\sigma_d}_x} \| \ind_{[0, T)}   F_{k}( \tilde{\mc{R}}_c^{mod} \psi)\|_{L^1_t \dot{H}^{\sigma_d}_x} \\\
				&\qquad  + \| \varphi \|_{L^\infty_t \dot{H}^{-\sigma_d}_x} \big\| \ind_{[0, T)} \big(F_k(\tilde{\mc{R}}_c^{mod} \psi) - F_k(\psi) \big) \big\|_{L^1_t \dot{H}^{\sigma}_x}\\
				\lesa{}& R^{3\sigma_d} \big\| \ind_{[0, T)} ( \lr{c^{-1} \nabla} \tilde{\mc{R}}_c^{mod} -1) P_{\les R} \varphi \big\|_{L^\infty_t \dot{H}^{-\sigma_d}_x} \Big( \| \mc{V}_\infty(-t) \psi \|_{\ell^2 U^3} + \| \ind_{[0, T)}  \mc{V}_\infty(-t) ( \tilde{\mc{R}}_c^{mod}-1) \psi)\|_{\ell^2 U^3}\Big)^3\\
				\qquad{}&  +  R^{3\sigma_d} \| \varphi \|_{L^\infty_t \dot{H}^{-\sigma_d}_x} \big\| \ind_{[0, T)} \mc{V}_\infty(-t) (\tilde{\mc{R}}_c^{mod} -1) \psi\big\|_{\ell^2 U^3}\Big( \| \mc{V}_\infty(-t) \psi \|_{\ell^2 U^3} + \| \ind_{[0, T)}\mc{V}_\infty(-t)(\tilde{\mc{R}}_c^{mod} -1 )\psi \|_{\ell^2 U^3}\Big)^2.
		\end{align*}
As we clearly have $\mc{V}_\infty(t) \ell^2 U^3 \subset L^\infty L^2$, \eqref{eqn:N2:goal I} follows from Lemma \ref{lem:comp-norms c} (with $b=3$). It only remains to deal with the non-resonant contribution \eqref{eqn:N2:goal II}. We begin by observing that as $\mc{V}_{\infty}(-t) \psi \in \mathfrak{G}$, it suffices to prove that for any $0\les t_1 < t_2 \les T$ we have
         \begin{equation}\label{eqn:N2:goal II v2}
            \Big\| \int_0^t \ind_{[t_1, t_2)}(s) \mc{V}_\infty(-s) e^{ i(1-k)sc^2 \gamma^0} \gamma^0  P_{\les R} F_{k}(\psi)  ds \Big\|_{\ell^2 U^a} = 0
         \end{equation}
for any $\psi = \mc{V}_\infty(t) f$ where $\supp \widehat{f} \subset \{ R^{-1} \les |\xi| \les R\}$. To this end, we note that provided $c \gg R$, the temporal frequency support of $\mc{V}_\infty(-s) e^{ i(1-k)sc^2 \gamma^0} \gamma^0  P_{\les R} F_{k}(\psi)$ is contained in the set $\{|\tau| \gtrsim c^2\}$. Consequently, an application of Lemma \ref{lem:high freq energy ineq} implies that
         \begin{align*}
            \Big\| \int_0^t \ind_{[t_1, t_2)}(s) &\mc{V}_\infty(-s) e^{ i(1-k)sc^2 \gamma^0} \gamma^0  P_{\les R} F_{k}(\psi)  ds \Big\|_{\ell^2 U^a} \\
                &\lesa c^{\frac{2}{a} -2} \|  \mc{V}_\infty(-s) e^{ i(1-k)sc^2 \gamma^0} \gamma^0  P_{\les R} F_{k}(\psi) \|_{L^a_t L^2_x} \lesa  c^{\frac{2}{a} -2} \| \psi \|_{L^{3a}_t L^6_x}^3.
         \end{align*}
Therefore \eqref{eqn:N2:goal II v2} follows by Sobolev embedding and the Strichartz estimate for free Schr\"odinger waves
            $$ \| \psi \|_{L^{3a}_t L^6_x} \lesa R^{\frac{d}{3} - \frac{2}{3a}} \| \psi \|_{L^{3a}_t L^{\frac{6ad}{3ad-4}}_x} \lesa R^{\frac{d}{3} - \frac{2}{3a}} \| f\|_{L^2_x}. $$
This completes the proof of \ref{item:N2}.

\subsection{Proof of \ref{item:N3}}\label{subsec:proofn3} In view of the uniform bound \ref{item:N1}, and arguing as in the proof of \ref{item:N2}, it suffices to proof \ref{item:N3} under the assumption that $\mc{V}_\infty(-t) \psi \in \mathfrak{G}$ is a step function with $\supp \widehat{\psi} \subset \{R^{-1} \les |\xi| \les R\}$. Moreover, for $T>0$ sufficiently large, $\ind_{[T,\infty)} \psi = \ind_{[T, \infty)} \mc{V}_\infty(t) f$ for some $f \in L^2$. Hence, again applying the uniform bound \ref{item:N1} together with \eqref{eqn:N2:X vs Ua}, we have reduced matters to proving that \ref{item:N3} holds in the special case $\psi = \mc{V}_\infty(t) f$ with $f\in L^1 \cap L^2$ satisfying the Fourier support condition $\supp \widehat{f} \subset \{R^{-1} \les |\xi| \les R\}$. Rescaling the Klein-Gordon dispersive estimate \eqref{eqn:KG disp} immediately gives for any $1\les c \les \infty$ and $\mu \in 2^\ZZ$
            $$ \| \mc{V}_c(t) g_\mu \|_{L^\infty_x} \lesa |t|^{-\frac{d}{2}} \lr{c^{-1} \mu }^{\frac{d+2}{2}} \| g_\mu \|_{L^1_x}.$$
Consequently, we see that for any $1 \les c \les \infty$
    \begin{align*}
        \Big\| \int_0^t \ind_{[T, \infty)}(s) \mc{V}_c(-s) \gamma^0 F\big( \tilde{R}_c(s) \psi(s) \big) ds \Big\|_{\ell^2 U^a}
                &\lesa R^{s_d} \big\| \ind_{[T, \infty)}(t) F\big( \mc{V}_c(t) f \big) \big\|_{L^1_t L^2_x} \\
                &\lesa R^{s_d} \| \ind_{[T, \infty)}(t) \mc{V}_c(t) f \|_{L^2_t L^\infty_x}^2 \| f \|_{L^2}\\
                &\lesa R^{s_d + \frac{d}{3} + 2} T^{1-d} \| f \|_{L^1}^3
    \end{align*}
where the implied constant is independent of $1\les c \les \infty$. Therefore \ref{item:N3} follows.

\subsection{Proof of Corollary \ref{cor:non-rel conv}} In view of the bound
        \begin{align*}
            \| \lr{c^{-1}\nabla}^{\frac{1}{2}} e^{itc^2 \gamma^0} \psi^{(c)} - \psi^{(\infty)}\|_{L^\infty_t \dot{H}^{\sigma_d}_x} &\lesa \Big( \sum_{\lambda \in 2^{\ZZ}} \lambda^{2\sigma_d} \| \lr{c^{-1}\nabla}^{\frac{1}{2}} \psi^{(c)}_\lambda \|_{L^\infty_t L^2_x}^2 \Big)^{\frac{1}{2}}  +\Big( \sum_{\lambda \in 2^{\ZZ}} \lambda^{2\sigma_d} \| \psi^{(c)}_\lambda \|_{L^\infty_t L^2_x}^2 \Big)^{\frac{1}{2}} \\
            &\lesa \| \psi^{(c)} \|_{X_c} + \| \psi^{(\infty)} \|_{X_\infty},
        \end{align*}
and the fact that the norm $\| \psi^{(c)} \|_{X_c}$ is bounded uniformly in $c\g1$, as $\mc{V}_\infty$ is unitary on $\dot{H}^{\sigma_d}$ it suffices to show that for any fixed $\lambda \in 2^\ZZ$ we have
            $$ \lim_{c\to \infty} \big\| \ind_I(t) \mc{V}_\infty(-t) \big( \lr{c^{-1}\nabla}^{\frac{1}{2}} e^{itc^2 \gamma^0} \psi^{(c)}_\lambda - \psi^{(\infty)}_\lambda \big) \big\|_{L^\infty_t L^2_x} = 0. $$
To this end, we decompose
        \begin{align*}
            \mc{V}_\infty(-t) \big( &\lr{c^{-1}\nabla}^{\frac{1}{2}} e^{itc^2 \gamma^0} \psi^{(c)}_\lambda - \psi^{(\infty)}_\lambda\big) \\
            &= \mc{V}_\infty(-t) \big( \tilde{\mc{R}}_c^{mod}(t) -1 \big) \mc{V}_\infty(t) \mc{V}_c(-t) \psi_{\lambda}^{(c)} + \lr{c^{-1}\nabla}^{\frac{1}{2}} \mc{V}_c(-t) \psi_{\lambda}^{(c)} - \mc{V}_\infty(-t) \psi_{\lambda}^{(\infty)}.
        \end{align*}
For the first difference we apply Lemma \ref{lem:comp-norms c} and observe that for any $1\les b<\infty$
        \begin{align*}
          \big\|  \ind_I(t) \mc{V}_\infty(-t) &\lr{c^{-1}\nabla}^{\frac{1}{2}} \big( \tilde{\mc{R}}_c^{mod}(t) -1 \big) \mc{V}_\infty(t) \mc{V}_c(-t) \psi^{(c)}_{\lambda} \big\|_{L^\infty_t L^2_x}\\
                &\lesa_\lambda \big\| \mc{V}_\infty(-t) \big( \tilde{\mc{R}}_c^{mod}(t) -1 \big) \mc{V}_\infty(t) \mc{V}_c(-t) \psi^{(c)}_{\lambda} \big\|_{\ell^2 U^b} \\
                &\lesa_\lambda c^{\frac{2}{b} -1} \| \mc{V}_c(-t) \psi^{(c)}_\lambda \|_{\ell^2 U^b} \lesa_\lambda c^{\frac{2}{b} -1} \| \psi^{(c)}_\lambda \|_{X_\infty}.
        \end{align*}
Letting $c\to \infty$ and again using the fact that $\| \psi^{(c)} \|_{X_c}$ is bounded uniformly in $c\g 1$, we see that (choosing $b>2$)
        $$ \lim_{c\to \infty} \big\|  \ind_I(t) \mc{V}_\infty(-t) \lr{c^{-1}\nabla}^{\frac{1}{2}} \big( \tilde{\mc{R}}_c^{mod}(t) -1 \big) \mc{V}_\infty(t) \mc{V}_c(-t) \psi^{(c)}_{\lambda} \big\|_{L^\infty_t L^2_x} = 0. $$
The remaining term is a direct consequence of Theorem \ref{thm:limit c}.

\section*{Acknowledgements}
\begin{enumerate}
  \item
Funded by the Deutsche Forschungsgemeinschaft (DFG, German Research Foundation) -- Project-ID 317210226 -- SFB 1283.
\item
T.C.\ is supported by the Marsden Fund Council grant 19-UOO-142, managed by Royal Society Te Ap\={a}rangi.
\item
S.H.\ thanks the \emph{Laboratoire de Math\'ematique d'Orsay -- Universit\'e Paris-Saclay} for its hospitality, where part of this paper has been written.
\end{enumerate}

\bibliographystyle{amsplain}
\bibliography{unified_Dirac}
\end{document}